\documentclass[oneside,11pt]{amsart}

\usepackage{mathtools}
\usepackage[backend=bibtex,style=alphabetic,date=year]{biblatex}
\bibliography{biblio}
\usepackage{amssymb,amsmath,amsthm}
\usepackage{calligra,mathrsfs}
\usepackage[foot]{amsaddr}
\usepackage{tikz-cd}
\usepackage{hyperref}
\usepackage{comment}

\usepackage[textwidth=17cm,top=2cm,bottom=2cm]{geometry}

\newtheorem{thm}{Theorem}[section]
\newtheorem{prop}[thm]{Proposition}
\newtheorem{lem}[thm]{Lemma}
\newtheorem{cor}[thm]{Corollary}

\theoremstyle{definition}
\newtheorem{defn}[thm]{Definition}

\theoremstyle{remark}

\newtheorem{rmk}[thm]{Remark}
\newtheorem{set}[thm]{Settings}
\newtheorem{que}[thm]{Question}

\numberwithin{equation}{section}

\newenvironment{claim}[1]{\par\noindent\underline{Claim:}\space#1}{\par}
\newenvironment{claimproof}[1]{\par\noindent\underline{Proof of claim:}\space#1}{\hfill $\blacksquare$\par}

\newcommand{\bbC}{\mathbb{C}}

\newcommand{\bbF}{\mathbb{F}}

\newcommand{\bbN}{\mathbb{N}}

\newcommand{\bbP}{\mathbb{P}}
\newcommand{\bbQ}{\mathbb{Q}}

\newcommand{\bbZ}{\mathbb{Z}}

\newcommand{\calC}{\mathcal{C}}
\newcommand{\calD}{\mathcal{D}}
\newcommand{\calE}{\mathcal{E}}
\newcommand{\calF}{\mathcal{F}}
\newcommand{\calG}{\mathcal{G}}

\newcommand{\calL}{\mathcal{L}}

\newcommand{\calN}{\mathcal{N}}
\newcommand{\calO}{\mathcal{O}}
\newcommand{\calP}{\mathcal{P}}

\newcommand{\frakT}{\mathfrak{T}}

\DeclareMathOperator{\Hom}{Hom}
\DeclareMathOperator{\SheafHom}{\mathscr{H}\text{\kern -5pt {\calligra\large om}}\,}
\DeclareMathOperator{\End}{End}
\DeclareMathOperator{\Aut}{Aut}
\DeclareMathOperator{\Id}{Id}

\DeclareMathOperator{\Spec}{Spec}

\DeclareMathOperator{\Sym}{Sym}

\DeclareMathOperator{\GL}{GL}
\DeclareMathOperator{\pr}{pr}

\DeclareMathOperator{\Supp}{Supp}

\DeclareMathOperator{\Alb}{Alb}
\DeclareMathOperator{\rank}{rk}

\begin{document}


\title[Associated bundles and Beauville--Bogomolov decomposition]{A criterion for associated bundles and the weak Beauville--Bogomolov decomposition}


\author{Yang Zhang}
\address{Institut de Math\'ematiques, \'Ecole polytechnique f\'ed\'erale de Lausanne}
\email{yang.zhang@epfl.ch}





\begin{abstract}
We give a criterion for a fibration to be an associated fibre bundle. 
As an application, we show that, under suitable singularity and Cartier-index assumptions, any surjective fibration $X \rightarrow Y$ with nef $-K_{X/Y}$ is an associated fibre bundle. 
From this, we also deduce a weak Beauville--Bogomolov decomposition for varieties with nef anticanonical divisor.
\end{abstract}


\maketitle


\setcounter{tocdepth}{1}
\tableofcontents


\section{Introduction}
An isotrivial family is a family of schemes, or more generally, of pairs $(X,\Delta)\rightarrow Y$ such that every geometric fibre is isomorphic. 
They usually occur as obstructions to the existence of a fine moduli space, and are therefore heavily studied in the area. 
A way to construct an isotrivial family is to take the fibre bundle associated to a torsor.

\begin{defn}
	Over a field $k$, let $G$ be a group scheme with an action on a pair $(W,\Theta)$, and let $Y' \rightarrow Y$ be a $G$-torsor. 
	Let $(X,\Delta):= (Y'\times W, Y'\times \Theta) /G$ be the diagonal quotient. 
	We call the natural map $(X,\Delta) \rightarrow Y$ the \textbf{$(W,\Theta)$-bundle associated to $Y' \rightarrow Y$}. 
\end{defn}

Here, a morphism from a pair to a variety $(X,\Delta) \rightarrow Y$ is nothing but just a map $X\rightarrow Y$. 
And one can easily check that all geometric fibres are isomorphic to $(W,\Theta)_{\overline k}$. 

A classical example for associated bundles arising naturally from the geometry is the Beauville--Bogomolov decomposition (see e.g. \cite{bogomolovDecompositionKahlerManifolds1974, beauvilleVarietesKahleriennesDont1983}), which states that for a complex projective manifold $X$ with $c_1(X)=0$, the Albanese $X \rightarrow \Alb_X$ is a fibre bundle associated to $\widetilde{\Alb_X} \rightarrow \Alb_X$, where $\widetilde{\Alb_X}$ is the topological universal cover of $\Alb_X$, considered (analytically) as a $\pi_1(\Alb_X,0)$-torsor. 

Over the recent decades, a substantial amount of work has been done (\cite{kawamataCharacterizationAbelianVarieties1981, ambroModuliBDivisor2005, demaillyCompactKahlerManifolds, caoAlbaneseMapsProjective2018, xuHomogeneousFibrationsLog2020}) to weaken the assumptions in the decomposition. 
It was noticed that we can allow boundaries and mild singularities, and the negativity of the first Chern class suffices to constrain the geometry of the Albanese. 
Recently Matsumura and Wang proved: 

\begin{thm}[{\cite[Corollary 4.2]{matsumuraStructureTheoremProjective2025}}]\label{thm: associated bundle complex case}
	Let $(X,\Delta)$ be a normal klt projective pair over $\bbC$ such that $-K_X-\Delta$ is nef, then the Albanese $(X,\Delta)\rightarrow A$ is a fibre bundle associated to $\widetilde A \rightarrow A$. 
\end{thm}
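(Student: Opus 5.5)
The plan is to combine two inputs: the known local triviality of the Albanese, and the flatness of the relevant direct-image bundles. First we establish that $\alpha\colon X\to A$ is surjective with connected fibres, and that it is isotrivial, with every fibre isomorphic to a fixed pair $(F,\Delta_F)$. For surjectivity and connectedness we use the results of Cao and Zhang and their klt extensions: since $-K_X-\Delta$ is nef and the pair is klt, $\alpha$ is surjective and its general fibre is connected. The key analytic input is then the positivity of direct images. Take an $\alpha$-ample line bundle $L$, twisted so that $L-K_X-\Delta$ is relatively ample. The bundles $\calE_m:=\alpha_*\calO_X(mL)$ are weakly positively curved. Their determinants are numerically trivial after a suitable normalization, because $A$ carries no nontrivial positivity coming from $-K_{X/A}=-K_X$. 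By the standard argument of Cao and Höring, this forces each $\calE_m$ to be numerically flat, hence a flat unitary-type bundle on the abelian variety $A$.

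Next we show that a numerically flat bundle on an abelian variety comes from a representation $\rho_m\colon\pi_1(A)\to\GL(V_m)$. Simpson's correspondence gives a flat structure, and we may take $\rho_m$ to be unitary. The multiplication maps $\Sym^k\calE_m\to\calE_{km}$ are then morphisms of flat bundles, so they are $\pi_1$-equivariant. This makes the relative section ring $\bigoplus_m\calE_m$ a flat family of graded algebras. Pulling back to the universal cover $\widetilde A\cong\bbC^n$, every flat bundle becomes trivial. Hence $\widetilde A\times_A X\cong \widetilde A\times \operatorname{Proj}\bigoplus_m V_m = \widetilde A\times F$. The boundary $\Delta$ is handled the same way, either by including its ideal sheaf twisted by $mL$ in the flat system, or by noting that each component of $\Delta$ is determined by the flat subbundles it defines. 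The $\pi_1(A)$-action on $\widetilde A\times F$ is diagonal, through the representation on $V_m$, which acts on $F$ preserving $\Delta_F$. Taking the quotient recovers $(X,\Delta)$, and this is exactly the associated bundle structure.

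The main obstacle is the first step: proving that the direct images $\calE_m$ are numerically flat, and not merely pseudo-effective, using only nefness of $-K_X-\Delta$ with klt singularities. I would try to use the Păun–Takayama / Hacon–Popa–Schnell positivity of $\alpha_*\calO(m(L-K_{X/A}-\Delta))$, applied to the twisted nef divisor $L+\epsilon(-K_X-\Delta)$. From this I would argue that $\det\calE_m$ is pseudo-effective and also anti-pseudo-effective after normalizing by $\rank\calE_m\cdot L$. Here the fact that $K_A$ is trivial is essential.

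A second subtlety is that the flat structures obtained for different $m$ must be compatible. This follows because morphisms between numerically flat bundles are automatically flat, by the uniqueness of flat structures on polystable bundles.
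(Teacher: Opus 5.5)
Your outline is the classical analytic argument (Cao, Cao--H\"oring, Matsumura--Wang), which the paper deliberately bypasses. As written, one essential step is missing and one claim is false.

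\textbf{The missing step is the boundary.} Adding $\alpha_*\calO_X(mL-rD)\subset\calE_m$ to the flat system is the right idea, and the paper does the same. But it only works if these subsheaves are themselves numerically flat, and that does not come ``the same way'' as for $\calE_m$. Being a subsheaf of a semistable bundle of degree $0$ gives only $\deg\le 0$. The reverse inequality is exactly where nefness of $-K_X-\Delta$ (not merely of $-K_X$) and the coefficient of $D$ in $\Delta$ have to be used.

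For instance, take $X=A\times\bbP^1$ with $A$ an elliptic curve, $L=\pr_2^*\calO_{\bbP^1}(1)$, and $D$ the graph of a map $g\colon A\to\bbP^1$ of degree $d>0$. Then $-K_X$ is nef and every $\calE_m$ is trivial. Yet $\alpha_*\calO_X(mL-D)\cong H^0(\bbP^1,\calO_{\bbP^1}(m-1))\otimes g^*\calO_{\bbP^1}(-1)$ has negative degree; here $-K_X-cD$ is not nef for any $c>0$.

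What is needed, working over curves $C\to A$, is the following chain:
\begin{enumerate}
\item The restriction of $L$ has vanishing top self-intersection and is therefore nef (Lemma \ref{lem: top intersection vanishes}, Corollary \ref{cor: nefness of relative ample bundle}).
\item A semipositivity theorem for the pair $(X,\Delta-\frac{r}{m}D)$ makes $mL-rD$ nef when $r/m\le\mathrm{coeff}_\Delta(D)$.
\item Writing $mL-rD=K_{X/A}+\Delta+(\text{nef and relatively ample})$, positivity of direct images makes $\alpha_*\calO_X(mL-rD)$ nef.
\item The degree bound above then gives numerical flatness.
\end{enumerate}
This is Lemma \ref{lem: nefness of ideal sheaf} and Proposition \ref{prop: numerical flatness of ideal sheaf} in characteristic $p$, and the boundary claim in the proof of Theorem \ref{thm: associated bundle relative anti-nef complex numbers in text} over $\bbC$.

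\textbf{The false claim is unitarity.} Numerically flat bundles are only semistable, so you cannot take $\rho_m$ unitary. Uniqueness of flat structures on polystable bundles also does not give compatibility. Take $A$ an elliptic curve, $E$ the non-split self-extension of $\calO_A$, and $X=\bbP_A(E)$. Then $-K_X=\calO_{\bbP(E)}(2)$ is nef and $\alpha\colon X\to A$ is the Albanese. For every relatively ample $L=\calO_{\bbP(E)}(a)\otimes\alpha^*M$, the bundles $\alpha_*\calO_X(mL)\cong\Sym^{ma}E\otimes M^{\otimes m}$ are indecomposable and strictly semistable, so their flat structures are unipotent rather than unitary.

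The correct input is Simpson's correspondence between semistable Higgs bundles with vanishing Chern classes and local systems. It is an equivalence of tensor categories, and numerically flat bundles are its objects with zero Higgs field. Hence every $\calO_A$-linear map between them is flat, including the multiplication maps and the inclusions above.

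Relatedly, the normalization in your first step must be a single twist of $L$ by a ($\bbQ$-)pullback from $A$ that makes all $\calE_m$ numerically flat simultaneously, since the section ring is built from one $L$. ``Normalizing by $\rank\calE_m\cdot L$'' mixes a bundle on $A$ with one on $X$. The paper imports this input as Theorem \ref{thm: existence of relative ample numerically flat bundle complex numbers}.

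\textbf{Comparison with the paper.} With these repairs your route works and lands directly on $\widetilde A$. The paper feeds the same two numerical-flatness statements into Theorem \ref{thm: main theorem in text} instead. There, the principal bundle $P\to A$ of $\langle\alpha_*\calL\rangle^\otimes$ trivializes all relevant bundles and has $H^0(P,\calO_P)=\bbC$ (Proposition \ref{prop: rigidity of universal cover}). So the multiplication and inclusion maps are constant, because they are constant at one point (Lemma \ref{lem: rigidity of matrix}). This replaces Simpson's correspondence by an algebraic rigidity statement, works in every characteristic, and gives a linear algebraic structure group that is a quotient of $\pi_1^S(A,0)$. The passage to $\widetilde A$ then uses only the homomorphism $\pi_1(A,0)\to\pi_1^S(A,0)$.
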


In positive characteristics, a parallel sequence of studies on analog statements
(\cite{ejiriWhenAlbaneseMorphism2019, patakfalviWeakBeauvilleBogomolovDecomposition2025})
has been also carried out and remains an active area. 
A recent result is:

\begin{thm}[{\cite[Theorem 5.7]{ejiriSplittingAlgebraicFiber2023a}}]\label{thm: isotriviality positive char}
	Let $(X,\Delta)$ be a globally $F$-split strongly $F$-regular projective pair over a perfect field $k$ of characteristic $p>0$, such that $-K_X-\Delta$ is nef and of Cartier index coprime to $p$. 
	Then the Albanese $X\rightarrow A$ is isotrivial.  
\end{thm}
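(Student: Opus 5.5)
The plan is to prove isotriviality of the Albanese morphism $f\colon X\rightarrow A$ in two stages: first show that $f$ is flat with all geometric fibres isomorphic over a dense open set, then show that this extends over all of $A$ via a positivity argument. The key input throughout is the nefness of $-K_{X/A}-\Delta = -K_X-\Delta$, which holds because $K_A\sim 0$.

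\textbf{Step 1: surjectivity and the general fibre.} Global $F$-splitting of $(X,\Delta)$ together with nefness of $-K_X-\Delta$ will give that $f$ is surjective with connected fibres. For this I would use the standard positive-characteristic substitute for Zhang's theorem in the $F$-split setting: an $F$-splitting of $X$ pushes forward to an $F$-splitting of the image, and a globally $F$-split subvariety of an abelian variety generating it must be the whole variety. Next, because the Cartier index $m$ of $-K_X-\Delta$ is prime to $p$, the trace maps
\[
F^e_*\calO_X\bigl((1-p^e)(K_X+\Delta)\bigr)\rightarrow \calO_X
\]
make sense and can be pushed forward along $f$. Strong $F$-regularity then gives that the general fibre pair $(X_y,\Delta_y)$ is again globally $F$-split with nef anticanonical divisor.

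\textbf{Step 2: weak positivity and numerical flatness.} Fix a sufficiently ample line bundle $L$ on $X$ and consider the sheaves $\calE_n = f_*\calO_X\bigl(n(-m(K_X+\Delta)) + L\bigr)$, or alternatively the graded pieces of the relative Cox-type ring. I would show that each $\calE_n$ is weakly positive, using $F$-splitting trace maps as in Ejiri's positivity results, and also that $\det \calE_n$ is numerically trivial. Numerical triviality should follow by bounding from above using nefness of $-K_{X/A}-\Delta$ and the trivial canonical bundle of $A$. On an abelian variety, a weakly positive sheaf with numerically trivial determinant is a numerically flat vector bundle. Such a bundle becomes a successive extension of torsion-type line bundles after pulling back by a suitable isogeny or Frobenius-type cover $A'\rightarrow A$.

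\textbf{Step 3: conclusion.} Embedding $X$ relatively into $\bbP(\calE_n)$ for $n\gg 0$ identifies the fibres $X_y$ with subschemes of a fixed projective space, cut out uniformly. Numerical flatness of the $\calE_n$, together with the equations, then shows that the induced map $A\rightarrow$ (Hilbert scheme)$/\mathrm{PGL}$ is constant on geometric points. Hence all geometric fibres are isomorphic, which is isotriviality.

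I expect the main obstacle to be Step 2, and specifically proving numerical triviality of the determinants and the splitting needed to deduce constancy of moduli. This is where the coprimality of the Cartier index with $p$ enters essentially, since it is what allows trace maps to produce global sections compatible with base change. My first attempt would follow Ejiri's $F$-splitting-based positivity of direct images, then invoke the known structure of numerically flat bundles on abelian varieties in positive characteristic.
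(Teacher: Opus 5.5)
Your proposal has a genuine gap, located in Steps 2 and 3, which together carry essentially all the content of the theorem.

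\textbf{Step 2 studies the wrong sheaves.} With $L$ ample on $X$, the sheaves $\calE_n$ cannot be expected to have numerically trivial determinant. Take $A$ ordinary, $X=A\times\bbP^1$, $\Delta=0$ and $L=\pr_1^*H\otimes\pr_2^*\calO_{\bbP^1}(1)$. Then $\calE_n\cong H\otimes_k H^0(\bbP^1,\calO_{\bbP^1}(2n+1))$, whose determinant is $H^{\otimes(2n+2)}$. In general, twisting $L$ by $f^*N$ shifts the class of $\det\calE_n$ by $\rank\calE_n\cdot[N]$, so numerical triviality is a normalization that a ``sufficiently ample'' $L$ does not satisfy. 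What is needed is a single $f$-ample (not ample) line bundle $\calL$ with $f_*(\calL^{\otimes m})$ numerically flat for \emph{all} $m$. This is Ejiri's result, quoted here as Theorem \ref{thm: existence of relative ample numerically flat bundle}. It requires $X$ Cohen--Macaulay, $f$ a fibration, and strong $F$-regularity of the geometric generic fibre; the paper obtains these in Proposition \ref{prop: geometric generic fibre SFR over Albanese} from relative global $F$-splitting over $A$, not from ``general fibre globally $F$-split''. Your family is also not closed under the operations that define $X$. The equations of $X\subset\bbP(\calE_n)$ are the kernels of $\Sym^k\calE_n\to f_*\calO_X(k(-nm(K_X+\Delta)+L))$, and these targets are not among the $\calE_{n'}$. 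So even numerically flat $\calE_n$ would not control the fibres.

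\textbf{Step 3 has no working mechanism.} The passage from numerical flatness to constancy of the fibres is asserted, and the mechanism you hint at does not exist in general. If $k$ is, say, uncountable, a non-torsion $P\in\mathrm{Pic}^0(A)$ is numerically flat. Yet no finite surjective $\pi\colon A'\to A$ makes $\pi^*P$ torsion: taking norms, $P^{\otimes r\deg\pi}$ would then be trivial. Numerically flat bundles become essentially finite only in special situations, such as a finite base field (Proposition \ref{prop: numerically flat bundle over finite field}), which is why the paper gets an isogeny only there. Moreover, even a description of each $f_*(\calL^{\otimes m})$ as an iterated extension of numerically trivial line bundles says nothing about whether the multiplication maps $\Sym^m f_*\calL\to f_*(\calL^{\otimes m})$ vary with the point of $A$.

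The missing idea is the one in Theorem \ref{thm: main theorem in text}. Let $\pi\colon P\to A$ be the principal $G$-bundle attached to the Tannakian category $\langle f_*\calL\rangle^\otimes\subset\mathrm{Vec}_0^S(A)$; this is in general not a finite cover. Every $\pi^*f_*(\calL^{\otimes m})$ is trivial (Corollary \ref{cor: pullback of vector bundle becomes trivial}), and $H^0(P,\calO_P)=k$ (Proposition \ref{prop: rigidity of universal cover}). Hence the pulled-back multiplication maps are constant matrices, determined by their value at one point (Lemma \ref{lem: rigidity of matrix}). This gives $X\times_A P\cong P\times X_0$, so $f$ is isotrivial and even an associated bundle. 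Ejiri's original proof instead uses an explicit structure theory of numerically flat bundles on abelian varieties. Your proposal supplies neither ingredient.
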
 

This result says however nothing on the boundary. 
Ejiri posted then in the same paper the following question, asking if one can prove a more general isotriviality result over an arbitrary base. The parallel question over complex numbers is true by \cite[Theorem 4.1]{matsumuraStructureTheoremProjective2025}

\begin{que}
	Let $f:(X,\Delta)\rightarrow Y$ be a morphism between projective varieties over a perfect field $k$ of characteristic $p>0$, such that the geometric generic fibre $(X_{\overline \eta}, \Delta_{\overline \eta})$ is strongly $F$-regular and $-K_{X/Y}-\Delta$ is nef. 
	Is it true that $f$ is isotrivial?
\end{que}

This paper is aimed to give a proof to the question, and discuss some consequences and related open problems. 

\subsection{Associated bundle criterion and the weak Beauville--Bogomolov decomposition}
In Section \ref{section: Criterion of being associated fibre bundles}, we give an affirmative answer to the question. 
Indeed, we show a stronger result that under mild extra assumptions, the morphisms described above are even associated fibre bundles. 
To this end, we apply the theory of S-fundamental group schemes developed by Langer. 
In \cite{langerSfundamentalGroupScheme2011} and \cite{langerSfundamentalGroupScheme2012}, Langer introduced for a smooth projective variety $Y$ with a rational point $y\in Y(k)$ a pro-algebraic group scheme $\pi_1^S(Y,y)$, called the S-fundamental group scheme, and a principal $\pi_1^S(Y,y)$-bundle $\widetilde Y^S$, called the S-universal cover. 
We shall consider them as an algebraic replacement for the topological fundamental group and the universal cover. 
As a result, we prove the following criterion for being an associated bundle. 

\begin{thm}[cf. Theorem \ref{thm: main theorem in text}]\label{thm: main theorem}
	Let $(X,\Delta)$ be a projective pair and $Y$ be a smooth projective variety over a perfect field $k$ of characteristic $p\geq 0$. 
	Let $f:(X,\Delta)\rightarrow Y$ be a morphism, and let $y\in Y(k)$ be a $k$-point. 
	We denote with $(X_y,\Delta_y)$ the fibre of $f$ over $y$. 
	Assume there exists an $f$-ample line bundle $\calL$, such that
	\begin{enumerate}
		\item the sheaves $f_*(\calL^{\otimes m})$ are numerically flat for all $m>0$,
	\end{enumerate} 
	and for every irreducible component $D$ of $\Delta$, there is a positive integer $r$ such that
	\begin{enumerate}
		\item[(2)] $f_*(\calL^{\otimes m}(-rD))$ are numerically flat for all $m>0$. 
	\end{enumerate}
	Then we can find a linear algebraic group $G$ which is a quotient of $\pi_1^S(Y,y)$, a principal $G$-bundle $P\rightarrow Y$, such that $f$ is an $(X_y,\Delta_y)$-bundle associated to $P \rightarrow Y$. In particular, $f$ is an $(X_y,\Delta_y)$-bundle associated to $\widetilde Y^S \rightarrow Y$.
\end{thm}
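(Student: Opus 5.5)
Since $\calL$ is $f$-ample, $X \cong \mathrm{Proj}_Y \bigoplus_{m\ge0} f_*(\calL^{\otimes m})$ (after passing to a Veronese subring, i.e.\ replacing $\calL$ by a power). We will realise the whole graded $\calO_Y$-algebra $\calR := \bigoplus_m f_*(\calL^{\otimes m})$, together with the ideals cutting out the components of $\Delta$, inside the Tannakian category of numerically flat bundles on $Y$. That category is equivalent to $\mathrm{Rep}(\pi_1^S(Y,y))$ via the fibre functor $\omega_y$ at $y$. The first step is to note that numerically flat bundles form an abelian, tensor-closed category. Any morphism between them has numerically flat kernel, cokernel and image, in particular is of constant rank, and duals and tensor products stay numerically flat. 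This is Langer's theory, valid for smooth projective $Y$ over a perfect field in any characteristic.

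Choose $m_0$ such that $\calL^{\otimes m_0}$ is relatively very ample and $\calR^{(m_0)}$ is generated in degree $1$. Replace $\calL$ by $\calL^{\otimes m_0}$, which preserves hypotheses (1) and (2) since they hold for all $m$. The multiplication maps $\Sym^m f_*\calL \to f_*\calL^{\otimes m}$ are morphisms between numerically flat bundles; the source is numerically flat by tensor-closedness. Hence the kernels $\calI_m$ are numerically flat subbundles, giving a closed embedding $X \hookrightarrow \bbP(f_*\calL)$ over $Y$ whose homogeneous ideal is a graded ideal of numerically flat pieces.

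For each component $D$ we have, for $m\gg0$, injections $f_*(\calL^{\otimes m}(-rD)) \hookrightarrow f_*\calL^{\otimes m}$. Both sides are numerically flat by (1) and (2), so the image is a subbundle. Since $\calL$ is $f$-ample, $rD$ (and hence $D$, after taking the reduced structure, which is compatible with base change to the fibre at $y$ because these are subbundles) is recovered as $\mathrm{Proj}$ of the quotient algebra. The quotients $f_*\calL^{\otimes m}/f_*(\calL^{\otimes m}(-rD))$ are numerically flat, and their formation commutes with base change to $y$ because all sheaves involved are locally free and the maps are of constant rank. This compatibility of the $D$-structure with restriction to fibres is the step I expect to be the main obstacle. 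One must check that the subscheme of $X_y$ defined by these ideals is exactly $rD|_{X_y}$, and that taking the reduced part is compatible with the restriction, so that $\Delta_y$ is correctly identified. This may require working with the multiplicity $r$ as given by the hypothesis rather than with $D$ itself.

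Now apply $\omega_y$. Finitely many numerically flat bundles suffice to determine everything: $V := f_*\calL$, the ideal pieces $\calI_m$ for $m$ up to a degree generating the ideal, and the $D$-ideals in finitely many degrees. These generate a Tannakian subcategory $\langle V\rangle$ equivalent to $\mathrm{Rep}(G)$, where $G$ is a linear algebraic quotient of $\pi_1^S(Y,y)$. It comes with the principal $G$-bundle $P = \mathrm{Isom}^\otimes(\omega_y,\,\cdot\,)$ over $Y$, and by Tannakian reconstruction every object $E$ of $\langle V\rangle$ is $P\times^G \omega_y(E)$. The fibre functor sends $V$ to $V_y = H^0(X_y,\calL_y)$, the ideal to the homogeneous ideal of $X_y\subset \bbP(V_y)$, and the $D$-ideals to those of $D_y$. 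These subspaces are $G$-stable because they are images of subobjects, so $G$ acts on $(X_y,\Delta_y)$. The closed subschemes $X \subset \bbP(V) = P\times^G \bbP(V_y)$ and $P\times^G X_y$ are cut out by the same sub-bundles of ideals, so $X \cong (P\times X_y)/G$ compatibly with the boundary components. Finally, $P$ is induced from $\widetilde Y^S$ via $\pi_1^S(Y,y)\to G$, which gives the last claim.
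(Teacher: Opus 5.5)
Your argument is correct and uses the same ingredients as the paper. You replace $\calL$ by a power, take the full subcategory $\langle f_*\calL\rangle^\otimes\subset\mathrm{Vec}_0^S(Y)$, its linear algebraic dual $G$ (a quotient of $\pi_1^S(Y,y)$), and the torsor $P$. Where you differ is the final step.

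The paper works upstairs:
\begin{itemize}
\item it pulls $f$ back along $\pi\colon P\to Y$ and notes that all relevant pushforwards become trivial (Corollary~\ref{cor: pullback of vector bundle becomes trivial});
\item it then uses $H^0(P,\calO_P)=k$ (Proposition~\ref{prop: rigidity of universal cover}) and Lemma~\ref{lem: rigidity of matrix} to make the trivializations compatible with the multiplication maps and $D$-inclusions, which gives $P\times_Y X\cong P\times X_y$;
\item finally it checks separately, via $P\times X_y\subset P\times\bbP^N$, that the $G$-action is diagonal.
\end{itemize}
You work downstairs. Since $W\mapsto P\times^G W$ is a tensor equivalence $\mathrm{Rep}(G)\simeq\langle f_*\calL\rangle^\otimes$ quasi-inverse to $\omega_y$, the multiplication maps and ideal inclusions are morphisms in this category. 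Hence they are $P\times^G$ of $G$-equivariant maps on the fibre. So $\bigoplus_m f_*(\calL^{\otimes m})$ is $P\times^G$ of a graded $G$-algebra, and taking $\mathrm{Proj}$ gives $X\cong P\times^G X_y$ with the diagonal action built in.

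Both routes rest on the same fact: fullness of the subcategory, which is exactly what yields $H^0(P,\calO_P)=k$. Your version is shorter and avoids the separate compatibility and diagonality checks. The paper's makes the rigidity of $P$ explicit, which it highlights when comparing with the analytic argument over $\bbC$.

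One correction on the boundary. Your parenthetical claim that passing to reduced structures commutes with restriction to $y$ ``because these are subbundles'' is not right. The subbundle property only shows that $\omega_y$ of the ideal $\bigoplus_m f_*(\calL^{\otimes m}(-rD))$ cuts out the scheme-theoretic fibre $Z_y$ of $Z:=V(\calO_X(-rD))$, whence $Z\cong P\times^G Z_y$.

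When $G$ is non-reduced, $(Z_y)_{\mathrm{red}}$ need not even be $G$-stable. For instance, $\alpha_p$ acting on $\bbP^1$ by $x\mapsto x+a$ preserves $V(x^p)$ but not $V(x)$. Take a nontrivial $\alpha_p$-torsor $P\to Y$ over a curve. Then $D=P\times^{\alpha_p}V(x^p)\cong P$ is a prime divisor of $P\times^{\alpha_p}\bbP^1$, and it satisfies the hypotheses with $r=1$ and the relative $\calO(1)$. Yet $D$ is purely inseparable of degree $p$ over $Y$, so it is not the associated bundle of a reduced point.

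So, as you suspected, keep the multiplicity. When $rD$ is Cartier (as in the applications, where $r$ is the Cartier index), read $D=\frac1r(rD)$ and $D_y=\frac1r(rD)|_{X_y}$ as $\bbQ$-Cartier divisors. Then $Z\cong P\times^G Z_y$ says exactly that $\Delta$ corresponds to $P\times^G\Delta_y$. The paper is equally brief here: Lemma~\ref{lem: triviality of relative ample bundle implies product} and the remark after it say ``hence the reduced $D$ itself''. This is a precision issue shared by both arguments, not a defect of your route.
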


Let us give an outline of the proof. The proofs of Theorem \ref{thm: associated bundle complex case}, Theorem \ref{thm: isotriviality positive char} both construct a relatively ample line bundle such that its pushforward to $A$ is numerically flat. 
This idea can be traced back to \cite{demaillyCOMPACTCOMPLEXMANIFOLDS}. 
Briefly speaking, over the complex numbers, one can show that given a numerically flat vector bundle $\calE$ on a variety $X$, the pullback $\widetilde \calE$ to the universal cover $\widetilde X$ is trivialized and admits a flat connection. 
Any morphism between pullbacks of numerically flat bundles must respect this connection and can be shown constant. 
Since the datum of $X$ is completely encoded in the multiplication map $\Sym^m f_*\calL \rightarrow f_*\calL^m$, which by the argument above must be constant when pulling back to the universal cover, the family $X\rightarrow Y$ is indeed isotrivial.
This idea however relies heavily on analytic tools. 
Ejiri's approach in \cite{ejiriSplittingAlgebraicFiber2023a} relies on very concrete descriptions of numerically flat bundles on abelian varieties and could not be generalized to a general base, nor does it say anything about the boundary.  
To prove Theorem \ref{thm: main theorem}, we use Langer's Tannakian approach and consider the behaviour of pullback of numerically flat bundles on the $S$-universal cover. 
This framework uses only purely algebraic techniques, hence Theorem \ref{thm: main theorem} works free from characteristic constraints. 
In particular, this allows us to prove the general associated bundle statement in positive characteristics, and also include the boundary. 

\begin{thm}[cf. Theorem \ref{thm: associated bundle relative anti-nef positive characteristic in text}]\label{thm: associated bundle relative anti-nef positive characteristic}
	Let $f:(X,\Delta) \rightarrow Y$ be a surjective fibration over a perfect field $k$ of characteristic $p>0$, such that
	\begin{enumerate}
		\item $(X,\Delta)$ is a Cohen-Macaulay projective pair, 
		\item $Y$ is a smooth projective variety with a $k$-rational point $y\in Y(k)$, 
		\item the geometric generic fibre $(X_{\overline \eta}, \Delta_{\overline \eta})$ is strongly $F$-regular,
		\item the relative log-anti-canonical divisor $-K_{X/Y}-\Delta$ is nef and of Cartier index coprime to $p$.
	\end{enumerate}
	Then $f$ is an $(X_y,\Delta_y)$-bundle associated to the S-universal cover $\widetilde Y^S \rightarrow Y$.  
\end{thm}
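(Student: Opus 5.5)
The strategy is to reduce Theorem \ref{thm: associated bundle relative anti-nef positive characteristic} to Theorem \ref{thm: main theorem}. So I will produce an $f$-ample line bundle $\calL$ satisfying hypotheses (1) and (2). Let $N$ be the Cartier index of $-K_{X/Y}-\Delta$; it is prime to $p$. Fix an $f$-ample Cartier divisor $H$, and twist by a pullback from $Y$ if needed so that $H$ is ample on $X$. Set $L_\epsilon := -N(K_{X/Y}+\Delta) + \epsilon H$ for small rational $\epsilon>0$, cleared to an integral Cartier multiple. Then $L_\epsilon$ is ample and $f$-ample, and in the limit it approaches a multiple of the nef class $-K_{X/Y}-\Delta$. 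To prove that $f_*\calO_X(mL)$ is numerically flat, I need two things.

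The first is \emph{nefness} of $f_*\calO_X(mL)$. This is the positive characteristic analogue of Viehweg/Fujino weak positivity. I would use the trace of Frobenius, following Patakfalvi and Ejiri. Since $N$ is prime to $p$, choose $e$ with $N \mid p^e-1$. The relative Frobenius trace $F^e_*\calO_X((1-p^e)(K_{X/Y}+\Delta)) \to \calO_X$ is surjective on the geometric generic fibre by strong $F$-regularity. Twisting by $mL$ gives a generically surjective map
\[ f_*F^e_*\calO_X\bigl((1-p^e)(K_{X/Y}+\Delta)+p^e mL\bigr) \to f_*\calO_X(mL). \]
The source is $F^e_{Y*}$ of a pushforward of a divisor of the form nef plus ample. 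Relative Fujita vanishing makes that pushforward globally generated after twisting by a fixed ample $A$ on $Y$. Iterating $e$ then shows that $f_*\calO_X(mL)\otimes A$ is generically generated for a bounded $A$, after Frobenius pullback. This gives weak positivity, and hence nefness because the sheaves are reflexive, in fact locally free, once Cohen--Macaulayness and vanishing of higher direct images are used. This is the step I expect to be the main technical obstacle. The delicate points are uniformity in $m$ and $\epsilon$, and getting genuine nefness rather than only weak positivity.

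The second is \emph{numerical triviality of determinants}. Nefness of $f_*\calO(mL)$ together with $\det f_*\calO(mL)\equiv 0$ gives numerical flatness. To control the determinant, I would compare $f_*\calO(mL)$ with the pushforward of the dual direction. Grothendieck duality gives $(f_*\calO(mL))^\vee \cong R^{n}f_*\calO(K_{X/Y}-mL)$, which is related via Frobenius trace applied to $\calO(-mL + \ldots)$. Alternatively, I would argue by contradiction: if $\det$ is nef but not numerically trivial, then $L+\delta f^*\det$ relations force $-K_{X/Y}-\Delta$ to be not relatively nef-minimal. I would try the cleaner route first, computing $c_1(f_*\calO(mL))$ by Grothendieck–Riemann–Roch as a polynomial in $m$ whose top coefficients are $f_*(L^{n+1})$ and $f_*(L^n\cdot K_{X/Y})$. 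Letting $\epsilon\to 0$ makes these combinations of $(K_{X/Y}+\Delta)^{n+1}$-type classes. These vanish because $-K_{X/Y}-\Delta$ is nef, and the weak positivity of $f_*\omega$-type sheaves (again via Frobenius) forces the opposite sign. Once (1) is established, I will actually take $\calL$ itself, at the limiting value, to be a suitable multiple of $-K_{X/Y}-\Delta$ plus $f$-ample $\epsilon H$, with $H$ chosen so that (1) holds.

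For hypothesis (2), for each component $D$ of $\Delta$, I repeat the Frobenius trace argument with the boundary decreased. For large $r$, the same surjectivity holds on the generic fibre with $\Delta$ replaced by $\Delta - \tfrac{r}{p^e-1}D$ suitably, which keeps the pair strongly $F$-regular. Weak positivity then gives nefness of $f_*\calL^m(-rD)$. Numerical flatness follows by sandwiching: $f_*\calL^m(-rD)\subset f_*\calL^m$ is a nef subsheaf of a numerically flat bundle, and its quotient has nef-dual behaviour, so both are numerically flat. Finally, Theorem \ref{thm: main theorem} applies and gives that $f$ is an $(X_y,\Delta_y)$-bundle associated to $\widetilde Y^S\to Y$.
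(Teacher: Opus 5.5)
Your overall reduction to Theorem \ref{thm: main theorem} matches the paper's, and so does your degree sandwich for (2) (compare Proposition \ref{prop: numerical flatness of ideal sheaf}). However, your construction of $\calL$ in (1) cannot work. Your $L_\epsilon=-N(K_{X/Y}+\Delta)+\epsilon H$ is ample on $X$ for every $\epsilon>0$, and an ample line bundle never has numerically flat direct images once $\dim Y>0$. Take a curve $C\to Y$ and set $Z=X\times_Y C$. Riemann--Roch gives $\deg f'_*\calO_Z(mL_\epsilon|_Z)=\frac{m^{\dim Z}}{(\dim Z)!}(L_\epsilon|_Z)^{\dim Z}+O(m^{\dim Z-1})$ with positive leading coefficient. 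This is Lemma \ref{lem: top intersection vanishes} read contrapositively. In the product case $X=Y\times F$ with $H=\pr_1^*A+\pr_2^*B$ you simply get $\calO_Y(m\epsilon A)\otimes_k H^0(F,\cdot)$, which has positive degree.

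So no fixed $\epsilon>0$ makes the determinant numerically trivial, and letting $\epsilon\to 0$ is not an operation on an actual line bundle. At $\epsilon=0$ the divisor is typically not $f$-ample, e.g.\ for $K$-trivial fibres. Nefness of $-K_{X/Y}-\Delta$ alone also does not force your $(K_{X/Y}+\Delta)^{n+1}$-type classes to vanish.

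Any valid $\calL$ must satisfy $(\calL|_Z)^{\dim Z}=0$. So the $f$-ample bundle has to be twisted \emph{down} by a suitable fractional pullback from $Y$ (e.g.\ normalizing away $\det f_*$), not up as you do. After this it is no longer visibly nef, and the Frobenius-trace positivity for $K_{X/Y}+\Delta+(\text{nef and } f\text{-ample})$ no longer applies. Proving that such a normalized bundle still has nef direct images is exactly the content of Ejiri's result, which the paper imports as Theorem \ref{thm: existence of relative ample numerically flat bundle}. Your sketch gives no substitute for it.

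The same missing input undermines your argument for (2). In the paper, numerical flatness from (1) gives $(\calL|_Z)^{\dim Z}=0$ and hence nefness of $\calL|_Z$ (Corollary \ref{cor: nefness of relative ample bundle}). The semipositivity engine, Theorem \ref{thm: semi-positivity}, applied with the smaller boundary $\Lambda-\frac{r}{m}D'$, then shows that $\calL'^{\otimes m}(-rD')$ is nef. Only then does \cite[Proposition 3.6]{patakfalviSemipositivityPositiveCharacteristics2014} give nefness of $f'_*(\calL'^{\otimes m}(-rD'))$ over curves.

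Your appeal to weak positivity skips the nefness of $\calL^{\otimes m}(-rD)$, which is needed to write it as $K_{X/Y}+\Delta+(\text{nef and } f\text{-ample})$. Weak positivity alone would not yield nefness anyway. The relevant constraint is $\frac{r}{m}\le\mathrm{coeff}_\Delta(D)$ with $r$ fixed (the Cartier index of $D$) and $m$ large, not $r$ large.

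Finally, you need $S^0(X_y,\Delta_y,\calL_y^{\otimes m}(-rD_y))=H^0(X_y,\calL_y^{\otimes m}(-rD_y))$ uniformly in $y$. The paper gets this by a bootstrap: it first proves isotriviality using curves through a fixed point $y_0$ with strongly $F$-regular fibre, so that all fibres become strongly $F$-regular. It then passes from curves to $Y$ via \cite[Remark 5.2]{langerSfundamentalGroupScheme2011}. None of these steps appears in your proposal.
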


When the base is the Albanese, we have $K_{X/A}=K_X$, and we are able to give a weak version of the Beauville--Bogomolov decomposition in the case where $-K_X$ is nef:

\begin{thm}[cf. Corollary \ref{cor: associated bundle abelian variety positive characteristic in text}]\label{thm: associated bundle abelian variety positive characteristic}
	Let $(X,\Delta)$ be a globally $F$-split strongly $F$-regular projective pair over a perfect field $k$ of characteristic $p>0$, such that $-K_X-\Delta$ is nef and of Cartier index coprime to $p$. 
	Let $(X,\Delta)\rightarrow A$ be the Albanese of $X$, and assume that $A$ has a rational point $0\in A(k)$. 
	Then $(X,\Delta)\rightarrow A$ is an $(X_0,\Delta_0)$-bundle associated to the S-universal cover $\widetilde A^S \rightarrow A$. 
\end{thm}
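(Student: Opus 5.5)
The plan is to reduce Theorem \ref{thm: associated bundle abelian variety positive characteristic} to Theorem \ref{thm: associated bundle relative anti-nef positive characteristic}, applied with $Y=A$, $y=0$ and $f$ the Albanese morphism $a:X\rightarrow A$. Since $K_A$ is trivial, we have $K_{X/A}=K_X$. Hence $-K_{X/A}-\Delta=-K_X-\Delta$ is nef and of Cartier index coprime to $p$, which is hypothesis (4). Hypothesis (2) holds because $A$ is an abelian variety, hence smooth projective, with $0\in A(k)$. For (1), strongly $F$-regular pairs are Cohen--Macaulay, since strong $F$-regularity implies that the local rings are $F$-rational and hence CM. It remains to check that $a$ is a surjective fibration and that the geometric generic fibre is strongly $F$-regular.

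\textbf{Surjectivity and connected fibres.} For globally $F$-split varieties with nef $-K_X-\Delta$, I would invoke the known results of Ejiri and Patakfalvi (\cite{ejiriWhenAlbaneseMorphism2019}, \cite{ejiriSplittingAlgebraicFiber2023a}): the Albanese morphism is surjective with $a_*\calO_X=\calO_A$. I would cite them as used in the proof of Theorem \ref{thm: isotriviality positive char}. If a self-contained argument is wanted, global $F$-splitting of $X$ gives $F$-splitting of $a_*\calO_X$ over $A$. Combined with the universal property of the Albanese, which rules out nontrivial étale covers through which $a$ factors, this yields $a_*\calO_X=\calO_A$. Surjectivity then follows from nefness of $-K_X-\Delta$, via the Ejiri--Patakfalvi argument.

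\textbf{Strong $F$-regularity of the geometric generic fibre.} This is the step I expect to be the main obstacle. Strong $F$-regularity of $(X,\Delta)$ gives strong $F$-regularity of the generic fibre $(X_\eta,\Delta_\eta)$ by localization. Passing to the geometric generic fibre, however, can fail in general when $k(A)$ is imperfect. My first approach would be to use the global $F$-splitting together with nefness of $-K_X-\Delta$ and the Cartier index condition. I would follow Ejiri's argument in \cite{ejiriSplittingAlgebraicFiber2023a}, where the relative Frobenius splittings show that the Albanese is "$F$-split" in the relative sense. This implies that the geometric generic fibre is globally $F$-split and strongly $F$-regular. Concretely, I would show that the trace maps $F^e_{X/A,*}\calO_X((1-p^e)(K_{X/A}+\Delta))\rightarrow\calO_{X^{(e)}_A}$ are surjective over the generic point. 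Since these relative trace maps commute with base change, strong $F$-regularity then transfers to $X_{\overline\eta}$. I expect this result is already available in \cite[\S5]{ejiriSplittingAlgebraicFiber2023a} under exactly the present hypotheses, so I would cite it.

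With all four hypotheses verified, Theorem \ref{thm: associated bundle relative anti-nef positive characteristic} gives that $a$ is an $(X_0,\Delta_0)$-bundle associated to $\widetilde A^S\rightarrow A$.
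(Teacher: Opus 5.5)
Your overall reduction is the same as the paper's. You apply Theorem \ref{thm: associated bundle relative anti-nef positive characteristic} with $Y=A$, $y=0$, using $K_{X/A}=K_X$, Cohen--Macaulayness of $X$ from strong $F$-regularity (Hochster--Huneke), and \cite{ejiriWhenAlbaneseMorphism2019} for the surjective fibration. (The paper cites Theorems 1.2 and 1.3 there, which also give that $(X,\Delta)$ is globally $F$-split \emph{over} $A$.) However, the real content of the paper's argument is Proposition \ref{prop: geometric generic fibre SFR over Albanese}(3), which is exactly the step you flag as the main obstacle. Your justification there does not work. Relative trace maps do commute with base change, but what this transfers to $X_{\overline\eta}$ is $F$-splitting (sharp $F$-purity), not strong $F$-regularity. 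Strong $F$-regularity asks for splittings of $\calO\rightarrow F^e_*\calO(\lceil (p^e-1)\Delta_{\overline\eta}\rceil+D)$ for every effective divisor $D$ on $X_{\overline\eta}$, and the relative trace map does not control these. As you note yourself, strong $F$-regularity is not stable under inseparable base change in general. Moreover, strong $F$-regularity presupposes that $X_{\overline\eta}$ is normal, which you never check.

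Two ingredients that the paper uses are missing.
\begin{itemize}
\item \emph{Normality of $X_{\overline\eta}$.} $R_1$ comes from the relative $F$-splitting over the generic point (\cite[Proposition 2.13]{patakfalviWeakBeauvilleBogomolovDecomposition2025}; compare the argument of Lemma \ref{lem: GFS implies geometrically normal}). $S_2$ comes from Cohen--Macaulayness of $X$, which passes to $X_\eta$ and then to $X_{\overline\eta}$.
\item \emph{The base-change theorem} \cite[Theorem 6.3]{patakfalviWeakBeauvilleBogomolovDecomposition2025} (Proposition \ref{prop: SFR stable under base change if GFS}). This nontrivial result says that a strongly $F$-regular pair which is geometrically sharply $F$-pure remains strongly $F$-regular over the algebraic closure. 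Apply it to $(X_\eta,\Delta_\eta)$, whose strong $F$-regularity you correctly obtain by localization. Combine this with the global $F$-splitting of $(X_{\overline\eta},\Delta_{\overline\eta})$, obtained by base change and composition (Lemmas \ref{lem: base change of relative GFS} and \ref{lem: compositum of relative GFS}); together these give (3).
\end{itemize}
Your fallback of citing \cite[\S 5]{ejiriSplittingAlgebraicFiber2023a} for this step is a guess rather than an argument.

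Separately, drop the ``self-contained'' sketch for the fibration property. The universal property of the Albanese only rules out factorizations through isogenies, not arbitrary finite, possibly non-\'etale, covers in the Stein factorization. The order is also backwards: $a_*\calO_X=\calO_A$ can only be reached once surjectivity is known. Citing Ejiri, as the paper does, is the right move here.
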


\subsection{A finiteness result when $-K_X-\Delta$ is semi-ample or base field is finite}
In Section \ref{section: Bertini}, we prove that when the base field $k$ is infinite and $-K_X-\Delta$ is semi-ample, we can trivialize the associated bundle over the Albanese after a finite pullback. 

\begin{thm}[cf. Corollary \ref{cor: associated bundle semi-ample in text}]\label{thm: associated bundle semi-ample}
	Let $(X,\Delta)$ be a globally $F$-split strongly $F$-regular projective pair over an infinite perfect field $k$ of characteristic $p>0$, such that $m(-K_X-\Delta)$ is Cartier and base point free for some integer $m$ coprime to $p$. 
	Let $(X,\Delta)\rightarrow A$ be the Albanese of $X$, and assume that $A$ has a rational point $0\in A(k)$. 
	Then $(X,\Delta)\rightarrow A$ is an $(X_0,\Delta_0)$-bundle associated to an isogeny $A'\rightarrow A$. 
\end{thm}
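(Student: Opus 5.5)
Starting from Theorem \ref{thm: associated bundle abelian variety positive characteristic}, I will show that $(X,\Delta)\rightarrow A$ is an $(X_0,\Delta_0)$-bundle associated to a principal $G$-bundle $P\rightarrow A$. Here $G$ is a linear algebraic quotient of $\pi_1^S(A,0)$, as produced in Theorem \ref{thm: main theorem}. The key extra input is that semi-ampleness makes the relevant numerically flat bundles \emph{finite}, which will force $G$ to be a finite group scheme.

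First step: choose the line bundle. Take $\calL$ to be the $f$-ample line bundle used in the proof of Theorem \ref{thm: associated bundle relative anti-nef positive characteristic}. In the Albanese case this can be taken of the form $\calO_X(m(-K_X-\Delta))\otimes \calH$, with $\calH$ a suitable twist. The group $G$ is the image of $\pi_1^S(A,0)$ in $\GL$ of the fibre at $0$ of the numerically flat bundles $f_*\calL^{\otimes n}$ and $f_*(\calL^{\otimes n}(-rD))$, for finitely many $n$ large enough to recover the pair $(X,\Delta)$ via the multiplication maps. I would pick $\calL$ so that $\calL$ itself is globally generated, which uses base point freeness of $m(-K_X-\Delta)$. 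Then $f_*\calL^{\otimes n}$ is a numerically flat bundle on $A$ that is a quotient of a trivial bundle $H^0(X,\calL^{\otimes n})\otimes\calO_A$.

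Second step: deduce finiteness. I will use Bertini over the infinite field $k$ to pick $\rank$-many general sections of $H^0(X,\calL^{\otimes n})$. I will show they generate $f_*\calL^{\otimes n}$ generically, and hence give an injection $\calO_A^{\rank}\rightarrow f_*\calL^{\otimes n}$ between numerically flat bundles of equal rank. Such a map is an isomorphism, because its determinant is a nonzero section of a numerically trivial line bundle. In practice this argument applies after a finite étale or Frobenius-type pullback to handle the twist. Consequently each of these bundles becomes trivial after pullback along an isogeny, i.e.\ they are essentially finite in Nori's sense. The Tannakian category they generate inside the numerically flat bundles then has a finite monodromy group scheme. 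So $G$ is a finite quotient of $\pi_1^S(A,0)$, in fact of Nori's $\pi_1^N(A,0)$.

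Third step: translate to isogenies. A $G$-torsor over an abelian variety with $G$ finite, pointed over $0$, becomes, after a standard argument (Nori: finite torsors over abelian varieties), dominated by the pullback along some isogeny $A'\rightarrow A$ with kernel $K$. Equivalently, the torsor is induced from a $K$-torsor $A'\rightarrow A$ via $K\rightarrow G$. The associated-bundle structure transfers: $X\cong (P\times X_0)/G\cong (A'\times X_0)/K$, and likewise for $\Delta$.

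The main obstacle is the second step. The Bertini argument is needed to produce enough sections showing the numerically flat pushforwards are finite, i.e.\ trivialized by an isogeny rather than merely lying in the S-category. This is exactly where semi-ampleness and an infinite $k$ enter. I expect to need the twist $\calH$ to come from $A$, so that it is absorbed by a torsion or isogeny pullback.
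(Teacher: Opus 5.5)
\textbf{There is a genuine gap in your Steps 1--2, and it cannot be repaired along the lines you sketch.}

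A numerically flat bundle $\calE$ on $A$ with $\rank\calE$ global sections that generate it generically is trivial. Indeed, the determinant of $\calO_A^{\oplus \rank\calE}\to\calE$ is a nonzero section of the numerically trivial line bundle $\det\calE$, hence nowhere vanishing. Since $H^0(X,\calL^{\otimes n})=H^0(A,f_*\calL^{\otimes n})$, your Bertini argument would show that every $f_*\calL^{\otimes n}$ is trivial. Lemmas \ref{lem: triviality of relative ample bundle implies product} and \ref{lem: rigidity of matrix} would then give $X\cong A\times X_0$ over $A$.

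This is false in general. Take a bielliptic surface $X=(E\times F)/G$ with $G=\bbZ/2$, $p\neq 2$, $E$ and $F$ ordinary, and $\Delta=0$. It satisfies all the hypotheses: it is smooth and globally $F$-split, and $K_X$ is $2$-torsion. Its Albanese $X\to E/G$ is not a product, although it becomes one after the isogeny $E\to E/G$.

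For the same reason Step 1 fails. A numerically flat $f_*\calL^{\otimes n}$ is in general not a quotient of a trivial bundle, since numerically flat plus globally generated forces triviality. Moreover, $\calO_X(m(-K_X-\Delta))\otimes f^*\mathcal{H}$ is $f$-ample only if $-K_X-\Delta$ already is, which fails for instance whenever $K_X+\Delta\sim_\bbQ 0$. If instead the twist does not come from $A$, global generation on $X$ says nothing about global generation of $f_*\calL^{\otimes n}$ on $A$.

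Your fallback ``after a finite \'etale or Frobenius-type pullback'' is exactly the finiteness statement that has to be proved, so it cannot be assumed. Note that $K_X+\Delta\sim_\bbQ 0$ is allowed by the hypotheses. There semi-ampleness supplies no sections at all, yet the conclusion contains the full weak Beauville--Bogomolov decomposition of \cite{patakfalviWeakBeauvilleBogomolovDecomposition2025}. So no argument that extracts finiteness only from sections of $m(-K_X-\Delta)$ can succeed. The paper even poses the existence of an $f$-ample $\calL$ with essentially finite pushforwards, over a general base, as an open question.

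\textbf{The missing idea is to use semi-ampleness to reduce to the log $K$-trivial case, not to produce sections of pushforwards.}

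By Corollary \ref{cor: GFS is log CY} there is $\Delta'\sim_\bbQ -K_X-\Delta$ such that $(X,\Delta+\Delta')$ is globally $F$-split and $\Delta'$ has Cartier index prime to $p$. Since $|m'\Delta'|$ is base point free, the $F$-singularity Bertini Theorem \ref{thm: Bertini of GFS+SFR in text} lets you replace $m'\Delta'$ by a general member, so that $(X,\Delta+\Delta')$ is also strongly $F$-regular. This is where the infinitude of $k$ enters: it guarantees a $k$-point in the Bertini open set.

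Then \cite[Theorem 9.3, Propositions 10.1 and 11.4]{patakfalviWeakBeauvilleBogomolovDecomposition2025} show that the Isom-torsor $I\to A$ is finite and dominated by an isogeny. An evaluation-map argument shows that the action on $I\times(X_0,\Delta_0+\Delta'_0)$ is diagonal. Finally one removes $\Delta'$: its support and coefficient were chosen different from those of $\Delta$, so the structure group preserves $\Delta'$ and hence $\Delta$.

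Your Step 3 (a finite torsor over $A$ is dominated by an isogeny) matches the last part of this argument. Steps 1--2, however, must be replaced by the complement-plus-Bertini reduction.
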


Theorem \ref{thm: associated bundle semi-ample} is known to hold true in \cite[Theorem 5.8]{ejiriSplittingAlgebraicFiber2023a}. 
We use here a different approach. 
Our proof starts from the Beauville--Bogomolov decomposition for log-$K$-trivial pairs given by \cite[Theorem 1.13]{patakfalviWeakBeauvilleBogomolovDecomposition2025}, and the following Bertini-type theorem then allows us to find a $K_X$-complement boundary without damaging the $F$-singularity properties. 
We state the theorem here as it is of independent interest. 

\begin{thm}[cf. Theorem \ref{thm: Bertini of GFS+SFR in text}]\label{thm: Bertini of GFS+SFR}
	Let $(X,\Delta)$ be a strongly $F$-regular proper pair over an $F$-finite field $k$ of characteristic $p>0$, such that
	\begin{enumerate}
		\item $H^0(X,\calO_X)$ is separable over $k$, and
		\item $K_X+\Delta$ has Cartier index coprime to $p$.
	\end{enumerate}
	Let $\Delta'$ be an effective $\bbQ$-Cartier $\bbQ$-divisor such that $(X,\Delta+\Delta')$ is globally $F$-split, and $m>1$ an integer coprime to $p$ such that $m\Delta'$ is Cartier and $|m\Delta'|$ is base point free. 
	Then for a general member $\Gamma\in |m\Delta'|$, the pair $(X,\Delta+\frac{1}{m}\Gamma)$ is globally $F$-split and strongly $F$-regular. 
\end{thm}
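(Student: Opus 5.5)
Globally $F$-split and strongly $F$-regular are handled separately, and in both the key tool is a Bertini argument applied to a trace map built from $\Gamma$. Fix $e\gg 0$ divisible enough that $(p^e-1)(K_X+\Delta)$ is Cartier (possible since the index is coprime to $p$) and $m\mid p^e-1$, so that $\frac{p^e-1}{m}\Gamma$ is an integral Cartier divisor. Everything is then phrased through the trace maps
\[
\phi_\Gamma\colon F^e_*\calO_X\bigl((1-p^e)(K_X+\Delta)-\tfrac{p^e-1}{m}\Gamma\bigr)\longrightarrow \calO_X,
\]
and surjectivity on global sections (resp. local splitting after twisting by an arbitrary effective divisor) is what must be checked.

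\textbf{Global $F$-splitting.} Since $(X,\Delta+\Delta')$ is globally $F$-split, for $e$ large there is a section $s\in H^0(X,\calO_X((1-p^e)(K_X+\Delta+\Delta')))$ whose trace map hits $1$. The space of $\Gamma\in|m\Delta'|$ is a projective space $\bbP(V)$. The condition that the composite map for $\frac{p^e-1}{m}\Gamma$ plus a choice of splitting section is surjective on $H^0$ is an open condition on $\bbP(V)$. It is nonempty because we can choose the splitting section to be $s$ divided by an appropriate power of the equation of $\Gamma$, after writing $(p^e-1)\Delta'\sim\frac{p^e-1}{m}\cdot m\Delta'$. More concretely, the global splitting of $(X,\Delta+\Delta')$ gives a splitting for every $\Gamma'\sim m\Delta'$ in the linear-equivalence sense. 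The cleaner route is the standard one: $(X,\Delta+\frac1m\Gamma)$ is globally $F$-split iff the map $H^0(F^e_*\calL_\Gamma)\to H^0(\calO_X)$ is surjective, and this map varies in an algebraic family over $\bbP(V)$, so surjectivity is open. I would then check nonemptiness using the existence of one $\Gamma_0$ with $\frac1m\Gamma_0\le\Delta'$ up to linear equivalence, choosing $\Gamma_0$ to be $m$ times a $\bbQ$-divisor dominated by a suitable member. The separability hypothesis $H^0(X,\calO_X)$ guarantees that openness can be tested after base change to $\bar k$ and that a $k$-point can be found, which is general since $k$ is infinite. (If $k$ is finite I would pass to a suitable extension; the statement says general.)

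\textbf{Strong $F$-regularity.} This is a Bertini theorem for strongly $F$-regular pairs and base point free linear systems, in the style of Schwede–Zhang, "Bertini theorems for $F$-singularities". I would invoke or reprove it as follows. Take the morphism $\varphi\colon X\to\bbP^N$ defined by $|m\Delta'|$ and the universal family $\mathcal{X}\subset X\times\bbP(V)^\vee$ of divisors. Because the system is base point free, $\mathcal{X}\to X$ is a projective bundle, hence smooth. So $(\mathcal X,\pr^*\Delta+\frac1m\mathcal X)$, viewed inside $X\times\bbP(V)^\vee$, is strongly $F$-regular. Here $\frac1m<1$ is used, together with the smoothness of the incidence divisor over $X$, to control the pair locally as a smooth coefficient-$\frac1m$ divisor on a strongly $F$-regular variety crossing nothing bad. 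One then applies the fact that strong $F$-regularity of the total space restricts to the geometric generic fibre of $X\times\bbP(V)^\vee\to\bbP(V)^\vee$ and spreads to an open set of closed fibres. This uses the Cartier index coprime to $p$, so the relevant test ideal/trace is compatible with base change, and uses separability of $H^0(\calO_X)$ so that the geometric generic fibre stays reduced and the $F$-finite base change behaves.

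\textbf{Main obstacle.} The main obstacle is the step from the generic fibre of the family over $\bbP(V)^\vee$ to a general closed fibre. The generic point is not perfect, and strong $F$-regularity is not obviously preserved under inseparable extensions. I would handle it via the non-$F$-pure/test ideal being constructible in families when the index is prime to $p$, giving an open locus. Finally, intersect the two nonempty open subsets of $\bbP(V)$.
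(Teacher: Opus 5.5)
Your global $F$-splitting half is essentially the paper's argument (Proposition~\ref{prop: relative GFS on an open}), once cleaned up. Fix the splitting section $s$ of $(X,\Delta+\Delta')$, pull it back to $X\times V$ for an affine open $V\subset\bbP(|m\Delta'|)$, and work with the relative Frobenius over $V$. The composite $\calO\to F^e_*(\cdots)\to\calO$ is then a function on the base that is nonzero at $t=[m\Delta']$, hence invertible near $t$. The nonempty point is simply $m\Delta'$ itself. Your claim that the splitting of $(X,\Delta+\Delta')$ gives one for every $\Gamma'\sim m\Delta'$ is false and should be dropped.

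\textbf{The gap: strong $F$-regularity.} Your first step is fine. By base point freeness, $\widetilde\Gamma$ is locally a coordinate hyperplane in a product, so $(X\times\bbP(|m\Delta'|),\pr_1^*\Delta+\frac1m\widetilde\Gamma)$ is strongly $F$-regular. By localization, so is the generic fibre $(X_\eta,\Delta_\eta+\frac1m\widetilde\Gamma|_\eta)$ over $k(\eta)$; the paper gets this from \cite[Theorem 4.19]{tanakaBertiniTheoremsAdmitting2024} and Corollary~\ref{cor: midpoint of SFR and PFR is SFR}.

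The next step fails. The ``fact'' that strong $F$-regularity of the total space passes to the geometric generic fibre is false. The family $y^2=x^p+t$ over $\mathbb{A}^1_t$ ($p$ odd) has smooth total space and regular generic fibre, yet its geometric generic fibre $y^2=(x+t^{1/p})^p$ is a cusp.

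Your proposed remedy does not close this.
\begin{itemize}
\item The openness result \cite[Theorem B]{patakfalviFsingularitiesFamilies2018} concerns perfect points. It yields a nonempty open set only if strong $F$-regularity is already known at some perfect point such as $\bar\eta$, which is exactly what is missing.
\item The test-ideal comparison in families relates fibres to the base change to $k(\eta)^{1/p^e}$, not to $k(\eta)$, so it hits the same inseparability problem.
\item In the example above, for $k$ infinite, the closed fibres over the Zariski-dense set of points $t=a^p$, $a\in k$, are all cuspidal. So regularity of the generic fibre alone gives no open locus at all.
\end{itemize}

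\textbf{The missing idea.} You need to use the global $F$-splitting in the regularity argument, which your proposal never does. The paper's chain is:
\begin{itemize}
\item The generic fibre $(X_\eta,\Delta_\eta+\frac1m\widetilde\Gamma|_\eta)$ is globally $F$-split (Corollary~\ref{cor: the generic fibre is GFS}), a by-product of the openness argument.
\item Global $F$-splitting survives $F$-finite field extensions (Proposition~\ref{prop: GFS is stable under base change}). Hence the geometric generic fibre is globally $F$-split, in particular sharply $F$-pure.
\item The Patakfalvi--Zdanowicz result (Proposition~\ref{prop: SFR stable under base change if GFS}) turns ``strongly $F$-regular and geometrically sharply $F$-pure'' into strong $F$-regularity of the geometric generic fibre.
\item Only then does \cite[Theorem B]{patakfalviFsingularitiesFamilies2018} give that $(X,\Delta+\frac1m\Gamma)_{\bar l}$, with $l=H^0(X,\calO_X)$, is strongly $F$-regular for general $\Gamma$.
\end{itemize}

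You also omit the final descent of strong $F$-regularity from $\bar l$ back to the ground field, which the paper does in Proposition~\ref{prop: SFR descends along perfect field extension}. This is a smaller point, but it is needed because the openness theorem only speaks about perfect points.

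Replacing all this by a Bertini theorem over $\bar k$ would not help. That would first require $(X,\Delta)_{\bar k}$ to be strongly $F$-regular, which again can only be obtained through the splitting.
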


When the base field $k$ is finite, then the finite decomposition holds readily when $-K_X-\Delta$ is nef. 
Note also that the existence of a rational point $0\in A(k)$ is guaranteed by \cite{langAlgebraicGroupsFinite1956}.

\begin{thm}[cf. Theorem \ref{thm: associated bundle finite field in text}]\label{thm: associated bundle finite field}
	Let $(X,\Delta)$ be a globally $F$-split strongly $F$-regular projective pair over a finite field $k$ of characteristic $p>0$, such that $-K_X-\Delta$ is nef and of Cartier index coprime to $p$. 
	Then the Albanese $(X,\Delta)\rightarrow A$ is an $(X_0,\Delta_0)$-bundle associated to an isogeny $A'\rightarrow A$, where $0\in A(k)$ is a rational point. 
\end{thm}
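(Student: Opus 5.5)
Over a finite field $k$ the plan is to combine Theorem \ref{thm: associated bundle abelian variety positive characteristic} with the fact that numerically flat bundles on an abelian variety over a finite field are, after a finite étale (or more generally finite flat) pullback, trivial. First, $A(k)\neq\emptyset$ by Lang's theorem, so we may fix $0\in A(k)$. By the proof of Theorem \ref{thm: main theorem} (via Theorem \ref{thm: associated bundle abelian variety positive characteristic}), there is a linear algebraic group $G$, a quotient of $\pi_1^S(A,0)$, and a principal $G$-bundle $P\to A$ with $(X,\Delta)$ the $(X_0,\Delta_0)$-bundle associated to $P$. Concretely, $G$ is the monodromy group of the Tannakian subcategory generated by finitely many numerically flat bundles $\calE_i=f_*(\calL^{\otimes m})$ and $f_*(\calL^{\otimes m}(-rD))$ for finitely many $m$ (enough to generate the relevant graded algebras and ideals), so it suffices to show that this $G$ is finite and that $P$ is dominated by an isogeny.

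The key step: each numerically flat bundle $\calE$ on $A$ over a finite field has finite monodromy. I would argue as follows. Numerically flat bundles on $A$ are extensions of degree-zero stable ones; on an abelian variety these are successive extensions of line bundles in $\mathrm{Pic}^0(A)$ (by Mukai/Miyanishi-type classification, twisted by unipotent bundles). Since $k$ is finite, $\mathrm{Pic}^0_A(k)$ is a finite group and, after passing to a finite extension of $k$ over which all the Jordan–Hölder factors are defined, each factor is torsion; torsion line bundles are killed by multiplication-by-$n$ isogenies. The unipotent part is handled by Frobenius: the pullback by a sufficiently high power of $F$ (equivalently of $[p]$, since $V\circ F=[p]$) kills extensions classes in $H^1(A,\calO_A)$ since $F$ acts nilpotently on the non-étale part, while on the étale ($p$-rank) part a finite field forces the Frobenius-semilinear action to be periodic, giving triviality after an étale isogeny. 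Alternatively, one uses the finite-field argument directly: the isomorphism classes of numerically flat bundles of given rank with $\calE\simeq F^{*e}\calE$ possible; since there are finitely many $k$-points in the relevant bounded moduli, $F^{*a}\calE\simeq F^{*b}\calE$ for some $a<b$, so $\calE$ becomes Frobenius-periodic after Frobenius twist and hence is trivialized by a finite étale cover (Lange–Stuhler), and by a further Frobenius pullback we handle the rest. In either approach, $[n]^*\calE_i$ is trivial for suitable $n$, uniformly for the finitely many $\calE_i$.

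Then with $A'=A$ and $A'\to A$ the isogeny $[n]$ (a torsor under the finite group scheme $A[n]$), all generating bundles become trivial, so the corresponding $G$-torsor $P$ acquires a section over $A'$; equivalently the map $\pi_1^S(A,0)\to G$ factors through $A[n]$, making $P$ induced from the $A[n]$-torsor $A'\to A$, and the associated bundle description transfers. The main obstacle is the middle step: establishing that numerically flat bundles over a finite-field abelian variety are trivialized by an isogeny, particularly controlling the unipotent/non-semisimple part and the boundedness needed for the pigeonhole on $F$-pullbacks.
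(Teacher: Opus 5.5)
Your proof is correct in outline and has the same skeleton as the paper's. Lang's theorem gives $0\in A(k)$. Corollary~\ref{cor: associated bundle abelian variety positive characteristic in text} then gives a $G$-torsor $P\to A$ with $G$ dual to $\langle f_*\calL\rangle^\otimes$. Finiteness of $k$ forces finite monodromy, and an isogeny dominates $P$. The difference lies in how you carry out the last two steps.

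The paper never uses the structure of bundles on abelian varieties. It quotes Proposition~\ref{prop: numerically flat bundle over finite field}, valid on any smooth projective variety over a finite field, and deduces that $G$ is finite. It then cites \cite[Proposition 11.4]{patakfalviWeakBeauvilleBogomolovDecomposition2025} to dominate the finite torsor by an isogeny. Your ``alternative'' argument is exactly the content of that proposition: boundedness plus finiteness of the $k$-points of a Quot scheme give $F^{a*}\calE\cong F^{b*}\calE$, and then Lange--Stuhler and Biswas--dos Santos apply.

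You instead kill the generators directly by some $[n]^*$. On the Frobenius route this step also needs Serre--Lang for the \'etale cover and the fact that a suitable power of Frobenius is an isogeny of $A$ over $\bbF_q$. You then use the point $0$ to get a section of $[n]^*P$ and reduce the structure group to $A[n]$. The transition map $A\times A[n]\to G$ factors through $A[n]$ because $H^0(A\times A[n],\calO)=k[A[n]]$. This reproves the needed case of Proposition 11.4 by hand and yields the explicit isogeny $[n]\colon A\to A$.

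Your first, abelian-variety-specific route is genuinely different and also works, but it needs two repairs.

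First, in characteristic $p$ you must justify that numerically flat bundles on $A$ are iterated extensions of elements of $\mathrm{Pic}^0$. This follows from the commutativity of $\pi_1^S(A,0)$ used in Section~\ref{section: homogeneous fibration}, since irreducible representations of a commutative group scheme over $\bar k$ are characters.

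Second, your unipotent step is muddled. Powers of $F$ are not ``equivalent'' to powers of $[p]$, since $F^*$ is bijective on the semisimple part of $H^1(A,\calO_A)$, and no periodicity argument is needed. Functorially $H^1(A,\calO_A)=\mathrm{Lie}(\mathrm{Pic}^0_A)$, and $[p]^*$ acts on $\mathrm{Pic}^0_A$ as $[p]$, so $[p]^*$ is zero on $H^1(A,\calO_A)$. Inducting along the unipotent filtration, $[p^{r-1}]^*$ trivializes every unipotent bundle of rank $r$ over any field of characteristic $p$. Hence $[np^{r-1}]^*\calE$ is trivial once $n$ kills the Jordan--H\"older factors. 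Those factors are torsion because $\mathrm{Pic}^0_A(k')$ is finite, and this is the only place finiteness of $k$ enters.
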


\subsection{Consequences on base field $\bbC$}
As we mentioned, the statement in Theorem \ref{thm: main theorem} is characteristic free. 
So a statement parallel to Theorem \ref{thm: associated bundle relative anti-nef positive characteristic} holds true over the complex numbers. 

\begin{thm}[cf. Theorem \ref{thm: associated bundle relative anti-nef complex numbers in text}]\label{thm: associated bundle relative anti-nef complex numbers}
	Let $f:(X,\Delta) \rightarrow Y$ be a surjective fibration such that
	\begin{enumerate}
		\item $(X,\Delta)$ is projective klt,
		\item $Y$ is smooth projective with a point $y\in Y$,
		\item $-K_{X/Y}-\Delta$ is nef.
	\end{enumerate}
	Then $f$ is an $(X_y,\Delta_y)$-bundle associated to $\widetilde Y^S\rightarrow Y$.  
\end{thm}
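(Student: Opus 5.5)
The plan is to deduce the statement from Theorem \ref{thm: main theorem}. We need an $f$-ample line bundle $\calL$ with $f_*(\calL^{\otimes m})$ numerically flat for all $m>0$, and, for each component $D$ of $\Delta$, an $r>0$ with $f_*(\calL^{\otimes m}(-rD))$ numerically flat for all $m>0$. Over $\bbC$ the positivity input is the Matsumura--Wang theorem \cite[Theorem 4.1]{matsumuraStructureTheoremProjective2025}, whose proof gives the complex analogue of the positive-characteristic argument of Theorem \ref{thm: associated bundle relative anti-nef positive characteristic}. That theorem is built for a klt pair $(X,\Delta)$ with $-K_{X/Y}-\Delta$ nef over a smooth projective base: for any $f$-ample $\bbQ$-Cartier $A$, the pushforwards $f_*\calO_X(m(A+\text{suitable multiple of }(-K_{X/Y}-\Delta)))$ are nef with singular-metric positivity, and weakly positive in the sense of Viehweg.

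First I fix an ample Cartier divisor $H$ on $X$ and take $\calL=\calO_X(H)\otimes f^*\calM$ for a line bundle $\calM$ on $Y$ normalised so that $\det f_*\calL$ is numerically trivial. After possibly replacing $H$ by a multiple, I twist by $f^*$ of a $\bbQ$-line bundle, using a finite cover of $Y$ or a rational twist and then clearing denominators. The key claim is that each $\calE_m=f_*(\calL^{\otimes m})$ is nef. I will argue as follows:
\begin{enumerate}
\item Write $mH=K_{X/Y}+\Delta+(mH-K_{X/Y}-\Delta)$. For $m\gg0$ the last term is $f$-ample, and it is nef-plus-relatively-ample after adding $f^*$ of an ample divisor on $Y$ of controlled size.
\item The Berndtsson--P\u{a}un / Cao--H\"oring positivity of direct images of twisted adjoint bundles, combined with nefness of $-K_{X/Y}-\Delta$, gives pseudo-effectivity of $\calE_m\otimes\det(\calE_m)^{-1/r_m}$ for all large $m$.
\item A tensor-power trick (the Demailly--Peternell--Schneider / Cao argument) upgrades this to nefness.
\end{enumerate}
Once $\calE_m$ is nef and $c_1(\calE_m)$ is numerically trivial, $\calE_m$ is numerically flat. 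The normalisations for different $m$ are compatible because the multiplication maps $\Sym^m\calE_1\to\calE_m$ are generically surjective. Finitely many small $m$ are handled by replacing $\calL$ by a power, which Theorem \ref{thm: main theorem} permits.

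For condition (2), take $D$ a component of $\Delta$ with coefficient $a\in(0,1)$. Twisting by $-rD$ amounts to considering the pair $(X,\Delta)$ with $\calL^m(-rD)$. Choose $r$ so that $\calL^m(-rD)-K_{X/Y}-\Delta$ is still a sum of an $f$-ample part and a nef part; this holds when $r$ is small relative to $m$, say after replacing $\calL$ by $\calL^N$. The same positivity theorem then gives nefness of $f_*(\calL^{\otimes m}(-rD))$. Its determinant is bounded between those of the subsheaf and $\calE_m$: the inclusion $f_*(\calL^m(-rD))\subset\calE_m$ with $\calE_m$ numerically flat forces the determinant of a nef subsheaf to be anti-pseudo-effective, hence numerically trivial, hence numerically flat. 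I also need the subsheaf to be saturated and reflexive, hence a subbundle; for this I use that a nef reflexive subsheaf with numerically trivial determinant of a numerically flat bundle is a subbundle.

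Applying Theorem \ref{thm: main theorem} then yields the $(X_y,\Delta_y)$-bundle structure associated to $\widetilde Y^S$.

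The main obstacle is the nefness of $f_*(\calL^{\otimes m})$ uniformly in $m$, for a single $\calL$ with numerically trivial determinants. This is the analytic heart of Matsumura--Wang. I would try to quote their intermediate statement directly, namely numerical flatness of $f_*$ of a suitable relatively ample bundle (as in their proof of \cite[Theorem 4.1]{matsumuraStructureTheoremProjective2025}), rather than reprove it. For the boundary twists I would check that their argument runs verbatim with $\calL^m(-rD)$ in place of $\calL^m$.
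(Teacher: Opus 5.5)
Your handling of condition (1), quoting the Matsumura--Wang construction, is what the paper does (Theorem \ref{thm: existence of relative ample numerically flat bundle complex numbers}). The gap is in condition (2).

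Writing $\calL^{\otimes m}(-rD)-K_{X/Y}-\Delta=(mL-rD)+(-K_{X/Y}-\Delta)$ makes it ``$f$-ample plus nef'' for free once $r/m$ is small. But no direct-image positivity theorem accepts that input. The twisting divisor must be globally nef, or at least carry a pseudo-effective metric with trivial multiplier ideal on general fibres. So what you actually need is that $mL-rD$ is nef, and smallness of $r/m$ only gives relative ampleness. Since $L$ is merely nef, and numerically trivial on ``flat'' horizontal curves, $-rD$ can be negative exactly there.

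Your argument never uses that $D$ is a component of $\Delta$, and for an arbitrary prime divisor the conclusion is false. Let $E$ be an elliptic curve, $X=E\times E\xrightarrow{\pr_1}Y=E$, $\Delta=0$, $\calL=\pr_2^*\calO_E(q)$, and let $D$ be the diagonal. Every hypothesis your argument for (2) really uses holds here, and $f_*\calL^{\otimes m}\cong\calO_E^{\oplus m}$ is numerically flat. Yet $(mL-rD)\cdot(E\times\{q'\})=-r<0$ for every $m$. Moreover, for $m\geq 2$ the exact sequence $0\to f_*(\calL^{\otimes m}(-D))\to\calO_E^{\oplus m}\to\calO_E(mq)\to 0$ gives $\deg f_*(\calL^{\otimes m}(-D))=-m$.

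The missing idea is to move part of $D$ into the boundary. First show $L$ is nef, by Lemma \ref{lem: top intersection vanishes} and Corollary \ref{cor: nefness of relative ample bundle}, or simply because $\calL^{\otimes m}$ is a quotient of $f^*f_*\calL^{\otimes m}$. Then write
\[
L-\tfrac{r}{m}D=\bigl(K_{X/Y}+\Delta-\tfrac{r}{m}D\bigr)+\bigl(-K_{X/Y}-\Delta+L\bigr).
\]
Here $(X,\Delta-\frac{r}{m}D)$ is still klt exactly because $\frac{r}{m}\leq\mathrm{coeff}_\Delta(D)$; this is where membership of $D$ in $\Delta$ enters. The second bracket is nef and the sum is $f$-ample. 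The semipositivity engine then makes $mL-rD$ nef, as in Lemma \ref{lem: nefness of ideal sheaf}: Theorem \ref{thm: semi-positivity} in characteristic $p$, and \cite[Lemma A.4 \& Proposition A.11]{patakfalviWeakBeauvilleBogomolovDecomposition2025} over $\bbC$.

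Only then is $mL-rD=K_{X/Y}+\Delta+(\text{nef and $f$-ample})$ a legitimate input for pushforward positivity. After that, your degree comparison with the numerically flat $f_*\calL^{\otimes m}$ finishes, as in Proposition \ref{prop: numerical flatness of ideal sheaf}. The paper runs this on curves $C\to Y$ and then globalises. If you prefer to stay on $Y$, note that over a higher-dimensional base positivity theorems give weak positivity, not nefness. So phrase your determinant step using pseudo-effectivity of $\det f_*(\calL^{\otimes m}(-rD))$ together with $c_1\cdot H^{n-1}\leq 0$ for every ample $H$.
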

Similarly we get also the following parallel to Theorem \ref{thm: associated bundle abelian variety positive characteristic}:

\begin{thm}[cf. Corollary \ref{cor: associated bundle abelian variety complex numbers in text}]\label{thm: associated bundle abelian variety complex numbers}
	Let $(X,\Delta)$ be a klt projective pair over $\bbC$, such that $-K_X-\Delta$ is nef. 
	Then the Albanese $(X,\Delta)\rightarrow A$ is an $(X_0,\Delta_0)$-bundle associated to $\widetilde A^S \rightarrow A$. 
\end{thm}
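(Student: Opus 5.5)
The natural route is to deduce this from Theorem \ref{thm: associated bundle relative anti-nef complex numbers}, applied to the Albanese morphism $f:(X,\Delta)\rightarrow A$ with base point $0\in A$. Since $A$ is an abelian variety, it is smooth projective, and $K_A\sim 0$, so $K_{X/A}=K_X-f^*K_A\sim K_X$. Hence $-K_{X/A}-\Delta\equiv -K_X-\Delta$ is nef, and $(X,\Delta)$ is projective klt by assumption. Among the hypotheses of Theorem \ref{thm: associated bundle relative anti-nef complex numbers}, the only one not immediate is that $f$ is a surjective fibration, i.e. $f$ is surjective and $f_*\calO_X=\calO_A$. Once that is checked, the conclusion that $f$ is an $(X_0,\Delta_0)$-bundle associated to $\widetilde A^S\rightarrow A$ is exactly the output of that theorem.

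So the real work is showing that the Albanese map is a surjective algebraic fibre space. I would not reprove this; it is a known consequence of the nefness of $-K_X-\Delta$ for klt pairs over $\bbC$. Zhang proved surjectivity for klt pairs with nef anticanonical divisor, and connectedness of fibres, i.e. $f_*\calO_X=\calO_A$, was obtained by Cao (smooth case) and extended to klt pairs, e.g. in \cite{caoAlbaneseMapsProjective2018} and \cite{matsumuraStructureTheoremProjective2025}. Alternatively, Theorem \ref{thm: associated bundle complex case} already states that the Albanese is a fibre bundle over $A$; in particular it is surjective with connected fibres, which gives the fibration property directly. One small technical point is that $X$ is only normal, so I would take the Albanese of a resolution. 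Since klt singularities are rational, this descends to a morphism from $X$, and I would use that description.

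With surjectivity and $f_*\calO_X=\calO_A$ established, the main input is Theorem \ref{thm: associated bundle relative anti-nef complex numbers}. Internally, it rests on Theorem \ref{thm: main theorem}: one takes $\calL$ to be a suitable $f$-ample line bundle built from a small perturbation of $-K_{X/A}-\Delta$. Positivity of direct images, in the style of Cao–Höring and Matsumura–Wang, then makes $f_*\calL^{m}$ and $f_*\calL^m(-rD)$ numerically flat. We do not need to redo this here.

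The only step that genuinely needs citing rather than formal checking is the fibration property of the Albanese. If I wanted to avoid any reliance on Theorem \ref{thm: associated bundle complex case} for it, I would invoke the Cao/Zhang results directly.
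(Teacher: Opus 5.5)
Your proposal is correct and follows the paper's route: the corollary comes from applying Theorem~\ref{thm: associated bundle relative anti-nef complex numbers in text} with $Y=A$, using $K_{X/A}\sim K_X$, and then Corollary~\ref{cor: associated fibre bundle comparison} to pass from $P$ to $\widetilde A^S$. You are more careful than the paper in flagging that the Albanese must first be known to be a surjective fibration; the paper uses this implicitly. Prefer the Zhang/Matsumura--Wang citations over Theorem~\ref{thm: associated bundle complex case} for it, since the paper claims to recover that theorem from this one, and note that Cao's paper only treats smooth $X$.
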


Over complex numbers there is a natural map $\widetilde A \rightarrow \widetilde A^S$ from the topological universal cover induced by a universal property, and $\widetilde A^S$ is more rigid than $\widetilde A$ in the sense that $H^0(\widetilde A^S,\calO_{\widetilde A^S})=k$ (see Proposition \ref{prop: rigidity of universal cover}). 
Hence we can directly recover Theorem \ref{thm: associated bundle complex case} from Theorem \ref{thm: associated bundle abelian variety complex numbers} without arguing on flat connections with analytic methods. 

There is of course a parallel statement to Theorem \ref{thm: Bertini of GFS+SFR} over the complex numbers by replacing ``strongly $F$-regular globally $F$-split" with ``klt", whose proof is just a simple discrepancy computation, and which directly implies the parallel statement of Theorem \ref{thm: associated bundle semi-ample} over the complex numbers. 

\subsection{Homogeneous fibration}
Finally in Section \ref{section: homogeneous fibration}, we see an application using the structure of associated bundles. 
We stress that the geometry of an associated fibre bundle is more constrained than an isotrivial map. 
Recall the definition of a homogeneous morphism on an abelian variety. 

\begin{defn}\label{defn: homogeneous fibration}
	A morphism $f:(X,\Delta)\rightarrow A$ to an abelian variety is called \textbf{homogeneous} if $f$ is isomorphic to its pullback along any translation on $A$, or equivalently, for any $a\in A(k)$, there exists an automorphism $\sigma_a:(X,\Delta)\rightarrow (X,\Delta)$ such that $f \circ \sigma_a=t_a \circ f$.
\end{defn}

We show that any fibre bundle associated to the S-universal cover on an abelian variety is essentially a homogeneous morphism.

\begin{thm}[cf. Theorem \ref{thm: associated bundles are homogeneous in text}]\label{thm: associated bundles are homogeneous}
	Let $f:(X,\Delta) \rightarrow A$ be a fibre bundle associated to $\widetilde A^S \rightarrow A$, where $A$ is an abelian variety over an algebraically closed field. 
	Then $f$ is homogeneous. 
\end{thm}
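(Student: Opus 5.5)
The idea is to show that the S-universal cover $\widetilde A^S \rightarrow A$ is itself homogeneous, in the sense that translation acts compatibly on the torsor. Then any associated bundle inherits homogeneity. Write $\pi = \pi_1^S(A,0)$.

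The key input is how $\pi$ behaves under translations. Recall that $\pi$ is the Tannaka group of the category of numerically flat bundles on $A$ with fibre functor at $0$. For $a\in A(k)$, the translation $t_a$ is an automorphism of $A$ with $t_a^*$ preserving numerical flatness. So $t_a^*$ induces an autoequivalence of the Tannakian category, and hence an isomorphism $\pi_1^S(A,0)\cong \pi_1^S(A,a)$. The torsor $t_a^*\widetilde A^S$ is then the S-universal cover based at $a$. Moreover, the fibre functors at $0$ and at $a$ differ by the torsor of tensor isomorphisms $\mathrm{Isom}^\otimes(\omega_0,\omega_a)$, which is the fibre $\widetilde A^S_a$. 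This torsor has a $k$-point because $k$ is algebraically closed.

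The plan therefore reduces to one step: show that $t_a^*\widetilde A^S \cong \widetilde A^S$ as $\pi$-torsors, possibly after twisting the $\pi$-action by an automorphism of $\pi$ coming from $t_a$. Concretely, I would argue as follows.
\begin{enumerate}
\item The functor $t_a^*$ on numerically flat bundles is isomorphic, as a tensor functor, to the identity. The reason is that numerically flat bundles on an abelian variety are homogeneous: one has $t_a^*\calE\cong\calE$ for every numerically flat $\calE$. For line bundles this is the statement that $\mathrm{Pic}^0$ (more precisely $\mathrm{Pic}^\tau$) is translation invariant. For general numerically flat bundles it follows from Langer's description of them as iterated extensions of stable ones with vanishing Chern classes, combined with connectedness of $A$. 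The difficulty is that one needs these isomorphisms to be \emph{natural and tensor compatible}, not merely to exist objectwise.
\item To obtain naturality, I would use the group structure directly. Let $m:A\times A\rightarrow A$ be the addition map. For numerically flat $\calE$, the bundle $m^*\calE$ is numerically flat on $A\times A$. By the Künneth-type property of S-fundamental groups, $\pi_1^S(A\times A)\cong\pi\times\pi$. Restricting $m^*\calE$ to $\{a\}\times A$ gives $t_a^*\calE$, and restricting to $\{0\}\times A$ gives $\calE$. Using the torsor $\widetilde A^S$ over the first factor, this yields a tensor-natural identification of the fibre functors $\omega_x\circ t_a^*$ with $\omega_{x+a}$. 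From this, the map $m$ induces a morphism of torsors $m_*:\widetilde A^S\times\widetilde A^S\rightarrow\widetilde A^S$ covering $m$, equivariant along the group homomorphism $\pi\times\pi\rightarrow\pi$ induced by $m$.
\item Now choose $\tilde a\in\widetilde A^S_a(k)$, which exists since $k$ is algebraically closed. The map $z\mapsto m_*(\tilde a,z)$ is an automorphism of $\widetilde A^S$ covering $t_a$. It is equivariant for the automorphism $\phi_a$ of $\pi$ induced by $\{a\}\times A\rightarrow A$. Since $A$ is connected and $0\mapsto a$, we expect $\phi_a$ to be the identity up to an inner automorphism, which can be absorbed into the choice of $\tilde a$.
\end{enumerate}

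Given such a $\pi$-equivariant automorphism $\tau_a$ of $\widetilde A^S$ over $t_a$, write $X=(\widetilde A^S\times W)/\pi$. The map $\tau_a\times\mathrm{id}_W$ descends to an automorphism $\sigma_a$ of $(X,\Delta)$ satisfying $f\circ\sigma_a=t_a\circ f$. If $\phi_a$ is only known to be some automorphism of $\pi$, this descent fails in general; one would then have to replace $\pi$ by the quotient $G$ through which the action on $W$ factors.

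The main obstacle is step 2 and the identification of $\phi_a$: making precise the Künneth formula and showing that the induced action on $\pi$ is trivial. I would first try to deduce this from the fact that $\pi$ is commutative, since $A$ is a group object and an Eckmann–Hilton argument applies. For a commutative group, inner automorphisms are trivial. Then $\phi_a$, varying algebraically in the connected parameter $a$ with $\phi_0=\mathrm{id}$, should be constant in $a$, hence trivial. This last deduction would use rigidity of homomorphisms of pro-algebraic groups.
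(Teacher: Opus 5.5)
Your overall plan matches the paper's. You produce an automorphism $\tau_a$ of $\widetilde A^S$ over $t_a$ that commutes with the action of $\pi=\pi_1^S(A,0)$, then descend $\tau_a\times\Id$ through the diagonal quotient; that last step is exactly the paper's.

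The difference is where $\tau_a$ comes from. The paper cites Miyanishi and Brion: $\widetilde A^S$ is a commutative group scheme in an exact sequence $0\to\pi\to\widetilde A^S\to A\to 0$. So for any lift $a'\in\widetilde A^S(k)$ of $a$, the translation $t_{a'}$ covers $t_a$ and commutes with $\pi$, which acts by translations inside a commutative group. You instead build the lift from functoriality of S-universal covers applied to the addition map $m$. That requires a Künneth formula $\pi_1^S(A\times A,(0,0))\cong\pi\times\pi$, with the induced splitting of the universal torsor of $A\times A$, as an external input; it replaces the Miyanishi--Brion citation. Your step 1 is never used and can be dropped.

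As written, however, the argument does not close at the point you flag, and the argument you propose to close it would fail. There is no general rigidity for automorphisms of pro-algebraic groups in connected families. Rigidity holds for groups of multiplicative type, but $\pi$ has a unipotent part: over $\bbC$ it surjects onto the vector group $\mathbb{G}_a^g$ dual to $H^1(A,\calO_A)$, and $\Aut(\mathbb{G}_a^g)=\GL_g$ is positive-dimensional. So ``varies algebraically with $\phi_0=\Id$'' does not force $\phi_a=\Id$. Also, $\{a\}\times A\to A$ does not preserve base points, so it does not by itself induce an automorphism of $\pi$.

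The missing observation is that no $a$-dependent twist arises at all. Once $m_*:\widetilde A^S\times\widetilde A^S\to\widetilde A^S$ is equivariant along the single homomorphism $\mu:\pi\times\pi\to\pi$ induced by $m$ at the base point $(0,0)$, you get for $h\in\pi$
\[
\tau_a(hz)=m_*\bigl((1,h)\cdot(\tilde a,z)\bigr)=\mu(1,h)\,\tau_a(z).
\]
Here $h\mapsto\mu(1,h)$ is the homomorphism induced by $m\circ(0,\Id_A)=\Id_A$, i.e.\ the identity. So $\tau_a$ is $\pi$-equivariant on the nose, for every $a$ and every choice of $\tilde a$. It is an automorphism because it gives a morphism of $\pi$-torsors $\widetilde A^S\to t_a^*\widetilde A^S$ over $A$. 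Your Eckmann--Hilton remark is correct (it shows $\mu$ is the multiplication and $\pi$ is commutative), but it is not needed for this step. With this fix your proof is complete modulo the Künneth input, and it essentially reconstructs the translation lifts that the paper gets from the group structure on $\widetilde A^S$.
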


Combining with Theorem \ref{thm: associated bundle abelian variety positive characteristic} and Theorem \ref{thm: associated bundle abelian variety complex numbers} , we obtain directly

\begin{cor}[cf. Corollary \ref{cor: albanese is homogeneous in text}]\label{cor: albanese is homogeneous}
	Let $(X,\Delta)$ be either
	\begin{enumerate}
		\item a normal klt projective pair over $\bbC$, such that $-K_X-\Delta$ is nef, or
		\item a globally $F$-split strongly $F$-regular projective pair over an algebraically closed field $k$ of characteristic $p>0$, such that $-K_X-\Delta$ is nef and of Cartier index coprime to $p$.
	\end{enumerate}
	Then the Albanese $(X,\Delta)\rightarrow A$ is homogeneous. 
\end{cor}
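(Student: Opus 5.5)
The corollary follows by combining the two weak Beauville--Bogomolov statements with Theorem \ref{thm: associated bundles are homogeneous}. The only real work is checking that the hypotheses match.

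In case (1), $(X,\Delta)$ is a normal klt projective pair over $\bbC$ with $-K_X-\Delta$ nef. Theorem \ref{thm: associated bundle abelian variety complex numbers} then applies directly. It says the Albanese $f:(X,\Delta)\rightarrow A$ is an $(X_0,\Delta_0)$-bundle associated to the S-universal cover $\widetilde A^S\rightarrow A$; here $0\in A$ is the origin, which is a $\bbC$-point.

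In case (2), $(X,\Delta)$ is globally $F$-split and strongly $F$-regular, $-K_X-\Delta$ is nef, and its Cartier index is coprime to $p$. Since $k$ is algebraically closed, it is perfect, and $A$ has the rational point $0\in A(k)$. So Theorem \ref{thm: associated bundle abelian variety positive characteristic} applies and again shows that $f$ is an $(X_0,\Delta_0)$-bundle associated to $\widetilde A^S\rightarrow A$.

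In both cases the base field is algebraically closed, which is exactly the setting of Theorem \ref{thm: associated bundles are homogeneous}. Applying that theorem gives that $f$ is homogeneous in the sense of Definition \ref{defn: homogeneous fibration}: for every $a\in A(k)$ there is an automorphism $\sigma_a$ of $(X,\Delta)$ with $f\circ\sigma_a=t_a\circ f$.

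The one point needing care is that the notions being composed agree. The S-universal cover $\widetilde A^S$ appearing in the two decomposition theorems must be the same object that Theorem \ref{thm: associated bundles are homogeneous} takes as input, namely Langer's S-universal cover based at the origin $0$. This is a consistency check rather than a genuine obstacle, since all three statements are formulated with the same $\widetilde A^S$. Everything else follows formally.
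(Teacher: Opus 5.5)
Your proposal is correct and is the same argument as the paper's, which simply combines the weak Beauville--Bogomolov decompositions (Corollaries \ref{cor: associated bundle abelian variety positive characteristic in text} and \ref{cor: associated bundle abelian variety complex numbers in text}) with Theorem \ref{thm: associated bundles are homogeneous in text}. One small bookkeeping step you leave implicit: the in-text corollaries give a bundle associated to a principal $G$-bundle with $G$ a quotient of $\pi_1^S(A,0)$, and the passage to $\widetilde A^S$ is via Corollary \ref{cor: associated fibre bundle comparison}; the introduction versions you cite already incorporate this.
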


In contrast, we remark that an isotrivial map to an abelian variety generally fails to be homogeneous.

\subsection*{Notations and conventions}
We work always over a field $k$, and the exact settings of $k$ will be specified in each section. 
A pair $(X,\Delta)$ is the datum of a normal variety and an effective $\bbQ$-divisor $\Delta$ on it. 
We assume also $K_X$ and all irreducible components of $\Delta$ are $\bbQ$-Cartier. 
By a fibration $f:(X,\Delta)\rightarrow Y$, we mean a morphism $X\rightarrow Y$ such that the natural map $\calO_Y \rightarrow f_*\calO_X$ is an isomorphism. 
We mean by an algebraic group a subgroup scheme of $\GL_n$. 
In particular, an algebraic group need not be reduced or connected. 
As we work also over non-algebraically closed fields, the Albanese $X\rightarrow A$ is referred to the initial object in the category of morphisms from $X$ to torsors over abelian varieties. 
In particular, $A$ might have no $k$-rational point.
See \cite{wittenbergAlbaneseTorsorsElementary2008a} for a modern treatment. 

\subsection*{Acknowledgements}
The topics studied in this paper were suggested by my PhD supervisor Zsolt Patakfalvi. 
I would like to thank Jefferson Baudin, Raymond Cheng, Zsolt Patakfalvi, Jinsong Xu for fruitful discussions. 

During the work on this paper, I was financially supported by grant \#200021-231484 from the Swiss National Science Foundation. 

\section{Preliminaries}\label{section: preliminaries}
\subsection{Tannakian subcategories in $\mathrm{Coh}(X)$}\label{subsection: Tannakian duality}
Let $k$ be a field. 
Recall that a neutral Tannakian category is a rigid $k$-linear tensor category $\calC$ with $\End(1_\calC)=k$ and a fibre functor $\calC \rightarrow \mathrm{Vec}_k$. 
They are equivalent to a category $\mathrm{Rep}_G$ of finite dimensional representations of an affine group scheme. 
This gives a one-to-one correspondence between neutral Tannakian categories and affine group schemes. 
We refer to \cite{deligneTannakianCategories} for a detailed discussion on the above equivalence. 
We are interested in subcategories in the category of coherent sheaves over a variety $X$.  
\begin{set}
	In this subsection, we work over a perfect field $k$ of characteristic $p\geq 0$ and a smooth projective variety $X$ over $k$. 
\end{set}

Let $P\rightarrow X$ be a principal $G$-bundle. 
Given a finite dimensional representation $V$ of $G$, one can form the associated fibre bundle $(P\times V)/G \rightarrow X$, which is a geometric vector bundle. 
This defines a functor $F: \mathrm{Rep}_G \rightarrow \mathrm{Coh}(X)$, whose image is a neutral Tannakian subcategory. 
Conversely, given a functor $F: \mathrm{Rep}_G \rightarrow \mathrm{Coh}(X)$, one can associate a principal $G$-bundle to it as follows: 
Take the regular representation of $G$ on $k[G]$ and consider the quasi-coherent sheaf $\calP:=\varinjlim_{V\subset k[G] \text{ fin. dim.}} F(V)$. 
Nori showed in \cite[Lemma 2.2 \& Lemma 2.3]{noriFundamentalGroupscheme1982} that $\calP$ has an $\calO_X$-algebra structure and the corresponding affine map $P\rightarrow X$ is a principal $G$-bundle. 
Nori also showed in \cite[Proposition 2.9]{noriFundamentalGroupscheme1982} that the two constructions above are inverse to each other. More precisely:

\begin{prop}\label{prop: correspondence principal bundle and tannakian}
	There is a one-to-one correspondence between functors from neutral Tannakian subcategories to $\mathrm{Coh}(X)$ and principal bundles on $X$ associated to affine group schemes, given by the above mentioned constructions. 
\end{prop}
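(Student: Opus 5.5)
We need to prove Proposition \ref{prop: correspondence principal bundle and tannakian}, which is essentially Nori's result. The proof amounts to checking that the two constructions described above are mutually inverse.

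\emph{From a torsor to a functor.} Start from a principal $G$-bundle $\pi: P\rightarrow X$ with $G$ affine. The functor $F_P(V)=(P\times V)/G$ is defined through fpqc descent: $\pi^*F_P(V)\cong \calO_P\otimes V$ with the descent datum twisted by the $G$-action. By descent, $F_P$ is $k$-linear, exact and faithful, and it respects tensor products, duals and the unit. Hence its essential image is a rigid tensor subcategory. Since $P\times_X P\cong P\times G$, $\Hom$'s in the image are computed as $G$-invariants, so the image is again neutral Tannakian.

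\emph{From a functor to a torsor.} Conversely, let $F:\mathrm{Rep}_G\rightarrow \mathrm{Coh}(X)$ be an exact faithful tensor functor. Write $k[G]=\varinjlim V$ as a filtered union of finite-dimensional subrepresentations, which is possible by local finiteness of comodules. Set $\calP=\varinjlim F(V)$. The multiplication $k[G]\otimes k[G]\rightarrow k[G]$ is $G$-equivariant and is compatible with the filtration, so it induces an $\calO_X$-algebra structure on $\calP$; the unit comes from $k\subset k[G]$. The coaction $k[G]\rightarrow k[G]\otimes k[G]$ uses the right-regular action, which commutes with the left one. It therefore induces $\calP\rightarrow \calP\otimes_k k[G]$, that is, a right $G$-action on $P=\mathrm{Spec}\,\calP$.

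The torsor condition is that $P\times G\rightarrow P\times_X P$ is an isomorphism and $P\rightarrow X$ is faithfully flat. This follows by applying $F$ to the $G$-equivariant isomorphism $k[G]\otimes k[G]\cong k[G]\otimes k[G]_{\mathrm{triv}}$, $(a\otimes b)\mapsto \sum a b_{(1)}\otimes b_{(2)}$, which gives $\calP\otimes_{\calO_X}\calP\cong \calP\otimes_k k[G]$. Flatness comes from exactness of $F$: every $F(V)$ is locally free, because exact tensor functors from a rigid category land in dualizable objects, and those are vector bundles. Faithfulness of $F$ gives $\calO_X\hookrightarrow\calP$ as a direct summand locally, which yields faithful flatness.

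\emph{Mutual inverse.} For a torsor $P$ we have $\pi_*\calO_P=\varinjlim F_P(V)$ over $V\subset k[G]$, because $(P\times k[G])/G\cong P$ as schemes over $X$, so we recover $P$. For a functor $F$, there is a natural isomorphism $F(V)\cong (P\times V)/G$: pulled back to $P$, both sides become $\calO_P\otimes V$ via the isomorphism $V\otimes k[G]\cong V_{\mathrm{triv}}\otimes k[G]$, and the descent data agree.

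The main obstacle is the torsor verification, especially faithful flatness of $\calP$ and checking that the algebra structure is compatible with colimits. I would handle both by working $G$-equivariantly inside $\mathrm{Ind}(\mathrm{Rep}_G)$ and extending $F$ to ind-objects, where all identities are formal consequences of Hopf algebra identities in $k[G]$.
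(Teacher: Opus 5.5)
Your second and third paragraphs reproduce Nori's argument, which the paper cites for this proposition instead of proving it \cite{noriFundamentalGroupscheme1982}. The steps are: extend $F$ to ind-objects, put $\calP=F(k[G])$, take the algebra structure and coaction from the Hopf structure of $k[G]$, get $P\times_X P\cong P\times G$ from the shearing isomorphism, and compare the two constructions by descent along $P\to X$. That part is sound.

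The problem is in your first paragraph. You claim that, because $P\times_X P\cong P\times G$, morphisms between the bundles $F_P(V)$ are computed as $G$-invariants, so the image of $F_P$ is a full neutral Tannakian subcategory. Descent along $\pi\colon P\to X$ actually gives
\[
\Hom_{\calO_X}\bigl(F_P(V),F_P(W)\bigr)\cong \bigl(H^0(P,\calO_P)\otimes_k \Hom_k(V,W)\bigr)^G .
\]
This contains $\Hom_G(V,W)$, and it coincides with it for all $V,W$ exactly when $H^0(P,\calO_P)=k$. Take the trivial torsor $X\times G\to X$ with $G\neq 1$. Every $F_P(V)$ is $\calO_X\otimes_k V$, and $\Hom_{\calO_X}(\calO_X,\calO_X\otimes_k W)$ contains all of $W$, whereas $\Hom_G(k,W)=W^G$ is smaller for any $W$ with $W^G\neq W$. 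This is exactly why Proposition \ref{prop: rigidity of universal cover} must assume that $F$ is full. If your claim held, the computation in its proof would give $H^0(P,\calO_P)=k$ for every torsor, which is false.

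So the statement has to be read as a bijection between principal $G$-bundles and exact faithful tensor functors $\mathrm{Rep}_G\to\mathrm{Coh}(X)$. The neutral Tannakian category involved is $\mathrm{Rep}_G$ itself, and fullness of $F$ is an additional property, not a consequence. The remark before the proposition that the image of $F_P$ is neutral Tannakian can only be meant in this weak sense, with morphisms coming from $\mathrm{Rep}_G$. Your second paragraph uses only exactness, faithfulness and the tensor structure of $F$, so deleting the fullness claim repairs the argument.

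One smaller correction: faithful flatness of $\calP$ comes from exactness, not faithfulness.
\begin{itemize}
\item Apply the extended $F$ to $0\to k\to k[G]\to k[G]/k\to 0$. This gives $\calO_X\hookrightarrow\calP$ with cokernel a filtered colimit of vector bundles, hence flat.
\item So $\calO_X\to\calP$ is universally injective, and together with flatness of $\calP$ this gives faithful flatness.
\end{itemize}
Your phrase ``direct summand locally'' is not clear for the infinite-rank $\calP$, and it is not needed.
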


The principal $G$-bundle $P\rightarrow X$ should be considered as a universal cover associated to the functor $F$. 
The following Corollary follows then directly from the constructions.

\begin{cor}\label{cor: pullback of vector bundle becomes trivial}
	Let $\calE$ be a vector bundle on $X$ sitting in a neutral Tannakian subcategory in $\mathrm{Coh}(X)$, and let $\pi:P \rightarrow X$ be the associated principal bundle. 
	Then $\pi^*\calE \cong \calO_P^{\oplus n}$, where $n=\rank \calE$. 
\end{cor}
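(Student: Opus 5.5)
We have to prove Corollary \ref{cor: pullback of vector bundle becomes trivial}: if $\calE$ lies in a neutral Tannakian subcategory $\calC\subset \mathrm{Coh}(X)$ with associated principal $G$-bundle $\pi:P\rightarrow X$, then $\pi^*\calE\cong\calO_P^{\oplus n}$. The plan is to use Proposition \ref{prop: correspondence principal bundle and tannakian}. This identifies $\calC$, through its fibre functor, with $\mathrm{Rep}_G$, and the inclusion $\calC\hookrightarrow \mathrm{Coh}(X)$ with the associated-bundle functor $F$. So $\calE\cong F(V)=(P\times V)/G$ for some finite dimensional representation $V$ of $G$, with $\dim V=n=\rank\calE$. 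The corollary then reduces to a purely local statement about associated bundles: the pullback of an associated vector bundle to the total space of the torsor is trivial.

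For the key step, form the cartesian square with $P\times_X P\rightarrow P$ given by the first projection. Since $P\rightarrow X$ is a $G$-torsor (fpqc), the map $P\times G\rightarrow P\times_X P$, $(p,g)\mapsto(p,pg)$, is an isomorphism. Formation of the quotient $(P\times V)/G$ commutes with flat base change on $X$. Therefore
\begin{equation*}
\pi^*\big((P\times V)/G\big)\cong \big((P\times_X P)\times V\big)/G\cong \big(P\times G\times V\big)/G\cong P\times V,
\end{equation*}
where $G$ acts on $G\times V$ diagonally and freely transitively on the first factor. The last isomorphism is $(p,g,v)\mapsto (p,g^{-1}v)$, with the sign of $g^{-1}$ fixed by the conventions for the diagonal action. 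The result is the trivial geometric vector bundle $P\times V\rightarrow P$, whose sheaf of sections is $\calO_P\otimes_k V\cong \calO_P^{\oplus n}$.

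Alternatively, one can argue sheaf-theoretically with Nori's algebra $\calP=\varinjlim F(W)$. We have $\pi^*F(V)=\calP\otimes_{\calO_X}F(V)$, and there is a natural map $V\otimes_k\calP\rightarrow \calP\otimes F(V)$ coming from the comodule map $V\rightarrow V\otimes k[G]$ and the tensor structure of $F$; one then checks it is an isomorphism. The main obstacle is justifying that the quotient commutes with base change, or equivalently that this comparison map is an isomorphism. I would handle it by fpqc descent: $P$ is affine and faithfully flat over $X$, so it suffices to check after pulling back along $\pi$ itself, where the torsor becomes trivial and the statement becomes the tautology $(G\times V)/G\cong V$.
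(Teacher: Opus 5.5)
Your proposal is correct and is exactly the argument the paper has in mind. The paper gives no separate proof, saying only that the corollary follows directly from the constructions, i.e.\ from Nori's correspondence (Proposition \ref{prop: correspondence principal bundle and tannakian}) giving $\calE\cong (P\times V)/G$, combined with the triviality of the torsor $P\times_X P\cong P\times G$ after pulling back along $\pi$; your fpqc-descent justification, or equivalently the tensor identity $V\otimes k[G]\cong V_{\mathrm{triv}}\otimes k[G]$ applied to Nori's $\calP=\varinjlim F(W)$, supplies the detail the paper leaves implicit.
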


Moreover, we can show that if $F$ is a full inclusion, then the scheme $P$ has few global sections. 

\begin{prop}\label{prop: rigidity of universal cover}
	Let $\pi:P\rightarrow X$ be the principal $G$-bundle constructed above corresponding to a full neutral Tannakian subcategory $F:\mathrm{Rep}_G \hookrightarrow \mathrm{Coh}(X)$, then $H^0(P,\calO_P)=k$. 
\end{prop}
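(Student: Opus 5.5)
Since $\pi:P\rightarrow X$ is affine, I will compute global sections of $P$ downstairs on $X$:
\[
H^0(P,\calO_P)=H^0(X,\pi_*\calO_P)=H^0(X,\calP),\qquad \calP=\varinjlim_{V\subset k[G]} F(V),
\]
where $V$ runs over the finite dimensional $G$-subrepresentations of the regular representation $k[G]$. The regular representation is the filtered union of such $V$, and $X$ is quasi-compact and noetherian, so global sections commute with this filtered colimit. This gives
\[
H^0(P,\calO_P)=\varinjlim_V H^0(X,F(V)).
\]

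Next I will use fullness of $F$. For any $V\in\mathrm{Rep}_G$,
\[
H^0(X,F(V))=\Hom_{\calO_X}(\calO_X,F(V))=\Hom_{\mathrm{Coh}(X)}(F(\mathbf{1}),F(V)).
\]
Here $F(\mathbf{1})=\calO_X$ because $F$ is a tensor functor sending the unit to the unit. Since $F$ is fully faithful, this equals $\Hom_G(\mathbf{1},V)=V^G$. Passing to the colimit, $H^0(P,\calO_P)=\varinjlim_V V^G=k[G]^G$, the invariants of the regular representation.

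Finally I will compute $k[G]^G$. A function $\varphi\in k[G]$ invariant under (say) right translation satisfies $\varphi(gh)=\varphi(g)$ for all points $g,h$ of $G$. Comodule-theoretically, $\Delta(\varphi)=\varphi\otimes 1$; applying $\epsilon\otimes\mathrm{id}$ then gives $\varphi=\epsilon(\varphi)\cdot 1$. So $\varphi$ is the constant $\epsilon(\varphi)$, i.e.\ $k[G]^G=k$. This argument works scheme-theoretically, so it needs no reducedness or connectedness of $G$.

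The main point to check carefully is the first step: that Nori's identification $\pi_*\calO_P=\calP$ is compatible with the colimit, and that the identification $\Hom(\calO_X,F(V))\cong V^G$ is natural in $V$, so that the isomorphisms pass to the colimit. Naturality follows from functoriality of $F$. One must also make sure that the $G$-action on $k[G]$ used in Nori's construction (left or right regular) matches the one whose invariants are taken; either choice gives invariants equal to $k$.
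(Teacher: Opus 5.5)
Your proposal is correct and follows the paper's proof essentially step for step. The chain $H^0(P,\calO_P)=H^0(X,\calP)=\varinjlim_V H^0(X,F(V))=\varinjlim_V \Hom_G(k,V)=k[G]^G=k$ uses fullness of the inclusion and commutes global sections with the filtered colimit, exactly as the paper does. The only difference is that you verify $k[G]^G=k$ directly via the counit, where the paper cites Jantzen.
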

\begin{proof}
	We have the following equalities:
	\begin{align*}
		&H^0(P,\calO_P)\\
		&= H^0(X,\pi_*\calO_P) \\
		&= H^0(X, \calP)
		&\text{(By definition)}\\
		&=H^0 \left( X,\textstyle{\varinjlim_{V\subset k[G] \text{ fin. dim.}}} F(V) \right)\\
		&= \textstyle{\varinjlim_{V\subset k[G] \text{ fin. dim.}}} H^0(X,F(V)) 
		&\text{(\cite[Tag 009F]{stacks-project})}\\
		&= \textstyle{\varinjlim_{V\subset k[G] \text{ fin. dim.}}} \Hom_{\calO_X}(\calO_X,F(V))\\
		&= \textstyle{\varinjlim_{V\subset k[G] \text{ fin. dim.}}} \Hom_{\calO_X}(F(k),F(V)) 
		&(\text{Note $\calO_X=F(k)$ with trivial $G$-action on $k$})\\
		&= \textstyle{\varinjlim_{V\subset k[G] \text{ fin. dim.}}} \Hom_{G}(k,V) 
		&(\text{$F$ is a full functor})\\
		&= \textstyle{\varinjlim_{V\subset k[G] \text{ fin. dim.}}} V^G\\
		&=k[G]^G 
		&(\text{\cite[Section 2.13]{jantzenRepresentationsAlgebraicGroups2014}})\\
		&= k.
	\end{align*}
\end{proof}

Let $\calC \subset \calD$ be an inclusion of neutral Tannakian subcategories in $\mathrm{Coh}(X)$, which is identified as an inclusion $\mathrm{Rep}_H \subset \mathrm{Rep}_G$. 
This inclusion corresponds to a group homomorphism $G\rightarrow H$ which is faithfully flat by \cite[Proposition 2.21]{deligneTannakianCategories}, and is identified as endowing every $H$-module a $G$-module structure along the group homomorphism. 
We can describe also the relations between the two principal bundles $P_\calC$ and $P_\calD$.

\begin{prop}[{\cite[Proposition 2.9]{noriFundamentalGroupscheme1982}}]
	We have $P_\calC= (P_\calD \times H)/G$, where $G$ acts on the product diagonally. 
\end{prop}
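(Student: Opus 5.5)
We have to prove that $P_\calC \cong (P_\calD\times H)/G$, where $G$ acts on $P_\calD$ by its principal action and on $H$ through the faithfully flat homomorphism $\varphi: G\rightarrow H$ by left multiplication. The plan is to compare the defining $\calO_X$-algebras under the correspondence of Proposition \ref{prop: correspondence principal bundle and tannakian}. Write $F_\calD:\mathrm{Rep}_G\rightarrow \mathrm{Coh}(X)$ and $F_\calC=F_\calD\circ \varphi^*:\mathrm{Rep}_H\rightarrow \mathrm{Coh}(X)$. This equality holds because the inclusion $\calC\subset\calD$ is identified with restriction of scalars along $\varphi$. By construction,
\[
\calP_\calC=\textstyle\varinjlim_{V\subset k[H]} F_\calD(\varphi^*V) = F_\calD(\varphi^* k[H]),
\]
where $F_\calD$ is extended to ind-objects, i.e.\ to all (locally finite) $G$-modules, by filtered colimits. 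This extension is compatible with tensor products and algebra structures, since $F_\calD$ is a tensor functor and colimits commute with tensor products.

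Next, identify the right-hand side. For any $G$-module $W$, the associated bundle $(P_\calD\times W)/G$ is $\operatorname{Spec}$ of the sheaf $(\calP_\calD\otimes \calO(W))^G$. Also, for a finite dimensional $V$, the functor $F_\calD(V)$ is the sheaf of $G$-equivariant maps $P_\calD\rightarrow V$, which is $(\calP_\calD\otimes V)^G$ with the appropriate dual/contragredient convention. Passing to the colimit with $V=k[H]$ (carrying its regular $G$-action via $\varphi$) gives
\[
\calO\bigl((P_\calD\times H)/G\bigr)=(\calP_\calD\otimes k[H])^G = F_\calD(\varphi^*k[H]) = \calP_\calC .
\]
Both identifications respect the multiplication, because the algebra structure on $\calP$ in Nori's construction comes from the multiplication $k[H]\otimes k[H]\rightarrow k[H]$, which is a map of representations. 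This yields an isomorphism of affine $X$-schemes $P_\calC\cong (P_\calD\times H)/G$.

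It then remains to check that this isomorphism is compatible with the residual right $H$-actions. On $P_\calC$ the $H$-action comes from the right regular action on $k[H]$, which commutes with the left action used for $\varphi^*$. On the quotient the action is right multiplication on the $H$ factor. These match under the identification above. The quotient is indeed an $H$-torsor because fppf-locally $P_\calD\cong X\times G$, and then $(X\times G\times H)/G\cong X\times H$. Since $\varphi$ is faithfully flat by \cite[Proposition 2.21]{deligneTannakianCategories}, the composite with the quotient behaves well and the quotient exists as the affine scheme written above.

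The main obstacle is the bookkeeping in the middle step: fixing consistent left/right and dual conventions so that $F_\calD(V)$ really equals $(\calP_\calD\otimes V)^G$ (and not its dual). The first thing to settle is the case $\calC=\calD$, i.e.\ $\varphi=\Id$, where the formula must return $(\calP_\calD\otimes k[G])^G\cong\calP_\calD$; fixing conventions there determines them in general.
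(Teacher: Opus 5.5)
Your argument is correct. The paper gives no proof of its own here, only the citation of Nori's Proposition 2.9, and what you propose is the standard verification behind that citation. You identify $F_\calC$ with $F_\calD\circ\varphi^*$ as tensor functors and evaluate the ind-extension of $F_\calD$ on $\varphi^*k[H]$. You then use the inverse direction of the correspondence in Proposition~\ref{prop: correspondence principal bundle and tannakian}, namely $F_\calD(V)\cong(\calP_\calD\otimes V)^G$ naturally and monoidally in $V$, to recognise $\calP_\calC$ as the ring of functions of the contracted product, and finally match the residual $H$-coactions. Compared with the bare citation, this shows the statement rests on just two things: compatibility of Nori's construction with filtered colimits of representations, and the associated-bundle description of $F_\calD$. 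The cost is the left/right and dual bookkeeping you flagged, and fixing conventions via the case $\varphi=\Id$ is the right way to settle it.

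A few small points of precision:
\begin{itemize}
\item Faithful flatness of $\varphi$ plays no role in the existence of $(P_\calD\times H)/G$. The contracted product exists for any homomorphism of affine group schemes, by descent of affine $X$-schemes.
\item Since $G$ may be pro-algebraic (e.g.\ $\pi_1^S(X,x)$), $P_\calD\to X$ is in general only fpqc-locally trivial, not fppf-locally trivial; it trivialises after base change to $P_\calD$ itself. This is harmless, because fpqc descent of affine morphisms is effective.
\item Rewriting $\varinjlim_{V\subset k[H]}F_\calD(\varphi^*V)$ as $F_\calD(\varphi^*k[H])$ implicitly uses that finite-dimensional $H$-stable subspaces of $k[H]$ are cofinal among the finite-dimensional $G$-stable ones. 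This holds by local finiteness of comodules.
\end{itemize}
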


\begin{cor}\label{cor: associated fibre bundle comparison}
	Let $X' \rightarrow X$ be an associated fibre bundle to $P_{\calC}$, then it is also an associated fibre bundle to $P_{\calD}$. 
\end{cor}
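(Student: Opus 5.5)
We use the preceding proposition, which gives $P_\calC\cong (P_\calD\times H)/G$, where $G$ acts on $H$ through the faithfully flat homomorphism $\varphi:G\rightarrow H$, and then show that forming associated bundles is transitive. Let $X'=(P_\calC\times W)/H$ for some $H$-scheme $W$ (together with a boundary $\Theta$, which is carried along in the same way throughout). We let $G$ act on $W$ through $\varphi$, and the goal is to produce a canonical isomorphism
\[
(P_\calD\times W)/G \;\cong\; \bigl(((P_\calD\times H)/G)\times W\bigr)/H
\]
over $X$. This exhibits $X'$ as the $W$-bundle associated to $P_\calD$.

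First we work on the level of products. The scheme $P_\calD\times H\times W$ carries commuting actions of $G$ and $H$:
\[
g\cdot(p,h,w)=(pg^{-1},\varphi(g)h,w),\qquad h'\cdot(p,h,w)=(p,h h'^{-1},h'w).
\]
The two quotients can be taken in either order. Quotienting first by $G$ gives $P_\calC\times W$, and quotienting that by $H$ gives $X'$. Quotienting first by $H$ is the map $(p,h,w)\mapsto (p,hw)$, which identifies $(P_\calD\times H\times W)/H$ with $P_\calD\times W$. This holds because $H\times W\rightarrow W$, $(h,w)\mapsto hw$, is a trivial $H$-torsor, with section $w\mapsto (1,w)$. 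Under this identification the residual $G$-action becomes the diagonal action $g(p,w)=(pg^{-1},\varphi(g)w)$. Quotienting by $G$ then gives $(P_\calD\times W)/G$.

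To justify that the quotients exist and commute, we use fpqc descent. Quotients by free actions that arise from torsors are computed as fpqc sheaves. We may check the identification fpqc-locally on $X$ over a cover trivializing $P_\calD$, and hence also $P_\calC$. There both sides become $U\times W$, and the transition data agree, being given by $\varphi$ of the cocycle of $P_\calD$. So the isomorphism of fpqc sheaves descends, and representability of the right-hand side, which is $X'$ by hypothesis, gives representability of the left-hand side.

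The main technical point is bookkeeping: making sense of the iterated quotient of an affine scheme by the non-reduced, possibly pro-algebraic group $G$, and checking that the boundaries match. The cleanest route is to phrase everything as the sheaf quotient and argue locally over the trivializing cover, where both sides are visibly the same twist of $W$ by the cocycle. The boundary $\Theta$ is an $H$-stable closed subscheme of $W$, so it is $G$-stable as well, and the same argument applied to $\Theta$ gives the matching of boundaries.
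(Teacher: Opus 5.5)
Your proposal is correct and is exactly the argument the paper intends. The paper states the corollary without proof as an immediate consequence of $P_\calC\cong(P_\calD\times H)/G$. Unwinding that is precisely your identification $\bigl(((P_\calD\times H)/G)\times W\bigr)/H\cong(P_\calD\times W)/G$, obtained from the commuting $G$- and $H$-actions on $P_\calD\times H\times W$ with $G$ acting on $W$ through $G\rightarrow H$; your extra care with fpqc sheaf quotients and the boundary is sound but does not change the route.
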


We discuss then briefly how the structure of a neutral Tannakian category reflects the structure of the corresponding affine group scheme. 
These criteria work for any neutral Tannakian category, so we may shortly drop the assumption $\calC \subset \mathrm{Coh}(X)$. 
Let $\calE \in \calC$ be an object in a neutral Tannakian category. 
Consider the full subcategory $\langle \calE \rangle^\otimes$ consisting of all subquotients of some $f(\calE, \calE^\vee)$,  where $f\in \bbN[s,t]$ is a polynomial, and addition and multiplication are taken as direct sum and tensor product. 
It is then easy to check

\begin{lem}\label{lem: Tannakian subcategory generated by one element}
	The subcategory $\langle \calE \rangle^\otimes$ is again neutral Tannakian. 
\end{lem}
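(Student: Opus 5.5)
We verify the axioms of a neutral Tannakian category for $\langle \calE \rangle^\otimes$ one at a time, inheriting each structure from $\calC$. The strategy is to show that $\langle \calE \rangle^\otimes$ is a full subcategory of $\calC$ containing the unit, and that it is closed under direct sums, tensor products, duals and subquotients (in particular kernels and cokernels). Once this is done, the remaining structure restricts for free: the $k$-linear abelian tensor structure, rigidity, the equality $\End(1)=k$ (because the subcategory is full), and the fibre functor $\omega|_{\langle \calE\rangle^\otimes}$, which stays exact and faithful.

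The steps, in order:

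\textbf{Unit.} $1 = f(\calE,\calE^\vee)$ for the constant polynomial $f=1$, and it is a subquotient of itself.

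\textbf{Sums and tensors.} Suppose $A$ is a subquotient of $f(\calE,\calE^\vee)$ and $B$ is a subquotient of $g(\calE,\calE^\vee)$.
\begin{itemize}
\item $A\oplus B$ is a subquotient of $(f+g)(\calE,\calE^\vee)$.
\item $A\otimes B$ is a subquotient of $(fg)(\calE,\calE^\vee)$. To see this, write $A = A_1/A_2$ and $B=B_1/B_2$. Since the tensor product in a neutral Tannakian category is exact in each variable (check after applying the exact faithful fibre functor to vector spaces), $A_1\otimes B_1$ is a subobject of $f\otimes g$, and $A\otimes B$ is a quotient of $A_1\otimes B_1$.
\end{itemize}

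\textbf{Duals.} Duality is an exact contravariant functor, so it turns subquotients into subquotients. Hence $A^\vee$ is a subquotient of $f(\calE,\calE^\vee)^\vee \cong f(\calE^\vee,\calE)$, using $\calE^{\vee\vee}\cong \calE$ and that duality commutes with $\oplus$ and $\otimes$. This has the form $\tilde f(\calE,\calE^\vee)$ with $\tilde f(s,t)=f(t,s)$.

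\textbf{Subquotients, kernels, cokernels.}
\begin{itemize}
\item A subquotient of a subquotient is again a subquotient, by the usual isomorphism theorems in an abelian category.
\item Given $\phi: A\to B$ in the subcategory, $\ker\phi \subset A$ is a subquotient of a subquotient.
\item $\operatorname{coker}\phi$ is a quotient of $B$.
\end{itemize}
Hence the full subcategory is abelian and closed under kernels and cokernels, and the inclusion into $\calC$ is exact.

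\textbf{Remaining structure.} Rigidity follows because duals and the evaluation and coevaluation maps lie in the full subcategory. Finally, $\End(1)$ computed in the full subcategory equals $\End_\calC(1)=k$, and the restricted fibre functor stays exact, faithful, $k$-linear and tensor.

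There is no genuine obstacle. The only point needing care is the exactness of $\otimes$ and of duality used in the tensor and dual steps. This holds because the fibre functor is exact, faithful and tensor-compatible, so exactness can be checked on vector spaces.
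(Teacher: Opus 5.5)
Your verification is correct and takes the same approach as the paper. You check that $\langle \calE\rangle^\otimes$ contains the unit, is closed under $\oplus$, $\otimes$, duals and subquotients (with exactness of $\otimes$ and of duality checked through the exact faithful fibre functor), and then use fullness for $\End(1)=k$ and restrict the fibre functor; this is exactly the routine check the paper leaves to the reader with ``it is then easy to check'', and there is no gap.
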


If $\langle \calE \rangle^\otimes=\calC$, or in other words, every bundle in $\calC$ is a subquotient of some $f(\calE, \calE^\vee)$, then we call $\calE$ a \textbf{tensor generator} of $\calC$. 
There is a criterion on the linear algebraicity of $G$.

\begin{prop}[{\cite[Proposition 2.20]{deligneTannakianCategories}}]\label{prop: algebraicity from Tannakian}
	Let $G$ be an affine group scheme corresponding to a neutral Tannakian category $\mathrm{Rep}_G$. Then $G$ is linear algebraic if and only if $\mathrm{Rep}_G$ admits a tensor generator. 
\end{prop}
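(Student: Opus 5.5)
The statement to prove is Proposition \ref{prop: algebraicity from Tannakian}: $G$ is linear algebraic, i.e.\ a closed subgroup scheme of some $\GL_n$, if and only if $\mathrm{Rep}_G$ has a tensor generator. We write $\omega$ for the forgetful fibre functor and identify $G$ with $\underline{\Aut}^\otimes(\omega)$ via Tannakian reconstruction.

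For the forward direction, suppose $G\hookrightarrow \GL(V)$ is a closed immersion, and take $\calE := V$. Since $G$ is closed in $\GL(V)$, the coordinate ring $k[G]$ is a quotient of $k[\GL(V)] = \Sym(V\otimes V^\vee)[\det^{-1}]$. Every finite dimensional representation $W$ embeds into $W\otimes k[G]$, with $G$ acting only on the second factor, and this last space is isomorphic to $k[G]^{\oplus \dim W}$. Hence $W$ is a subrepresentation of finitely many copies of a finite dimensional subcomodule of $k[G]$. Any such subcomodule lies in the image of some filtered piece $\bigoplus_{i\le N}\Sym^i(V\otimes V^\vee)\cdot \det^{-N}$. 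These pieces are subquotients of tensor expressions in $V$ and $V^\vee$, and $\det^{-1}=\bigwedge^{n}V^\vee$ is such a subquotient as well. Therefore $W$ is a subquotient of some $f(V,V^\vee)$.

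For the converse, suppose $\calE$ is a tensor generator, and let $V=\omega(\calE)$. The action gives a homomorphism $\rho:G\to \GL(V)$, and I would show it is a closed immersion. Let $H$ be its image, the closed subgroup scheme of $\GL(V)$ defined by the kernel of $\rho^\#: k[\GL(V)]\to k[G]$. Because $\rho$ factors through $H$, every object of $\langle \calE\rangle^\otimes$ is naturally an $H$-representation: subquotients of $f(V,V^\vee)$ stable under $G$ are also $H$-stable, since the comodule structure maps factor through $k[H]\subset k[G]$. Thus $\mathrm{Rep}_G=\langle\calE\rangle^\otimes$ is identified with a full subcategory of $\mathrm{Rep}_H$ closed under subquotients.

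The key step, and the main obstacle, is to conclude from this that $k[H]\to k[G]$ is surjective. I would argue via matrix coefficients. The ring $k[G]$ is the union of the coefficient spaces $C(W)\subset k[G]$ of finite dimensional representations $W$. If $W$ is a subquotient of $f(V,V^\vee)$, then $C(W)\subseteq C(f(V,V^\vee))$. The latter is spanned by products of coefficients of $V$ and $V^\vee$, which lie in the image of $k[\GL(V)]$, because the coefficients of $V^\vee$ are those of $V$ composed with the inverse. So every element of $k[G]$ lies in the image of $\rho^\#$, which means $\rho^\#$ is surjective and $\rho$ is a closed immersion. This shows $G$ is linear algebraic.

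The care needed here is that the argument must use coefficient spaces and avoid any claim that $G$ acts faithfully a priori; faithfulness is exactly what surjectivity of $\rho^\#$ yields.
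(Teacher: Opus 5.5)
Your proof is correct. The paper gives no argument of its own and only cites \cite[Proposition 2.20]{deligneTannakianCategories}; your two directions (embedding every representation into copies of the regular representation, and showing $\rho^\#$ is surjective via coefficient spaces, with the antipode handling $V^\vee$) are the standard argument behind that reference, and the paragraph about the image $H$ is not needed for it.

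One small slip needs fixing. The pieces $\bigoplus_{i\le N}\Sym^i(V\otimes V^\vee)\cdot\det^{-N}$, with the same $N$ in both places, do not exhaust $k[\GL(V)]$: a linear coordinate $x_{11}$ lies in none of them. Instead take $\Sym^{\le N}(V\otimes V^\vee)\cdot\det^{-M}$ with $N$ and $M$ chosen independently. Each of these is still a subcomodule and a quotient of a tensor expression in $V$ and $V^\vee$, so the rest of your argument goes through unchanged.
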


\subsection{Numerical flatness and S-fundamental group scheme}
Let $X$ be a smooth projective variety over a perfect field $k$. 
A vector bundle $\calE$ on $X$ is called numerically flat if both $\calE$ and $\calE^\vee$ are nef. 
In \cite{langerSfundamentalGroupScheme2011}, it is shown that all numerically flat bundles on $X$ form a neutral Tannakian category $\mathrm{Vec}_0^S(X)$, and Langer defines the S-fundamental group scheme to be the corresponding Tannakian dual. 
We discuss here some useful facts that we require for proofs later. 
\begin{set}
	In this subsection, we work over a perfect field $k$ of characteristic $p\geq 0$ and a smooth projective variety $X$ over $k$.  
\end{set}

\begin{defn}
	A vector bundle $\calE$ on $X$ is \textbf{nef} if the tautological quotient $\calO_{\bbP_X(\calE)}(1)$ is nef on $\bbP_X(\calE)$. 
	It is called \textbf{numerically flat} if both $\calE$ and $\calE^\vee$ are nef. 
\end{defn}

\begin{prop}[{\cite[Proposition 5.1]{langerSfundamentalGroupScheme2011}}]\label{prop: characterization of numerical flatness}
	Let $\calE$ be a vector bundle on $X$. 
	The following conditions are equivalent:
	\begin{enumerate}
		\item $\calE$ is numerically flat,
		\item there exists an ample divisor $H$ such that $\calE$ is strongly $H$-semistable and $c_1(\calE)\cdot H^{n-1}=c_2(\calE)\cdot H^{n-2}=0$. 
		\item $\calE$ is nef of degree $0$ with respect to some ample divisor $H$.
	\end{enumerate}
\end{prop}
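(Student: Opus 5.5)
We plan to prove the cycle of implications $(1)\Rightarrow(3)\Rightarrow(1)$ and then $(1)\Leftrightarrow(2)$. Throughout, $H$ is an ample divisor, $n=\dim X$ and $r=\rank\calE$; we assume $n\geq 2$, the curve case being classical.

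For $(1)\Rightarrow(3)$: $\calE$ is nef by assumption. Since $\det\calE$ and $\det\calE^\vee=(\det\calE)^{-1}$ are both nef, we get $c_1(\calE)\cdot H^{n-1}\geq 0$ and $-c_1(\calE)\cdot H^{n-1}\geq 0$, so the degree vanishes.

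For $(3)\Rightarrow(1)$: $\det\calE$ is a quotient of $\bigwedge^r\calE$, hence nef, and it has $H$-degree $0$. A nef class $D$ with $D\cdot H^{n-1}=0$ is numerically trivial. Indeed, $D$ is a limit of ample classes, so $D^2\cdot H^{n-2}\geq 0$, while the Hodge index theorem gives $D^2\cdot H^{n-2}\leq 0$ with equality only if $D\equiv 0$. Then $\calE^\vee\cong\bigwedge^{r-1}\calE\otimes(\det\calE)^{-1}$ is an exterior power of a nef bundle twisted by a numerically trivial line bundle, hence nef.

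For $(1)\Rightarrow(2)$ we argue in two steps.
\begin{itemize}
\item \emph{Strong semistability.} Frobenius pullbacks of nef bundles are nef, so each $F^{e*}\calE$ is again numerically flat with $c_1\equiv 0$. Suppose $F^{e*}\calE$ had a saturated destabilizing subsheaf. Restricting to a general complete intersection curve $C\in|mH|^{n-1}$, the torsion-free quotient becomes a locally free quotient of a nef bundle of negative degree. This is impossible, since quotients of nef bundles on a curve have nonnegative degree.
\item \emph{Vanishing of $c_2$.} Apply the Fulton--Lazarsfeld positivity of Schur polynomials for nef bundles, which is valid in any characteristic, to both $\calE$ and $\calE^\vee$. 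This gives $c_2\cdot H^{n-2}\geq 0$ and $(c_1^2-c_2)\cdot H^{n-2}\geq 0$. Since $c_1\equiv 0$ by the previous paragraph, these force $c_2(\calE)\cdot H^{n-2}=0$.
\end{itemize}

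For $(2)\Rightarrow(1)$, first show that $c_1(\calE)\equiv 0$. Bogomolov's inequality holds for strongly semistable sheaves in any characteristic and gives $\Delta\cdot H^{n-2}=(2rc_2-(r-1)c_1^2)\cdot H^{n-2}\geq 0$, hence $c_1^2\cdot H^{n-2}\geq 0$ in view of $c_2\cdot H^{n-2}=0$. On the other hand, $c_1\cdot H^{n-1}=0$ and the Hodge index theorem give $c_1^2\cdot H^{n-2}\leq 0$, with equality only for $c_1\equiv 0$. So $c_1(\calE)\equiv 0$ and $\Delta(\calE)\cdot H^{n-2}=0$. It then suffices to show that for every curve $\nu:C\rightarrow X$ from a smooth projective curve, $\nu^*\calE$ is semistable of degree $0$. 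On a curve such a bundle is nef, and so is its dual; by definition nefness of $\calE$ is tested on curves, so this yields (1).

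The main obstacle is exactly this restriction to \emph{arbitrary} curves. Restriction theorems only control general complete intersection curves, but nefness must be checked on every curve. Our plan is Langer's approach via the Bogomolov equality. First show that a strongly semistable sheaf with $\Delta\cdot H^{n-2}=0$ and $c_1\equiv 0$ has a filtration of $F^{e*}\calE$, uniform in $e$, whose graded pieces are stable with vanishing discriminant. Then use boundedness of the family $\{F^{e*}\calE\}_e$ to reduce to the stable case. Finally use the sharp form of the Bogomolov inequality (equality case) to deduce that all pullbacks to curves remain semistable. If this turns out too heavy, the fallback is to cite Langer's theorem that strongly semistable sheaves with vanishing discriminant and numerically trivial $c_1$ are exactly the numerically flat ones.
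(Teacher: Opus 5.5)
Your arguments for (1)$\Rightarrow$(3), (3)$\Rightarrow$(1) and (1)$\Rightarrow$(2) are fine. The argument for (2)$\Rightarrow$(1), however, fails at the step where you derive $c_1(\calE)\equiv 0$.

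With $c_2(\calE)\cdot H^{n-2}=0$, Bogomolov's inequality $(2rc_2-(r-1)c_1^2)\cdot H^{n-2}\geq 0$ gives $c_1(\calE)^2\cdot H^{n-2}\leq 0$, not $\geq 0$. This is the same direction as the Hodge index inequality, so together they force nothing. For $r=1$ Bogomolov says nothing at all.

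The slip cannot be repaired, because with $c_2$ the implication is false. Take $X=\bbP^1\times\bbP^1$ with $H$ of bidegree $(1,1)$, and let $\calE=\calO_X(1,-1)$ (or $\calO_X(1,-1)\oplus\calO_X$ in rank $2$). Then $\calE$ is strongly $H$-semistable with $c_1(\calE)\cdot H=0$ and $c_2(\calE)=0$. Yet it restricts to $\calO(-1)$ on a fibre $\{pt\}\times\bbP^1$, so it is neither nef nor numerically flat.

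The condition that makes the equivalence true, and the one in Langer's statement (following Simpson), is $\mathrm{ch}_2(\calE)\cdot H^{n-2}=0$, i.e.\ $(c_1^2-2c_2)(\calE)\cdot H^{n-2}=0$. The statement as quoted has $c_2$ where $\mathrm{ch}_2$ is needed. With that hypothesis your strategy works: $\Delta(\calE)\cdot H^{n-2}=c_1(\calE)^2\cdot H^{n-2}$, which Bogomolov makes $\geq 0$ and Hodge index makes $\leq 0$, with equality only if $c_1\equiv 0$. Hence $c_1\equiv 0$ and $c_2\cdot H^{n-2}=\Delta\cdot H^{n-2}=0$. Your proof of (1)$\Rightarrow$(2) then also gives $\mathrm{ch}_2\cdot H^{n-2}=0$, since there $c_1\equiv 0$ and $c_2\cdot H^{n-2}=0$.

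For the remaining step, semistability of $\nu^*\calE$ for \emph{every} curve $\nu\colon C\to X$, your proposal ends by citing Langer. That is also all the paper does: it gives no proof of this proposition, only the reference.

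Be aware that your sketched route through boundedness of $\{F^{e*}\calE\}_e$ is not available directly on $X$ when $n\geq 3$. Since $c_i(F^{e*}\calE)=p^{ie}c_i(\calE)$, the Hilbert polynomials of the $F^{e*}\calE$ stay in a finite set only if the relevant Chern numbers of $\calE$ already vanish.

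Where boundedness does hold (for instance on a surface, where $c_1\equiv 0$ and $c_2=0$ fix the Hilbert polynomial), the conclusion in characteristic $p>0$ comes from a Frobenius amplification argument, not an equality case of Bogomolov. A uniform $m$ with every $F^{e*}\calE(mH)$ globally generated gives $p^e\deg Q+m\,\rank(Q)\,\deg\nu^*H\geq 0$ for every quotient bundle $Q$ of $\nu^*\calE$ and every $e$. Hence $\deg Q\geq 0$. Reducing an arbitrary curve to such a situation is the nontrivial content of the Langer theorem you would be invoking.
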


Here, strongly $H$-semistable means that all Frobenius pullbacks $F^{e,*}\calE$ are $H$-semistable.
Langer shows furthermore that given a point $x\in X(k)$, the full subcategory in $\mathrm{Coh}(X)$ consisting of numerically flat bundles together with the fibre functor $\calE \mapsto \calE|_x$ is a neutral Tannakian category.

\begin{defn}
	The \textbf{S-fundamental group scheme} $\pi_1^S(X,x)$ is the Tannakian dual to the full subcategory $\mathrm{Vec}_0^S(X)$ of $\mathrm{Coh}(X)$ consisting of numerically flat bundles. 
	The \textbf{S-universal cover} $\widetilde X^S$ is the principal $\pi_1^S(X,x)$-bundle given by the correspondence in Proposition \ref{prop: correspondence principal bundle and tannakian}. 
\end{defn}

\subsection{Grothendieck duality along finite morphisms}
We recall here the Grothendieck duality of finite morphisms between pairs. 
A reference for the results can be found in
\cite[Section 5.5]{kollarBirationalGeometryAlgebraic2002}.

\begin{set}
	In this subsection, we work over a field $k$. 
	Given a scheme $X$, we always assume that $X$ is of finite type over $k$.  
\end{set}
Let $f:X\rightarrow Y$ be a finite morphism between schemes, and let $\calF$ be a coherent sheaf on $X$, $\calG$ a coherent sheaf on $Y$. 
We define $f^!\calG:=\SheafHom_{\calO_Y}(\calO_X,\calG)$ equipped with its natural coherent $\calO_X$-module structure. 
The Grothendieck duality then reads
\begin{align*}
	\SheafHom_{\calO_Y}(f_*\calF, \calG) &\cong f_*\SheafHom_{\calO_X}(\calF, f^!\calG),\\
	\Hom_{\calO_Y}(f_*\calF, \calG) &\cong \Hom_{\calO_X}(\calF, f^!\calG).
\end{align*}
We define the relative dualizing sheaf $\omega_{X/Y}$ as $f^!\calO_Y$, and one can show that the formation of the relative dualizing sheaf is compatible with flat base changes on $Y$. 
In particular,

\begin{lem}\label{lem: relative dualizing sheaf of field extension}
	Let $Y$ be a scheme over an $F$-finite field $k$ and let $X:=Y\times_k k^{\frac{1}{p}}$. 
	Then $\omega_{X/Y}=\calO_X$.
\end{lem}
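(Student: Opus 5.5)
Write $X = Y\times_k k^{1/p} = Y\times_{\Spec k}\Spec k^{1/p}$ and let $f\colon X\to Y$ be the first projection. The plan is to reduce the statement to a computation over the base field and then transport it to $Y$ by flat base change, using the compatibility of $\omega$ with flat base change recalled just above. First, since $k$ is $F$-finite, the extension $k\subset k^{1/p}$ is finite, so $f$ is a finite morphism. Then $\omega_{X/Y}=f^!\calO_Y=\SheafHom_{\calO_Y}(\calO_X,\calO_Y)$ is well defined with the Grothendieck duality recalled above. Moreover, $f$ is the base change of the finite morphism $g\colon\Spec k^{1/p}\to\Spec k$ along the structure map $Y\to\Spec k$. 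Every morphism to the spectrum of a field is flat, so this base change is flat. By the compatibility of relative dualizing sheaves with flat base change, $\omega_{X/Y}$ is therefore the pullback of $\omega_{k^{1/p}/k}=\Hom_k(k^{1/p},k)$ to $X$. It then suffices to show $\Hom_k(k^{1/p},k)\cong k^{1/p}$ as $k^{1/p}$-modules.

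For this key step, set $L=k^{1/p}$, which is a finite field extension of $k$. The $L$-module $\Hom_k(L,k)$, with $L$ acting by precomposition with multiplication, has $k$-dimension $[L:k]$. Its $L$-dimension is therefore $1$, and since $L$ is a field it is free of rank one, so $\Hom_k(L,k)\cong L$.

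A nonzero generator can be written down even though $L/k$ is purely inseparable, so the trace form is zero. Choose a $k$-basis of $L$, for example $p$-monomials in a $p$-basis of $k$, and take the functional $\phi$ sending the basis element $1$ to $1$ and all other basis elements to $0$. The map $a\mapsto \phi(a\cdot -)$ is injective because $L$ is a field. It is then surjective by the dimension count.

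Pulling this isomorphism back gives $\omega_{X/Y}\cong\calO_{Y}\otimes_k L=f_*\calO_X$ as $f_*\calO_X$-modules, that is, $\omega_{X/Y}\cong\calO_X$.

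The only point that needs care is the base-change compatibility. To check it directly, note that for a finite flat (here even free) morphism, $\SheafHom_{\calO_Y}(\calO_Y\otimes_k L,\calO_Y)=\calO_Y\otimes_k\Hom_k(L,k)$. This identification holds because $L$ is a finite-dimensional $k$-vector space, and it is compatible with the $L$-module structures. The rest is immediate.
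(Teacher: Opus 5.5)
Your proof is correct and follows the paper's argument. The paper gives no separate proof: it states the lemma as an immediate consequence of the compatibility of $\omega_{X/Y}=f^!\calO_Y$ with flat base change, applied to $\Spec k^{\frac{1}{p}}\rightarrow \Spec k$ along $Y\rightarrow \Spec k$, together with $\Hom_k(k^{\frac{1}{p}},k)\cong k^{\frac{1}{p}}$; your explicit identification $\SheafHom_{\calO_Y}(\calO_Y\otimes_k L,\calO_Y)\cong \calO_Y\otimes_k \Hom_k(L,k)$ and the dimension count simply supply the details the paper leaves unstated.
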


Assume now that $X$ is normal. 
Then there exists a reflexive sheaf $\omega_X$ which agrees with $\Omega_X^{\dim_X}$ on the smooth locus, called the dualizing sheaf. 
The canonical divisor $K_X$ is a Weil divisor on $X$ such that $\calO_X(K_X)\cong \omega_X$. 
If both $X$ and $Y$ are normal, then there is an isomorphism $\omega_{X/Y}\cong (\omega_X\otimes f^*\omega_Y^{\vee})^{\vee\vee}$, from which we may directly obtain the following:
\begin{lem}\label{lem: duality}
	Assume that $X$ and $Y$ are normal, and let $D$ be a divisor on $X$. 
	Then there is a one-to-one correspondence
	\[
		\{\varphi: f_*\calO_X(D) \rightarrow \calO_Y \text{ homomorphisms of $\calO_Y$-modules}\} 
		\cong 
		H^0(X,\calO_X(K_X-f^*K_Y-D))
	\]
	which is natural in $D$. 
\end{lem}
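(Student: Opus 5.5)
The plan is to derive the correspondence from the Grothendieck duality isomorphism for finite morphisms recalled above, and then make the dualizing sheaf explicit. Set $\calF=\calO_X(D)$ and $\calG=\calO_Y$. Duality gives
\[
\Hom_{\calO_Y}(f_*\calO_X(D),\calO_Y)\cong \Hom_{\calO_X}(\calO_X(D), f^!\calO_Y)=\Hom_{\calO_X}(\calO_X(D),\omega_{X/Y}).
\]
Everything then comes down to identifying the right-hand side with $H^0(X,\calO_X(K_X-f^*K_Y-D))$.

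First I would identify $\omega_{X/Y}$ with the reflexive rank one sheaf $\calO_X(K_X-f^*K_Y)$. This uses $\omega_{X/Y}\cong(\omega_X\otimes f^*\omega_Y^\vee)^{\vee\vee}$ and $\omega_X\cong\calO_X(K_X)$. Here $f^*K_Y$ should be interpreted as follows. On the regular locus $Y^{reg}$, $K_Y$ is Cartier, so $f^*K_Y$ is defined there. Its complement has codimension $\geq 2$ in $Y$, and its preimage has codimension $\geq 2$ in $X$ because $f$ is finite. So $f^*K_Y$ extends uniquely as a Weil divisor class. On the open set where $K_Y$ is Cartier, $f^*\omega_Y^\vee=\calO_X(-f^*K_Y)$, and $\omega_X\otimes\calO_X(-f^*K_Y)=\calO_X(K_X-f^*K_Y)$. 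Both sides are reflexive on the normal scheme $X$ and agree off codimension two, hence they agree everywhere.

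Second I would compute the Hom of reflexive rank one sheaves on the normal scheme $X$:
\[
\Hom(\calO_X(D),\calO_X(E))=H^0(X,\SheafHom(\calO_X(D),\calO_X(E)))=H^0(X,\calO_X(E-D)).
\]
The sheaf $\SheafHom$ between reflexive sheaves is reflexive, and it agrees with $\calO_X(E-D)$ on the regular locus, where both divisors are Cartier. By normality it therefore agrees everywhere. Taking $E=K_X-f^*K_Y$ gives the claimed bijection.

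For naturality in $D$, take an inclusion $D'\le D$, or more generally any map induced by multiplication by a section. Duality is natural in $\calF$, and the identification $\SheafHom(\calO_X(D),\calO_X(E))\cong\calO_X(E-D)$ is natural in $D$, so the correspondence commutes with restriction.

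The main obstacle I expect is bookkeeping rather than substance: making $f^*K_Y$ precise when $K_Y$ is not $\bbQ$-Cartier, and checking that all identifications are canonical. Both are handled by the same device, namely working on the big open set where everything is Cartier and extending by reflexivity. Finiteness of $f$ is exactly what guarantees that preimages of codimension two subsets remain of codimension two.
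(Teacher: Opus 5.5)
Your proposal is correct and follows the same route as the paper. The paper obtains the lemma directly from Grothendieck duality for the finite map $f$, the identification $\omega_{X/Y}\cong(\omega_X\otimes f^*\omega_Y^{\vee})^{\vee\vee}\cong\calO_X(K_X-f^*K_Y)$, and the standard description of $\Hom$ between rank one reflexive sheaves on a normal variety. One small point: your codimension-two argument for extending $f^*K_Y$ needs $f$ to be surjective (so that $\dim X=\dim Y$), not merely finite. This holds in every application in the paper, namely Frobenius and base field extensions.
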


\subsection{$F$-singularities}
The notion of $F$-singularities for a pair with boundary was introduced by Hara and Watanabe in \cite{haraFregularFpureRings2001}. 
They should be considered as characteristic $p$ replacements for singularities arising from the minimal model program. 
We briefly recall the definitions and properties of different types of $F$-singularities that we shall use later. 

\begin{set}
	In this subsection, we work over an $F$-finite field $k$ of characteristic $p>0$. 
	By a pair $(X,\Delta)$, we always assume that $X$ is normal. 
	The \textbf{Cartier index} of $K_X+\Delta$ is the smallest positive integer $m$ such that $m(K_X+\Delta)$ is Cartier.
\end{set}

\begin{defn}
	We say that $(X,\Delta)$ is
	\begin{enumerate}
		\item \textbf{globally $F$-regular} if for any effective Weil divisor $D$, the natural $\calO_X$-module homomorphism $\calO_X \rightarrow F^e_* \calO_X (\lceil (p^e-1)\Delta \rceil + D)$ admits a splitting for some integer $e>0$,
		\item \textbf{globally purely $F$-regular} if for any effective Weil divisor $D$ whose support contains no component of $\lfloor \Delta \rfloor$, the natural $\calO_X$-module homomorphism $\calO_X \rightarrow F^e_* \calO_X (\lceil (p^e-1)\Delta \rceil + D)$ admits a splitting for some integer $e>0$,
		\item \textbf{globally $F$-split} if the natural $\calO_X$-module homomorphism $\calO_X \rightarrow F^e_* \calO_X( \lceil (p^e-1)\Delta \rceil )$ admits a splitting for some integer $e>0$. 
	\end{enumerate}
\end{defn}

There are also relative and local versions.

\begin{defn}
	Let $f: X \rightarrow Y$ be a morphism. 
	Let $X'$ be the base change of $X$ along the $e$-th Frobenius of $Y$, and $F^e_{X/Y}:X\rightarrow X'$ be the relative $e$-th Frobenius. 
	\begin{enumerate}
		\item \textbf{globally $F$-regular} (resp. \textbf{globally purely $F$-regular} resp. \textbf{globally $F$-split}) over $Y$ if we have the same splitting, but with the domain $\calO_X$ replaced by $\calO_{X'}$, and the pushforward $F^e_*$ replaced by $F^e_{X/Y,*}$,
		\item \textbf{strongly $F$-regular} (resp. \textbf{purely $F$-regular} resp. \textbf{sharply $F$-pure}) if there exists an open over $X=\bigcup_i X_i$ such that $(X_i, \Delta|_{X_i})$ is globally $F$-regular (resp. globally purely $F$-regular resp. globally $F$-split). 
	\end{enumerate}
\end{defn}

\begin{rmk}
	In particular, the absolute global $F$-regularity (resp. global pure $F$-regularity resp. global $F$-splitting) can be considered as relative 
	global $F$-regularity (resp. global pure $F$-regularity resp. global $F$-splitting) over $\bbF_p$.
\end{rmk}

We list here some fundamental properties of the above defined splitting notions. 
First, we give a criterion on global $F$-splitting using Grothendieck duality. 

\begin{lem}\label{lem: surjectivity criterion of GFS and GFR}
	The pair $(X,\Delta)$ is globally $F$-split if and only if the natural map $H^0(X,F_*^e\calO_X(\lfloor (1-p^e)(K_X+\Delta) \rfloor)) \rightarrow H^0(X,\calO_X)$ is surjective for some integer $e>0$. 
\end{lem}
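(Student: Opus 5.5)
The strategy is to rewrite the splitting condition as a surjectivity statement through Grothendieck duality for the finite morphism $F^e\colon X\to X$, in the form of Lemma \ref{lem: duality}. The pair $(X,\Delta)$ is globally $F$-split exactly when, for some $e>0$, the map $\calO_X \to F^e_*\calO_X(\lceil (p^e-1)\Delta\rceil)$ admits an $\calO_X$-linear left inverse. Equivalently, some homomorphism $\varphi\colon F^e_*\calO_X(\lceil (p^e-1)\Delta\rceil)\to \calO_X$ sends $1$ to $1$, i.e. restricts to the identity on $\calO_X$. Any $\varphi$ with $\varphi(1)=1$ is automatically a splitting, and conversely. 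So the splitting question becomes a question about the evaluation-at-$1$ map
\[
\Hom_{\calO_X}(F^e_*\calO_X(\lceil (p^e-1)\Delta\rceil),\calO_X)\to \Hom_{\calO_X}(\calO_X,\calO_X)=H^0(X,\calO_X),
\]
induced by precomposition with $\calO_X\to F^e_*\calO_X(\lceil (p^e-1)\Delta\rceil)$. This map is $H^0(X,\calO_X)$-linear, and $1$ lies in its image if and only if it is surjective. Hence global $F$-splitting is equivalent to surjectivity of this evaluation map for some $e$.

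Next I apply Lemma \ref{lem: duality} to $f=F^e$ and $D=\lceil (p^e-1)\Delta\rceil$. Since $F^{e,*}K_X\sim p^eK_X$, this identifies the source with
\[
H^0\bigl(X,\calO_X(K_X-p^eK_X-\lceil (p^e-1)\Delta\rceil)\bigr).
\]
Viewed on the target copy of $X$, this is $H^0(X,F^e_*\calO_X(\lfloor (1-p^e)(K_X+\Delta)\rfloor))$, because $(1-p^e)K_X$ is integral and so $-\lceil (p^e-1)\Delta\rceil=\lfloor (1-p^e)\Delta\rfloor$. The same lemma with $D=0$ identifies $\Hom(F^e_*\calO_X,\calO_X)$ with $H^0(X,F^e_*\calO_X((1-p^e)K_X))$. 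By the naturality in $D$, the effective divisor $\lceil (p^e-1)\Delta\rceil$ induces an inclusion of the first space into the second, corresponding to restricting homomorphisms along $F^e_*\calO_X\hookrightarrow F^e_*\calO_X(\lceil (p^e-1)\Delta\rceil)$.

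The map in the statement should then be the evaluation map above, transported through these identifications. It factors as this inclusion followed by the trace-type map $H^0(F^e_*\calO_X((1-p^e)K_X))\to H^0(\calO_X)$, which is evaluation of $\varphi\colon F^e_*\calO_X\to\calO_X$ at $1$. Combined with the first paragraph, this gives the equivalence.

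The main obstacle is not conceptual but bookkeeping. One has to check that the naturality of Lemma \ref{lem: duality} in $D$ is compatible with the evaluation map, and that the duality for $F^e$ (with $F^!\calO_X$ computed via $\omega_X^{1-p^e}$ on the normal variety, extended reflexively across codimension two) yields exactly the rounding $\lfloor (1-p^e)(K_X+\Delta)\rfloor$. I would do this on the regular locus, where everything is a line bundle, and extend by reflexivity, since $X$ is normal and all sheaves involved are reflexive of rank one.
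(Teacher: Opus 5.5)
Your proposal is correct and follows the paper's proof. Like the paper, you dualize the natural map $\calO_X\to F^e_*\calO_X(\lceil (p^e-1)\Delta\rceil)$ to turn splitting into surjectivity of the evaluation-at-$1$ map on $\Hom_{\calO_X}(F^e_*\calO_X(\lceil (p^e-1)\Delta\rceil),\calO_X)$, and then identify that space with $H^0(X,F^e_*\calO_X(\lfloor (1-p^e)(K_X+\Delta)\rfloor))$ via Lemma \ref{lem: duality}; your extra factorization through the $D=0$ trace map is consistent but not needed.
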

\begin{proof}
	Applying the functor $\Hom_{\calO_X}(-,\calO_X)$ to the natural map $\calO_X\rightarrow F_*^e\calO_X( \lceil (p^e-1)\Delta \rceil )$, we see that global $F$-splitting is equivalent to the surjectivity of $\Hom_{\calO_X}(F_*^e\calO_X( \lceil (p^e-1)\Delta \rceil ),\calO_X) \rightarrow H^0(X,\calO_X)$ for some $e$. 
	Then apply Lemma \ref{lem: duality} to the left hand side.  
\end{proof}

Then we show that we have certain flexibilities on the exponent $e$ we may choose for a splitting to exist. 

\begin{lem}\label{lem: multiplies of GFS index are still GFS}
	Assume that there exists an integer $e$ such that the natural map $\calO_X \rightarrow F^e_* \calO_X( \lceil (p^e-1)\Delta \rceil )$ is split. 
	Then for any integer $n\geq 1$, the natural map $\calO_X \rightarrow F^{ne}_* \calO_X( \lceil (p^{ne}-1)\Delta \rceil )$ admits a splitting.
\end{lem}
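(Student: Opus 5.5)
Induction on $n$: compose the given splitting with suitable Frobenius pushforwards of itself. The case $n=1$ is the hypothesis. For the inductive step, write $D_e := \lceil (p^e-1)\Delta \rceil$. Let $\varphi: F^e_*\calO_X(D_e) \rightarrow \calO_X$ be the given splitting and $\psi_n: F^{ne}_*\calO_X(D_{ne}) \rightarrow \calO_X$ a splitting for $ne$. I want a splitting for $(n+1)e$.

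The key numerical input is the inequality of Weil divisors
\[
D_{(n+1)e} = \lceil (p^{(n+1)e}-1)\Delta \rceil \leq \lceil (p^{e}-1)\Delta \rceil + p^{e}\lceil (p^{ne}-1)\Delta \rceil = D_e + p^e D_{ne}.
\]
This holds because $(p^{(n+1)e}-1)\Delta = (p^e-1)\Delta + p^e(p^{ne}-1)\Delta$ and $\lceil a+b\rceil \le \lceil a\rceil + \lceil b \rceil$, $\lceil p^e b\rceil \le p^e\lceil b\rceil$, applied coefficientwise. So there is a natural inclusion $\calO_X(D_{(n+1)e}) \hookrightarrow \calO_X(D_e + p^eD_{ne})$ compatible with the inclusions of $\calO_X$.

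Next, twist $\varphi$ by $D_{ne}$: applying the projection formula (on the reflexive level, working on the regular locus where $D_{ne}$ is Cartier and extending by reflexivity since $X$ is normal) gives
\[
F^e_*\calO_X(D_e + p^e D_{ne}) \rightarrow \calO_X(D_{ne}),
\]
restricting to $\varphi$ on $F^e_*\calO_X(D_e)$. Pushing forward by $F^{ne}_*$ and composing with $\psi_n$ gives
\[
F^{(n+1)e}_*\calO_X(D_e+p^eD_{ne}) \rightarrow F^{ne}_*\calO_X(D_{ne}) \xrightarrow{\psi_n} \calO_X.
\]
Precomposing with the inclusion from the first step yields a map $F^{(n+1)e}_*\calO_X(D_{(n+1)e}) \to \calO_X$. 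Tracing $1$ through the composite, $\varphi$ sends $1\mapsto 1$, and then $\psi_n$ sends $1\mapsto 1$, so it is a splitting.

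The only delicate point is the twisting step, since $D_{ne}$ need not be Cartier. I will handle it by restricting to the smooth locus $U$, whose complement has codimension $\geq 2$, constructing the map there, and extending by $S_2$/reflexivity of the target $\calO_X(D_{ne})$. Everything else is bookkeeping with the ceiling inequality.
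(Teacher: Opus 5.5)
Your proof is correct and is essentially the paper's argument. Both proceed by induction on $n$, twist the level-$e$ map by $\calO_X(\lceil (p^{ne}-1)\Delta\rceil)$, push forward by $F^{ne}$ and compose, then use the same inequality $D_{(n+1)e}\le D_e+p^eD_{ne}$ to pass to the smaller sheaf. The only differences are cosmetic: you twist and compose the splitting maps (and justify the reflexive projection formula explicitly), whereas the paper twists the natural injections and uses that a composition of split maps is split.
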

\begin{proof}
	We proceed by an induction. 
	Assume $\calO_X \rightarrow F^{ke}_* \calO_X( \lceil (p^{ke}-1)\Delta \rceil )$ for some integer $k\geq 1$, then we may twist the map $\calO_X \rightarrow F^e_* \calO_X( \lceil (p^e-1)\Delta \rceil )$ by $\calO_X( \lceil (p^{ke}-1)\Delta \rceil )$ and take the pushforward by $F^{ke}$. 
	This gives us a split map 
	\[
		F^{ke}_*\calO_X( \lceil (p^{ke}-1)\Delta \rceil ) \rightarrow F^{(k+1)e}_* \calO_X( \lceil (p^e-1)\Delta \rceil + p^e \lceil (p^{ke}-1) \Delta \rceil).
	\] 
	Hence the composition 
	\[
		\calO_X \rightarrow F^{ke}_* \calO_X( \lceil (p^{ke}-1)\Delta \rceil ) \rightarrow F^{(k+1)e}_* \calO_X( \lceil (p^e-1)\Delta \rceil + p^e \lceil (p^{ke}-1) \Delta \rceil)
	\] 
	is also split. 
	The inequalities 
	\[
		\lceil (p^{(k+1)e}-1)\Delta \rceil
		\leq \lceil (p^e-1)\Delta \rceil + \lceil (p^{(k+1)e}-p^e) \Delta \rceil
		\leq \lceil (p^e-1)\Delta \rceil + p^e \lceil (p^{ke}-1) \Delta \rceil
	\] identify $\calO_X(\lceil (p^{(k+1)e}-1)\Delta \rceil)$ as a subsheaf of $\calO_X( \lceil (p^e-1)\Delta \rceil + p^e \lceil (p^{ke}-1) \Delta \rceil)$, sending the canonical section to the canonical section. 
	Hence $\calO_X \rightarrow F^{(k+1)e}_*\calO_X(\lceil (p^{(k+1)e}-1)\Delta \rceil)$ is split, finishing the induction. 
\end{proof}

Next, we show that relative global $F$-splittings are stable under base change, composition and descent, given some mild extra conditions. 

\begin{lem}\label{lem: base change of relative GFS}
	Let $(X,\Delta)$ be a pair, and $f:(X,\Delta) \rightarrow S$ be a morphism, such that $(X,\Delta)$ is globally $F$-split over $S$. 
	Then $(X,\Delta)_T$ is globally $F$-split over $T$ for any flat base change $T \rightarrow S$. 
\end{lem}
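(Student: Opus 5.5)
We argue directly from the definition of relative global $F$-splitting, checking that every ingredient commutes with flat base change. Let $g: T \to S$ be flat and write $X_T := X \times_S T$, with $g_X: X_T \to X$ the projection and $\Delta_T := g_X^*\Delta$. Flat pullback of a Weil divisor on a normal scheme is defined componentwise on the codimension one points. For fixed $e$, let $X'$ be the base change of $X$ along $F^e_S$, and $X'_T$ the base change of $X_T$ along $F^e_T$.

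The first step is to relate the two relative Frobenius situations. There is a natural morphism $X'_T \to X'$, which is the base change of $g$ along $X' \to S$. The point is to check that $F^e_{X_T/T}: X_T \to X'_T$ is obtained from $F^e_{X/S}: X \to X'$ by the base change $X'_T \to X'$, i.e. that the square
\[
X_T \longrightarrow X, \qquad X'_T \longrightarrow X'
\]
is cartesian. I would check this affine-locally: with $S = \Spec A$, $T = \Spec B$, $X = \Spec R$, one has $X'_T = \Spec(R \otimes_{A,F^e} A \otimes_A B')$, where $B'$ is $B$ viewed over $A$ via Frobenius. Comparing this with $(R\otimes_A B)\otimes_{B,F^e} B$ is a formal tensor identity. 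Since $X'_T \to X'$ is flat, being a base change of the flat $g$, flat base change for the affine morphism $F^e_{X/S}$ gives
\[
h^*F^e_{X/S,*}\calO_X(\lceil (p^e-1)\Delta\rceil) \cong F^e_{X_T/T,*}\, g_X^*\calO_X(\lceil (p^e-1)\Delta\rceil),
\]
where $h: X'_T \to X'$.

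The second step, which I expect to be the main obstacle, is to identify $g_X^*\calO_X(\lceil (p^e-1)\Delta\rceil)$ with $\calO_{X_T}(\lceil (p^e-1)\Delta_T\rceil)$. Both sheaves are reflexive of rank one: flat pullback preserves reflexivity, since it commutes with $\SheafHom$ for coherent sheaves. Both also agree in codimension one, because flat pullback of a prime divisor is reduced, or at least has the expected multiplicities. If $X_T$ fails to be normal, I would instead interpret $\Delta_T$ directly as the divisorial pullback and define the sheaf $\calO_{X_T}(\lceil (p^e-1)\Delta_T \rceil)$ as $g_X^*$ of the sheaf on $X$. This convention is compatible with the ceiling, since $\lceil\cdot\rceil$ commutes with pulling back coefficients whenever the pullback of each prime is reduced. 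Where the pullback of a prime is not reduced, one only gets an inclusion into the sheaf defined via $\Delta_T$ that sends the canonical section to the canonical section. By the same argument as in Lemma \ref{lem: multiplies of GFS index are still GFS}, such an inclusion still suffices for the splitting statement.

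Finally, we pull back the splitting. Apply $h^*$ to the split injection $\calO_{X'} \to F^e_{X/S,*}\calO_X(\lceil (p^e-1)\Delta\rceil)$ together with its retraction. Since $h^*$ is a functor and $h^*\calO_{X'} = \calO_{X'_T}$, this yields a split map $\calO_{X'_T} \to F^e_{X_T/T,*}\calO_{X_T}(\lceil (p^e-1)\Delta_T\rceil)$, and it is the natural one because the isomorphisms above respect the unit sections. Hence $(X,\Delta)_T$ is globally $F$-split over $T$.
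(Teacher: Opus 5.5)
Your overall route is the paper's: the Cartesian square relating $F^e_{X_T/T}$ to $F^e_{X/S}$, base change of $F^e_{X/S,*}$ along the flat map $X'_T\to X'$, and pulling back the retraction. The one non-formal point is comparing the roundings, and that is where your write-up goes wrong.

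The two sheaves $g_X^*\calO_X(\lceil (p^e-1)\Delta\rceil)$ and $\calO_{X_T}(\lceil (p^e-1)\Delta_T\rceil)$ need not agree in codimension one. A flat pullback of a prime divisor can be non-reduced, for instance under a ramified finite flat cover or an inseparable base field extension. Suppose $P$ has coefficient $a$ in $\Delta$ and $g_X^*P=\sum_i n_iQ_i$. Then the coefficients along $Q_i$ are $n_i\lceil (p^e-1)a\rceil$ and $\lceil n_i(p^e-1)a\rceil$ respectively, and $n\lceil x\rceil\geq \lceil nx\rceil$. So $\lceil (p^e-1)\Delta_T\rceil\leq \lceil (p^e-1)\Delta\rceil_T$. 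Hence $\calO_{X_T}(\lceil (p^e-1)\Delta_T\rceil)$ is a \emph{subsheaf} of the pulled-back sheaf, compatibly with the canonical sections. This is the opposite of the inclusion ``into the sheaf defined via $\Delta_T$'' that you assert.

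The direction is essential. The natural map $\calO_{X'_T}\to F^e_{X_T/T,*}\calO_{X_T}(\lceil (p^e-1)\Delta_T\rceil)$ followed by this inclusion is the pulled-back split map, so the pulled-back retraction restricts to a retraction. With your direction, the map you want would be the split map followed by an inclusion into a larger sheaf, and no splitting would follow. Lemma \ref{lem: multiplies of GFS index are still GFS}, which you cite, uses the correct direction. The correct inclusion is exactly what the paper's proof uses.

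Relatedly, drop the convention of defining $\calO_{X_T}(\lceil (p^e-1)\Delta_T\rceil)$ as $g_X^*$ of the sheaf on $X$ when $X_T$ is not normal. It silently replaces $\lceil (p^e-1)\Delta_T\rceil$ by $\lceil (p^e-1)\Delta\rceil_T$, which is precisely the discrepancy at issue. Your final paragraph (``it is the natural one because the isomorphisms above respect the unit sections'') then proves splitting for the larger sheaf, not the one in the definition. A minor point in the affine check: $X'_T=X'\times_S T=\Spec((R\otimes_{A,F^e}A)\otimes_A B)$ with the original $A$-structure on $B$. Twisting $B$ by Frobenius as well would amount to base change along $F^{2e}$.
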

\begin{proof}
	Let $X'$ be the base change of $X$ along $F^e:S \rightarrow S$. 
	Then one can easily check that the following diagram is commutative and Cartesian:
	\[\begin{tikzcd}
		{X_T} & X \\
		{(X')_T\cong(X_T)'} & {X'.}
		\arrow[from=1-1, to=1-2]
		\arrow["{F^e_{X_T/T}}"', from=1-1, to=2-1]
		\arrow["{F^e_{X/S}}", from=1-2, to=2-2]
		\arrow["\pi"', from=2-1, to=2-2]
	\end{tikzcd}\]
	So by \cite[Tag 02KG]{stacks-project}, we have $\pi^*F^e_{X/S,*}\calO_X(\lceil (p^e-1)\Delta \rceil) \cong F^e_{X_T/T,*}\calO_{X_T}(\lceil (p^e-1)\Delta \rceil_T)$. 
	There is also an inclusion $F^e_{X_T/T,*}\calO_{X_T}(\lceil (p^e-1)\Delta_T \rceil) \subset F^e_{X_T/T,*}\calO_{X_T}(\lceil (p^e-1)\Delta \rceil_T)$, hence we can pullback a splitting of $\calO_{X'} \rightarrow F^e_{X/S,*}\calO_X(\lceil (p^e-1)\Delta \rceil)$ on $X'$ to get a splitting of $\calO_{(X_T)'} \rightarrow F^e_{X_T/T,*}\calO_{X_T}(\lceil (p^e-1)\Delta_T \rceil)$ on $(X_T)'$. 
\end{proof}

\begin{lem}\label{lem: compositum of relative GFS}
	Let $(X,\Delta)$ be a pair and $f:X\rightarrow Y, g:Y \rightarrow Z$ be two morphisms. 
	Assume
	\begin{enumerate}
		\item $(X,\Delta)$ is globally $F$-split over $Y$, and
		\item $Y$ is globally $F$-split over $Z$. 
	\end{enumerate}
	Then $(X,\Delta)$ is globally $F$-split over $Z$.
\end{lem}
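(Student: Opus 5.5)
We compose the two relative Frobenius splittings, working in the same style as Lemma \ref{lem: multiplies of GFS index are still GFS}. Throughout, for a morphism $h:V\rightarrow W$ we write $V^{(e)}_W$ for the base change of $V$ along $F^e_W$, and $F^e_{V/W}:V\rightarrow V^{(e)}_W$ for the relative Frobenius.

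The starting point is the factorization of the relative Frobenius of $X$ over $Z$. There is a commutative diagram
\[
	X \xrightarrow{F^e_{X/Y}} X^{(e)}_Y \xrightarrow{\ \psi\ } X^{(e)}_Z ,
\]
in which $F^e_{X/Z}=\psi\circ F^e_{X/Y}$. Here $X^{(e)}_Y = X\times_{Y,F^e_Y}Y$, and $\psi$ is the base change of $F^e_{Y/Z}:Y\rightarrow Y^{(e)}_Z$ along the projection $X^{(e)}_Z=X\times_{Z,F^e_Z}Z\rightarrow Y^{(e)}_Z$. This is a routine check with the universal properties of the fibre products involved.

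Step 1 is to reduce to a common exponent. By Lemma \ref{lem: multiplies of GFS index are still GFS}, applied in its relative form (the same proof works verbatim with relative Frobenii), we may take the same $e$ for both splittings.

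Step 2 lifts the splitting of $Y$ over $Z$ to $\psi$. Hypothesis (2) gives a map $\calO_{Y^{(e)}_Z}\rightarrow F^e_{Y/Z,*}\calO_Y$ with a retraction. Pulling it back along $X^{(e)}_Z\rightarrow Y^{(e)}_Z$ and using that $F^e_{Y/Z}$ is affine, we obtain a split map $\calO_{X^{(e)}_Z}\rightarrow \psi_*\calO_{X^{(e)}_Y}$. This base change compatibility of affine pushforward is the same one used in Lemma \ref{lem: base change of relative GFS} (\cite[Tag 02KG]{stacks-project}), and notably it holds without any flatness assumption, because for an affine morphism we are only pulling back an algebra and a module map. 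Since a retraction pulls back to a retraction, splitness survives the base change.

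Step 3 pushes forward the splitting of $X$ over $Y$. Hypothesis (1) gives a split map $\calO_{X^{(e)}_Y}\rightarrow F^e_{X/Y,*}\calO_X(\lceil (p^e-1)\Delta\rceil)$. Applying $\psi_*$, the map $\psi_*\calO_{X^{(e)}_Y}\rightarrow F^e_{X/Z,*}\calO_X(\lceil (p^e-1)\Delta\rceil)$ is still split. Composing with Step 2 then yields the natural map $\calO_{X^{(e)}_Z}\rightarrow F^e_{X/Z,*}\calO_X(\lceil (p^e-1)\Delta\rceil)$ as a composite of split maps. The naturality (it sends $1$ to the canonical section) is clear from the construction, so this map is split, which is exactly the statement that $(X,\Delta)$ is globally $F$-split over $Z$.

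The main obstacle is verifying the factorization $F^e_{X/Z}=\psi\circ F^e_{X/Y}$ together with the Cartesian square that identifies $\psi$ as a base change of $F^e_{Y/Z}$. Closely tied to this is checking that pulling back along a possibly non-flat morphism is harmless here. It is harmless because $\calO_Y$ (with no boundary) is pulled back as an algebra along an affine map, and no divisorial sheaf on $Y$ is involved.
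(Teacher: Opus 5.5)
Your proposal is correct and follows the paper's proof essentially step by step. The paper uses the same factorization of $F^e_{X/Z}$ through $X'$ (the base change of $X$ along $F^e_Y$), fixes a common $e$ via Lemma \ref{lem: multiplies of GFS index are still GFS}, base-changes the splitting of $\calO_{Y''}\to F^e_{Y/Z,*}\calO_Y$ to a splitting of $\calO_{X''}\to\pi_*\calO_{X'}$ using \cite[Tag 02KG]{stacks-project}, and composes it with $\pi_*$ of the splitting for $X$ over $Y$ — with your verification that $\psi$ is the base change of $F^e_{Y/Z}$ being exactly the Cartesian square the paper calls ``easy to check''.
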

\begin{proof}
	Consider the following diagram:
	\[\begin{tikzcd}
		X &&& \\
		& {X'} & {X''} & X \\
		& Y & {Y''} & {Y.}
		\arrow["{{F^e_{X/Y}}}"{description}, from=1-1, to=2-2]
		\arrow["{F^e_{X/Z}}", from=1-1, to=2-3]
		\arrow["f"', from=1-1, to=3-2]
		\arrow["\pi"', from=2-2, to=2-3]
		\arrow["{f'}", from=2-2, to=3-2]
		\arrow["\lrcorner"{anchor=center, pos=0.125}, draw=none, from=2-2, to=3-3]
		\arrow[from=2-3, to=2-4]
		\arrow["{f''}", from=2-3, to=3-3]
		\arrow["\lrcorner"{anchor=center, pos=0.125}, draw=none, from=2-3, to=3-4]
		\arrow["f", from=2-4, to=3-4]
		\arrow["{F^e_{Y/Z}}"', from=3-2, to=3-3]
		\arrow["{g^*F^e_Z}"', from=3-3, to=3-4]
	\end{tikzcd}\]
	where $X''$ and $Y''$ are base changes of $X$ and $Y$ along the $e$-th absolute Frobenius $F^e:Z \rightarrow Z$. 
	Using Lemma \ref{lem: multiplies of GFS index are still GFS}, we can assume the integer $e$ is so picked such that $\calO_{X'} \rightarrow F^e_{X/Y,*} \calO_X( \lceil (p^e-1)\Delta \rceil )$ and $\calO_{Y''} \rightarrow F^e_{Y/Z,*}\calO_Y$ are split.
	It is easy to check that $X'$ is indeed the base change of $X$ along $F^e:Y\rightarrow Y$. 
	By \cite[Tag 02KG]{stacks-project} there is an isomorphism $f''^*F^e_{Y/Z,*}\calO_Y\cong \pi_*\calO_{X'}$. 
	Hence the map $(\calO_{X''} \rightarrow \pi_*\calO_{X'}) \cong (f''^*\calO_{Y''} \rightarrow f''^*F^e_{Y/Z,*}\calO_Y)$ is split. 
	Now $\calO_{X''} \rightarrow F^e_{X/Z,*} \calO_X( \lceil (p^e-1)\Delta \rceil )$ can be written as $\calO_{X''} \rightarrow \pi_*\calO_{X'} \rightarrow \pi_*F^e_{X/Y,*} \calO_X( \lceil (p^e-1)\Delta \rceil )$, the composition of a split map with the pushforward of a split map, hence is also split. 
\end{proof}

\begin{lem}\label{lem: descent of relative GFS}
	Let $(X,\Delta)$ be a pair and $f:X\rightarrow Y, g:Y \rightarrow Z$ be two morphisms. 
	Assume
	\begin{enumerate}
		\item $(X,\Delta)$ is globally $F$-split over $Z$, 
		\item $f_*\calO_X=\calO_Y$, and
		\item $Z$ is smooth over $\bbF_p$. 
	\end{enumerate}
	Then $Y$ is globally $F$-split over $Z$. 
\end{lem}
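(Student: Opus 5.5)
We need to show: if $(X,\Delta)$ is globally $F$-split over $Z$, $f_*\calO_X=\calO_Y$, and $Z$ is smooth over $\bbF_p$, then the map $\calO_{Y''}\to F^e_{Y/Z,*}\calO_Y$ splits for some $e$. Here $Y''$ and $X''$ are the base changes of $Y$ and $X$ along $F^e:Z\to Z$, with $f'':X''\to Y''$ the induced map.

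The plan is to push the given splitting on $X''$ forward along $f''$. Smoothness of $Z$ is used to make this pushforward behave well.

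\emph{Step 1: the splitting on $X$.} Fix $e$ and a splitting $\sigma: F^e_{X/Z,*}\calO_X(\lceil (p^e-1)\Delta\rceil)\to \calO_{X''}$ of the natural map. Since $\Delta$ is effective, $\calO_X\subset \calO_X(\lceil (p^e-1)\Delta\rceil)$, and this inclusion sends $1$ to the canonical section. So $\sigma$ restricts to a splitting $F^e_{X/Z,*}\calO_X\to\calO_{X''}$ of $\calO_{X''}\to F^e_{X/Z,*}\calO_X$. We may therefore forget $\Delta$.

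\emph{Step 2: factor through $Y$.} The relative Frobenius factors as $F^e_{X/Z}=\pi\circ F^e_{X/Y}$, where $\pi:X'\to X''$ is the base change of $F^e_{Y/Z}:Y\to Y''$ along $f''$, as in the diagram of Lemma~\ref{lem: compositum of relative GFS}. We apply $f''_*$ to the composite $\calO_{X''}\to F^e_{X/Z,*}\calO_X\to \calO_{X''}$, which is the identity. The middle term gives
\[
f''_*F^e_{X/Z,*}\calO_X = F^e_{Y/Z,*}f_*\calO_X = F^e_{Y/Z,*}\calO_Y ,
\]
by commutativity $f''\circ F^e_{X/Z}=F^e_{Y/Z}\circ f$ and hypothesis (2). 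So we obtain maps $f''_*\calO_{X''}\to F^e_{Y/Z,*}\calO_Y\to f''_*\calO_{X''}$ whose composite is the identity. The first map is compatible with the natural map $\calO_{Y''}\to F^e_{Y/Z,*}\calO_Y$ via $\calO_{Y''}\to f''_*\calO_{X''}$.

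\emph{Step 3: identify $f''_*\calO_{X''}$ with $\calO_{Y''}$.} This is the main obstacle, and it is where smoothness enters. By Kunz's theorem, $Z$ smooth over the perfect field $\bbF_p$ (in practice smooth over $k$ perfect, hence regular and $F$-finite) means $F^e:Z\to Z$ is flat. Flat base change then gives
\[
f''_*\calO_{X''}\cong (f_*\calO_X)\otimes_{\calO_Z,F^e}\calO_Z=\calO_{Y''} .
\]
One must check that this isomorphism is the natural map $\calO_{Y''}\to f''_*\calO_{X''}$, and that it is compatible with the relative Frobenius maps above. This follows because the base change isomorphism is adjoint to the natural map (\cite[Tag 02KG]{stacks-project}, as in Lemma~\ref{lem: base change of relative GFS}).

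\emph{Conclusion.} With this identification, Step 2 gives maps $\calO_{Y''}\to F^e_{Y/Z,*}\calO_Y\to\calO_{Y''}$ with identity composite, where the first map is the natural one. Hence $Y$ is globally $F$-split over $Z$.
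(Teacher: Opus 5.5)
Your proposal is correct and takes essentially the paper's approach: push the splitting forward along $f''$, and use Kunz's theorem with flat base change to identify $f''_*\calO_{X''}\cong\calO_{Y''}$. The only cosmetic difference is that the paper passes through the intermediate sheaf $\pi_*\calO_{X'}$ and then observes that the resulting split map factors through $\calO_{Y''}\to F^e_{Y/Z,*}\calO_Y$, whereas you identify the middle term directly as $F^e_{Y/Z,*}f_*\calO_X=F^e_{Y/Z,*}\calO_Y$ using $f''\circ F^e_{X/Z}=F^e_{Y/Z}\circ f$.
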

\begin{proof}
	Consider the same diagram as in Lemma \ref{lem: compositum of relative GFS}. 
	Since $F^e_Z$ is flat by Kunz Theorem (\cite[Tag 0EC0]{stacks-project}), applying flat base change \cite[02KH]{stacks-project} to the right square and $\calO_X$ yields $f''_*\calO_{X''}\cong \calO_{Y''}$. 
	The map $\calO_{X''} \rightarrow \pi_*\calO_X'$ is split as it factors the split map $\calO_{X''} \rightarrow F^e_{X/Z,*} \calO_X( \lceil (p^e-1)\Delta \rceil )$, and pushing it forward gives a map $\calO_{Y''} \rightarrow F^e_{Y/Z,*}f'_*\calO_{X'}$ which factors through $\calO_{Y''} \rightarrow F^e_{Y/Z,*}\calO_Y$, hence the last map is split. 
\end{proof}

\begin{rmk}
	In general, the global $F$-splitting does not satisfy a cancellation law. 
	Namely, $(X,\Delta)$ being absolutely globally $F$-split does not imply that $(X,\Delta)$ is relatively globally $F$-split over a base $Y$. 
	We need more constraints to make the statement true, see Proposition \ref{prop: GFS is stable under base change}. 
\end{rmk}

The global $F$-splitting can be considered as a characteristic $p$ analogue of being log Calabi-Yau. 

\begin{lem}\label{lem: adding extra boundary to GFS pair}
	Assume that there exists an integer $e$ such that the natural map $\calO_X \rightarrow F^e_* \calO_X( \lceil (p^e-1)\Delta \rceil )$ is split. 
	Then there exists an effective Weil divisor $\Gamma$ such that $\Gamma \sim \lfloor (1-p^e)(K_X+\Delta) \rfloor$ and $(X,\Delta+\frac{1}{p^e-1}\Gamma)$ is globally $F$-split. 
\end{lem}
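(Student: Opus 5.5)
We will argue through the duality criterion of Lemma \ref{lem: surjectivity criterion of GFS and GFR}, applied on the level of a single splitting map. The splitting hypothesis gives an $\calO_X$-linear map $\varphi: F^e_*\calO_X(\lceil (p^e-1)\Delta\rceil)\rightarrow \calO_X$ with $\varphi(1)=1$. By Lemma \ref{lem: duality}, applied to the finite morphism $F^e$ with $D=\lceil (p^e-1)\Delta\rceil$, such a $\varphi$ corresponds to a global section
\[
s\in H^0\bigl(X,\calO_X(K_X-p^eK_X-\lceil (p^e-1)\Delta\rceil)\bigr)=H^0\bigl(X,\calO_X(\lfloor (1-p^e)(K_X+\Delta)\rfloor)\bigr).
\]
Here we use $F^{e,*}K_X=p^eK_X$, that $\omega_{X/X}$ for $F^e$ is $\calO_X((1-p^e)K_X)$, and the identity $-\lceil (p^e-1)\Delta\rceil=\lfloor (1-p^e)\Delta\rfloor$, which is valid since $(1-p^e)K_X$ is integral. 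We take $\Gamma$ to be the effective divisor of zeros of $s$. Then $\Gamma\sim\lfloor(1-p^e)(K_X+\Delta)\rfloor$ and $\Gamma$ is effective, and $\Gamma\neq 0$ as a section is meant unless $s$ is nowhere vanishing. In every case $s\neq0$, since $\varphi\neq 0$.

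It remains to show that $(X,\Delta+\frac{1}{p^e-1}\Gamma)$ is globally $F$-split. Set $\Delta'=\Delta+\frac{1}{p^e-1}\Gamma$. Then
\[
\lceil (p^e-1)\Delta'\rceil=\lceil (p^e-1)\Delta\rceil+\Gamma,
\]
because $\Gamma$ is integral. The key point is that $\varphi$ extends to a map
\[
\tilde\varphi:F^e_*\calO_X(\lceil (p^e-1)\Delta\rceil+\Gamma)\rightarrow\calO_X.
\]
Under the correspondence of Lemma \ref{lem: duality}, which is natural in $D$, a map out of $F^e_*\calO_X(D+\Gamma)$ corresponds to a section of $\calO_X(\lfloor(1-p^e)(K_X+\Delta)\rfloor-\Gamma)\cong\calO_X$. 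We take the section corresponding to $s/s_\Gamma=1$, that is, the constant section $1$. Naturality in $D$ says that restricting $\tilde\varphi$ along the inclusion $\calO_X(D)\subset\calO_X(D+\Gamma)$ corresponds to multiplying by the canonical section of $\Gamma$, so this restriction recovers $s$, and hence recovers $\varphi$. Consequently $\tilde\varphi(1)=\varphi(1)=1$, and $\tilde\varphi$ splits $\calO_X\rightarrow F^e_*\calO_X(\lceil (p^e-1)\Delta'\rceil)$. Therefore $(X,\Delta')$ is globally $F$-split.

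The main obstacle is the bookkeeping in the naturality step. One must check that the inclusion $\calO_X(D)\hookrightarrow\calO_X(D+\Gamma)$ corresponds, on the dual side, to multiplication by the section defining $\Gamma$, and that the reflexive rounding conventions (floors and ceilings on Weil divisors that are not Cartier) match up. This can be verified on the regular locus, where all the sheaves involved are invertible. Both sides are reflexive on the normal variety $X$, so the identity then extends across a codimension-$2$ subset.
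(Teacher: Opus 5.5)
Your proposal is correct and follows essentially the same route as the paper. You dualize the splitting $\varphi$ to a section $s$ of $\calO_X(\lfloor(1-p^e)(K_X+\Delta)\rfloor)$, take $\Gamma$ to be its divisor, and dualize the trivializing section of $\calO_X(\lfloor(1-p^e)(K_X+\Delta)\rfloor-\Gamma)\cong\calO_X$ back to a map $F^e_*\calO_X(\lceil(p^e-1)\Delta\rceil+\Gamma)\rightarrow\calO_X$; your naturality-in-$D$ argument, showing this map restricts to $\varphi$ and so sends $1$ to $1$, spells out the splitting step that the paper only asserts.
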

\begin{proof}
	Let $F^e_* \calO_X( \lceil (p^e-1)\Delta \rceil ) \rightarrow \calO_X$ be the map that splits $\calO_X \rightarrow F^e_* \calO_X( \lceil (p^e-1)\Delta \rceil )$. 
	By Lemma \ref{lem: duality}, there is a divisor $\Gamma$ corresponding to the section $\calO_X \rightarrow \calO_X( \lfloor (1-p^e)(\Delta+K_X)\rfloor)$. 
	We can perform the transformations in Lemma \ref{lem: duality} again to the identity map $\calO_X \rightarrow \calO_X( \lfloor (1-p^e)(\frac{1}{p^e-1}\Gamma+\Delta+K_X)\rfloor) \cong \calO_X$ in the reverse direction and see that the induced map $F^e_* \calO_X( \lceil (p^e-1)(\Delta+\frac{1}{p^e-1}\Gamma) \rceil ) \rightarrow \calO_X$ splits the natural map $\calO_X \rightarrow F^e_* \calO_X( \lceil (p^e-1)(\Delta+\frac{1}{p^e-1}\Gamma) \rceil )$. 
\end{proof}

\begin{cor}\label{cor: GFS is log CY}
	If the pair $(X,\Delta)$ is globally $F$-split and the Cartier index of $K_X+\Delta$ is coprime to $p$, then there exists an effective $\bbQ$-Cartier $\bbQ$-divisor $\Delta'$ such that
	\begin{enumerate}
		\item $(X,\Delta+\Delta')$ is globally $F$-split, 
		\item $\Delta'$ and $K_X+\Delta+\Delta'$ have Cartier index coprime to $p$, 
		\item $K_X+\Delta+\Delta'\sim_\bbQ 0$.
	\end{enumerate} 
\end{cor}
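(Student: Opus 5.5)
The plan is to apply Lemma \ref{lem: adding extra boundary to GFS pair} with a well-chosen exponent $e$ and then set $\Delta' := \frac{1}{p^e-1}\Gamma$.

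First choose $e$. Let $m$ be the Cartier index of $K_X+\Delta$; by hypothesis $p\nmid m$. Global $F$-splitting gives some $e_0$ for which $\calO_X \rightarrow F^{e_0}_*\calO_X(\lceil (p^{e_0}-1)\Delta\rceil)$ splits. Since $p$ is a unit modulo $m$, there is $n\geq 1$ with $p^{ne_0}\equiv 1 \pmod m$, namely a multiple of the order of $p$ in $(\bbZ/m)^\times$. By Lemma \ref{lem: multiplies of GFS index are still GFS}, the map with exponent $e:=ne_0$ still splits. For this $e$ we have $m \mid p^e-1$, so $(1-p^e)(K_X+\Delta)$ is an integral Cartier divisor class. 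In particular no rounding is needed: $\lfloor (1-p^e)(K_X+\Delta)\rfloor = (1-p^e)(K_X+\Delta)$ as a $\bbQ$-linear equivalence class. This uses only that $\lfloor\cdot\rfloor$ of a Weil divisor with integral coefficients is itself, after fixing a representative.

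Next apply Lemma \ref{lem: adding extra boundary to GFS pair}. It gives an effective Weil divisor $\Gamma \sim \lfloor (1-p^e)(K_X+\Delta)\rfloor$ such that $(X,\Delta+\frac{1}{p^e-1}\Gamma)$ is globally $F$-split. Set $\Delta'=\frac{1}{p^e-1}\Gamma$, which gives (1) immediately. Since $\Gamma$ is linearly equivalent to the Cartier divisor $(1-p^e)(K_X+\Delta)$, it is Cartier, so $\Delta'$ is $\bbQ$-Cartier.

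Then (3) is a computation: $K_X+\Delta+\Delta' \sim_\bbQ K_X+\Delta - (K_X+\Delta) = 0$. For (2), $(p^e-1)\Delta'=\Gamma$ is Cartier and $p\nmid p^e-1$, so the Cartier index of $\Delta'$ divides $p^e-1$ and is coprime to $p$. Likewise $(p^e-1)(K_X+\Delta+\Delta') = (p^e-1)(K_X+\Delta)+\Gamma$ is Cartier, since $m\mid p^e-1$. Hence the Cartier index of $K_X+\Delta+\Delta'$ also divides $p^e-1$.

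The main obstacle is the bookkeeping in the first step: the rounding $\lfloor\cdot\rfloor$ in Lemma \ref{lem: adding extra boundary to GFS pair} must not destroy the $\bbQ$-linear triviality, and this is exactly why $e$ has to be chosen with $m\mid p^e-1$, using the coprimality assumption. It should also be checked that the divisor-level identity holds, not just one up to linear equivalence. Fix a representative of $K_X$ and note that $(p^e-1)(K_X+\Delta)$ has integral coefficients whenever its class is Cartier of index dividing $p^e-1$. If $\Delta$ itself has non-integral coefficients with denominators divisible by $p$, this is where care is needed; I would reduce to it by observing that $m(K_X+\Delta)$ being Cartier forces the coefficients of $\Delta$ to have denominators dividing $m$ in the $\bbQ$-Weil sense.
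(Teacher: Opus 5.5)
Your proposal is correct and follows the paper's proof essentially verbatim: you pass to an exponent $e$ with $m \mid p^e-1$ via Lemma \ref{lem: multiplies of GFS index are still GFS}, apply Lemma \ref{lem: adding extra boundary to GFS pair}, and set $\Delta'=\frac{1}{p^e-1}\Gamma$. Your explicit checks of (2) and (3) fill in details the paper leaves implicit, including the observation that $m\Delta$ is integral, so that no rounding occurs in $\lfloor (1-p^e)(K_X+\Delta)\rfloor$.
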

\begin{proof}
	Let $m$ denote the Cartier index of $K_X+\Delta$. 
	Assume that the natural map $\calO_X \rightarrow F^e_* \calO_X( \lceil (p^e-1)\Delta \rceil )$ is split for some $e$. 
	Since $p^e$ is coprime to $m$, there exists some positive integer $k$ such that $m$ divides $p^{ke}-1$. 
	By Lemma \ref{lem: multiplies of GFS index are still GFS}, the natural map $\calO_X \rightarrow F^{ke}_* \calO_X( \lceil (p^{ke}-1)\Delta \rceil )$ is still split. 
	Then we apply Lemma \ref{lem: adding extra boundary to GFS pair} and find an effective Weil divisor $\Gamma$ such that $\Gamma \sim \lfloor (1-p^{ke})(K_X+\Delta) \rfloor = (1-p^{ke})(K_X+\Delta)$, and $(X,\Delta+\frac{1}{p^{ke}-1}\Gamma)$ is globally $F$-split. 
	So we may define $\Delta'=\frac{1}{p^{ke}-1}\Gamma$.
\end{proof}

Similarly, the strong $F$-regularity (resp. pure $F$-regularity) can be considered as a characteristic $p$ analogue of klt (resp. dlt) singularity. 

\begin{lem}[{\cite[Theorem 3.3]{haraFregularFpureRings2001}}]
	If $(X,\Delta)$ is purely $F$-regular, then $(X,\Delta)$ is dlt. 
\end{lem}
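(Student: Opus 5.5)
The statement is local on $X$ and concerns discrepancies, so I would argue valuatively via the duality of Lemma \ref{lem: duality}, in the spirit of \cite{haraFregularFpureRings2001}. Shrink to an affine open $X=\Spec R$ on which $(X,\Delta)$ is globally purely $F$-regular. For the dlt condition it suffices to prove the following. Let $E$ be a prime divisor over $X$, realised on a normal birational model $\mu:Y\to X$, whose centre $Z=c_X(E)$ is not contained in $\Supp\lfloor\Delta\rfloor$. Then $a(E,X,\Delta)>-1$. (If one uses the Hara--Watanabe variant of dlt, the same argument applies after choosing $Z$ to be an arbitrary centre outside the bad locus.)

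\textbf{Step 1 (test divisor).} Choose a nonzero $g\in R$ vanishing along $Z$ but not vanishing identically on any component of $\lfloor\Delta\rfloor$. This is possible by prime avoidance, because $Z\not\subset\Supp\lfloor\Delta\rfloor$. Set $D=\mathrm{div}(g)$. It is Cartier and effective, contains no component of $\lfloor\Delta\rfloor$, and satisfies $\mathrm{ord}_E(\mu^*D)>0$. By global pure $F$-regularity there is some $e$ and a map
\[
\varphi:F^e_*\calO_X(\lceil(p^e-1)\Delta\rceil+D)\to\calO_X \quad\text{with } \varphi(1)=1.
\]

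\textbf{Step 2 (complement).} As in Lemma \ref{lem: adding extra boundary to GFS pair}, Lemma \ref{lem: duality} turns $\varphi$ into an effective divisor $\Gamma\sim(1-p^e)(K_X+\Delta)-D$, up to rounding. Put $\Delta_\varphi=\Delta+\frac{1}{p^e-1}(D+\Gamma)$. Then $(p^e-1)(K_X+\Delta_\varphi)\sim 0$, and $\varphi$ is, up to a unit, the map $F^e_*\calO_X((p^e-1)\Delta_\varphi)\to\calO_X$ attached to this $\bbQ$-linearly trivial boundary. In particular $K_X+\Delta_\varphi$ is $\bbQ$-Cartier, so $\mu^*(K_X+\Delta_\varphi)$ makes sense. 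Since $D+\Gamma\geq0$ and $\mathrm{ord}_E(\mu^*D)>0$, we get
\[
a(E,X,\Delta)=a(E,X,\Delta_\varphi)+\tfrac{1}{p^e-1}\,\mathrm{ord}_E\mu^*(D+\Gamma)>a(E,X,\Delta_\varphi).
\]
It therefore suffices to show $a(E,X,\Delta_\varphi)\geq -1$.

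\textbf{Step 3 (key local computation; main obstacle).} I expect this step to be the main obstacle. Both $\varphi$ and $\Delta_\varphi$ extend birationally. Write $K_Y+\Delta_Y=\mu^*(K_X+\Delta_\varphi)$; then $\varphi$ extends at the generic point of $E$ to a map
\[
\varphi_Y:F^e_*\calO_{Y,E}\big((p^e-1)\Delta_Y\big)\to K(Y)
\]
whose image is still contained in $\calO_{Y,E}$. The reason is that $\varphi(1)=1$, and $\varphi$ takes values in $R$, so by normality its extension takes values in the valuation ring of $E$. This last point is delicate: one has to check that the extension of $\varphi$ really corresponds under duality to $\Delta_Y$ with coefficient $b=1+a(E,X,\Delta_\varphi)\cdot(-1)$ along $E$, i.e.\ $b=-a(E,X,\Delta_\varphi)$, and that no extra poles appear.

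Now compute in the DVR $\calO_{Y,E}$ with uniformiser $t$, where $\Hom(F^e_*\calO,\calO)$ is generated by the trace-type map $T$. Let $b$ be the coefficient of $E$ in $\Delta_Y$, so $\varphi_Y$ equals $T$ precomposed with multiplication by $t^{(p^e-1)b}$ up to a unit. The condition $1\in\operatorname{im}\varphi_Y\subset\calO_{Y,E}$ then forces $(p^e-1)b\leq p^e-1$, that is $b\leq1$. Hence $a(E,X,\Delta_\varphi)=-b\geq-1$, and combined with Step 2 this gives $a(E,X,\Delta)>-1$, as required.
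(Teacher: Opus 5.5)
Your Steps 2 and 3 follow the standard Hara--Watanabe mechanism (the paper itself only quotes their Theorem 3.3). As written, however, the proposal does not reach dlt, because the opening reduction is wrong.

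You claim it suffices to show $a(E,X,\Delta)>-1$ for every $E$ whose centre $Z$ is not contained in $\Supp\lfloor\Delta\rfloor$. That condition only says that $(X\setminus\Supp\lfloor\Delta\rfloor,\Delta)$ is klt. It says nothing about the divisors that actually decide dlt or plt, namely those whose centre lies inside $\lfloor\Delta\rfloor$. For instance, $X=\mathbb{A}^2$ with $\Delta=L_1+L_2+L_3$, three distinct lines through the origin, satisfies your condition trivially. Yet the exceptional divisor of the blow-up of the origin has $a(E,X,\Delta)=1-3=-2$, so the pair is not even lc.

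The restriction comes from a reversed containment in Step 1. To find $g\in I_Z$ lying in none of the height-one primes $P_i$ of the components $D_i$ of $\lfloor\Delta\rfloor$, prime avoidance needs $I_Z\not\subseteq P_i$, i.e.\ $D_i\not\subseteq Z$. It does not need $Z\not\subseteq\Supp\lfloor\Delta\rfloor$. If $E$ is exceptional then $\operatorname{codim}_X Z\geq 2$, so $D_i\not\subseteq Z$ holds automatically, including when $Z\subseteq\Supp\lfloor\Delta\rfloor$. With this correction your Steps 1--3 give $a(E,X,\Delta)>-1$ for every exceptional $E$. Step 3 applied on $Y=X$ to a component of $\Delta$, together with $\Delta\leq\Delta_\varphi$, bounds all coefficients of $\Delta$ by $1$. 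So $(X,\Delta)$ is plt, hence dlt.

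Step 3 reaches the right conclusion, but its justification needs repair. That $\varphi$ takes values in $R$ and $R$ is normal says nothing about the values of its extension on $F^e_*\calO_{Y,E}((p^e-1)\Delta_Y)$. That containment is a tautology once $\Delta_Y$ is defined as the divisor of $\varphi$ on $Y$. The real content is that this divisor equals the $\Delta_Y$ with $K_Y+\Delta_Y=\mu^*(K_X+\Delta_\varphi)$. This holds for the following reason:
\begin{itemize}
\item Viewed in $\Hom_{K(X)}(F^e_*K(X),K(X))$, $\varphi$ corresponds to a rational section $h\,\omega^{\otimes(1-p^e)}$.
\item On any normal model $W$ with $K_W=\mathrm{div}_W(\omega)$, the divisor of $\varphi$ satisfies $(p^e-1)(K_W+\Delta_{W,\varphi})=\mathrm{div}_W(h)$.
\item Since $\mathrm{div}_Y(h)=\mu^*\mathrm{div}_X(h)$, the divisors on $X$ and $Y$ agree.
\end{itemize}
Also, the condition ``$1\in\operatorname{im}\varphi_Y$'' carries no information, since the generator of $\Hom(F^e_*\calO_{Y,E}((p^e-1)\Delta_Y),\calO_{Y,E})$ is surjective. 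The constraint you actually use is $\varphi(F^e_*1)=1$ with $1\in\calO_{Y,E}$. That is, $T(F^e_*(u\,t^{(p^e-1)b}))=1$ for a unit $u$, which forces $(p^e-1)b\leq p^e-1$.
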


\begin{lem}[{\cite[Proposition 2.2]{haraFregularFpureRings2001}}]
	The pair $(X,\Delta)$ is strongly $F$-regular if and only if $(X,\Delta)$ is purely $F$-regular and $\lfloor\Delta\rfloor=0$.
\end{lem}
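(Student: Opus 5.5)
Since both notions are defined by the existence of an open cover on which a global splitting condition holds, I will work on an affine open $X_i$ and compare the two splitting conditions directly. Recall that on $X_i$ strong $F$-regularity asks, for \emph{every} effective Weil divisor $D$, for a splitting of $\calO_{X_i} \rightarrow F^e_*\calO_{X_i}(\lceil (p^e-1)\Delta\rceil + D)$ for some $e>0$. Pure $F$-regularity asks for the same splitting, but only for those $D$ whose support contains no component of $\lfloor\Delta\rfloor$. The proof then splits into the two directions.

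\emph{The implication ``$\Leftarrow$''.} If $\lfloor\Delta\rfloor=0$, the restriction on $\Supp D$ in the definition of pure $F$-regularity is vacuous. The two definitions therefore coincide verbatim on each open $X_i$, and a cover witnessing pure $F$-regularity also witnesses strong $F$-regularity.

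\emph{The implication ``$\Rightarrow$''.} Strong $F$-regularity trivially implies pure $F$-regularity, because it requires the splitting for a larger class of divisors $D$. It remains to show $\lfloor\Delta\rfloor=0$, which is the only step with any content. Suppose some prime divisor $E$ occurs in $\Delta$ with coefficient $\geq 1$, and pick an open $X_i$ meeting $E$. Take $D=E|_{X_i}$. Then $\lceil (p^e-1)\Delta\rceil + D \geq (p^e-1)E + E = p^eE$. Hence a splitting $\varphi$ of $\calO_{X_i}\rightarrow F^e_*\calO_{X_i}(\lceil (p^e-1)\Delta\rceil + D)$ restricts to a map $\varphi: F^e_*\calO_{X_i}(p^eE)\rightarrow \calO_{X_i}$ with $\varphi(1)=1$.

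\emph{The contradiction at the generic point of $E$.} Localize at the generic point of $E$, where $\calO_{X,E}$ is a DVR with uniformizer $t$. The element $t^{-p^e}$ is a section of $\calO_X(p^eE)$ there. By $\calO_X$-linearity of $\varphi$ for the $F^e_*$-module structure,
\[
\varphi(t^{-p^e}) = \varphi\bigl((t^{-1})^{p^e}\cdot 1\bigr) = t^{-1}\varphi(1)=t^{-1}.
\]
This is not in $\calO_{X,E}$, contradicting that $\varphi$ lands in $\calO_X$. Hence every coefficient of $\Delta$ is $<1$, that is, $\lfloor\Delta\rfloor=0$.

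I expect no genuine obstacle. The one point to check carefully is the choice $D=E$ forcing the round-up to contain $p^eE$. This works because the coefficient of $E$ in $\lceil (p^e-1)\Delta\rceil$ is at least $p^e-1$ when its coefficient in $\Delta$ is at least $1$.
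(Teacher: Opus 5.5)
Your proof is correct, but it follows a different route from the paper. The paper gives no argument and simply cites \cite[Proposition 2.2]{haraFregularFpureRings2001}. You instead verify the lemma directly from the definitions as stated here, namely the existence of an open cover on which the global splitting conditions hold. You are right that ``$\Leftarrow$'' and the implication strongly $F$-regular $\Rightarrow$ purely $F$-regular are purely formal, and that one cover witnesses both. The only real content is that a component $E$ with coefficient $\geq 1$ obstructs splitting, and you detect this with the single test divisor $D=E$.

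The citation buys brevity and places the statement in Hara--Watanabe's local, ring-theoretic framework (splittings twisted by nonzero elements $c$, and their own terminology for the plt-type notion). There it sits next to companion facts the paper also uses, such as Lemma \ref{lem: SFR with smaller boundary}. However, using it tacitly requires matching those local definitions with the paper's open-cover-of-global-splittings definitions. Your argument is self-contained and needs no such translation. The essential point, what happens at the generic point of a component of $\lfloor\Delta\rfloor$, is the same in either language.

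One step should be tightened. Since $t^{-1}\notin\calO_{X,E}$, the equality $\varphi\bigl((t^{-1})^{p^e}\cdot 1\bigr)=t^{-1}\varphi(1)$ is not literally an instance of $\calO_{X,E}$-linearity. You can fix this in either of two ways:
\begin{itemize}
\item Note that $\varphi$ extends to a $K$-linear map $F^e_*K\rightarrow K$, where $K$ is the function field, because localization commutes with $F^e_*$.
\item Argue directly: $t\cdot\varphi(t^{-p^e})=\varphi(t^{p^e}\cdot t^{-p^e})=\varphi(1)=1$. This forces $t$ to be a unit in $\calO_{X,E}$, a contradiction.
\end{itemize}
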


\begin{lem}[{\cite[Proposition 2.2]{haraFregularFpureRings2001}}]\label{lem: SFR with smaller boundary}
	If $(X,\Delta)$ is strongly $F$-regular (resp. purely $F$-regular), then so is $(X,\Delta')$ for any $\bbQ$-effective divisor $\Delta'\leq \Delta$. 
\end{lem}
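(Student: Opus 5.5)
The statement to prove is Lemma \ref{lem: SFR with smaller boundary}: if $(X,\Delta)$ is strongly (resp. purely) $F$-regular and $0\le\Delta'\le\Delta$, then $(X,\Delta')$ is too. The plan is to work directly from the definitions: the property is local, so we fix an open cover $X=\bigcup_i X_i$ on which $(X_i,\Delta|_{X_i})$ is globally $F$-regular (resp. globally purely $F$-regular). We then check that the same cover works for $(X,\Delta')$. After restricting, we may assume $X$ itself is globally $F$-regular (resp. globally purely $F$-regular) with respect to $\Delta$.

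The key observation is monotonicity of the round-ups. Since $0\le\Delta'\le\Delta$, we have for every $e$
\[
	\lceil (p^e-1)\Delta' \rceil \le \lceil (p^e-1)\Delta \rceil .
\]
Hence, for any effective Weil divisor $D$, there is an inclusion
\[
	\calO_X(\lceil (p^e-1)\Delta' \rceil + D)\subset \calO_X(\lceil (p^e-1)\Delta \rceil + D),
\]
and it sends the canonical section $1$ to $1$. The natural map $\calO_X\to F^e_*\calO_X(\lceil (p^e-1)\Delta\rceil+D)$ therefore factors as
\[
	\calO_X\to F^e_*\calO_X(\lceil (p^e-1)\Delta'\rceil+D)\hookrightarrow F^e_*\calO_X(\lceil (p^e-1)\Delta\rceil+D).
\]
Composing a splitting of the total map with the inclusion gives a splitting of the first arrow, exactly as in the last step of the proof of Lemma \ref{lem: multiplies of GFS index are still GFS}.

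For strong $F$-regularity this argument applies to every effective $D$, so we are done. For pure $F$-regularity, the definition only quantifies over $D$ whose support contains no component of $\lfloor\Delta'\rfloor$. This set of allowed $D$ is larger than the set allowed for $\Delta$, since $\lfloor\Delta'\rfloor\le\lfloor\Delta\rfloor$. This is the one real obstacle: we must handle $D$ that contain a component $E$ of $\lfloor\Delta\rfloor$ which is not a component of $\lfloor\Delta'\rfloor$. I would try to absorb such $E$ using the slack in $\Delta'$.

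Along such a component $E$, the coefficient of $\Delta'$ is some $c<1$, while the coefficient of $\Delta$ is $1$. Hence $\lceil (p^e-1)\Delta\rceil-\lceil (p^e-1)\Delta'\rceil$ has coefficient along $E$ at least $(p^e-1)(1-c)-1$, which tends to infinity with $e$. So, for $e\gg0$, the enlarged divisor satisfies
\[
	\lceil (p^e-1)\Delta'\rceil+D\le \lceil (p^e-1)\Delta\rceil+D_0,
\]
where $D_0$ is the part of $D$ not supported on such $E$. I would combine this with two standard facts to obtain a splitting for some large $e$. The first is the usual fact that a splitting for one $e$ yields one for all larger multiples, via the composition argument of Lemma \ref{lem: multiplies of GFS index are still GFS} after replacing $D$ by $D+$ an appropriate effective divisor. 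The second is that a purely $F$-regular pair is normal along $\lfloor\Delta\rfloor$.

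Alternatively, if this bookkeeping becomes messy, I would simply cite \cite[Proposition 2.2]{haraFregularFpureRings2001}, as the paper does.
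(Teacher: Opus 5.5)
Your argument is correct, and it takes a different route from the paper, which gives no proof and only cites \cite[Proposition 2.2]{haraFregularFpureRings2001}; yours is a self-contained derivation from the definitions.

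\begin{itemize}
\item \emph{Strongly $F$-regular case.} This is exactly the inclusion $\calO_X(\lceil (p^e-1)\Delta'\rceil+D)\subset\calO_X(\lceil (p^e-1)\Delta\rceil+D)$, and your argument handles it fully.
\item \emph{Purely $F$-regular case: the obstacle.} You correctly isolate the only new phenomenon. These are the test divisors $D$ for $\Delta'$ that contain a component $E$ of $\lfloor\Delta\rfloor$ with $\mathrm{coeff}_E\Delta'<1$.
\item \emph{Purely $F$-regular case: the fix.} Your fix is the right one. The slack along $E$ grows like $(p^e-1)(1-\mathrm{coeff}_E\Delta')$. So once $e$ exceeds a threshold depending only on $D$ and $\Delta'$, you get $\lceil (p^e-1)\Delta'\rceil+D\le\lceil (p^e-1)\Delta\rceil+D_0$, with $D_0$ admissible for $\Delta$.
\item \emph{Closing the argument.} You only need a $D_0$-splitting for $(X,\Delta)$ at some level above that threshold. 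Your first ``standard fact'' supplies this without enlarging $D_0$. Suppose the natural map $\calO_X\to F^{e_1}_*\calO_X(B)$ splits, with $B=\lceil (p^{e_1}-1)\Delta\rceil+D_0$. Running the composition argument of Lemma~\ref{lem: multiples of GFS index are still GFS} with this map shows that, for every $k\ge 1$, the natural map $\calO_X\to F^{ke_1}_*\calO_X(B_k)$ splits for some $B_k\ge\lceil (p^{ke_1}-1)\Delta\rceil+D_0$. The extra copies of $D_0$ that accumulate only help. Restricting to the subsheaf then finishes.
\item \emph{Unneeded ingredient.} The normality of $\lfloor\Delta\rfloor$ plays no role and can be dropped.
\end{itemize}

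As for what each route buys: the citation is shorter, but it tacitly requires translating the paper's open-cover, Weil-divisor definitions into Hara--Watanabe's local ring-theoretic formulation. Your argument works verbatim with the definitions as the paper states them.
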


Combining the last two Lemmas, we may directly obtain the following Corollary we need. 

\begin{cor}\label{cor: midpoint of SFR and PFR is SFR}
	If $(X,\Delta)$ is strongly $F$-regular and $\Gamma$ is an integral Weil divisor such that $(X,\Delta+\Gamma)$ is purely $F$-regular, then $(X,\Delta+\varepsilon\Gamma)$ is strongly $F$-regular for all $\varepsilon\in [0,1)\cap\bbQ$.
\end{cor}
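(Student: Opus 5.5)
The statement to prove is Corollary~\ref{cor: midpoint of SFR and PFR is SFR}: $(X,\Delta+\varepsilon\Gamma)$ is strongly $F$-regular for every rational $\varepsilon\in[0,1)$. The plan is to combine the two preceding lemmas with a convexity argument. It splits into two checks: pure $F$-regularity of $(X,\Delta+\varepsilon\Gamma)$, and vanishing of its round-down.

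\emph{Round-down.} Since $(X,\Delta)$ is strongly $F$-regular, the characterization lemma (strongly $F$-regular $\iff$ purely $F$-regular with $\lfloor\Delta\rfloor=0$) gives $\lfloor\Delta\rfloor=0$. So every coefficient of $\Delta$ lies in $[0,1)$. By Lemma~\ref{lem: SFR with smaller boundary}, applied in the purely $F$-regular version to $(X,\Delta+\Gamma)$, the pair $(X,\Gamma)$ is purely $F$-regular. In particular $\Gamma$ is reduced. Hence $\Gamma$ is a sum of distinct prime divisors with coefficient $1$. (An integral Weil divisor with a coefficient at least $2$ cannot be a dlt boundary.)

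Fix a prime divisor $P$ in $\Supp\Gamma$. Pure $F$-regularity of $(X,\Delta+\Gamma)$ forces the coefficient of $P$ in $\Delta+\Gamma$ to be at most $1$. Since that coefficient is $\mathrm{coeff}_P\Delta+1$, we get $\mathrm{coeff}_P\Delta=0$. Therefore $\Delta+\varepsilon\Gamma$ has coefficient $\varepsilon<1$ along $\Supp\Gamma$ and coefficients of $\Delta$, all $<1$, elsewhere. This gives $\lfloor\Delta+\varepsilon\Gamma\rfloor=0$.

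\emph{Pure $F$-regularity.} Since $\varepsilon\ge0$ and $\Gamma\ge0$, we have $\Delta+\varepsilon\Gamma\le\Delta+\Gamma$ coefficientwise. Lemma~\ref{lem: SFR with smaller boundary}, in the purely $F$-regular version applied to $(X,\Delta+\Gamma)$, then gives that $(X,\Delta+\varepsilon\Gamma)$ is purely $F$-regular. The characterization lemma now yields strong $F$-regularity. The case $\varepsilon=0$ is simply the hypothesis.

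\emph{Expected obstacle.} The only delicate point is the coefficient bookkeeping in the round-down step: showing that $\Gamma$ is reduced and that $\Gamma$ and $\Delta$ share no component. For this I would rely on the standard fact that a purely $F$-regular pair has boundary coefficients at most $1$. This follows from the cited implication purely $F$-regular $\Rightarrow$ dlt, or directly from the splitting definition, since $\calO_X\to F^e_*\calO_X(\lceil(p^e-1)\Delta\rceil)$ cannot split at a prime whose coefficient exceeds $1$. Everything else is a direct invocation of the two lemmas.
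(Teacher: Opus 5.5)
Your proof is correct and matches the paper's approach. The paper obtains the corollary by combining Lemma~\ref{lem: SFR with smaller boundary} (pure $F$-regularity passes from $\Delta+\Gamma$ to the smaller boundary $\Delta+\varepsilon\Gamma$) with the characterization ``strongly $F$-regular $\iff$ purely $F$-regular and $\lfloor\Delta\rfloor=0$'', exactly as you do. Your coefficient bookkeeping makes explicit the round-down step the paper leaves implicit. A purely $F$-regular boundary has coefficients at most $1$, so $\Gamma$ is reduced and shares no component with $\Delta$, which gives $\lfloor\Delta+\varepsilon\Gamma\rfloor=0$. The effectivity $\Gamma\ge0$ that you use is automatic, since $\Delta+\Gamma$ is effective, $\lfloor\Delta\rfloor=0$ and $\Gamma$ is integral.
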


\section{Criterion of being associated fibre bundles}\label{section: Criterion of being associated fibre bundles}
In this section we proceed to prove Theorem \ref{thm: main theorem} and see its applications to the Albanese morphism, which leads to Theorem \ref{thm: associated bundle abelian variety positive characteristic}. 

\begin{set}
	Throughout this section, we work over a perfect field of characteristic $p\geq 0$, or over $\bbC$ when specified. 
\end{set}

\subsection{Criterion of associated bundle}
Let $f:(X,\Delta) \rightarrow Y$ be a projective morphism, and let $\calL$ be an $f$-very ample line bundle on $X$. 
The datum of $f$ is completely encoded in the graded section algebra $\bigoplus_{m=0}^\infty f_*(\calL^{\otimes m})$, and the multiplication rule in the section algebra can be interpreted as the map $\Sym^m f_*\calL \rightarrow f_*(\calL^{\otimes m})$. 
The following lemma is then obvious. 

\begin{lem}\label{lem: triviality of relative ample bundle implies product}
	Let $y\in Y(k)$ be a distinguished point, and let $(X_y, \Delta_y)$ be the fibre of $f$ over $y$. The pair $(X,\Delta)$ is isomorphic to the product $Y \times (X_y,\Delta_y)$ over $Y$, if for some $d\gg 0$ and for every irreducible component $D$ of $\Delta$, there is a positive integer $r$ and isomorphisms of graded $\calO_Y$-algebras
	\begin{align*}
		\bigoplus_{m\geq 0} f_*(\calL^{\otimes dm}) &\cong \bigoplus_{m\geq 0} \calO_Y \otimes_k H^0(X_y,\calL_y^{\otimes dm}),\\
		\bigoplus_{m\geq 0} f_*(\calL^{\otimes dm}(-rD)) &\cong \bigoplus_{m\geq 0} \calO_Y \otimes_k H^0(X_y,\calL_y^{\otimes dm}(-rD_y))
	\end{align*}
	that commute with the obvious inclusions on both sides, or equivalently, if there is an integer $d\gg 0$, such that for every irreducible component $D$ of $\Delta$, there is a positive integer $r$ and isomorphisms
	\[\begin{tikzcd}
		{\calO_Y \otimes_k \Sym^{dm} H^0(X_y,\calL_y(-rD_y))} & {\Sym^{dm} f_*(\calL(-rD))} && \\
		{\calO_Y \otimes_k H^0(X_y,\calL_y^{\otimes dm}(-rD_y))} & {f_*(\calL^{\otimes dm}(-rD))} \\
		& {\calO_Y \otimes_k \Sym^{dm} H^0(X_y,\calL_y)} && {\Sym^{dm} f_*\calL } \\
		& {\calO_Y \otimes_k  H^0(X_y,\calL_y^{\otimes dm})} && {f_*(\calL^{\otimes dm})}
		\arrow["\cong"{description}, draw=none, from=1-1, to=1-2]
		\arrow[from=1-1, to=2-1]
		\arrow[from=1-1, to=3-2]
		\arrow[from=1-2, to=2-2]
		\arrow[from=1-2, to=3-4]
		\arrow["\cong"{description}, draw=none, from=2-1, to=2-2]
		\arrow[from=2-1, to=4-2]
		\arrow[from=2-2, to=4-4]
		\arrow["\cong"{description}, draw=none, from=3-2, to=3-4]
		\arrow[from=3-2, to=4-2]
		\arrow[from=3-4, to=4-4]
		\arrow["\cong"{description}, draw=none, from=4-2, to=4-4]
	\end{tikzcd}\]
	that make the cube commute for all $m\geq 0$. 
\end{lem}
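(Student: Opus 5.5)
We reconstruct both $X$ and the components of $\Delta$ from the graded algebras via relative Proj, and use the assumed isomorphisms to identify them with the corresponding objects for the constant family $Y\times (X_y,\Delta_y)$. Throughout, $\calL$ is $f$-very ample; we may replace $\calL$ by $\calL^{\otimes d}$, and we choose $d\gg 0$ as follows. By relative Serre vanishing and Castelnuovo--Mumford regularity, we can take $d$ so that the section algebras $\bigoplus_m f_*(\calL^{\otimes dm})$ and $\bigoplus_m f_*(\calL^{\otimes dm}(-rD))$ are generated in degree one. We also want $\calL^{\otimes d}(-rD)$ to be $f$-globally generated, so that the ideal sheaf of $rD$ twisted by $\calL^{\otimes d}$ is generated by its pushforward. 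This is what makes the two formulations in the statement equivalent: the cube with surjective vertical arrows $\Sym^{dm}(\cdot)\to(\cdot)^{(dm)}$ determines the graded algebras and their inclusions, and conversely.

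\emph{Step 1: the underlying scheme.} Since $\calL$ is $f$-ample and $X$ is projective over $Y$, we have $X\cong \mathrm{Proj}_Y \bigoplus_{m\ge0} f_*(\calL^{\otimes dm})$. Likewise $X_y\cong \mathrm{Proj}_k\bigoplus_m H^0(X_y,\calL_y^{\otimes dm})$, provided $d$ is large enough that cohomology and base change gives the right sections in high degree; the truncated algebras have the same Proj in any case. The first graded isomorphism then yields
\[
X\cong \mathrm{Proj}_Y\bigl(\calO_Y\otimes_k \textstyle\bigoplus_m H^0(X_y,\calL_y^{\otimes dm})\bigr)\cong Y\times_k X_y
\]
over $Y$.

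\emph{Step 2: the boundary.} For each component $D$, the subsheaf $\bigoplus_m f_*(\calL^{\otimes dm}(-rD))$ is a graded ideal in the section algebra, or at least in its truncation in degrees $\ge 1$. By the choice of $d$, its saturation defines the closed subscheme $rD$, that is, the divisorial ideal $\calO_X(-rD)$. Because the second isomorphism commutes with the inclusions, the ideals correspond, so under the isomorphism of Step 1 the subscheme $rD$ is sent to $Y\times rD_y$. Taking supports (or reflexive hulls of the divisorial ideals, using normality) gives $D\mapsto Y\times D_y$. Matching coefficients then shows $\Delta\mapsto Y\times\Delta_y$. Since $\Delta$ has finitely many components, a single $d$ works for all of them.

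\emph{Main obstacle.} The delicate point is Step 2: extracting the Weil divisor $D$, rather than some non-saturated subscheme, from the graded ideal, and making sure $D_y$ means the correct restriction. I would handle this by arguing on the smooth locus of the fibres, or at codimension-one points, where $\calO_X(-rD)$ is invertible, and then extending by reflexivity on the normal variety. I would also use that the ideal sheaf $\calO_X(-rD)\otimes\calL^{d}$ is recovered as the image of $f^*f_*(\calL^{d}(-rD))\to\calL^{d}$ for $d\gg0$.
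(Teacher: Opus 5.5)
Your proof is correct and is the same relative-$\mathrm{Proj}$ reconstruction the paper has in mind when it calls the lemma obvious; the paper gives no argument beyond noting that $f$ is encoded in its section algebra and in the maps $\Sym^m f_*\calL\to f_*(\calL^{\otimes m})$. One small correction to your side remark on why the two formulations are equivalent: the arrow $\Sym^{dm} f_*(\calL(-rD))\to f_*(\calL^{\otimes dm}(-rD))$ is in general not surjective, since its image lies in $f_*(\calL^{\otimes dm}(-dmrD))$, so the correspondence of the $D$-ideals should instead be read off from the face of the cube containing the inclusions $f_*(\calL^{\otimes dm}(-rD))\hookrightarrow f_*(\calL^{\otimes dm})$.
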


\begin{rmk}
	It seems more natural to take $r=1$, but an arbitrary $r>0$ satisfying the condition will fix an infinitesimal neighbourhood of $D$ and hence the reduced $D$ itself. 
	Allowing such an $r>0$ would simplify the treatment on the bundles.
\end{rmk}

\begin{lem}\label{lem: rigidity of matrix}
	In the situation above, we assume further $H^0(Y,\calO_Y)=k$. 
	Then the cube automatically commutes. 
\end{lem}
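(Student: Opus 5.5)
Every arrow in the cube is a morphism between vector bundles of the form $\calO_Y\otimes_k V$ with $V$ a finite-dimensional $k$-vector space. The plan is to reduce commutativity to commutativity over the single point $y$. The key observation is that for finite-dimensional $V,W$,
\[
\Hom_{\calO_Y}(\calO_Y\otimes_k V,\calO_Y\otimes_k W)\cong H^0(Y,\calO_Y)\otimes_k \Hom_k(V,W)=\Hom_k(V,W),
\]
using $H^0(Y,\calO_Y)=k$. Hence every such morphism is constant, and it is determined by its restriction to the fibre at $y$.

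To use this, I first transport all arrows in the cube to the trivialized side. The horizontal isomorphisms identify the four right-hand vertices $\Sym^{dm} f_*\calL$, $f_*(\calL^{\otimes dm})$, $\Sym^{dm} f_*(\calL(-rD))$ and $f_*(\calL^{\otimes dm}(-rD))$ with the corresponding trivial bundles on the left. Conjugating the right-hand face maps (the multiplication maps and the inclusions induced by $\calL(-rD)\subset\calL$) by these isomorphisms turns them into morphisms between trivial bundles. By the observation above, each of these is a constant matrix.

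Next I pin these constant matrices down. I normalize the horizontal isomorphisms so that their restriction to $y$ is the base change identification $f_*(\calL^{\otimes n})|_y\cong H^0(X_y,\calL_y^{\otimes n})$ (and similarly with the twist by $-rD$). Such a normalization is available in the situation of Lemma \ref{lem: triviality of relative ample bundle implies product}, where $d\gg0$ ensures cohomology and base change. Failing that, I compose each given isomorphism with the inverse of its fibre at $y$, which is again a constant automorphism of a trivial bundle. After this normalization, the restriction to $y$ of each transported right-face map is exactly the corresponding left-face map: the multiplication $\Sym^{dm} H^0(X_y,\calL_y)\to H^0(X_y,\calL_y^{\otimes dm})$, or the inclusion induced by $\calL_y(-rD_y)\subset\calL_y$. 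The left-face maps are also constant, being of the form $\calO_Y\otimes(\text{linear map})$. Two constant maps that agree at $y$ agree everywhere, so every square of the cube commutes.

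The main obstacle is the bookkeeping in the normalization step: the isomorphisms for $\Sym^{dm}$ must be induced by those for degree one, and the isomorphisms must be compatible with base change at $y$. For the symmetric powers I will define the top isomorphisms as $\Sym^{dm}$ of the degree-$d$ (resp. degree-one) isomorphism, which makes the symmetric-power squares commute by construction. Only the multiplication and inclusion squares then need the rigidity argument above.
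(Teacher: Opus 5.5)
Your proposal is correct and follows the paper's argument. The paper likewise uses that a morphism $\calO_Y^{\oplus n}\to\calO_Y^{\oplus m}$ is a constant matrix when $H^0(Y,\calO_Y)=k$, hence determined by its restriction to $y$, and that the cube restricted to $y$ commutes because the horizontal isomorphisms restrict to the identity there. You make explicit the normalization at $y$ and the base-change identification of the fibres, which the paper takes for granted; defining the symmetric-power isomorphisms as $\Sym^{dm}$ of lower-degree ones is harmless but unnecessary, since rigidity handles all four side faces uniformly.
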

\begin{proof}
	We note that, when restricting the cube to the fibre at $y$, the restricted cube commutes as all the horizontal isomorphisms become identity. 
	Then the lemma follows from the following claim that is easy to prove: 
	\begin{claim}
		Let $Y$ be a scheme over $k$ such that $H^0(Y,\calO_Y)=k$, and let $y\in Y(k)$ be a point. 
	Let $\varphi,\psi: \calO_Y^{\oplus n} \rightarrow \calO_Y^{\oplus m}$ be two maps of $\calO_Y$-modules such that $\varphi|_y=\psi|_y$, then $\varphi=\psi$.
	\end{claim}
\end{proof}

Let now $Y$ be smooth projective. 
Assume that $\calL$ is an $f$-ample line bundle on $X$, such that
\begin{equation}
\tag*{(\#)}
\begin{minipage}{0.9\textwidth}
\begin{enumerate}
	\item $f_*(\calL^{\otimes m})$ are numerically flat for all $m>0$, 
	\item the multiplication map $\Sym^m f_*\calL \rightarrow f_*(\calL^{\otimes m})$ is surjective for all $m> 0$,
\end{enumerate}
\end{minipage}
\end{equation}
and for every irreducible component $D$ of $\Delta$, there is a positive integer $r$ such that
\begin{equation}
\tag*{(\#)} \label{property very very ample}
\begin{minipage}{0.9\textwidth}
    \begin{enumerate}
	\item[(3)] $\calL(-rD)$ is $f$-ample, and
	\item[(4)] $f_*(\calL^{\otimes m}(-rD))$ are numerically flat for all $m>0$. 
	\end{enumerate}
\end{minipage}
\end{equation} 
We denote the collection of the properties above with \ref{property very very ample}. 
Let $\langle f_*\calL \rangle^\otimes$ be the full subcategory in $\mathrm{Vec}_0^S(Y)$ generated by $f_*\calL$ in the sense of Lemma \ref{lem: Tannakian subcategory generated by one element}. 

\begin{lem}\label{lem: Tannakian subcategory generated by f_*L contains all}
	The category $\langle f_*\calL \rangle^\otimes$ is neutral Tannakian. 
	If $\calL$ has property \ref{property very very ample}, then $\langle f_*\calL \rangle^\otimes$ contains $\Sym^m f_*\calL,\  f_*(\calL^{\otimes m}),\ \Sym^m f_*(\calL(-rD))$ and $f_*(\calL^{\otimes m}(-rD))$ for all $m> 0$ and all irreducible components $D$ of $\Delta$. 
\end{lem}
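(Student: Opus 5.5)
The first assertion is immediate from Lemma \ref{lem: Tannakian subcategory generated by one element}, applied to the neutral Tannakian category $\mathrm{Vec}_0^S(Y)$ and the object $f_*\calL$. This object is numerically flat by property (1) of \ref{property very very ample}, so it lies in $\mathrm{Vec}_0^S(Y)$. The content of the lemma is therefore the list of objects contained in $\langle f_*\calL \rangle^\otimes$.

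Since $\langle f_*\calL\rangle^\otimes$ consists of all subquotients in $\mathrm{Vec}_0^S(Y)$ of tensor polynomials in $f_*\calL$ and its dual, the plan is to realize each listed bundle as such a subquotient. I will use throughout that $\mathrm{Vec}_0^S(Y)$ is abelian, with kernels and images computed in $\mathrm{Coh}(Y)$ and again numerically flat. This is part of Langer's result that it is a Tannakian subcategory.

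\begin{enumerate}
\item \emph{Symmetric powers.} $\Sym^m f_*\calL$ is a quotient of $(f_*\calL)^{\otimes m}$ and is numerically flat, so it lies in $\langle f_*\calL\rangle^\otimes$.
\item \emph{The bundles $f_*(\calL^{\otimes m})$.} By property (2) of \ref{property very very ample}, $f_*(\calL^{\otimes m})$ is the image of $\Sym^m f_*\calL \to f_*(\calL^{\otimes m})$. Both sides are numerically flat by (1), so this is a quotient in $\mathrm{Vec}_0^S(Y)$.
\item \emph{The twisted bundles $f_*(\calL^{\otimes m}(-rD))$.} Multiplication by the canonical section of $\calO_X(rD)$ gives an injection $f_*(\calL^{\otimes m}(-rD)) \hookrightarrow f_*(\calL^{\otimes m})$. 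This is a morphism between numerically flat bundles by (1) and (4), so it is a monomorphism in $\mathrm{Vec}_0^S(Y)$. Hence $f_*(\calL^{\otimes m}(-rD))$ is a subobject of $f_*(\calL^{\otimes m})$, which lies in the subcategory by the previous step. Here $rD$ should be Cartier, or we replace $r$ by a multiple; all components of $\Delta$ are assumed $\bbQ$-Cartier.
\item \emph{Symmetric powers of the twisted bundle.} $\Sym^m f_*(\calL(-rD))$ is a quotient of $(f_*(\calL(-rD)))^{\otimes m}$. The tensor power lies in $\langle f_*\calL\rangle^\otimes$ because the subcategory is closed under tensor products, and the quotient is numerically flat, so it lies in the subcategory as well.
\end{enumerate}

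The only point needing care is the third step: we must know that a subsheaf of a numerically flat bundle which is itself numerically flat is a subobject in the Tannakian sense. This amounts to checking that the injective map has numerically flat cokernel, or at least that the map is a morphism whose kernel and image in $\mathrm{Vec}_0^S(Y)$ agree with those in $\mathrm{Coh}(Y)$. I would cite Langer: morphisms between numerically flat bundles have locally free, numerically flat kernels, images and cokernels, since $\mathrm{Vec}_0^S(Y)$ is an abelian full subcategory of $\mathrm{Coh}(Y)$. This identifies the image with the source, and the third step follows.

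Property (3) is not needed for this lemma; it presumably matters later for Lemma \ref{lem: triviality of relative ample bundle implies product}.
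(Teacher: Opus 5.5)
Your proposal is correct and takes essentially the same approach as the paper: $\Sym^m f_*\calL$ is a quotient of a tensor power, $f_*(\calL^{\otimes m})$ is a quotient of $\Sym^m f_*\calL$ via the surjectivity in (2), and $f_*(\calL^{\otimes m}(-rD))$ is a numerically flat subbundle of $f_*(\calL^{\otimes m})$. The only difference is cosmetic: the paper obtains $\Sym^m f_*(\calL(-rD))$ as a subbundle of $\Sym^m f_*\calL$, whereas you obtain it as a quotient of $(f_*(\calL(-rD)))^{\otimes m}$ using closure of $\langle f_*\calL\rangle^\otimes$ under tensor products, and both routes are valid.
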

\begin{proof}
	The first statement is just Lemma \ref{lem: Tannakian subcategory generated by one element}. 
	Since the symmetric power is the cokernel of a map between tensor products, we have $\Sym^m f_*\calL \in \langle f_*\calL \rangle^\otimes$. 
	Then $f_*(\calL^{\otimes m})$ is a numerically flat quotient of $\Sym^m f_*\calL$, and $\Sym^m f_*(\calL(-rD))$ is a numerically flat subbundle of $\Sym^m f_*\calL$, and eventually $f_*(\calL^{\otimes m}(-rD))$ is a numerically flat subbundle of $f_*(\calL^{\otimes m})$. 
\end{proof}

We are ready to prove the following criterion of being an associated bundle. 

\begin{thm}\label{thm: main theorem in text}
	Let $(X,\Delta)$ be a projective pair and $Y$ be a smooth projective variety over a perfect field $k$. 
	Let $f:(X,\Delta)\rightarrow Y$ be a morphism, and let $y\in Y(k)$ be a $k$-point. 
	We denote with $(X_y,\Delta_y)$ the fibre of $f$ over $y$. 
	Assume there exists an $f$-ample line bundle $\calL$, such that
	\begin{enumerate}
		\item the sheaves $f_*(\calL^{\otimes m})$ are numerically flat for all $m>0$,
	\end{enumerate} 
	and for every irreducible component $D$ of $\Delta$, there is a positive integer $r$ such that
	\begin{enumerate}
		\item[(2)] $f_*(\calL^{\otimes m}(-rD))$ are numerically flat for all $m>0$. 
	\end{enumerate}
	Then we can find a linear algebraic group $G$ which is a quotient of $\pi_1^S(Y,y)$, a principal $G$-bundle $P\rightarrow Y$, such that $f$ is an $(X_y,\Delta_y)$-bundle associated to $P \rightarrow Y$. 
	In particular, $f$ is an $(X_y,\Delta_y)$-bundle associated to $\widetilde Y^S \rightarrow Y$.
\end{thm}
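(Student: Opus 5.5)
First I reduce to the situation with property \ref{property very very ample}. Replace $\calL$ by $\calL^{\otimes d}$ for $d\gg 0$. Hypotheses (1) and (2) are stable under this replacement, since $f_*(\calL^{\otimes dm})$ and $f_*(\calL^{\otimes dm}(-rD))$ are among the sheaves already assumed numerically flat. For $d$ large and divisible enough, $\calL^{\otimes d}$ becomes $f$-very ample with surjective multiplication maps $\Sym^m f_*\calL^{\otimes d}\rightarrow f_*\calL^{\otimes dm}$ (relative Castelnuovo--Mumford regularity / Serre vanishing). For each of the finitely many components $D$ of $\Delta$, $\calL^{\otimes d}(-rD)$ is $f$-ample. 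If needed I also replace $r$ by a suitable multiple so that condition (2) applies to the twisted bundle; the form (2) is stated in exactly matches $f_*(\calL^{\otimes dm}(-rD))$. Hence I may assume (\#) holds, possibly after also taking the surjectivity for $\calL(-rD)$ in the same way.

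Next I form $\calC:=\langle f_*\calL\rangle^\otimes\subset \mathrm{Vec}_0^S(Y)$. This is a full neutral Tannakian subcategory with fibre functor at $y$, by Lemma \ref{lem: Tannakian subcategory generated by one element}. By Proposition \ref{prop: algebraicity from Tannakian}, its dual $G$ is linear algebraic, since $f_*\calL$ is a tensor generator. The inclusion $\calC\subset\mathrm{Vec}_0^S(Y)$ gives a faithfully flat map $\pi_1^S(Y,y)\rightarrow G$, so $G$ is a quotient of $\pi_1^S(Y,y)$. Let $\pi:P\rightarrow Y$ be the associated principal $G$-bundle from Proposition \ref{prop: correspondence principal bundle and tannakian}.

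By Lemma \ref{lem: Tannakian subcategory generated by f_*L contains all}, all the bundles $\Sym^m f_*\calL$, $f_*\calL^{\otimes m}$, $\Sym^m f_*(\calL(-rD))$, $f_*(\calL^{\otimes m}(-rD))$ lie in $\calC$. Moreover, all multiplication maps and inclusions between them are morphisms in $\calC$, because $\calC$ is full. Pulling back to $P$ trivializes each bundle by Corollary \ref{cor: pullback of vector bundle becomes trivial}, with the fibre at a lift $\tilde y$ of $y$ identified with $H^0(X_y,\cdot)$. Base change (flat, since $\pi$ is faithfully flat) gives $\pi^*f_*(\cdots)=f_{P*}(\cdots)$ for $f_P:X_P\rightarrow P$. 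Since $H^0(P,\calO_P)=k$ by Proposition \ref{prop: rigidity of universal cover}, the claim in Lemma \ref{lem: rigidity of matrix} shows the cube of Lemma \ref{lem: triviality of relative ample bundle implies product} commutes on $P$. Therefore $(X_P,\Delta_P)\cong P\times(X_y,\Delta_y)$ over $P$.

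Finally I check $G$-equivariance. $G$ acts on $X_P=X\times_YP$ through $P$, and I need the trivialization to intertwine this with the diagonal action for some $G$-action on $(X_y,\Delta_y)$. The trivialization of $\pi^*\calE$ for $\calE=F(V)$ is by construction the canonical isomorphism $\pi^*F(V)\cong\calO_P\otimes V$, which is $G$-equivariant for the diagonal action. Functoriality of $F$ means the multiplication maps come from $G$-equivariant maps $\Sym^m V\rightarrow V_m$. So $G$ acts on the graded algebras $\bigoplus_m H^0(X_y,\calL_y^m)$ and their ideals for each $D$, hence on $(X_y,\Delta_y)$, and the isomorphism descends to $X\cong (P\times X_y)/G$ compatibly with $\Delta$. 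I expect this equivariance bookkeeping to be the main obstacle: I must make sure the boundary ideals (using $r$ rather than $1$) cut out exactly $\Delta_y$ with its coefficients, and that the descent of the Proj is compatible. The last statement then follows from Corollary \ref{cor: associated fibre bundle comparison} applied to $\calC\subset\mathrm{Vec}_0^S(Y)$.
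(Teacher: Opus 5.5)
Your proposal is correct and follows essentially the same route as the paper. You pass to a multiple of $\calL$ to get property (\#), take $G$ dual to $\langle f_*\calL\rangle^\otimes$, trivialize the pulled-back section algebras and boundary ideals on $P$ using $H^0(P,\calO_P)=k$, and get the diagonal $G$-action from the canonical $G$-equivariant trivializations $\pi^*F(V)\cong\calO_P\otimes V$; the paper phrases this last step via the embedding into $\bbP_P(f'_*\calL')\cong P\times\bbP^N$. The one thing to drop is your hedge about replacing $r$ by a multiple: it is not needed, and it is not allowed, since hypothesis (2) is given only for the specific $r$ and already covers $f_*(\calL^{\otimes dm}(-rD))$.
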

\begin{proof}
	By replacing $\calL$ with a large enough multiple we may assume that $\calL$ has property \ref{property very very ample}. 
	Let $G$ be the group scheme corresponding to the neutral Tannakian category $\langle f_*\calL \rangle^\otimes$, and let $\pi: P\rightarrow Y$ be the principal $G$-bundle given by Proposition \ref{prop: correspondence principal bundle and tannakian}. 
	Since $\langle f_*\calL \rangle^\otimes$ is a full neutral Tannakian subcategory of $\mathrm{Vec}_0^S(Y)$, the natural map $\pi_1^S(Y,y) \rightarrow G$ is faithfully flat by \cite[Proposition 2.21]{deligneTannakianCategories}. 
	The group scheme $G$ is linear algebraic by Proposition \ref{prop: algebraicity from Tannakian}.
	We name several maps as in the following Cartesian diagram. 
	\[\begin{tikzcd}
	{(X',\Delta')} & {(X,\Delta)} \\
		P & Y
		\arrow["{\pi'}", from=1-1, to=1-2]
		\arrow["{f'}"', from=1-1, to=2-1]
		\arrow["f", from=1-2, to=2-2]
		\arrow["\pi"', from=2-1, to=2-2]
	\end{tikzcd}\]
	By flat base change (\cite[Tag 02KH]{stacks-project}), we have $\pi^*f_*=\pi'^*f'_*$ as functors $\mathrm{QCoh}(X) \rightarrow \mathrm{QCoh}(P)$. 
	Denote $\calL':= \pi'^* \calL$ and $D':=D \times_Y P$. 
	Then there are isomorphisms
	\begin{align*}
		\pi^*\Sym^m f_*\calL &\cong \Sym^m f'_*\calL', \\
		\pi^*f_*(\calL^{\otimes m}) &\cong f'_*(\calL'^{\otimes m}), \\
		\pi^*\Sym^m f_*(\calL(-rD)) &\cong \Sym^m f'_*(\calL'(-rD')), \\
		\pi^*f_*(\calL^{\otimes m}(-rD)) &\cong f'_*(\calL'^{\otimes m}(-rD')). 
	\end{align*}
	By Corollary \ref{cor: pullback of vector bundle becomes trivial} and Lemma \ref{lem: Tannakian subcategory generated by f_*L contains all}, the four bundles on the left hand side are trivial, and by Proposition \ref{prop: rigidity of universal cover}, we have $H^0(P,\calO_P)=k$.
	This allows us to use Lemma \ref{lem: triviality of relative ample bundle implies product} and Lemma \ref{lem: rigidity of matrix} and conclude that $(X', \Delta') \cong P \times (X_y, \Delta_y)$ over $P$. 
	
	It still remains to show that the action of $G$ on $P \times (X_y, \Delta_y)$ is diagonal. 
	But indeed we have an embedding $P \times X_y \rightarrow \bbP_{P}(f'_*\calL') \cong P\times \bbP^N$ given by the surjection $\Sym^m f'_*\calL' \rightarrow f'_*(\calL'^{\otimes m})$. 
	And by construction in Section \ref{subsection: Tannakian duality}, $G$ acts diagonally on $P\times \bbP^N$, hence also diagonally on $P \times (X_y, \Delta_y)$. 
	This shows that $f:(X,\Delta) \rightarrow Y$ is an $(X_y,\Delta_y)$-bundle associated to $\pi: P \rightarrow Y$. 
	To show that $f$ is also a fibre bundle associated to the S-universal cover, simply apply Corollary \ref{cor: associated fibre bundle comparison}. 
\end{proof}

\subsection{Semi-positivity engine}
It seems a priori a very strong condition to have an $f$-ample line bundle $\calL$ such that $f_*(\calL^{\otimes m})$ are numerically flat for all $m \geq 0$. 
In positive characteristics, Ejiri shows however in \cite{ejiriSplittingAlgebraicFiber2023a} that this is always achievable when the relative anti-log-canonical divisor is nef with some mild extra conditions. 

\begin{thm}[{\cite[Theorem 4.2]{ejiriSplittingAlgebraicFiber2023a}}]\label{thm: existence of relative ample numerically flat bundle}
	Let $(X,\Delta)$ be a Cohen-Macaulay projective pair, and $Y$ be a smooth projective variety over a perfect field $k$ of characteristic $p>0$. 
	Given a surjective morphism $f:(X,\Delta) \rightarrow Y$ and assume also
	\begin{enumerate}
		\item the relative anti-log-canonical divisor $-K_{X/Y}-\Delta$ is nef and of Cartier index coprime to $p$,
		\item the geometric generic fibre $(X_{\overline\eta}, \Delta_{\overline\eta})$ is strongly $F$-regular.
	\end{enumerate}
	Then there exists an $f$-ample line bundle $\calL$ such that $f_*(\calL^{\otimes m})$ are numerically flat for all $m\geq 0$. 
\end{thm}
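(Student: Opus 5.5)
Since $-K_{X/Y}-\Delta$ is only nef, the first step is to perturb it into something relatively ample and then feed that into a relative Frobenius trace argument on $Y$. Fix an $f$-ample Cartier divisor $A$ and let $m_0$ be a multiple of the Cartier index of $-K_{X/Y}-\Delta$, with $m_0$ coprime to $p$. For $s\gg 0$ set
\[
\calL := \calO_X\bigl(s m_0(-K_{X/Y}-\Delta) + A\bigr),
\]
after possibly twisting by a suitable pullback $f^*H$. The goal is to show that each $\calE_m := f_*(\calL^{\otimes m})$ is nef. Once that is known, $\det \calE_m$ has degree controlled by $\calL$ restricted to curves in the fibres, and a separate argument must give degree $0$. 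Proposition~\ref{prop: characterization of numerical flatness}(3) then upgrades nef of degree $0$ to numerically flat.

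For nefness, I would use the relative trace of Frobenius. Strong $F$-regularity of the geometric generic fibre makes the trace map
\[
F^e_{X/Y,*}\calO_X\bigl((1-p^e)(K_{X/Y}+\Delta) + p^e M\bigr) \longrightarrow \calO_{X'}(M')
\]
generically surjective on pushforwards for $e$ divisible enough, where $p^e-1$ is divisible by $m_0$. This gives generically surjective maps
\[
F^{e,*}_Y(\text{something}) \longrightarrow f_*\calO_X(M+\ldots)
\]
whose sources are positive twists. Iterating in the standard Patakfalvi/Ejiri manner shows that $f_*\calO_X(M)\otimes \calO(H)$ is generically globally generated for a fixed $H$ and for $M$ ranging over all $p^e$-multiples. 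Dividing by $p^e$ then yields weak positivity, and hence nefness on curves after the usual Frobenius base-change trick.

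The main obstacle is the degree-$0$ part. Nefness alone allows $\det f_*\calL^{m}$ to be positive, so the choice of $\calL$ matters. I would try to choose $A$ so that $\calL$ is numerically $f$-trivial up to the nef part, or normalise by $f^*$ of a $\bbQ$-divisor $-\frac{1}{\rank}c_1(f_*\calL)$ twisted appropriately. To prove this degree vanishes, I would restrict to a general complete-intersection curve $C\subset Y$ and use that $-K_{X_C/C}-\Delta_C$ is nef. The key input is that a nef relative anticanonical divisor forces $\deg f_*\calO(m(-K-\Delta)) \le 0$ via the dual positivity of $f_*\omega$-type sheaves. This would pin down degree $0$ for the correct normalisation, which I expect to be the delicate step. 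Making this uniform in $m$ may require passing to a fixed multiple $\calL$ and using surjectivity of $\Sym^m$.
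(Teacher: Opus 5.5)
Your proposal has a genuine gap, and it sits exactly at the step you postpone. Note that the paper gives no proof of this statement; it is quoted from \cite[Theorem 4.2]{ejiriSplittingAlgebraicFiber2023a}, so your sketch has to stand on its own.

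Making the degrees vanish is the whole content of the theorem, and your construction of $\calL$ cannot achieve it. Your nefness step is a Patakfalvi-type semipositivity argument. Its hypothesis is that $m\calL-K_{X/Y}-\Delta$ be nef and $f$-ample.

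Suppose you secure this by taking $A$ ample. Then $\calL=\calO_X(sm_0(-K_{X/Y}-\Delta)+A)$ is ample. Take any finite morphism $C\to Y$ from a smooth projective curve, let $n$ be the relative dimension, and let $m\gg 0$ so that higher direct images vanish. Then
\[
\deg\bigl(f_*\calL^{\otimes m}|_C\bigr)=\chi\bigl(X_C,\calL^{\otimes m}\bigr)-\rank\bigl(f_*\calL^{\otimes m}\bigr)\,\chi(\calO_C)=\frac{(\calL|_{X_C})^{n+1}}{(n+1)!}\,m^{n+1}+O(m^{n})>0 .
\]
So $f_*\calL^{\otimes m}$ is never numerically flat. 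This is the contrapositive of the computation in Lemma \ref{lem: top intersection vanishes}. Contrary to your remark, this degree is governed by intersection numbers on $X_C$, not by curves in the fibres.

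Suppose instead that $A$ is only $f$-ample, or that you twist by a negative $f^*H$ to kill the leading term. In general $H$ must then be a suitable $\bbQ$-class, so you also pass to a multiple of $\calL$. In either case $m\calL-K_{X/Y}-\Delta$ is no longer nef, and your nefness step has no input.

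So the split into ``nef first, degree zero afterwards'' cannot work: nefness must be proved for the normalised bundles themselves. The input you propose for the degree, $\deg f_*\calO_X(m(-K_{X/Y}-\Delta))\le 0$, would not help even if granted. The divisor $-K_{X/Y}-\Delta$ need not be $f$-big; it is $f$-numerically trivial when the fibres are log Calabi--Yau. It therefore says nothing about $f_*\calL^{\otimes m}$ for an $f$-ample $\calL$.

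What is missing is a mechanism that produces nefness after normalisation, which is where the hypotheses are actually used. One standard route goes back to the characteristic-zero arguments of Cao and Cao--H\"oring (cf. \cite{caoAlbaneseMapsProjective2018}):

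\begin{enumerate}
\item Take $A$ sufficiently $f$-ample and set $E=f_*\calO_X(A)$, of rank $r$.
\item Aim at nefness of the $\bbQ$-twisted bundle $E\langle-\frac1r\det E\rangle$, whose determinant is trivial by construction.
\item Integrally, set $\calL=\calO_X(rA)\otimes f^*(\det E)^{-1}$. Then $f_*\calL^{\otimes m}$ is a quotient of $\Sym^{mr}E\otimes(\det E)^{-m}$ once the multiplication maps are surjective.
\item To get the required positivity, use the determinant trick on the $r$-fold fibre product $X^{(r)}=X\times_Y\cdots\times_Y X$. The inclusion $\det E\hookrightarrow E^{\otimes r}=f^{(r)}_*\calO_{X^{(r)}}(A^{\boxtimes r})$ gives an effective divisor in $|A^{\boxtimes r}-(f^{(r)})^*\det E|$.
\item On $X^{(r)}$, the divisor $-K_{X^{(r)}/Y}-\Delta^{(r)}$ is still nef. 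The geometric generic fibre, a product of strongly $F$-regular pairs, is still strongly $F$-regular. This lets a small multiple of that effective divisor be absorbed into the boundary, so the Frobenius-trace argument runs for the normalised sheaf.
\end{enumerate}

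Three further points.
\begin{itemize}
\item Your sketch never uses the Cohen--Macaulay hypothesis. It enters through flatness of $f$ (Lemma \ref{lem: miracle flatness}, via \cite[Theorem 4.1]{ejiriSplittingAlgebraicFiber2023a}).
\item Flatness is needed for the $f_*\calL^{\otimes m}$ to be vector bundles compatible with restriction to curves. It is also needed for the fibre products above to be well behaved.
\item The trace map runs the other way from what you wrote: it gives $f_*\calO_X(p^eM+(1-p^e)(K_{X/Y}+\Delta))\to F_Y^{e*}f_*\calO_X(M)$, so $F_Y^{e*}f_*\calO_X(M)$ is the target, not the source.
\end{itemize}
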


This theorem says nothing about the boundary. 
We use the ideas in \cite{patakfalviWeakBeauvilleBogomolovDecomposition2025} relying on semi-positivity engine to show that $f_*(\calL^{\otimes m}(-rD))$ are also numerically flat for all $m\gg 0$ and all irreducible components $D$ of $\Delta$, with $r$ chosen as the Cartier index of $D$. 
To this end, we apply the following settings.
\begin{set}\label{set: conditions for proving numerical flatness of ideal sheaves}
	We take the same settings and notations in Theorem \ref{thm: existence of relative ample numerically flat bundle}. 
	We remark also that by \cite[Theorem B]{patakfalviFsingularitiesFamilies2018} the strong $F$-regularity of the geometric generic fibre $(X_{\overline\eta}, \Delta_{\overline\eta})$ is equivalent to the strong $F$-regularity of the fibre over a general perfect point. 
	Here, a perfect point means just a map $\Spec l \rightarrow Y$ from the spectrum of a perfect field.
	Furthermore, let $\iota:C \rightarrow Y$ be a morphism from a smooth projective curve $C$ to $Y$. 
	We assume that there is a rational point $y\in C(k)$, and assume that the fibre $(X_y,\Delta_y)$ is strongly $F$-regular.  
	We denote $(Z,\Lambda):=(X,\Delta)\times_Y C,\ D':=D \times_Y C,\ \calL':= \calL|_Z$. 
	Write also $f'=f|_Z: (Z,\Lambda) \rightarrow C$. 
	Moreover, let $r$ be an integer such that $rD$ is Cartier. 
\end{set}

\begin{lem}\label{lem: miracle flatness}
	Under the above setting, $f$ is flat, and $Z$ is Cohen-Macaulay and normal. 
\end{lem}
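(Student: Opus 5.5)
The plan is to use miracle flatness for $f$, and then deduce the properties of $Z$ by base change. We have $X$ Cohen--Macaulay and $Y$ smooth, hence regular. By miracle flatness (\cite[Tag 00R4]{stacks-project}), $f$ is flat as soon as every fibre has dimension $\dim X-\dim Y$. So the first and main step is equidimensionality of the fibres of $f$. To get it, I would use the structure coming from Theorem \ref{thm: existence of relative ample numerically flat bundle}: there is an $f$-ample $\calL$ with $f_*(\calL^{\otimes m})$ numerically flat, in particular locally free, for all $m\ge 0$.

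I would aim to show that the Hilbert polynomial of the fibres is constant, which forces all fibres to have dimension $n=\dim X-\dim Y$.
\begin{itemize}
\item For $m\gg0$, relative Serre vanishing and cohomology and base change give $h^0(X_t,\calL_t^{\otimes m})=\rank f_*(\calL^{\otimes m})$ on an open set.
\item Upper semicontinuity gives $h^0(X_t,\calL_t^{\otimes m})\ge \rank f_*(\calL^{\otimes m})$ at every point.
\item This alone does not bound the fibre dimension from above. Instead I would argue directly: since $f$ is surjective, every fibre has dimension at least $n$. Suppose some fibre over $t$ had a component of dimension $>n$. Restrict to a general smooth curve $\iota:C\to Y$ through $t$; such curves exist since $Y$ is smooth projective, for instance as complete intersections of hyperplane sections through $t$. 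Then $Z=X\times_Y C$ would have an irreducible component lying inside a single fibre of $Z\to C$.
\item I expect this to contradict the positivity of $f_*(\calL^{\otimes m})|_C$ combined with its local freeness. The vertical component contributes torsion-free but nonconstant-rank sections; more precisely, comparing $\chi$ or $h^0$ growth in $m$ would give growth of order $m^{n+1}$ at $t$ against $m^n$ generically.
\end{itemize}
This is the step I expect to be the obstacle. If it becomes delicate, I would fall back on the surjective fibration hypothesis together with the fact that the anti-nef relative canonical divisor prevents jumping fibres, as in Ejiri's argument.

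Once $f$ is flat, base change $f'\colon Z\to C$ is flat, and its fibres are fibres of $f$. The fibres over closed points of $C$ coincide with those of $f$ over $\iota(c)$, and these are Cohen--Macaulay because $X$ is Cohen--Macaulay and $f$ is flat over a regular base (\cite[Tag 045J]{stacks-project}). Since $C$ is regular, hence Cohen--Macaulay, and $f'$ is flat with Cohen--Macaulay fibres, $Z$ is Cohen--Macaulay (\cite[Tag 045J]{stacks-project}).

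For normality of $Z$, I would use Serre's criterion. Cohen--Macaulay gives $S_2$, so only $R_1$ remains. Over the generic point of $C$ and over general points, the fibres are strongly $F$-regular by \cite[Theorem B]{patakfalviFsingularitiesFamilies2018}, hence normal. In particular the fibre $X_y$ over the chosen rational point $y$ is normal. For a flat morphism to a smooth curve, normality of one fibre $Z_y$ implies $Z$ is normal along $Z_y$. Since $Z_y$ is a Cartier divisor with $Z$ Cohen--Macaulay, and $\calO_{Z_y}$ normal, $Z$ is normal near $Z_y$. So the nonnormal locus is closed and disjoint from $Z_y$. Any codimension-one nonnormal point would have image either a closed point of $C$, where it is a whole fibre component, or the generic point. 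The generic fibre is the base change of the geometric generic fibre's descent, and it is normal. Hence $R_1$ can only fail along finitely many special fibres. There I would invoke that the nonnormal locus, if of codimension one, has codimension one and dominates nothing. If necessary I would choose $C$ general so that it meets the codimension-$\ge2$ bad locus of $X$ only in codimension $\ge2$ of $Z$, giving $R_1$.
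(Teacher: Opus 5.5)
The Cohen--Macaulay step matches the paper. The flatness step has a fixable gap, and the normality step is missing an idea.

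\textbf{Flatness.} Your equidimensionality argument does not work as written. To relate the fibre of $f_*(\calL^{\otimes m})$ at $t$ to $H^0(X_t,\calL_t^{\otimes m})$, or to identify $\iota^*f_*(\calL^{\otimes m})$ with $f'_*(\calL'^{\otimes m})$, you need base change. Semicontinuity has the same requirement. Both presuppose the flatness you are trying to prove; in the paper, Corollary \ref{cor: comparison of higher direct images} itself relies on this lemma. Without it, extra growth of order $m^{n+1}$ on a vertical component of $Z$ only appears as torsion in $f'_*(\calL'^{\otimes m})$, not in the locally free sheaf $\iota^*f_*(\calL^{\otimes m})$, so you get no contradiction.

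You do not need miracle flatness at all. Choose $d$ with $\calL^{\otimes d}$ $f$-very ample. Then $X\cong \mathrm{Proj}_Y\bigoplus_{m\geq 0} f_*(\calL^{\otimes dm})$, and every graded piece is locally free, since numerically flat sheaves are vector bundles. Hence the degree-zero parts of the homogeneous localizations are flat over $\calO_Y$. This is the step (ii)$\Rightarrow$(i) in the proof of \cite[Theorem III.9.9]{hartshorneAlgebraicGeometry1977}. The paper instead quotes \cite[Theorem 4.1]{ejiriSplittingAlgebraicFiber2023a}, which gives flatness \emph{and} reducedness of all fibres of $f$.

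\textbf{Normality.} This is the genuine gap. Serre's criterion needs $R_1$ at the generic points of the components of $Z_c$ for every closed point $c\in C$. Normality of $Z_\eta$ and of $Z_y$ says nothing there. For example, $\{v^2=s^2u\}\subset\mathbb{A}^2_{u,v}\times\mathbb{A}^1_s$ is flat over $\mathbb{A}^1_s$ with smooth fibres for $s\neq 0$. Yet it is non-normal along the line $v=s=0$, which is its non-reduced fibre over $s=0$.

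Your fallback of choosing $C$ general is not available. The lemma concerns an arbitrary $C$, and in the proof of Theorem \ref{thm: associated bundle relative anti-nef positive characteristic in text} the curve must pass through prescribed points $y_0,y_1$. Your final sentences about the nonnormal locus are not an argument.

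The missing input is that \emph{every} fibre of $f$ is reduced. This does not follow from local freeness of the $f_*(\calL^{\otimes m})$. It comes from the nefness of $-K_{X/Y}-\Delta$ together with strong $F$-regularity of the geometric generic fibre, via Ejiri's theorem.

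Given reducedness, the argument closes as follows. Let $\xi$ be the generic point of a component of $Z_c$ and $s$ a uniformizer of $C$ at $c$. By flatness, $\calO_{Z,\xi}$ is one-dimensional and $s$ is a nonzerodivisor in it. The quotient $\calO_{Z,\xi}/s\calO_{Z,\xi}$ is a reduced Artinian local ring, hence a field. So $\calO_{Z,\xi}$ is a DVR. Combined with $S_2$ from Cohen--Macaulayness, Serre's criterion gives normality.
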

\begin{proof}
	By \cite[Theorem 4.1]{ejiriSplittingAlgebraicFiber2023a}, the map $f$ is flat, and all fibres are reduced. 
	Any fibre of $f$ is Cohen-Macaulay by \cite[Tag 045J]{stacks-project}, and by the same cited argument, $Z$ is Cohen-Macaulay. 
	As $(X_y,\Delta_y)$ is strongly $F$-regular, the geometric generic fibre of $f'$ is strongly $F$-regular too (\cite[Theorem B]{patakfalviFsingularitiesFamilies2018}), and is in particular normal. 
	The normality of the geometric generic fibre together with the reducedness of all fibres imply then $Z$ is regular in codimension one. 
	By Serre's criterion on normality, $Z$ is normal. 
\end{proof}

Thus, the relative canonical divisor $K_{Z/C}$ is well-defined, and we have $K_{Z/C}+\Lambda \sim_\bbQ (K_{X/Y}+\Delta)|_C$. 
In particular, $-K_{Z/C}-\Lambda$ is nef.

\begin{cor}\label{cor: comparison of higher direct images}
	Let $m$ be an integer such that $R^if_*(\calL^{\otimes m}(-rD))=0$ for all $i>0$. 
	Then $f_*(\calL^{\otimes m}(-rD))|_C \cong f'_*(\calL'^{\otimes m}(-rD'))$, and $R^if'_*(\calL'^{\otimes m}(-rD'))=0$ for all $i>0$. 
\end{cor}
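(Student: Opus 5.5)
The plan is to use cohomology and base change for the flat morphism $f$, together with the fact that $\calL^{\otimes m}(-rD)$ is a line bundle flat over $Y$. By Lemma \ref{lem: miracle flatness}, $f$ is flat. Since $rD$ is Cartier, $\calF:=\calL^{\otimes m}(-rD)$ is invertible on $X$, hence flat over $Y$. Write $Z=X\times_Y C$. Then $\calF|_Z=\calL'^{\otimes m}(-rD')$, provided we check that $D\times_Y C$ is the divisor $D'$ and that pulling back the Cartier divisor $rD$ gives $rD'$. This should hold because $f$ is flat with reduced fibres and $Z$ is normal, so the pullback of the local equation of $rD$ is a nonzerodivisor. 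Consequently $\calO_X(-rD)|_Z=\calO_Z(-rD')$.

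First, use the vanishing $R^if_*\calF=0$ for all $i>0$. Then $f_*\calF$ is locally free, and the base change map $f_*\calF\otimes k(y)\to H^0(X_y,\calF_y)$ is an isomorphism for every point $y$; this follows from descending induction via cohomology and base change (\cite[III.12.11]{} style, or Stacks Tag 0D2M). More conceptually, $Rf_*\calF$ is a perfect complex concentrated in degree $0$ and locally free there. Since $f$ is flat and $\calF$ is $Y$-flat, derived base change holds for arbitrary base change $\iota:C\to Y$, giving
\[
L\iota^*Rf_*\calF\cong Rf'_*(\calF|_Z).
\]
This is Stacks Tag 08IB, which applies because $f$ is flat, or more generally because $X$ and $C$ are Tor-independent over $Y$.

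Second, compare the two sides. The left-hand side is $\iota^*f_*\calF$ in degree $0$, since $f_*\calF$ is locally free and so has no higher $L\iota^*$. Taking cohomology sheaves then gives $f'_*(\calF|_Z)\cong (f_*\calF)|_C$ and $R^if'_*(\calF|_Z)=0$ for all $i>0$, which is the statement.

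The main obstacle is the identification $\calF|_Z=\calL'^{\otimes m}(-rD')$. In particular, one must ensure that $D'$ means the pullback Cartier divisor and that it has no embedded components. I would handle this with the flatness of $f$: for a Cartier divisor with local equation $g$, the pullback of $g$ to $Z$ remains a nonzerodivisor because $g$ restricts to a nonzerodivisor on each fibre. This last point holds because $D$ contains no fibre, as $D$ is a horizontal component dominating $Y$, and the fibres are reduced. Failing that, it suffices to interpret $\calO(-rD')$ as $\calO_X(-rD)|_Z$, which is what the later argument uses.
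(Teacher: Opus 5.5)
Your argument is correct and is essentially the paper's proof: derived base change (Tag 08IB) for the flat morphism $f$, the identification $L\iota^*Rf_*\calF\cong\iota^*f_*\calF$ in degree $0$ (using that $Rf_*\calF=f_*\calF$ is locally free, which you justify via cohomology and base change whereas the paper simply asserts it), and then reading off cohomology sheaves of $Rf'_*(\calF|_Z)$. Your extra justification that $D\times_Y C$ is a genuine Cartier divisor, via $D$ being horizontal and containing no fibre, is not established in this setting. It is also not needed: the paper takes $\calL'^{\otimes m}(-rD')$ to mean the pullback $\calL^{\otimes m}(-rD)|_Z$, which is exactly your fallback interpretation.
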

\begin{proof}
	Consider the base change diagram
		\[\begin{tikzcd}
			Z & X \\
			C & Y.
			\arrow["{\iota'}", from=1-1, to=1-2]
			\arrow["{f'}"', from=1-1, to=2-1]
			\arrow["f", from=1-2, to=2-2]
			\arrow["\iota"', from=2-1, to=2-2]
		\end{tikzcd}\]
		By derived base change (\cite[Tag 08IB]{stacks-project}), there is an isomorphism of complexes
		\[ 
			L\iota^* Rf_* (\calL^{\otimes m}(-rD)) \cong Rf'_* L\iota'^* (\calL^{\otimes m}(-rD)).
		\] 
		Since $X$ is flat over $Y$ by Lemma \ref{lem: miracle flatness}, we have $\calL^{\otimes m}(-rD)$ is flat over $Y$ too. 
		Then both sides read further
		\begin{align*}
			L\iota^* Rf_* (\calL^{\otimes m}(-rD)) &\cong L\iota^* f_* (\calL^{\otimes m}(-rD))\\
			&\cong \iota^* f_* (\calL^{\otimes m}(-rD)) &(f_*(\calL^{\otimes m}(-rD)) \text{ is locally free})\\
			Rf'_* L\iota'^* (\calL^{\otimes m}(-rD)) 
			&\cong Rf'_* \iota'^* (\calL^{\otimes m}(-rD)) &(\calL^{\otimes m}(-rD) \text{ is flat over $Y$})\\
			&\cong Rf'_* (\calL'^{\otimes m}(-rD')). 
		\end{align*}
		Since the left hand side is a complex concentrated in degree $0$, both claims follow.
\end{proof}

\begin{lem}\label{lem: top intersection vanishes}
	The top self-intersection $(\calL')^{\dim Z}=0$. 
\end{lem}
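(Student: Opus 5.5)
We want $(\calL')^{\dim Z}=0$ on $Z=X\times_Y C$, where $\dim Z=n+1$ and $n$ is the relative dimension. The plan is to compute the top self-intersection through the asymptotic Euler characteristic of $\calL'^{\otimes m}$ on $Z$, and to evaluate that Euler characteristic on the curve $C$, using the numerical flatness of the pushforwards from Theorem \ref{thm: existence of relative ample numerically flat bundle}.

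\textbf{Step 1 (cohomology vanishing).} Since $\calL$ is $f$-ample, relative Serre vanishing gives $R^if_*(\calL^{\otimes m})=0$ for all $i>0$ and $m\gg0$. Applying Corollary \ref{cor: comparison of higher direct images} with $D$ replaced by the empty divisor (the same derived base change argument works verbatim, using flatness of $f$ from Lemma \ref{lem: miracle flatness}), we get
\[
f'_*(\calL'^{\otimes m})\cong \iota^*f_*(\calL^{\otimes m}) \quad\text{and}\quad R^if'_*(\calL'^{\otimes m})=0 \text{ for } i>0 .
\]
The Leray spectral sequence then yields $\chi(Z,\calL'^{\otimes m})=\chi(C,\iota^*f_*(\calL^{\otimes m}))$ for $m\gg0$.

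\textbf{Step 2 (Riemann--Roch on $C$).} Pullbacks of nef bundles are nef, so $\iota^*f_*(\calL^{\otimes m})$ is numerically flat on $C$. In particular it has degree $0$, for instance by Proposition \ref{prop: characterization of numerical flatness}, or directly because both it and its dual are nef on a curve. Riemann--Roch on $C$ then gives
\[
\chi(Z,\calL'^{\otimes m})=\rank f_*(\calL^{\otimes m})\cdot(1-g(C)).
\]
The rank equals $h^0(X_y,\calL_y^{\otimes m})=\chi(X_y,\calL_y^{\otimes m})$, which is a polynomial in $m$ of degree $n=\dim Z-1$.

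\textbf{Step 3 (compare leading terms).} By asymptotic Riemann--Roch on the projective scheme $Z$, $\chi(Z,\calL'^{\otimes m})$ is a polynomial in $m$ whose $m^{\dim Z}$-coefficient is $(\calL')^{\dim Z}/(\dim Z)!$. By Step 2 the same polynomial has degree at most $\dim Z-1$, so its $m^{\dim Z}$-coefficient vanishes, and hence $(\calL')^{\dim Z}=0$.

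The only delicate point is Step 1, namely making sure that base change holds and the higher direct images vanish on $C$. This is exactly what Corollary \ref{cor: comparison of higher direct images} provides, together with the flatness from Lemma \ref{lem: miracle flatness}, so I expect no real obstacle. An alternative route to Step 3 would be to note that $\calL'$ is nef, since the pushforwards are nef and $\calL'$ is relatively globally generated, and then read off the volume from $h^0$. The Euler characteristic argument above avoids needing nefness.
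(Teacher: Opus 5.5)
Your proposal is correct and follows essentially the same route as the paper: Riemann--Roch on $C$ for the degree-zero, numerically flat bundle $f'_*(\calL'^{\otimes m})$ shows its Euler characteristic is $O(m^{\dim Z-1})$, while relative Serre vanishing plus asymptotic Riemann--Roch on $Z$ identify the $m^{\dim Z}$-coefficient of that same Euler characteristic with $(\calL')^{\dim Z}/(\dim Z)!$. You are slightly more explicit than the paper in justifying the base change $f'_*(\calL'^{\otimes m})\cong\iota^*f_*(\calL^{\otimes m})$, which is what transfers numerical flatness to $C$, and in comparing polynomial coefficients instead of using asymptotic notation.
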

\begin{proof}
	We apply the theorem of Hirzebruch-Riemann-Roch (\cite[Theorem A.4.1]{hartshorneAlgebraicGeometry1977}) to $f'_*(\calL'^{\otimes m})$, which reads
	\[
		\chi(C,f'_*(\calL'^{\otimes m}))= \deg c_1(f'_*(\calL'^{\otimes m})) + \rank f'_*(\calL'^{\otimes m}) \cdot (1-g(C))
	\]
	By Proposition \ref{prop: characterization of numerical flatness}, we have $\deg c_1(f'_*(\calL'^{\otimes m}))=0$. 
	And we have also $\rank f'_*(\calL'^{\otimes m})=h^0(X_y,\calL_y^{\otimes m})=\chi(X_y,\calL_y^{\otimes m})$ for $m\gg 0$. 
	So asymptotically, $\chi(C,f'_*(\calL'^{\otimes m}))= O(m^{\dim X_y})= O(m^{\dim Z -1})$. 
	Meanwhile, we know $\chi(C,f'_*(\calL'^{\otimes m})) \sim \chi(Z,\calL'^{\otimes m}) \sim (\calL')^{\dim Z} \cdot m^{\dim Z}$, where the first asymptotic equivalence is because $\calL'$ is $f'$-ample, and the second is asymptotic Riemann-Roch (\cite[Tag 0BJ8]{stacks-project}). Hence we must have $(\calL')^{\dim Z}=0$. 
\end{proof}

\begin{cor}\label{cor: nefness of relative ample bundle}
	The line bundle $\calL'$ is nef. 
\end{cor}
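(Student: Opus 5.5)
Since $Z$ is projective and $\calL'$ is $f'$-ample, I will use Kleiman's criterion in its curve form: it suffices to show that $\calL'\cdot \Gamma\geq 0$ for every integral curve $\Gamma\subset Z$. I treat two cases, according to whether $\Gamma$ is contracted by $f'$ or dominates $C$.

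\emph{Vertical curves.} If $f'(\Gamma)$ is a point, then $\Gamma$ lies in a fibre of $f'$. On that fibre $\calL'$ restricts to an ample line bundle, so $\calL'\cdot\Gamma>0$.

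\emph{Horizontal curves.} Now suppose $\Gamma$ dominates $C$. Then $\Gamma$ is a multisection, and I will use the positivity of the pushforwards. Choose $m$ large enough that $\calL'^{\otimes m}$ is $f'$-very ample and that the evaluation map $f'^*f'_*(\calL'^{\otimes m})\to \calL'^{\otimes m}$ is surjective; both hold because $\calL'$ is $f'$-ample. Since $f_*(\calL^{\otimes m})$ is numerically flat, hence nef, and nefness is preserved under pullback, the bundle $f'_*(\calL'^{\otimes m})$ is nef on $C$. Here I use the base change comparison $f_*(\calL^{\otimes m})|_C\cong f'_*(\calL'^{\otimes m})$, which holds for $m\gg0$ by Corollary \ref{cor: comparison of higher direct images} applied with $D$ replaced by $0$, using relative Serre vanishing. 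Its pullback $f'^*f'_*(\calL'^{\otimes m})$ is therefore nef on $Z$. The line bundle $\calL'^{\otimes m}$ is a quotient of this nef bundle, and a quotient of a nef bundle is nef. It follows that $m\,\calL'\cdot\Gamma\geq 0$, so $\calL'\cdot\Gamma\ge 0$.

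Together the two cases show that $\calL'$ is nef. Note that this argument does not need Lemma \ref{lem: top intersection vanishes}; that lemma presumably matters afterwards, when nefness and vanishing top self-intersection are combined. The main point to check is the identification of $f'_*(\calL'^{\otimes m})$ with the pullback of a numerically flat bundle. Since $f$ is flat by Lemma \ref{lem: miracle flatness}, this is exactly the base-change statement of Corollary \ref{cor: comparison of higher direct images}, once we take $m\gg0$ so that the higher direct images vanish. The remaining facts used are standard:
\begin{itemize}
\item quotients and pullbacks of nef bundles are nef, directly from the definition via $\calO_{\bbP(\calE)}(1)$;
\item a line bundle that is nef on every curve is nef.
\end{itemize}
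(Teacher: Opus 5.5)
Your argument is correct, and it takes a genuinely different, more elementary route than the paper.

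\emph{The paper's route.} The paper gives no direct argument. It defers to \cite[Theorem 5.4]{patakfalviWeakBeauvilleBogomolovDecomposition2025}, remarking that the proof there only uses that $\calL'$ is $f'$-ample and that $(\calL')^{\dim Z}=0$, i.e.\ Lemma \ref{lem: top intersection vanishes}. That lemma is itself extracted from $\deg f'_*(\calL'^{\otimes m})=0$ by Riemann--Roch. So the paper compresses numerical flatness into a numerical vanishing and then recovers nefness inside Settings \ref{set: conditions for proving numerical flatness of ideal sheaves}. These two properties alone do not force nefness: on the Hirzebruch surface $\bbF_2$, the divisor $C_0+F$ is relatively ample with $(C_0+F)^2=0$, but $(C_0+F)\cdot C_0=-1$. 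So the ambient hypotheses, such as nefness of $-K_{Z/C}-\Lambda$, have to enter that argument.

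\emph{Your route.} You instead use the positivity of the pushforward itself: $\calL'^{\otimes m}$ is a quotient of the nef bundle $f'^*f'_*(\calL'^{\otimes m})$. The price is that you need nefness of an actual pushforward rather than the vanishing of a degree. That is part of the hypotheses here, so it costs nothing.

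\emph{Streamlining.} Your proof can be shortened in two ways. First, the vertical/horizontal split is redundant, since the quotient argument already gives $\calL'\cdot\Gamma\geq 0$ for every curve. Second, Corollary \ref{cor: comparison of higher direct images} is not needed. The map $f^*f_*(\calL^{\otimes m})\to\calL^{\otimes m}$ is surjective on $X$ for $m\gg 0$, so $\calL$ is already nef on $X$, and $\calL'$ is its pullback. In this form the argument uses nothing about $-K_{Z/C}-\Lambda$, the fibres, or the characteristic.

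It also makes Lemma \ref{lem: top intersection vanishes} superfluous for this step. Your aside that this lemma ``matters afterwards'' is therefore off: in the paper it is precisely the input for this corollary, and otherwise it is only reused in the complex case.
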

\begin{proof}
	The proof is totally identical to \cite[Theorem 5.4]{patakfalviWeakBeauvilleBogomolovDecomposition2025}. 
	We simply remark here that the loc. cit. settings are more restricted on $\calL'$, but the proof essentially only uses that $\calL'$ is $f$-ample and the top intersection vanishes. 
\end{proof}

Recall the result on semi-positivity in \cite{patakfalviWeakBeauvilleBogomolovDecomposition2025}. 

\begin{thm}[{\cite[Theorem 3.1]{patakfalviWeakBeauvilleBogomolovDecomposition2025}}]\label{thm: semi-positivity}
	Under Settings \ref{set: conditions for proving numerical flatness of ideal sheaves}, let $E$ be a nef $\bbQ$-Cartier divisor on $Z$ such that $K_{Z/C}+\Lambda+E$ is $f'$-nef. 
	Then $K_{Z/C}+\Lambda+E$ is pseudo-effective and nef. 
\end{thm}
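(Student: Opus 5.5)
Write $N:=K_{Z/C}+\Lambda+E$. Once nefness is known, pseudo-effectivity is automatic, so the real content is that $N$ is nef. Since $N$ is $f'$-nef, it suffices to check $N\cdot\Gamma\ge 0$ for horizontal curves $\Gamma\subset Z$. I would argue by a bootstrapping (``semi-positivity engine'') argument in the spirit of Patakfalvi's semi-positivity for pushforwards. Fix an ample Cartier divisor $A$ on $Z$ and consider
\[
t_0:=\inf\{t\ge 0 : N+tA \text{ is nef}\},
\]
which is finite because $A$ is ample. The goal is to show $t_0=0$. Throughout, I would work with $\bbQ$-divisors whose Cartier indices are coprime to $p$, and clear denominators by powers $q=p^e$ chosen so that $(q-1)(K_{Z/C}+\Lambda)$ is Cartier, as in Corollary \ref{cor: GFS is log CY}.

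The key input is a pushforward positivity statement. Let $M$ be a nef, $f'$-ample $\bbQ$-Cartier divisor with Cartier index coprime to $p$. Since the fibre $(Z_y,\Lambda_y)$ is strongly $F$-regular, and hence so is the geometric generic fibre (Lemma \ref{lem: miracle flatness} and \cite{patakfalviFsingularitiesFamilies2018}), the relative trace map
\[
F^e_{Z/C,*}\calO_Z\big((1-q)(K_{Z/C}+\Lambda)+qM'\big)\rightarrow \calO_{Z^{(e)}}(M')
\]
is surjective over the generic point. After pushing forward to $C$, this exhibits $f'_*\calO_Z(K_{Z/C}+\Lambda+M)$ (suitably rounded) as a generic quotient of Frobenius pullbacks of pushforwards of high powers of $M$. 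The latter are nef, which I would show by Castelnuovo--Mumford regularity and Fujita vanishing on $Z$, after twisting by a fixed ample divisor $H$ on $C$ whose degree does not depend on $e$. Letting $e\to\infty$ removes the twist, and gives that $f'_*\calO_Z(m(K_{Z/C}+\Lambda+M))$ is nef for divisible $m$ coprime to $p$.

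I would then apply this with $M_t:=(q-1)(N+tA)+E+\delta A$ for $t>t_0$ and small $\delta>0$. Here $M_t$ is nef, because $N+tA$ and $E$ are nef, and it is $f'$-ample. Its adjoint divisor is
\[
K_{Z/C}+\Lambda+M_t=qN+((q-1)t+\delta)A.
\]
Using relative global generation (uniform relative regularity, again via Fujita vanishing) together with the nefness of the pushforward, the divisor $qN+((q-1)t+\delta)A+f'^*H_0$ is nef for a fixed ample divisor $H_0$ on $C$. Dividing by $q$ shows that
\[
N+\tfrac{(q-1)t+\delta}{q}A+\tfrac1q f'^*H_0
\]
is nef. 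Iterating this and letting $q\to\infty$ drives the coefficient of $A$ down geometrically, unless $t_0=0$; passing to the limit gives $N$ nef. Since $E$ is nef, the intermediate divisors $M_t$ stay nef at every step, which is exactly what makes the induction close.

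The main obstacle is uniformity. The twists $H$ and $H_0$ needed for relative global generation and for Castelnuovo--Mumford regularity must be bounded independently of $e$ and of the multiple $m$, otherwise the $\tfrac1q$ factor does not kill them. I would first try to obtain this from Fujita's vanishing applied to the fixed ample $A$ plus nef perturbations, together with flatness of $f'$ (Lemma \ref{lem: miracle flatness}), so that cohomology and base change hold uniformly.
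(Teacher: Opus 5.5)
The paper gives no proof of this statement; it quotes \cite[Theorem 3.1]{patakfalviWeakBeauvilleBogomolovDecomposition2025}. Your sketch therefore has to stand on its own. The ingredients are the right ones, but the closing bootstrap does not work as written.

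\textbf{First gap: the twist $H_0$.} From the nefness of $N+\frac{(q-1)t+\delta}{q}A+\frac1q f'^*H_0$ you cannot reach $t_0=0$. The gain in the $A$-coefficient is $\frac{t-\delta}{q}$, which is of the same order $1/q$ as the error $\frac1q f'^*H_0$. Letting $q\to\infty$ just returns $N+tA$. For fixed $q$, choosing $c$ with $cA-f'^*H_0$ nef only shows that $N+\bigl(t-\frac{t-\delta-c}{q}\bigr)A$ is nef, which improves $t$ only while $t>c+\delta$. The iteration therefore stalls at $t_0\le c$, and rescaling $A$ rescales $t_0$ and $c$ alike. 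So ``the $1/q$ factor kills $H_0$'' is false. The twist $H$ in your Frobenius step is harmless only because there it is divided by $p^e$ while the rest of the estimate does not move.

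In fact $H_0$ is superfluous. Surjectivity of $f'^*f'_*\calO_Z(L)\to\calO_Z(L)$ is insensitive to twists pulled back from $C$, so nefness of $f'_*\calO_Z(L)$ gives nefness of $L$ with no twist at all. A single step with $t\downarrow t_0$ (no $q\to\infty$) would then give $t_0\le\delta$ for every $\delta>0$---provided $L$ could be an arbitrary $\bbQ$-divisor.

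\textbf{Second gap: integrality.} $L$ cannot be an arbitrary $\bbQ$-divisor. For $f'_*\calO_Z(L)$ and the trace map to make sense, $L=qN+kA$ must be integral (Cartier), so $k=(q-1)t+\delta$ is an integer with $k>(q-1)t_0$. Then $t_0\le k/q\le\frac{(q-1)t_0+1}{q}$, which only yields $t_0\le 1$. The rounding loss (up to $1/q$) competes with the gain ($t_0/q$). Choosing $q$ arithmetically need not help, since $q$ must also clear the Cartier index of $N$: if that index is $2$, nothing rules out $t_0=\tfrac12$.

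\textbf{A robust fix.} Measure the threshold against the fibre class $\Phi=f'^*[\mathrm{pt}]$, where fractional twists are absorbed by Frobenius.
\begin{enumerate}
\item Replace $E$ by $E+\varepsilon A$, so that $N$ is $f'$-ample and $s_0:=\inf\{s: N+s\Phi\text{ nef}\}$ is finite.
\item Run your trace argument with the twist $\lceil(p^e-1)x\rceil[\mathrm{pt}]+H$ and divide by $p^e$. This shows: if $M=L-K_{Z/C}-\Lambda$ is sufficiently $f'$-ample and $M+x\Phi$ is nef ($x\in\bbQ$), then every quotient of $f'_*\calO_Z(L)$ has slope $\ge -x$. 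By relative global generation, $L+x\Phi$ is then nef.
\item Take $L=aN$ ($a$ large and divisible) and $x=(a-1)s$ with $s>s_0$ rational. This gives $s_0\le\frac{a-1}{a}s_0$, so $s_0\le 0$ and $N$ is nef.
\end{enumerate}

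\textbf{Two smaller points.}
\begin{enumerate}
\item The pushed-forward trace is generically surjective only when $S^0(Z_\eta,\Lambda_\eta,L_\eta)=H^0(Z_\eta,L_\eta)$. This requires $L-K_{Z/C}-\Lambda$ to be sufficiently $f'$-ample \cite[Lemma 2.20 \& Proposition 2.23]{patakfalviSemipositivityPositiveCharacteristics2014}, not merely nef and $f'$-ample.
\item The trace makes $F^{e,*}_C f'_*\calO_Z(K_{Z/C}+\Lambda+M)$ a generic quotient of $f'_*\calO_Z(K_{Z/C}+\Lambda+qM)$, not the other way round. So it yields nefness of $f'_*\calO_Z(K_{Z/C}+\Lambda+M)$ only, not of $f'_*\calO_Z(m(K_{Z/C}+\Lambda+M))$.
\end{enumerate}
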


\begin{lem}\label{lem: nefness of ideal sheaf}
	Let $m$ be an integer such that $\calL'^{\otimes m}(-rD')$ is $f'$-ample and $\frac{r}{m}\leq \mathrm{coeff}_\Delta (D)$, then $\calL'^{\otimes m}(-rD')$ is also nef.
\end{lem}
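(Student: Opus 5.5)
The plan is to apply Theorem \ref{thm: semi-positivity} with a suitably chosen nef divisor $E$, so that $K_{Z/C}+\Lambda+E$ becomes a positive multiple of $mL'-rD'$. Here $L'$ denotes a Cartier divisor with $\calO_Z(L')\cong\calL'$. We write
\[
	mL'-rD' = \bigl(K_{Z/C}+\Lambda'\bigr) + E, \qquad \Lambda' := \Lambda - \tfrac{r}{m}\cdot\tfrac{m}{1}\cdot\tfrac{1}{m}\,(\dots),
\]
but it is cleaner to divide by $m$ and aim for
\[
	L' - \tfrac{r}{m}D' = K_{Z/C}+\Lambda+E, \qquad E := L' - \tfrac{r}{m}D' - K_{Z/C}-\Lambda .
\]
This choice of $E$ does not help directly, because $E$ need not be nef. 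Instead we will use the boundary as the source of the twist. Set $\Lambda_0:=\Lambda-\frac{r}{m}D'$. The hypothesis $\frac{r}{m}\le \mathrm{coeff}_\Delta(D)$ ensures that $\Lambda_0$ is still effective, since the coefficient of $D'$ in $\Lambda$ is $\mathrm{coeff}_\Delta(D)$, the base change being generically a smooth-fibre base change. By Lemma \ref{lem: SFR with smaller boundary}, the fibre $(Z_y,\Lambda_{0,y})$ is still strongly $F$-regular. Hence Settings \ref{set: conditions for proving numerical flatness of ideal sheaves} still hold with $\Lambda$ replaced by $\Lambda_0$ (and $\Delta$ by $\Delta-\frac rm D$), and Theorem \ref{thm: semi-positivity} is applicable to $(Z,\Lambda_0)$.

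Now take
\[
	E := L' + (-K_{Z/C}-\Lambda).
\]
This is nef: $L'$ is nef by Corollary \ref{cor: nefness of relative ample bundle}, and $-K_{Z/C}-\Lambda$ is nef as observed after Lemma \ref{lem: miracle flatness}. It is also $\bbQ$-Cartier, since $K_{Z/C}+\Lambda$ is $\bbQ$-Cartier by the Cartier index assumption pulled back from $X$. With this $E$ we get
\[
	K_{Z/C}+\Lambda_0+E = L' - \tfrac{r}{m}D' = \tfrac1m\bigl(mL'-rD'\bigr),
\]
which is $f'$-ample, hence $f'$-nef, by hypothesis. Theorem \ref{thm: semi-positivity} then says that this divisor is nef, so $\calL'^{\otimes m}(-rD')$ is nef.

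The main obstacle is justifying the replacement of $\Lambda$ by $\Lambda_0$ inside the settings. We must check three things. First, $\Lambda_0\ge 0$, which requires computing the coefficient of $D'$ in $\Lambda=\Delta\times_Y C$, i.e. that pullback to $Z$ preserves coefficients of horizontal components. Second, the fibre over $y$ remains strongly $F$-regular, which follows from Lemma \ref{lem: SFR with smaller boundary} applied to $(X_y,\Delta_y)$ with the smaller boundary. Third, the Cartier index of $K+\Lambda_0$ stays coprime to $p$, if the cited theorem needs it. For the third point, $rD$ is Cartier and $m$ can be taken so that the index condition is harmless; if necessary we would restrict to $m$ divisible by the relevant index. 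Granting these checks, the remaining argument is the formal computation above.
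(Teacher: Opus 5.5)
Your argument is correct and is the paper's own proof: it writes
$L'-\frac{r}{m}D' = \bigl(K_{Z/C}+\Lambda-\frac{r}{m}D'\bigr) + \bigl((-K_{Z/C}-\Lambda)+L'\bigr)$, gets strong $F$-regularity of the reduced boundary on the fibre from Lemma \ref{lem: SFR with smaller boundary}, gets nefness of the second summand from Corollary \ref{cor: nefness of relative ample bundle} and the nefness of $-K_{Z/C}-\Lambda$, and concludes with Theorem \ref{thm: semi-positivity}. Delete the garbled false start in your first paragraph (the display defining $\Lambda'$), and justify $\Lambda_0\geq 0$ directly: $\Lambda_0=(\Delta-\frac{r}{m}D)\times_Y C$ is the pullback of a divisor that is effective by hypothesis.
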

\begin{proof}
	Let $L'$ be a divisor on $Z$ corresponding to $\calL'$.
	By Lemma \ref{lem: SFR with smaller boundary}, $(X_y,\Delta_y-\frac{r}{m}D_y)$ is also strongly $F$-regular. 
	Hence by the openness of strong $F$-regularity (\cite[Theorem B]{patakfalviFsingularitiesFamilies2018}), $(X_c,\Delta_c-\frac{r}{m}D_c)$ is also strongly $F$-regular for a general perfect point $c\in C$. 
	So we can apply Theorem \ref{thm: semi-positivity} as follows:
	\begin{align*}
		L'-\frac{r}{m}D' = 
			\underbrace{
				\underbrace{K_{Z/C}+\Lambda-\frac{r}{m}D'}_{\parbox{90pt}{\scriptsize $\left(X_c,\Delta_c-\frac{r}{m}D_c\right)$ is strongly $F$-regular for a general $c \in C$}} 
				+ \underbrace{(-K_{Z/C}-\Lambda) + L'}_{\text{nef}}
			}_{\text{$f'$-ample}},
	\end{align*}
	which shows that $L'-\frac{r}{m}D'$ is nef. 
\end{proof}

Let $\calN$ be a line bundle on $X_y$. 
For the next Proposition, we need a technical term $S^0(X_y,\Delta_y,\calN)$, defined as follows (see also \cite[Definition 2.5]{patakfalviSemipositivityPositiveCharacteristics2014}): 
Let $g$ be the minimal positive integer such that $(p^g-1)(K_{X_y}+\Delta_y)$ is Cartier. 
Let $\varphi_e$ be the natural map $F_*^{eg}\calO_{X_y}((1-p^{eg})(K_{X_y}+\Delta_y)) \rightarrow \calO_{X_y}$ as described in Lemma \ref{lem: surjectivity criterion of GFS and GFR}. 
Then we denote
\[
	S^0(X_y,\Delta_y,\calN):=\bigcap_{e\in \bbZ^{\geq 0}}\mathrm{Im}
	(H^0(X_y, F_*^{eg}\calO_{X_y}((1-p^{eg})(K_{X_y}+\Delta_y)) \otimes_{\calO_{X_y}} \calN )
	\longrightarrow
	H^0(X_y,\calN)).
\]

\begin{prop}\label{prop: numerical flatness of ideal sheaf}
	Let $m$ be an integer such that
	\begin{enumerate}
		\item $\calL'^{\otimes m}(-rD')$ is $f'$-ample, 
		\item $\frac{r}{m}\leq \mathrm{coeff}_\Delta (D)$,  
		\item $R^if'_* (\calL'^{\otimes m}(-rD'))=0$ for all $i>0$, and
		\item $S^0(X_y,\Delta_y, \calL_y^{\otimes m}(-rD_y))=H^0(X_y,\calL_y^{\otimes m}(-rD_y))$.
	\end{enumerate}
	Then $f'_*(\calL'^{\otimes m}(-rD'))$ is numerically flat on $C$. 
\end{prop}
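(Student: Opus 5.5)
Set $\calE:=f'_*(\calL'^{\otimes m}(-rD'))$, a vector bundle on the curve $C$. By Proposition \ref{prop: characterization of numerical flatness}(3), numerical flatness of $\calE$ amounts to two facts: $\calE$ is nef, and $\deg\calE=0$. I plan to prove these separately and then combine them.

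The degree bound is the easy half. By hypothesis (3) and Lemma \ref{lem: top intersection vanishes}, I would redo the Riemann--Roch computation. There is an injection $\calE\hookrightarrow f'_*(\calL'^{\otimes m})$ whose target is numerically flat of degree $0$, and this should give $\deg \calE\le 0$. Concretely, the saturation of $\calE$ in $f'_*(\calL'^{\otimes m})$ has degree $\ge\deg\calE$. A numerically flat bundle on a curve is semistable of degree $0$, so every subsheaf has degree $\le 0$. Hence it suffices to prove that $\calE$ is nef: a nef bundle on a curve has nonnegative degree, so nefness forces $\deg\calE=0$.

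For nefness I would use the semi-positivity engine in the form of \cite{patakfalviSemipositivityPositiveCharacteristics2014}. That argument shows that, under hypothesis (4) and with $M:=\calL'^{\otimes m}(-rD')$, the pushforward $f'_*M$ is nef as soon as $M-(K_{Z/C}+\Lambda)$ is nef and $f'$-ample, at least generically along the fibre over $y$. The $S^0$ condition guarantees that the trace maps
\[
F^{eg}_*\bigl(\calO_Z((1-p^{eg})(K_{Z/C}+\Lambda))\otimes M^{p^{eg}}\bigr)\to M
\]
are surjective on global sections of the fibre. Here I use hypothesis (3) to get cohomology and base change at $y$. Pushing forward, this yields generically surjective maps
\[
f'_*\bigl(\calO_Z(p^{eg}M+(1-p^{eg})(K_{Z/C}+\Lambda))\bigr)\otimes F^{eg,*}(\ldots)\to \calE.
\]
I would then write $M-(K_{Z/C}+\Lambda)=(mL'-rD')+(-K_{Z/C}-\Lambda)$. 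The first summand is nef by Lemma \ref{lem: nefness of ideal sheaf}, using hypotheses (1) and (2), and the second is nef by Lemma \ref{lem: miracle flatness} and the discussion after it. Since the first summand is also $f'$-ample, so is the sum.

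The standard Frobenius-twisting argument then bounds the negativity of quotients of $\calE$. Take a quotient line bundle of $\calE\otimes$(an ample $A$ on $C$) after pulling back along finite covers. The sources above are globally generated after twisting by a fixed ample $A$, independently of $e$, by Fujita/Castelnuovo--Mumford vanishing for nef plus relatively ample divisors. Letting $e\to\infty$ then shows that any quotient of $\calE$ has degree $\ge -\deg A/p^{eg}\to 0$, so $\calE$ is nef.

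The main obstacle is this last uniform global generation step. One needs the $e$-independent bound for $f'_*\calO_Z(p^{eg}M+(1-p^{eg})(K_{Z/C}+\Lambda))\otimes A$. I would first try to obtain it from relative Castelnuovo--Mumford regularity together with Keeler/Fujita vanishing for nef plus $f'$-ample divisors on $Z$, which is normal and Cohen--Macaulay by Lemma \ref{lem: miracle flatness}. Alternatively, I would quote \cite[Theorem 5.4]{patakfalviWeakBeauvilleBogomolovDecomposition2025} in the same way as for Corollary \ref{cor: nefness of relative ample bundle}.
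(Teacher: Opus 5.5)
Your proposal is correct and follows the paper's proof. Nefness of $f'_*(\calL'^{\otimes m}(-rD'))$ comes from Patakfalvi's semi-positivity result applied to $mL'-rD'=K_{Z/C}+\Lambda+\bigl((-K_{Z/C}-\Lambda)+mL'-rD'\bigr)$, where the bracket is nef (via Lemma \ref{lem: nefness of ideal sheaf}) and $f'$-ample; $\deg\le 0$ comes from the inclusion into the semistable degree-zero bundle $f'_*(\calL'^{\otimes m})$. The paper simply cites \cite[Proposition 3.6]{patakfalviSemipositivityPositiveCharacteristics2014} for the nefness step, so the uniform global generation you flag is handled inside that reference; your fallback \cite[Theorem 5.4]{patakfalviWeakBeauvilleBogomolovDecomposition2025} would not do this job, since as used here it gives nefness of the line bundle $\calL'$, not of a pushforward.
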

\begin{proof}
	By Lemma \ref{lem: nefness of ideal sheaf}, we know that $\calL'^{\otimes m}(-rD')$ is nef. 
	Let $L'$ be a divisor corresponding to $\calL'$. 
	We have the following equality:
	\begin{align*}
		mL'-rD'= K_{Z/C}+\Lambda +
		\underbrace{(-K_{Z/C}-\Lambda) + mL'-rD'}_{\text{$f'$-ample and nef}},
	\end{align*}
	which allows us to apply \cite[Proposition 3.6]{patakfalviSemipositivityPositiveCharacteristics2014} and deduce that $f'_*(\calL'^{\otimes m}(-rD'))$ is nef. 
	Since there is an inclusion $f'_*(\calL'^{\otimes m}(-rD')) \hookrightarrow f'_*(\calL'^{\otimes m})$ and $f'_*(\calL'^{\otimes m})$ is semistable of degree $0$ by Proposition \ref{prop: characterization of numerical flatness}, we have $\deg(f'_*(\calL'^{\otimes m}(-rD')))\leq 0$. 
	Meanwhile, as $f'_*(\calL'^{\otimes m}(-rD'))$ is nef, we have also $\deg(f'_*(\calL'^{\otimes m}(-rD')))\geq 0$. 
	This shows that $f'_*(\calL'^{\otimes m}(-rD'))$ is nef of degree $0$, hence numerically flat by Proposition \ref{prop: characterization of numerical flatness}. 
\end{proof}

\begin{rmk}
	We remark here that for fixed $D,\calL,r$ and $y$, there exists an integer $m_0$, such that the conditions in Proposition \ref{prop: numerical flatness of ideal sheaf} hold for all $m\geq m_0$ and all curves $C$ going through $y$.  
	Indeed, the first two conditions hold obviously for all $m\gg 0$. 
	For the third condition, it suffices to use Corollary \ref{cor: comparison of higher direct images} and ask for the vanishing of $R^if_*(\calL^{\otimes m}(-rD))$ on $Y$. 
	Patakfalvi proved that the fourth condition holds for $m\gg 0$, see \cite[Lemma 2.20 \& Proposition 2.23]{patakfalviSemipositivityPositiveCharacteristics2014}.
\end{rmk}

\begin{thm}\label{thm: associated bundle relative anti-nef positive characteristic in text}
	Let $f:(X,\Delta) \rightarrow Y$ be a surjective fibration over a perfect field $k$ of characteristic $p> 0$, such that
	\begin{enumerate}
		\item $(X,\Delta)$ is a Cohen-Macaulay projective pair, 
		\item $Y$ is a smooth projective variety with a $k$-rational point $y\in Y(k)$, 
		\item the geometric generic fibre $(X_{\overline \eta}, \Delta_{\overline \eta})$ is strongly $F$-regular,
		\item $-K_{X/Y}-\Delta$ is nef and of Cartier index coprime to $p$.
	\end{enumerate}
	Then there exists a linear algebraic group $G$ which is a quotient of $\pi_1^S(Y,y)$, a principal $G$-bundle $\pi:P\rightarrow Y$, such that $f$ is an $(X_y,\Delta_y)$-bundle associated to $\pi:P\rightarrow Y$.  
\end{thm}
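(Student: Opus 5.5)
The plan is to reduce the statement to Theorem \ref{thm: main theorem in text}. We need an $f$-ample line bundle $\calL$ such that $f_*(\calL^{\otimes m})$ is numerically flat for all $m>0$. We also need, for each irreducible component $D$ of $\Delta$, an integer $r>0$ with $f_*(\calL^{\otimes m}(-rD))$ numerically flat for all $m>0$.

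The first condition is supplied directly by Theorem \ref{thm: existence of relative ample numerically flat bundle}, whose hypotheses are exactly (1)--(4). For the second, fix $D$ and let $r$ be the Cartier index of $D$. Replace $\calL$ by a power, which preserves the first condition.

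By the remark following Proposition \ref{prop: numerical flatness of ideal sheaf}, there is an $m_0$ such that the hypotheses of that proposition hold for all $m\ge m_0$ and every curve $C\to Y$ through $y$. We first arrange that $y$ is a point with strongly $F$-regular fibre; Settings \ref{set: conditions for proving numerical flatness of ideal sheaves} require this. If the given $y$ is not such a point, we apply the argument with any general perfect point, or after a finite extension of $k$. Numerical flatness is a geometric property, so it should not depend on this choice.

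For $m\ge m_0$, Corollary \ref{cor: comparison of higher direct images} gives $f_*(\calL^{\otimes m}(-rD))|_C\cong f'_*(\calL'^{\otimes m}(-rD'))$. This sheaf is numerically flat on $C$ by Proposition \ref{prop: numerical flatness of ideal sheaf}. Nefness of a vector bundle can be tested on curves mapping to $Y$; we use curves through a general point and invoke the openness of strong $F$-regularity. Hence $\calE_m:=f_*(\calL^{\otimes m}(-rD))$ is nef. It is a subsheaf of the numerically flat bundle $f_*(\calL^{\otimes m})$, so it has degree $\le 0$. By Proposition \ref{prop: characterization of numerical flatness}(3) it is numerically flat.

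This gives numerical flatness only for $m\ge m_0$, whereas Theorem \ref{thm: main theorem in text} asks for all $m>0$. We fix this by replacing $\calL$ with $\calL^{\otimes N}$ and $r$ with $r$ for $N\ge m_0$. Then $\calL^{\otimes Nm}(-rD)$ has $Nm\ge m_0$ for every $m>0$. Since there are finitely many components, a single $N$ works for all of them.

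Alternatively, one can inspect the proof of Theorem \ref{thm: main theorem in text}. It only uses the conditions after passing to a high multiple, where property \ref{property very very ample} holds. Either way, Theorem \ref{thm: main theorem in text} then yields the quotient $G$ of $\pi_1^S(Y,y)$ and the principal bundle $P$.

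The main obstacle is the passage from curves to $Y$. Settings \ref{set: conditions for proving numerical flatness of ideal sheaves} demand a rational point of $C$ with strongly $F$-regular fibre. The test must cover all curves in $Y$, not just those through a good point.

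I would handle it in two steps. First, nefness of a bundle on $Y$ is detected by curves through a general point together with a limiting or covering argument. Second, and more robustly, I would use the remark's uniformity in $C$ and apply it to curves through a general closed point, after base change to $\bar k$. Nefness is insensitive to field extension. Over $\bar k$, general points are rational, and every curve can be deformed or dominated by covering families meeting the strongly $F$-regular locus.
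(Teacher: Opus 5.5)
There is a genuine gap at the step ``we use curves through a general point \dots\ Hence $\calE_m$ is nef''. Proposition \ref{prop: numerical flatness of ideal sheaf} only applies to curves $C\to Y$ that pass through a rational point whose fibre is strongly $F$-regular. This requirement is built into Settings \ref{set: conditions for proving numerical flatness of ideal sheaves}, and the uniform $m_0$ of the subsequent remark is only for curves through one fixed such point. Nefness of $\calE_m$ on $Y$ via \cite[Remark 5.2]{langerSfundamentalGroupScheme2011}, however, has to be tested on \emph{every} curve, including curves lying inside the closed locus over which the fibres might fail to be strongly $F$-regular. Neither of your proposed repairs is a valid principle. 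First, nefness is not detected by curves through a general point: on the blow-up of $\bbP^2$ at a point, the exceptional curve $E$ satisfies $E\cdot C\geq 0$ for every curve $C\neq E$, yet $E^2=-1$. Second, nefness of vector bundles does not specialize in families of curves: on $\bbP^1$, $\calO\oplus\calO$ degenerates flatly to $\calO(1)\oplus\calO(-1)$. So deforming a bad curve, or dominating it by a covering family meeting the good locus, gives no control over $\calE_m$ restricted to it. Nothing in your argument uses structure of $\calE_m$ that would exclude this.

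The missing idea is to bootstrap through isotriviality before testing on arbitrary curves. Work over $\bar k$ and fix $y_0$ with strongly $F$-regular fibre. For an arbitrary closed point $y_1$, choose a curve $C$ through $y_0$ and $y_1$. On such a $C$ the Settings hold, so $f'_*(\calL'^{\otimes m})$ and $f'_*(\calL'^{\otimes m}(-rD'))$ are numerically flat for $m\geq m_0$. Applying Theorem \ref{thm: main theorem in text} to $f'\colon(Z,\Lambda)\to C$ with $\calL'^{\otimes m_0}$ then shows that $f'$ is an associated bundle, so $(X_{y_1},\Delta_{y_1})\cong(X_{y_0},\Delta_{y_0})$. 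Hence \emph{every} closed fibre is strongly $F$-regular, and the Settings hold for every curve $C\to Y$. By \cite[Lemma 2.20 \& Proposition 2.23]{patakfalviSemipositivityPositiveCharacteristics2014} one can then enlarge $m_0$ so that condition (4) of Proposition \ref{prop: numerical flatness of ideal sheaf} holds for all $y$ simultaneously. Only at this point does that proposition give numerical flatness of $\calE_m|_C$ for all curves $C\to Y$, and hence of $\calE_m$ on $Y$. The remaining parts of your outline are fine: the degree comparison with $f_*(\calL^{\otimes m})$, and replacing $\calL$ by $\calL^{\otimes N}$ with $N\geq m_0$ to get all $m>0$.
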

\begin{proof}
	We pick by Theorem \ref{thm: existence of relative ample numerically flat bundle} an $f$-ample line bundle $\calL$ on $X$ such that $f_*(\calL^{\otimes m})$ are numerically flat for all $m> 0$. 
	In view of Theorem \ref{thm: main theorem in text}, it suffices to prove the following claim. 
	\begin{claim}
		Let $r$ be the Cartier index of an irreducible component $D$ of $\Delta$. 
		Then there exists an $m_0$ such that for any irreducible component $D$ of $\Delta$, $f_*(\calL^{\otimes m}(-rD))$ are numerically flat for all $m\geq m_0$. 
	\end{claim}
	To prove the claim, note that numerical flatness can be descended along base field extensions, so without loss of generality we assume $k$ is algebraically closed. 
	Let $y_0\in Y(k)$ be a point such that $(X_{y_0},\Delta_{y_0})$ is strongly $F$-regular. 
	By replacing $\calL$ by a multiple, we may assume that for every irreducible component $D$ of $\Delta$, 
	\begin{enumerate}
		\item $\calL(-rD)$ is $f$-ample , 
		\item the higher direct image $R^if_* (\calL^{\otimes m}(-rD))$ vanishes for all $i>0$ and $m>0$, and
		\item $S^0(X_{y_0},\Delta_{y_0}, \calL_{y_0}^{\otimes m}(-rD_{y_0}))=H^0(X_{y_0},\calL_{y_0}^{\otimes m}(-rD_{y_0}))$ for all $m>0$. 
	\end{enumerate}
	Let $C\rightarrow Y$ be any morphism from a smooth projective curve going through $y_0$, and denote $(Z,\Lambda):=(X,\Delta)\times_Y C,\ D':=D \times_Y C,\ \calL':= \calL|_Z$. 
	Then for all $i>0,m>0$ and all irreducible components $D$ of $\Delta$, we have $R^if'_* (\calL'^{\otimes m}(-rD'))=0$ by Corollary \ref{cor: comparison of higher direct images}. 
	
	Let $m_0$ be an integer such that $\frac{r_D}{m_0}\leq \mathrm{coeff}_\Delta (D)$ for every irreducible component $D$ of $\Delta$. 
	For any point $y_1\in Y(k)$, there exists always a curve $C$ going through $y_0$ and $y_1$. 
	For any $m\geq m_0$ and any $D$, all assumptions in Settings \ref{set: conditions for proving numerical flatness of ideal sheaves} and Proposition \ref{prop: numerical flatness of ideal sheaf} are satisfied, so the bundles $f'_*(\calL'^{\otimes m}(-rD'))$ are numerically flat. 
	So we can apply Theorem \ref{thm: main theorem in text} to $C$ and $\calL'^{\otimes m_0}$ and see that $f':(Z,\Lambda) \rightarrow C$ is isotrivial. 
	As $y_1$ is chosen arbitrarily, this shows also $f:(X,\Delta) \rightarrow Y$ is isotrivial. 
	Now every fibre of $f$ is strongly $F$-regular, and so we can apply \cite[Lemma 2.20 \& Proposition 2.23]{patakfalviSemipositivityPositiveCharacteristics2014} and deduce that after increasing $m_0$, the equality $S^0(X_y,\Delta_y, \calL_y^{\otimes m}(-rD_y))=H^0(X_y,\calL_y^{\otimes m}(-rD_y))$ holds for all $m\geq m_0$, all $D$ and all $y\in Y(k)$. 
	Hence we can apply Proposition \ref{prop: numerical flatness of ideal sheaf} to any curve $C \rightarrow Y$ (not necessarily going through $y_0$) again, and get that the bundles $f_*(\calL^{\otimes m}(-rD))|_C = f'_* \calL'^{\otimes m}(-rD')$ are numerically flat for all $m\geq m_0$, where the equality is due to Corollary \ref{cor: comparison of higher direct images}. 
	By \cite[Remark 5.2]{langerSfundamentalGroupScheme2011}, $f_*(\calL^{\otimes m}(-rD))$ is also numerically flat on the whole base $Y$. 
	This finishes the proof of the claim, and hence we may apply Theorem \ref{thm: main theorem in text} to $Y$ and $\calL^{\otimes m_0}$.
\end{proof}

\subsection{The weak Beauville--Bogomolov decomposition}
One special case that we are interested in is when we take the base $Y$ to be the Albanese of $X$. 
It is known that in the same assumption as Theorem \ref{thm: associated bundle relative anti-nef positive characteristic in text}, the Albanese map is a surjective fibration. 

\begin{prop}[{\cite[Theorem 1.1]{ejiriWhenAlbaneseMorphism2019}}]
	Let $(X,\Delta)$ be a normal projective pair and consider the Albanese $f:(X,\Delta) \rightarrow A$. 
	Assume that
	\begin{enumerate}
		\item the geometric generic fibre $(X_{\overline \eta}, \Delta_{\overline \eta})$ over the image of $f$ is sharply $F$-pure,
		\item $-K_X-\Delta$ is nef and of Cartier index coprime to $p$.
	\end{enumerate}
	Then the Albanese $(X,\Delta) \rightarrow A$ is a surjective fibration. 
\end{prop}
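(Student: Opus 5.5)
The plan is to pass through the Stein factorization and reduce both surjectivity and connectedness of fibres to a single positivity statement. That statement says that a base receiving a map from $(X,\Delta)$ with sharply $F$-pure geometric generic fibre and nef $-K_X-\Delta$ cannot have big canonical class.

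\textbf{Reduction.} Write $f:X\rightarrow A$ for the Albanese, $V:=f(X)$, and $X\xrightarrow{h} Y\xrightarrow{g} V$ for the Stein factorization, with $g$ finite. Then $h_*\calO_X=\calO_Y$.

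\textbf{Surjectivity.} Suppose $V\neq A$. Since $V$ generates $A$, Ueno's structure theorem gives an abelian subvariety $B\subset A$ with $V$ stable under translation by $B$. It also says that the image $W$ of $V$ in $A/B$ is of general type and positive dimensional. After resolving and replacing by a suitable model, we obtain a fibration $X\dashrightarrow W$ (or a morphism on a birational model) with $K_W$ big. The geometric generic fibre of $X\rightarrow W$ fibres over the geometric generic fibre of $X\rightarrow V$ with geometrically connected fibres. Hence it inherits sharp $F$-purity in the relative sense, i.e.\ the trace map
\[
F^e_{X/W,*}\calO_X\bigl((1-p^e)(K_{X/W}+\Delta)\bigr)\rightarrow \calO_{X'}
\]
is surjective at the generic point for $e$ with $(p^e-1)(K_X+\Delta)$ Cartier; coprimality of the index with $p$ is used exactly here. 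Pushing forward to $W$ and twisting, nefness of $-K_X-\Delta$ gives the following: for every ample $H$ on $W$ and every $\varepsilon>0$, the divisor $-K_W+\varepsilon H$ is pseudo-effective. The mechanism is that $(1-p^e)(K_{X/W}+\Delta)=(p^e-1)(-K_X-\Delta)-(p^e-1)f^*K_W$, and a generically surjective map from a weakly positive sheaf onto $\calO_W$ forces $(p^e-1)(-K_W)+\text{small ample}$ to have sections. Letting $\varepsilon\to 0$ shows $-K_W$ is pseudo-effective, which contradicts bigness of $K_W$ with $\dim W>0$. Hence $V=A$.

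\textbf{Connected fibres.} Now $g:Y\rightarrow A$ is finite and surjective. Running the same trace argument for $h:X\rightarrow Y$, whose geometric generic fibre is sharply $F$-pure and in particular reduced, shows $-K_Y$ is pseudo-effective on the normal variety $Y$. On the other hand $K_Y=g^*K_A+R=R$, with $R\ge 0$ the ramification divisor; in the inseparable case one uses instead the corresponding effective divisor coming from Lemma \ref{lem: duality} applied to the trace of $g$. So $R\equiv 0$, hence $R=0$. The reducedness of the fibres of $h$ is what rules out a purely inseparable part, via the trace computation. Thus $g$ is étale, so $Y$ is an abelian variety (torsor) isogenous to $A$. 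The universal property of the Albanese then factors $g$ through an inverse, forcing $\deg g=1$, i.e.\ $f_*\calO_X=\calO_A$.

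\textbf{Main obstacle.} The hard step is the positivity statement: producing the generically surjective trace map in the relative setting and converting the nefness of $-K_X-\Delta$ into weak positivity of the pushforward. This should be done in the style of \cite{patakfalviSemipositivityPositiveCharacteristics2014}, carefully handling the base change to the Frobenius twist and the fact that $W$ is only birationally a target. I would first try to make $X\rightarrow W$ an honest morphism by replacing $W$ with the normalization of the image in $A/B$. Its canonical class is still big by Ueno, so no resolution of $X$ (which would destroy nefness) is needed.
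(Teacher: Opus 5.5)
The paper does not prove this statement; it quotes \cite[Theorem 1.1]{ejiriWhenAlbaneseMorphism2019}, so there is no internal argument to compare against. Your skeleton is the Zhang/Chen--Zhang strategy and is plausible: Ueno's reduction for surjectivity, then Stein factorization, \'etaleness and the universal property for connectedness, all resting on the claim that $-K$ of the base is pseudo-effective. But that positivity claim is the whole content of the theorem, and the mechanism you sketch for it does not work.

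\textbf{The positivity step.} First, the sign is wrong: $(1-p^e)(K_{X/W}+\Delta)=(p^e-1)(-K_X-\Delta)+(p^e-1)f^*K_W$. With your formula the trace would bound $K_W$ from below, not $-K_W$. With the correct sign you get a map $f_*\calO_X((p^e-1)(-K_X-\Delta))\to\calO_W((1-p^e)K_W)$, and two things fail.
\begin{enumerate}
\item This map can be zero. Sharp $F$-purity is a local condition. Non-vanishing of the pushed-forward trace at the generic point of $W$ means the trace is surjective on \emph{global} sections of the geometric generic fibre, i.e.\ that fibre is globally $F$-split. Take $E\times\bbP^1\to\bbP^1$ with $E$ a supersingular elliptic curve: $-K$ is nef and every fibre is smooth, yet the pushed-forward trace is the Cartier operator on $H^0(E,\omega_E)$, which is $0$.
\item Nefness of $-K_X-\Delta$ gives no positivity of $f_*\calO_X((p^e-1)(-K_X-\Delta))$. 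The characteristic-$p$ semipositivity results (e.g.\ \cite{patakfalviSemipositivityPositiveCharacteristics2014}) concern Frobenius-stable parts of adjoint sheaves $f_*\calO_X(K_{X/W}+\Delta+L)$ with $L$ nef and $f$-ample.
\end{enumerate}
Repairing both forces a twist by an $f$-ample $\varepsilon H$ on $X$, so that $S^0=H^0$ on fibres; twisting by an ample divisor on $W$, as you do, does not change the fibres. What comes out is positivity of roughly $\calO_W(-mK_W)\otimes f_*\calO_X(m\varepsilon H)$. Extracting pseudo-effectivity of $-K_W$ from this requires controlling a factor of growing rank as $\varepsilon\to0$. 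That is where the real work lies: the Chen--Zhang theorem in characteristic $0$, and a weak positivity theorem of Ejiri/Patakfalvi type in characteristic $p$. Your proposal does not supply it.

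\textbf{Further gaps.}
\begin{enumerate}
\item $W$ and $Y$ are only normal. $K_W$ need not be $\bbQ$-Cartier, so $K_{X/W}$ and $f^*K_W$ are not defined on $X$; the preimage of $\mathrm{Sing}(W)$ can be divisorial. Also, identifying the pushforward of $\calO_{X'}$ (with $X'$ the Frobenius base change) with $F_W^{e*}f_*\calO_X$ uses flat base change along $F^e_W$, i.e.\ regularity of $W$. Passing to the normalization fixes neither problem.
\item $X\to W$ need not have connected fibres, since that is what you are proving. You must Stein-factorize and re-check bigness.
\item Ueno's theorem is a characteristic-zero statement about desingularizations. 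It needs a characteristic-$p$ justification.
\item In the connectedness step, reducedness of the fibres of $h$ says nothing about separability of $g$ (take $X=Y$). What rules out inseparability is reducedness of the geometric generic fibre of $f$ over $A$. By flat base change, $H^0(X_{\overline{\eta}},\calO)=K(Y)\otimes_{K(A)}\overline{K(A)}$, which must then be reduced, so $K(Y)/K(A)$ is separable.
\end{enumerate}
With that separability fix, the chain $R\ge 0$, $R=0$, purity of the branch locus, Serre--Lang and the universal property is correct. It still needs $-K_Y$ to be pseudo-effective, which is the missing piece again.
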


In general, it is hard to verify if the geometric generic fibre is sharply $F$-pure or strongly $F$-regular. 
But when the base is the Albanese, we have however the following sufficient condition:

\begin{prop}\label{prop: geometric generic fibre SFR over Albanese}
	Let $(X,\Delta)$ be a strongly $F$-regular and globally $F$-split normal projective pair, such that the Cartier index of $K_X+\Delta$ is coprime to $p$. 
	Let $f:(X,\Delta) \rightarrow A$ be the Albanese. 
	Then the following statements hold.
	\begin{enumerate}
		\item $X$ is Cohen-Macaulay,
		\item $f$ is a surjective fibration, and
		\item the geometric generic fibre $(X_{\overline \eta},\Delta_{\overline \eta})$ is strongly $F$-regular.
	\end{enumerate}
\end{prop}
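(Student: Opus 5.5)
The plan is to treat the three statements in order, since each feeds the next.

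For (1), I would use that strongly $F$-regular singularities are Cohen--Macaulay. Strong $F$-regularity of $(X,\Delta)$ implies strong $F$-regularity of $X$ by Lemma \ref{lem: SFR with smaller boundary} applied to $0\leq \Delta$. Strongly $F$-regular local rings are $F$-rational, hence Cohen--Macaulay; this is a standard local fact (Hochster--Huneke). So (1) is purely local and needs nothing global.

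For (2), I would invoke the cited result \cite[Theorem 1.1]{ejiriWhenAlbaneseMorphism2019}. The task is to verify its hypotheses: $-K_X-\Delta$ nef with Cartier index coprime to $p$, and a sharply $F$-pure geometric generic fibre over the image. The nefness hypothesis is not literally in our assumptions, so I would first use Corollary \ref{cor: GFS is log CY}. It produces $\Delta'$ with $(X,\Delta+\Delta')$ globally $F$-split, $K_X+\Delta+\Delta'\sim_\bbQ 0$, and Cartier index coprime to $p$. In particular $-K_X-(\Delta+\Delta')$ is nef. Since the Albanese depends only on $X$, it suffices to prove (2) for the pair $(X,\Delta+\Delta')$.

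The main obstacle is sharp $F$-purity of the geometric generic fibre. My first attempt is to show that global $F$-splitting of $(X,\Delta+\Delta')$ descends to a relative splitting over $A$. One could hope for a relative form of Lemma \ref{lem: descent of relative GFS}, but the remark after it warns that cancellation fails in general, and the paper points to Proposition \ref{prop: GFS is stable under base change}, which is presumably where the relevant statement sits. Failing that, I would argue as follows. An abelian variety is $F$-split over $\bbF_p$ only when it is ordinary, so that route is not available in general. Instead I would use the trace-of-Frobenius approach of Ejiri/Hacon--Patakfalvi, adapted to the Albanese: the splitting section gives a nonzero element of $H^0(X,\calO_X((1-p^e)(K_X+\Delta+\Delta')))\cong k$. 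Restricting it to the generic fibre yields the nonvanishing of $S^0$ for the fibre. Together with $K_{X/A}=K_X$, this should give that $(X_{\overline\eta},(\Delta+\Delta')_{\overline\eta})$ is globally $F$-split, and in particular sharply $F$-pure. The delicate point is passing from the generic to the geometric generic fibre, which needs separability; I would handle it by a flat base change as in Lemma \ref{lem: base change of relative GFS}.

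For (3), I would combine this with strong $F$-regularity. By Corollary \ref{cor: GFS is log CY}, the pair $(X,\Delta+\varepsilon\Delta')$ is strongly $F$-regular for small $\varepsilon$ by the analogue of Corollary \ref{cor: midpoint of SFR and PFR is SFR} (convexity in the boundary). Strong $F$-regularity of the geometric generic fibre should then follow from generic smoothness-type behaviour of $F$-singularities along the fibration given by \cite[Theorem B]{patakfalviFsingularitiesFamilies2018}. Here I would use the relative splitting over $A$ from step (2) to ensure the generic fibre is geometrically reduced and normal, and then check strong $F$-regularity locally via test elements. I expect step (2), the relative splitting over the Albanese, to be the real difficulty; (1) and (3) are comparatively formal.
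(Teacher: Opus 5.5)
Part (1) is fine and agrees with the paper. There are, however, genuine gaps in (2) and (3).

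\textbf{Part (2).} Here the argument is circular. You apply \cite[Theorem 1.1]{ejiriWhenAlbaneseMorphism2019}, so you need sharp $F$-purity of the geometric generic fibre over the image. You try to get it from a splitting of $(X,\Delta+\Delta')$ relative to $A$, but restricting the nonzero section of $\calO_X((1-p^e)(K_X+\Delta+\Delta'))$ to the generic fibre does not produce a splitting.

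Use duality for the finite map $W\colon X'=X\times_{A,F^e}A\to X$. The absolute splitting factors as $\mathrm{Tr}_W\circ W_*\phi_{X/A}$, where $\phi_{X/A}$ is the relative map and $\mathrm{Tr}_W$ is the base change of the trace of $F^e_A$. So all you learn is that $u=\phi_{X/A}(1)\in H^0(X',\calO_{X'})$ satisfies $\mathrm{Tr}_W(u)=1$. To conclude that $u$ is a unit you need $H^0(X',\calO_{X'})=H^0(A,F^{e,*}_A f_*\calO_X)=k$, i.e.\ $f_*\calO_X=\calO_A$, which is statement (2) itself.

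The same circularity appears elsewhere in your sketch:
\begin{itemize}
\item Applying Proposition \ref{prop: GFS is stable under base change} to $X_\eta$ over $k(\eta)$ requires $H^0(X_\eta,\calO_{X_\eta})$ to be separable over $k(\eta)$, which is again the fibration property.
\item Writing $K_{X/A}=K_X$ for ``the generic fibre over $A$'' already presupposes surjectivity.
\end{itemize}

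The missing input is \cite[Theorem 1.2 \& Theorem 1.3]{ejiriWhenAlbaneseMorphism2019}. If $(X,\Delta)$ is globally $F$-split and the Cartier index of $K_X+\Delta$ is coprime to $p$, then the Albanese is a surjective fibration and $(X,\Delta)$ is globally $F$-split over $A$. No nefness and no complement $\Delta'$ are needed. This is exactly what the paper uses, after reducing to $k=\overline{k}$. Your trace idea does work once $f_*\calO_X=\calO_A$ is known: $u$ is then a nonzero constant. Incidentally, the trace of Frobenius on $A$ is then nonzero, so your ordinarity worry is moot.

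\textbf{Part (3).} The tools you name do not give the conclusion.
\begin{itemize}
\item Strong $F$-regularity of the total space does not pass to the geometric generic fibre in general. Quasi-elliptic fibrations have smooth total space and a cuspidal geometric generic fibre.
\item \cite[Theorem B]{patakfalviFsingularitiesFamilies2018} only relates strong $F$-regularity of the geometric generic fibre to that of fibres over general perfect points; it does not produce it.
\item The perturbation $(X,\Delta+\varepsilon\Delta')$ plays no role. Corollary \ref{cor: midpoint of SFR and PFR is SFR} would not justify it anyway, since you have no purely $F$-regular pair $(X,\Delta+\Delta')$.
\item ``Check locally via test elements'' is where the actual theorem is hiding.
\end{itemize}

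The key result you need is \cite[Theorem 6.3]{patakfalviWeakBeauvilleBogomolovDecomposition2025} (Proposition \ref{prop: SFR stable under base change if GFS}): a strongly $F$-regular pair that is geometrically sharply $F$-pure is geometrically strongly $F$-regular. The paper feeds it as follows.
\begin{itemize}
\item From the relative splitting over $A$, Lemma \ref{lem: base change of relative GFS} shows that $(X_{\overline\eta},\Delta_{\overline\eta})$ is globally $F$-split over $\overline\eta$.
\item Since $\overline\eta$ is perfect, Lemma \ref{lem: compositum of relative GFS} makes this splitting absolute.
\item \cite[Proposition 2.13]{patakfalviWeakBeauvilleBogomolovDecomposition2025} gives $R_1$, and (1) gives Cohen--Macaulayness, so $X_{\overline\eta}$ is normal by Serre's criterion.
\item The generic fibre $(X_\eta,\Delta_\eta)$ is strongly $F$-regular, being a localization of $(X,\Delta)$.
\end{itemize}
Your instinct to use the relative splitting for geometric normality is right. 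Without the base-change theorem for strong $F$-regularity, however, your argument for (3) is incomplete.
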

\begin{proof}
	The statements can be descended along base changes to perfect fields, so without loss of generality we can assume that the base field $k$ is algebraically closed. 
	By \cite[Theorem 1.2 \& Theorem 1.3]{ejiriWhenAlbaneseMorphism2019}, the pair $(X,\Delta)$ is globally $F$-split over $A$, and $f$ is a surjective fibration. 
	Then by Lemma \ref{lem: base change of relative GFS}, the geometric generic fibre $(X_{\overline \eta},\Delta_{\overline \eta})$ is globally $F$-split over $\overline \eta$. 
	Since $k(\overline \eta)$ is $F$-finite, $\overline\eta$ is globally $F$-split over $\bbF_p$. 
	Hence $(X_{\overline \eta},\Delta_{\overline \eta})$ is globally $F$-split by Lemma \ref{lem: compositum of relative GFS}. 
	By \cite[Proposition 2.13]{patakfalviWeakBeauvilleBogomolovDecomposition2025}, 
	the relative global $F$-splitting of $X_{\overline \eta}$ over $\overline\eta$ implies $X_{\overline \eta}$ is regular in codimension one.
	Moreover, by \cite[Corollary 2.5]{hochsterTightClosureStrong1989}, the underlying variety $X$ is Cohen-Macaulay, and therefore $X_{\overline \eta}$ is Cohen-Macaulay too. 
	Using Serre's criterion, we conclude that $X_{\overline \eta}$ is normal.
	As $(X_{\overline \eta},\Delta_{\overline \eta})$ is normal and globally $F$-split, we can apply \cite[Theorem 6.3]{patakfalviWeakBeauvilleBogomolovDecomposition2025} and deduce that $(X_{\overline \eta},\Delta_{\overline \eta})$ is strongly $F$-regular. 
\end{proof}

We may then apply Theorem \ref{thm: associated bundle relative anti-nef positive characteristic in text} and deduce the weak Beauville--Bogomolov decomposition in positive characteristics.

\begin{cor}\label{cor: associated bundle abelian variety positive characteristic in text}
	Let $(X,\Delta)$ be a globally $F$-split strongly $F$-regular projective pair, such that $-K_X-\Delta$ is nef and of Cartier index coprime to $p$. 
	Let $(X,\Delta)\rightarrow A$ be the Albanese of $X$, and assume that $A$ has a rational point $0\in A(k)$. 
	Then $(X,\Delta)\rightarrow A$ is an $(X_0,\Delta_0)$-bundle associated to a principal $G$-bundle $P\rightarrow A$ of an algebraic group $G$, and $G$ is a quotient of $\pi_1^S(A,0)$.
\end{cor}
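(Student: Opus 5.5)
The plan is to reduce Corollary \ref{cor: associated bundle abelian variety positive characteristic in text} directly to Theorem \ref{thm: associated bundle relative anti-nef positive characteristic in text}, with $Y=A$ and $y=0$. What we must do is check that each hypothesis of that theorem holds for the Albanese $f:(X,\Delta)\rightarrow A$.

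The first ingredient is Proposition \ref{prop: geometric generic fibre SFR over Albanese}. Its hypotheses are that $(X,\Delta)$ is strongly $F$-regular and globally $F$-split, and that $K_X+\Delta$ has Cartier index coprime to $p$. The last point holds because the Cartier index of $-K_X-\Delta$ equals that of $K_X+\Delta$. The proposition then gives three facts:
\begin{enumerate}
\item $X$ is Cohen--Macaulay, which is hypothesis (1) of the theorem;
\item $f$ is a surjective fibration;
\item the geometric generic fibre $(X_{\overline\eta},\Delta_{\overline\eta})$ is strongly $F$-regular, which is hypothesis (3).
\end{enumerate}

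Next we check hypothesis (2). The Albanese is a torsor under an abelian variety, and we assume it has a rational point $0\in A(k)$. Hence $A$ is an abelian variety, in particular a smooth projective variety with a $k$-rational point.

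For hypothesis (4), note that $K_A\sim 0$. Because $f$ is a surjective fibration onto a smooth variety with trivial canonical class, we get $K_{X/Y}=K_X-f^*K_A\sim K_X$. So $-K_{X/A}-\Delta\sim_{\bbQ}-K_X-\Delta$ is nef, with Cartier index coprime to $p$. Strictly, the relative canonical divisor is defined up to linear equivalence, and the Cartier index is invariant under linear equivalence, so this causes no trouble.

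The only point requiring care is the base field. Proposition \ref{prop: geometric generic fibre SFR over Albanese} is proved after passing to an algebraically closed field. Its conclusions (Cohen--Macaulayness, being a fibration, strong $F$-regularity of the geometric generic fibre) are stated geometrically, so they hold over the original perfect field $k$. Likewise, the Albanese over $k$ base changes to the Albanese over $\bar k$ once $A$ has a rational point. With all hypotheses verified, Theorem \ref{thm: associated bundle relative anti-nef positive characteristic in text} yields a linear algebraic group $G$, a quotient of $\pi_1^S(A,0)$, and a principal $G$-bundle $P\rightarrow A$ such that $f$ is an $(X_0,\Delta_0)$-bundle associated to $P$. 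By Corollary \ref{cor: associated fibre bundle comparison}, $f$ is then also an $(X_0,\Delta_0)$-bundle associated to $\widetilde A^S\rightarrow A$.
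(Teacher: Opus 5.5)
Your proposal is correct and matches the paper's argument. The paper also feeds Proposition \ref{prop: geometric generic fibre SFR over Albanese} (Cohen--Macaulayness, surjective fibration, strongly $F$-regular geometric generic fibre), together with $K_{X/A}\sim K_X$, into Theorem \ref{thm: associated bundle relative anti-nef positive characteristic in text} with $Y=A$ and $y=0$; your extra remarks about base change of the Albanese are not needed, since that proposition is already stated over the perfect ground field.
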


\subsection{Cases over complex numbers}
We briefly discuss the parallel consequences of Theorem \ref{thm: main theorem in text} over the complex numbers $\bbC$. 
The existence of a family of numerically flat bundles is shown in \cite{matsumuraStructureTheoremProjective2025}.

\begin{thm}[{\cite[Proof of Theorem 4.1]{matsumuraStructureTheoremProjective2025}}]\label{thm: existence of relative ample numerically flat bundle complex numbers}
	Let $f:(X,\Delta) \rightarrow Y$ be a surjective morphism from a projective klt pair to a smooth projective variety. 
	Assume the relative anti-log-canonical divisor $-K_{X/Y}-\Delta$ is nef. 
	Then there exists an $f$-ample line bundle $\calL$ such that $f_*(\calL^{\otimes m})$ are numerically flat for all $m\geq 0$.
\end{thm}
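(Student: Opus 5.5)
This statement is quoted from \cite{matsumuraStructureTheoremProjective2025}, and the plan below reconstructs their argument. Over $\bbC$ one replaces the $F$-singularity machinery of Theorem \ref{thm: existence of relative ample numerically flat bundle} by analytic positivity of direct images. The candidate is $\calL := \calO_X(H)$ with $H$ a sufficiently divisible $f$-ample Cartier divisor, normalized so that $f_*\calL^{\otimes m}$ has degree zero.

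\textbf{Step 1: reduce to nefness.} By Proposition \ref{prop: characterization of numerical flatness} and \cite[Remark 5.2]{langerSfundamentalGroupScheme2011}, it suffices to prove two things: $\det f_*(\calL^{\otimes m})$ is numerically trivial, and $f_*(\calL^{\otimes m})$ is nef, for all $m$ (equivalently after restricting to curves). To arrange the degree condition, I first modify $H$ by $f^*$ of a $\bbQ$-divisor on $Y$. By semi-continuity, $\det f_*\calO_X(mH)$ varies polynomially in $m$. Choose $H$ so that $\det f_*\calO_X(H)$ is numerically trivial after twisting by $f^*B$, where $B=-\frac{1}{\rank}c_1$. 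Then argue as in Lemma \ref{lem: top intersection vanishes} in reverse: the normalization forces $H^{\dim X}=0$ and $H^{\dim X-1}\cdot f^*A=0$ for ample $A$ on $Y$.

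\textbf{Step 2: nefness (positivity engine).} Write
\[
mH = K_{X/Y}+\Delta + \bigl(-K_{X/Y}-\Delta + mH\bigr).
\]
Since $-K_{X/Y}-\Delta$ is nef and $H$ is $f$-ample, the bracket is $f$-ample and nef-plus-$f$-ample. The analytic analogue of Theorem \ref{thm: semi-positivity}, namely Păun--Takayama/Berndtsson positivity of direct images for klt pairs, then shows that $f_*\calO_X(mH)$ is weakly positively curved, hence pseudo-effective. Using $H^{\dim X}=0$ as in Corollary \ref{cor: nefness of relative ample bundle}, one first shows that $H$ itself is nef. One can then upgrade to nefness of $f_*\calO_X(mH)$ via the singular Hermitian metric with semi-positive curvature on $\calO_{\bbP(f_*\calO_X(mH))}(1)$, whose minimal singularities have trivial multiplier ideals because $\Delta$ is klt.

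\textbf{Step 3: numerical flatness.} This follows Proposition \ref{prop: numerical flatness of ideal sheaf}. A nef bundle with numerically trivial determinant is numerically flat. Since $f_*\calO_X(mH)$ is nef, its degree is at least $0$. The degree is also at most $0$: asymptotic Riemann--Roch gives $\deg f_*(\calL^m)$ as a polynomial in $m$ with leading terms $H^{\dim X}$ and $H^{\dim X-1}\cdot f^*A$, both zero, and nefness forces all lower coefficients to vanish.

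I expect the main obstacle to be Step 2. Obtaining genuine nefness rather than mere pseudo-effectivity of $f_*\calO_X(mH)$ on every curve, including curves over which fibres are singular, is the delicate analytic input. It uses the Matsumura--Wang extension theorem together with the klt condition in order to control multiplier ideals.
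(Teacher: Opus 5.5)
The paper gives no argument of its own for this statement; it imports it from Matsumura--Wang. Your reconstruction has a genuine gap at its core, in Step 2.

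P\u{a}un--Takayama/Berndtsson positivity gives weak positivity of $f_*\calO_X(K_{X/Y}+\Delta+B)$ only when the twist $B$ is pseudo-effective on $X$, with a semi-positive singular metric that is mild along a general fibre. Knowing that $-K_{X/Y}-\Delta+mH$ is nef plus $f$-ample gives no global positivity at all. Your Step 2 never uses the normalization of $H$, so it applies verbatim to $H-f^*B$ for any ample $B$ on $Y$. It would then make $\det f_*\calO_X(mH)-m\,r_mB$ pseudo-effective for every such $B$, where $r_m=\rank f_*\calO_X(mH)$, which is absurd.

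Proving that the normalized $\bbQ$-line bundle $L=A-\frac{1}{r}f^*\det f_*A$ (with $r=\rank f_*A$) is pseudo-effective is exactly the non-formal input of the imported argument, which is in the spirit of Cao--H\"oring. Your proposal has no substitute for it. There, the determinant gives a nonzero section of $\sum_{i=1}^r\pr_i^*L$ on the $r$-fold fibre product $X^{(r)}=X\times_Y\cdots\times_Y X$. The relative anti-log-canonical divisor of $X^{(r)}$ is again nef, and positivity of direct images on $X^{(r)}$ then yields pseudo-effectivity of $L$. Consequently $f_*L$ is weakly positively curved with $c_1=0$, and such a bundle is already numerically flat. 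So the difficulty you single out (nef versus merely pseudo-effective direct images) is not where the work lies.

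Steps 1 and 3 do not hold up either.

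\textbf{Step 1.} Normalizing $\det f_*\calO_X(H)$ fixes a single twist and says nothing about $H^{n+1}\cdot f^*A^{d-1}$, where $n,d$ are the fibre and base dimensions. That number, not $H^{\dim X}$, is the leading coefficient of $\deg f_*\calO_X(mH)$. Lemma \ref{lem: top intersection vanishes} obtains its vanishing from $\deg f_*(\calL^{\otimes m})=0$ for all $m$, which is the conclusion you are after. Running it ``in reverse'' is therefore circular, and so is your appeal to Corollary \ref{cor: nefness of relative ample bundle} to make $H$ nef.

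\textbf{Step 3.} Nefness only gives $\deg f_*(\calL^{\otimes m})\geq 0$. A nonnegative polynomial with vanishing top coefficients need not vanish, so you still lack an upper bound valid for every $m$.

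One way to get that bound is to run the fibre-product trick for each $A^{\otimes m}$. Set $r_m=\rank f_*(A^{\otimes m})$ and $L_m=A-\frac{1}{m r_m}f^*\det f_*(A^{\otimes m})$. The trick shows that $L_m$ is nef with $f_*(L_m^{\otimes m})$ numerically flat. Now write $L_m=L_{m'}+f^*\beta$. Weak positivity of $f_*(L_m^{\otimes m'})$ and of $f_*(L_{m'}^{\otimes m})$ forces both $\beta$ and $-\beta$ to be pseudo-effective, so all the normalizations agree numerically. Then $\calL=A^{\otimes r}\otimes f^*(\det f_*A)^{-1}$ has $f_*(\calL^{\otimes m})$ numerically flat for every $m$.
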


Hence by Theorem \ref{thm: main theorem in text}, we may deduce a result similar to Theorem \ref{thm: associated bundle relative anti-nef positive characteristic in text}. 

\begin{thm}\label{thm: associated bundle relative anti-nef complex numbers in text}
	Let $f:(X,\Delta) \rightarrow Y$ be a surjective fibration over the complex numbers $\bbC$ such that
	\begin{enumerate}
		\item $(X,\Delta)$ is projective klt,
		\item $Y$ is smooth projective, with a distinguished point $y\in Y$,
		\item $-K_{X/Y}-\Delta$ is nef.
	\end{enumerate}
	Then there exists a linear algebraic group $G$ which is a quotient of $\pi_1^S(Y,y)$, a principal $G$-bundle $\pi:P\rightarrow Y$, such that $f$ is an $(X_y,\Delta_y)$-bundle associated to $\pi:P\rightarrow Y$.  
\end{thm}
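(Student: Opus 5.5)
The plan mirrors the proof of Theorem \ref{thm: associated bundle relative anti-nef positive characteristic in text}, with the characteristic $p$ inputs replaced by their complex analogues. First invoke Theorem \ref{thm: existence of relative ample numerically flat bundle complex numbers}. It produces an $f$-ample line bundle $\calL$ with $f_*(\calL^{\otimes m})$ numerically flat for all $m\geq 0$, which is hypothesis (1) of Theorem \ref{thm: main theorem in text}. What remains is hypothesis (2). For each irreducible component $D$ of $\Delta$, let $r$ be its Cartier index. We must show that $f_*(\calL^{\otimes m}(-rD))$ is numerically flat for $m\gg 0$. Theorem \ref{thm: main theorem in text} is then applied to $\calL^{\otimes m_0}$. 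That theorem is characteristic free, and $\bbC$ is perfect of characteristic $0$.

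To get hypothesis (2), fix $m_0$ such that, for every $m\geq m_0$ and every $D$, three things hold: $\calL^{\otimes m}(-rD)$ is $f$-ample, $R^if_*(\calL^{\otimes m}(-rD))=0$ for $i>0$, and $r/m\leq \mathrm{coeff}_\Delta(D)$. By \cite[Remark 5.2]{langerSfundamentalGroupScheme2011} it suffices to check numerical flatness after restriction to every smooth projective curve $C\rightarrow Y$. For $f$ we need flatness over $Y$ with normal fibres, playing the role of Lemma \ref{lem: miracle flatness}. One could use the known fact that such $f$ is flat in the complex setting (\cite{matsumuraStructureTheoremProjective2025}). Granting it, Corollary \ref{cor: comparison of higher direct images} identifies $f_*(\calL^{\otimes m}(-rD))|_C$ with $f'_*(\calL'^{\otimes m}(-rD'))$. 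The degree argument of Proposition \ref{prop: numerical flatness of ideal sheaf} then applies verbatim. The subsheaf of the degree $0$ semistable bundle $f'_*(\calL'^{\otimes m})$ has degree $\leq 0$, so it is enough to show $f'_*(\calL'^{\otimes m}(-rD'))$ is nef.

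The nefness is the step that needs characteristic $0$ replacements, and I expect it to be the main obstacle. Lemma \ref{lem: top intersection vanishes} and Corollary \ref{cor: nefness of relative ample bundle} show $\calL'$ is nef; this argument is characteristic free. In Lemma \ref{lem: nefness of ideal sheaf}, replace Theorem \ref{thm: semi-positivity} with the klt version of the semipositivity statement: if $(Z,\Lambda-\frac rm D')$ is klt over the generic point and $E$ is nef with $K_{Z/C}+\Lambda-\frac rm D'+E$ being $f'$-ample, then this divisor is nef. This follows from the nefness of $f'_*$ of adjoint bundles (Viehweg/Fujino-type weak positivity) together with the Campana–Peternell/Fujino argument as in \cite{patakfalviWeakBeauvilleBogomolovDecomposition2025}. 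Klt is preserved when the boundary is decreased, and the general fibre is klt by inversion of adjunction, or by Bertini for general fibres. This gives that $\calL'^{\otimes m}(-rD')$ is nef. Writing
\[
mL'-rD'=K_{Z/C}+\Lambda+\bigl((-K_{Z/C}-\Lambda)+mL'-rD'\bigr),
\]
the bracketed term is nef and $f'$-ample. Fujino's semipositivity theorem for direct images of adjoint klt bundles then gives that $f'_*\calO_Z(mL'-rD')$ is nef. This replaces \cite[Proposition 3.6]{patakfalviSemipositivityPositiveCharacteristics2014}, and no $S^0$ condition is needed in characteristic $0$.

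With (1) and (2) established, Theorem \ref{thm: main theorem in text} gives the linear algebraic quotient $G$ of $\pi_1^S(Y,y)$ and the principal bundle $P\rightarrow Y$ to which $f$ is an associated $(X_y,\Delta_y)$-bundle.
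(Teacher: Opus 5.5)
Your route is the one the paper takes. You get $\calL$ from Theorem~\ref{thm: existence of relative ample numerically flat bundle complex numbers}, restrict to curves, and use Lemma~\ref{lem: top intersection vanishes} to make $\calL'$ nef. You then use characteristic-zero replacements for Theorem~\ref{thm: semi-positivity} and for Patakfalvi's Proposition~3.6; the paper imports these from Patakfalvi--Zdanowicz, Lemma~A.4 and Proposition~A.11. The degree comparison with $f'_*(\calL'^{\otimes m})$, Langer's Remark~5.2 and Theorem~\ref{thm: main theorem in text} finish the argument.

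There is, however, a gap in how you verify the hypotheses of the semipositivity results on $Z=X\times_Y C$. Langer's criterion needs \emph{every} curve $C\to Y$, but all you know is that a \emph{general} fibre of $f$ is klt. If $C$ maps into a proper closed subset of $Y$, the generic fibre of $(Z,\Lambda-\frac rm D')\to C$ is a special fibre of $f$, about which nothing has been proved. A priori it need not be klt, nor even normal, so neither $K_{Z/C}$ nor the ``klt over the generic point'' hypothesis is available. Inversion of adjunction does not fill this. It propagates klt-ness from a klt fibre to a neighbourhood of that fibre, not from the klt total space $(X,\Delta)$ to arbitrary special fibres. Relatedly, you grant flatness but never establish normality of the fibres, which is what Lemma~\ref{lem: miracle flatness} supplied in characteristic $p$.

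This is exactly why the characteristic $p$ proof runs in two passes, and you should do the same.
\begin{enumerate}
\item Apply Theorem~\ref{thm: main theorem in text} to $(X,0)$, which needs only hypothesis (1). All fibres of $f$ are then isomorphic to a general one, hence normal, so $Z$ is normal for every $C$.
\item Take only curves through a general point $y_0$ and an arbitrary $y_1$. Here inversion of adjunction at the klt fibre over $y_0$ does make the generic fibre klt, and your argument gives numerical flatness on $C$. Theorem~\ref{thm: main theorem in text} over $C$ then yields $(X_{y_1},\Delta_{y_1})\cong(X_{y_0},\Delta_{y_0})$.
\item Now every fibre pair is klt, and your argument applies to all curves.
\end{enumerate}
Alternatively, work only on a general complete-intersection curve, where Bertini makes $(Z,\Lambda)$ klt, to get $\deg_H f_*(\calL^{\otimes m}(-rD))=0$. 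By reflexivity and semistability, a locally free degree-zero subsheaf of the numerically flat $f_*(\calL^{\otimes m})$ is saturated, and the Bogomolov/Hodge-index argument then shows it is numerically flat.

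Finally, you need not import flatness from Matsumura--Wang, whose result already contains the local constancy you are reproving. Flatness follows from the local freeness of all $f_*(\calL^{\otimes m})$, since $X\cong\mathrm{Proj}_Y\bigoplus_{m\ge 0} f_*(\calL^{\otimes m})$.
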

\begin{proof}
	We proceed similarly as in the proof of Theorem \ref{thm: associated bundle relative anti-nef positive characteristic in text}. 
	Pick a line bundle $\calL$ as in Theorem \ref{thm: existence of relative ample numerically flat bundle complex numbers}. 
	We just need to show the following claim:
	\begin{claim}
		For any irreducible component $D$ of $\Delta$, there exists an $m_0$ such that $f_*(\calL^{\otimes m}(-D))$ are numerically flat for all $m\geq m_0$. 
	\end{claim} 
	Indeed, the proof of Lemma \ref{lem: top intersection vanishes} is characteristic-free, and we can combine Lemma \ref{lem: top intersection vanishes} with \cite[Lemma A.4 \& Proposition A.11]{patakfalviWeakBeauvilleBogomolovDecomposition2025}, showing that the pullback bundle $f_*(\calL^{\otimes m}(-D))|_C$ is numerically flat for any morphism $C\rightarrow Y$ from a smooth curve $C$. 
	This in turn shows that $f_*(\calL^{\otimes m}(-D))$ is numerically flat, finishing the proof. 
\end{proof}

In particular, when $Y$ is the Albanese of $X$, we have the following Corollary. 

\begin{cor}\label{cor: associated bundle abelian variety complex numbers in text}
	Let $(X,\Delta)$ be a projective klt pair such that $-K_X-\Delta$ is nef. 
	Then the Albanese $(X,\Delta)\rightarrow A$ is an $(X_0,\Delta_0)$-bundle associated to a principal $G$-bundle $P\rightarrow A$ of an algebraic group $G$, and $G$ is a quotient of $\pi_1^S(A,0)$. 
\end{cor}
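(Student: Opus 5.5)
The plan is to specialize Theorem \ref{thm: associated bundle relative anti-nef complex numbers in text} to the Albanese morphism $f:(X,\Delta)\rightarrow A$, so the work is to check its three hypotheses. The base $A$ is an abelian variety over $\bbC$, hence smooth projective, and we fix $0\in A$ as the distinguished point. The pair $(X,\Delta)$ is projective klt by assumption. Since $K_A\sim 0$, we have $K_{X/A}=K_X-f^*K_A\sim K_X$, so $-K_{X/A}-\Delta$ is nef because $-K_X-\Delta$ is. Once these are in place, Theorem \ref{thm: associated bundle relative anti-nef complex numbers in text} gives a linear algebraic quotient $G$ of $\pi_1^S(A,0)$ and a principal $G$-bundle $P\rightarrow A$ such that $f$ is an $(X_0,\Delta_0)$-bundle associated to $P$, which is exactly the assertion.

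The remaining hypothesis is that $f$ is a surjective fibration, meaning $f$ is surjective and $f_*\calO_X=\calO_A$. I would quote the complex analogue of the result of Ejiri used in the positive-characteristic case: for a klt pair with $-K_X-\Delta$ nef, the Albanese map is surjective with connected fibres. This is Zhang's theorem, refined by Cao and by Matsumura--Wang; it is also implicit in \cite{matsumuraStructureTheoremProjective2025}, whose Corollary 4.2 already presents the Albanese as a fibre bundle. Since $X$ is normal, surjectivity with connected fibres gives, via Stein factorization, a finite part $A'\rightarrow A$. That part is étale because the Albanese map is its own Stein factorization up to isogeny, and by the universal property of the Albanese the isogeny must be trivial. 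Hence $\calO_A\rightarrow f_*\calO_X$ is an isomorphism.

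This surjectivity and fibration step is the only non-formal input, and I expect it to be the main obstacle. If a clean citation is unavailable, I would try to derive it from Theorem \ref{thm: existence of relative ample numerically flat bundle complex numbers} applied to the Stein factorization. That route seems circular because the theorem needs surjectivity. So I would first establish surjectivity from the nefness of $-K_X-\Delta$ using the standard argument via semipositivity of $f_*(\omega_{X/A}\otimes\ldots)$, or simply cite it.
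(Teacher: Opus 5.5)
Your proposal is correct and follows the paper's route: the corollary is obtained by applying Theorem~\ref{thm: associated bundle relative anti-nef complex numbers in text} to the Albanese $f\colon(X,\Delta)\rightarrow A$ with base point $0$, using $K_{X/A}\sim K_X$. The paper does not spell out the surjective-fibration hypothesis, so your citation of the Zhang/Cao/Matsumura--Wang results fills that in. Your Stein factorization remark should be tidied, though: the \'etaleness justification is circular and in any case unnecessary, because once $f$ is surjective with connected fibres onto the normal variety $A$ in characteristic $0$, the finite part of the Stein factorization is finite and birational, hence an isomorphism by Zariski's main theorem, which gives $\calO_A\cong f_*\calO_X$ directly.
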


Since there exists a canonical map $\pi_1(A,0) \rightarrow \pi_1^S(A,0)$, we directly recover \cite[Corollary 4.2]{matsumuraStructureTheoremProjective2025}. 

\section{Bertini for globally $F$-split strongly $F$-regular boundary}\label{section: Bertini}
In this section, we prove Theorem \ref{thm: Bertini of GFS+SFR} and deduce Theorem \ref{thm: associated bundle semi-ample} as a consequence.
Let us explain shortly the ideas in the proof. 
We consider a strongly $F$-regular pair $(X,\Delta)$, a base point free linear system $L$ on $X$, and the universal effective divisor $\widetilde\Gamma$ on $(X,\Delta)\times \bbP(L)$. 
We first perform a careful evaluation to show that if $(X, \Delta+ \frac{1}{m}\widetilde\Gamma|_t)$ is globally $F$-split at a closed point $t\in \bbP(L)$, then so is $(X\times U, \Delta\times U + \frac{1}{m}\widetilde\Gamma|_U)$ on an open $t\in U \subset \bbP(L)$. 
This shows the general global $F$-splitting. 
Next, according to \cite[Theorem 4.19]{tanakaBertiniTheoremsAdmitting2024}, the fibre $(X_\eta, \Delta_\eta + \frac{1}{m} \widetilde \Gamma|_\eta)$ at the generic point $\eta\in \bbP(L)$ is strongly $F$-regular. 
However in general, the strong $F$-regularity is only an open condition around a perfect point on the base (\cite[Theorem B]{patakfalviFsingularitiesFamilies2018}). 
Here, a perfect point means a morphism from $\Spec K$ to the base where $K$ is a perfect field. 
To bridge ourselves to the strong $F$-regularity of $(X_{\overline\eta}, \Delta_{\overline\eta} + \frac{1}{m}\widetilde\Gamma|_{\overline\eta})$, we note that the global $F$-splitting of $(X_\eta, \Delta_\eta + \frac{1}{m} \widetilde \Gamma|_\eta)$ implies the global $F$-splitting of $(X_{\overline\eta}, \Delta_{\overline\eta} + \frac{1}{m}\widetilde\Gamma|_{\overline\eta})$ by a generalization of \cite[Corollary 2.5]{gongyoRationalConnectednessGlobally2015} to the case of pairs, so the geometric generic strong $F$-regularity follows from \cite[Theorem 6.3]{patakfalviWeakBeauvilleBogomolovDecomposition2025}. 
Finally we manage to descend the strong $F$-regularity from a perfect base field extension to the original $F$-finite base field, finishing the proof. 

Throughout this section, we apply the following settings: 

\begin{set}
	We work over an $F$-finite field $k$ of characteristic $p>0$. 
	For a pair $(X,\Delta)$, we assume 
	\begin{enumerate}
		\item $X$ is normal and proper over $k$,
		\item $l:=H^0(X,\calO_X)$ is a separable field extension over $k$, 
		\item the log-canonical divisor $K_X+\Delta$ has a Cartier index coprime to $p$.
	\end{enumerate}
\end{set}

\subsection{Bertini-type theorem}
First we show that the global $F$-splitting is stable under $F$-finite base changes. 
Indeed, the proof in the boundary-free case (see \cite[Lemma 2.4 \& Corollary 2.5]{gongyoRationalConnectednessGlobally2015}) still works in the settings with a boundary. 
We need, however, argue first that the divisors on base changes are indeed well-defined. 

\begin{lem}\label{lem: GFS implies geometrically normal}
	Under the settings above, if $(X,\Delta)$ is globally $F$-split, then $X$ is geometrically normal. 
\end{lem}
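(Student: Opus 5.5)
The plan is to show that, for every finite purely inseparable extension $k'/k$, the base change $X_{k'}$ is normal. Since $l/k$ is separable, this will also handle geometric normality over $k^{\mathrm{sep}}$. We may replace $\Delta$ by $0$: a splitting of $\calO_X \rightarrow F^e_*\calO_X(\lceil (p^e-1)\Delta \rceil)$ restricts along $F^e_*\calO_X \subset F^e_*\calO_X(\lceil (p^e-1)\Delta\rceil)$ to a splitting of $\calO_X\rightarrow F^e_*\calO_X$. So $X$ is globally $F$-split, and we want geometric normality of $X$. By induction on a tower of degree-$p$ extensions, and since $k^{1/p^e}$ dominates every finite purely inseparable extension, it suffices to treat $X_{k^{1/p^e}}$ for each $e$. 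Normality descends along faithfully flat maps, so we may in fact take $e$ large.

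The key observation is that the $e$-th absolute Frobenius of $X$ factors through $X_{k^{1/p^e}}$. Writing $k^{1/p^e}$ as the source of $F^e$, we get a factorization
\[
F^e_X : X \xrightarrow{\ \alpha\ } X\times_k k^{1/p^e} \xrightarrow{\ \beta\ } X ,
\]
where $\beta$ is the projection. Up to the identification $k \cong k^{1/p^e}$, the map $\alpha$ is the relative Frobenius $F^e_{X/k}$. Global $F$-splitting gives a map $\phi: F^e_*\calO_X\rightarrow \calO_X$ splitting $\calO_X\rightarrow \beta_*\alpha_*\calO_X$. We will exploit this as follows. Let $Y:=X_{k^{1/p^e}}$ and let $\nu:\widetilde Y\rightarrow Y$ be its normalization. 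Because $X$ is normal and $\alpha$ is a finite, universally homeomorphic map from a normal variety, $\alpha$ factors through $\nu$. Hence $\calO_Y\subset \nu_*\calO_{\widetilde Y}\subset \alpha_*\calO_X$.

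We then descend along $\beta$. The composite $\calO_X \rightarrow \beta_*\calO_Y \rightarrow \beta_*\nu_*\calO_{\widetilde Y}\rightarrow F^e_*\calO_X \xrightarrow{\phi} \calO_X$ is the identity. Therefore $\calO_X\rightarrow \beta_*\calO_Y$ is split by a map that extends to $\beta_*\nu_*\calO_{\widetilde Y}$. To conclude that $\calO_Y=\nu_*\calO_{\widetilde Y}$, we will instead run the same argument over $Y$. By Lemma \ref{lem: base change of relative GFS} style reasoning, or more simply by flat base change of the splitting along $\Spec k^{1/p^e}\rightarrow \Spec k$, the scheme $Y$ is itself $F$-split, hence weakly normal and reduced. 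Likewise $\calO_Y \rightarrow F^{e'}_*\calO_Y$ splits for a suitable $e'$. Any $s\in \nu_*\calO_{\widetilde Y}$ has $s^{p^{e'}}\in \calO_Y$ for $e'$ large, since $\nu$ is a universal homeomorphism onto its image. This holds because $\widetilde Y\rightarrow Y$ is dominated by $\alpha$, which is purely inseparable, so $\calO_Y\subset\nu_*\calO_{\widetilde Y}$ is a purely inseparable (Frobenius-power) extension. An $F$-split ring is closed under such $p$-th roots: if $s^{p^{e'}}=a\in\calO_Y$, then applying the Frobenius splitting $\psi$ gives $s=\psi(s^{p^{e'}}\cdot 1)$ computed inside $\nu_*\calO_{\widetilde Y}$, and $\psi(F^{e'}_* a)\in \calO_Y$. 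Hence $s\in\calO_Y$, and $Y$ is normal.

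The main obstacle is the compatibility needed in this last step. The splitting of $\calO_Y\rightarrow F^{e'}_*\calO_Y$ must extend to $\nu_*\calO_{\widetilde Y}$, so that the identity $s=\psi(F^{e'}_*s^{p^{e'}})$ genuinely holds for $s$ in the normalization. I would handle this via the standard fact that $F$-splittings extend uniquely to the normalization: $\nu_*\calO_{\widetilde Y}$ is contained in the total fraction ring of $Y$, and $\psi$ extends $K(Y)$-semilinearly. Alternatively, I would cite that $F$-pure rings are weakly normal, together with the observation that $\widetilde Y\rightarrow Y$ is a universal homeomorphism. Either way, weak normality plus a birational universal homeomorphism forces $Y=\widetilde Y$.
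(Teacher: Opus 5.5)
There is a genuine gap: the step where you assert that $Y=X\times_k k^{1/p^e}$ is $F$-split ``by flat base change of the splitting along $\Spec k^{1/p^e}\rightarrow\Spec k$''. The absolute Frobenius of $X$ is not a $k$-morphism, so it does not base change along $k\subset k^{1/p^e}$. Pulling the $\calO_X$-linear splitting back along the flat map $\beta$ only gives a retraction of $\calO_Y\rightarrow\beta^*F^e_*\calO_X$. On $X=\Spec A$ this map is $B=A\otimes_k k^{1/p^e}\rightarrow k^{1/p^e}\otimes_k A^{1/p^e}$. Its target surjects onto $A^{1/p^e}=\alpha_*\calO_X$, with nilpotent kernel generated by the elements $\lambda\otimes 1-1\otimes\lambda$. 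Nothing forces your retraction to factor through this quotient, so you get neither a splitting of $F^e_Y$ nor a $B$-linear retraction of $B\rightarrow A^{1/p^e}$. Even the inclusion $\calO_Y\subset\nu_*\calO_{\widetilde Y}$ earlier already presupposes that $Y$ is reduced, which you only obtain from this step.

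The claim is in fact false without the hypotheses of the settings. Take $a\in k\setminus k^p$ and $X=\Spec l$ with $l=k(a^{1/p})$. Then $X$ is normal, proper and $F$-split, but $X\times_k k^{1/p}$ is non-reduced. Your argument never uses in an essential way that $l=H^0(X,\calO_X)$ is separable over $k$. The use you make of it, normality over $k^{\mathrm{sep}}$, holds for every normal variety. So as written your argument would also prove geometric normality in this example.

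What is missing is the passage from an absolute splitting to one relative to $k$. This is exactly where properness and separability enter, and the paper imports it from the proof of \cite[Corollary 2.5]{gongyoRationalConnectednessGlobally2015}. Concretely, factor $F^{2e}_X$ as $X\rightarrow Y\xrightarrow{F^e_Y}Y\xrightarrow{G}X$, apply $\SheafHom_{\calO_X}(-,\calO_X)$, use $G^!\calO_X\cong\calO_Y$, and take global sections. You get maps $\Hom_{\calO_X}(F^{2e}_*\calO_X,\calO_X)\rightarrow\Hom_{\calO_Y}(F^e_*\calO_Y,\calO_Y)\rightarrow l\otimes_k k^{1/p^e}\rightarrow l$ whose composite is surjective. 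Hence the middle map (evaluation at $1$) is a nonzero $l\otimes_k k^{1/p^e}$-linear map into $l\otimes_k k^{1/p^e}$. Because $l/k$ is separable this ring is a field, so the map is onto and $Y$ is $F$-split. Restricting a splitting of $F^e_Y$ to $A^{1/p^e}$ then gives a $B$-linear retraction $\sigma$ of $B\rightarrow A^{1/p^e}$.

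With this input, your $p$-th root argument (extending $\psi$ linearly to the total quotient ring) does finish correctly. The paper's ending is shorter and avoids normalizations and weak normality. Since $B$ is a $B$-module direct summand of the normal domain $A^{1/p^e}$, any $z=b/c\in\mathrm{Frac}(B)$ integral over $B$ lies in $A^{1/p^e}$. Then $c\,\sigma(z)=\sigma(b)=b=cz$ forces $z=\sigma(z)\in B$.
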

\begin{proof}
	By \cite[Tag 038O \& Tag 033G]{stacks-project}, it suffices to show that $Y:= X\times_k k^{\frac{1}{p^e}}$ is normal for all $e>0$. 
	To start, we remark that $(X,0)$ is globally $F$-split too. 
	As affine opens of globally $F$-split schemes are still globally $F$-split, we can assume $X=\Spec A, Y= \Spec A\otimes_k k^{\frac{1}{p^e}}$. 
	For simplicity denote $B:= A\otimes_k k^{\frac{1}{p^e}}$.
	Let $H: B \rightarrow A^{\frac{1}{p^e}}$ be the relative Frobenius of $A$ over $k$.
	By the proof of \cite[Corollary 2.5]{gongyoRationalConnectednessGlobally2015}, $X$ is globally $F$-split relative to $k$, that is, the map $H: B \rightarrow A^{\frac{1}{p^e}}$ has a $B$-linear section $\sigma$. 
	Let $z=\frac{b}{c}$ be an element in $\mathrm{Frac}(B)$ which is integral over $B$. 
	As $A$ is normal, we have $z\in A^{\frac{1}{p^e}}$. 
	Then one can apply $\sigma$ on the equation $cz=b$ and get $\sigma(z)=z$, hence $z\in B$.
\end{proof}

\begin{prop}\label{prop: GFS is stable under base change}
	Under the settings above, if the pair $(X,\Delta)$ is globally $F$-split, then so is it relatively over $k$. 
	In particular, for any field extension $k\subset K$ where $K$ is $F$-finite, the pair $(X_K, \Delta_K)$ after base change is globally $F$-split as well.
\end{prop}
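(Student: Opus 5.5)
We follow the boundary-free argument of \cite[Lemma 2.4 \& Corollary 2.5]{gongyoRationalConnectednessGlobally2015}: absolute global $F$-splitting is converted into relative global $F$-splitting over $k$ by factoring the absolute Frobenius through the relative one, and $F$-finite base change is then handled by Lemma \ref{lem: base change of relative GFS}. Lemma \ref{lem: GFS implies geometrically normal} guarantees that $X_K$ is normal, so that $\Delta_K$ and the pair $(X_K,\Delta_K)$ are well defined.

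\textbf{Step 1: factor the absolute Frobenius.} Let $X'=X\times_{k,F^e}k$ and factor $F^e_X=W\circ F^e_{X/k}$, where $W:X'\to X$ is the projection. Since $k$ is $F$-finite and $W$ is the base change of $F^e_k$, the map $W$ is finite. The splitting
\[
\calO_X\to F^e_*\calO_X(\lceil (p^e-1)\Delta\rceil)\to\calO_X
\]
factors as
\[
\calO_X\to W_*\calO_{X'}\to W_*F^e_{X/k,*}\calO_X(\lceil (p^e-1)\Delta\rceil).
\]
Composing the absolute splitting with this factorization shows that $\calO_X\to W_*\calO_{X'}$ is split, but only $\calO_X$-linearly. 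What we actually need is an $\calO_{X'}$-linear splitting of $\calO_{X'}\to F^e_{X/k,*}\calO_X(\lceil (p^e-1)\Delta\rceil)$.

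\textbf{Step 2: dualize.} By Lemma \ref{lem: surjectivity criterion of GFS and GFR} and its relative analogue through Lemma \ref{lem: duality}, relative splitting is equivalent to the surjectivity of
\[
H^0\bigl(X',F^e_{X/k,*}\calO_X(\lfloor(1-p^e)(K_{X/k}+\Delta)\rfloor)\bigr)\to H^0(X',\calO_{X'}).
\]
Since $X$ is geometrically normal, $K_{X'/k}$ is the pullback of $K_X$, and Lemma \ref{lem: relative dualizing sheaf of field extension} gives $\omega_{X'/X}=\calO_{X'}$. Grothendieck duality along the finite map $W$ then identifies the evaluation maps for the absolute and relative Frobenius: the absolute trace is the relative trace followed by the trace of $W$.

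So the image of the relative map is an $l'$-subspace of $H^0(X',\calO_{X'})=l\otimes_{k,F^e}k$, where $l'=l\otimes_{k,F^e}k$. This ring is a field (it equals $l^{p^e}k$ inside $l$) because $l/k$ is separable. The image is nonzero, since after composing with the trace of $W$ it surjects onto $l$. A nonzero subspace of a field that is stable under multiplication by $H^0(X',\calO_{X'})$ is everything. This gives the relative splitting. To ensure $(p^e-1)(K_X+\Delta)$ is Cartier, and hence that the floors commute with pullback, we first replace $e$ using Lemma \ref{lem: multiplies of GFS index are still GFS} and the assumption that the Cartier index is coprime to $p$.

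\textbf{Step 3: base change.} Apply Lemma \ref{lem: base change of relative GFS} to the flat morphism $\Spec K\to\Spec k$. This gives relative global $F$-splitting of $(X_K,\Delta_K)$ over $K$. Then, because $K$ is $F$-finite, $\Spec K$ is globally $F$-split over $\bbF_p$, and Lemma \ref{lem: compositum of relative GFS} yields absolute global $F$-splitting.

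\textbf{Main obstacle.} The hard part is Step 2: matching the divisors $\lfloor(1-p^e)(K+\Delta)\rfloor$ on $X$ and $X'$ correctly, and using separability of $l$ to upgrade a nonzero image to a surjective one. This is where the boundary-free proof must be adjusted with care.
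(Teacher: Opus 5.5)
Your proposal is correct and uses the same ingredients as the paper's proof: Grothendieck duality along the projection $W\colon X'\to X$ with $\omega_{X'/X}\cong\calO_{X'}$, and separability of $l/k$, which makes $H^0(X',\calO_{X'})$ a field so that a nonzero evaluation map onto it is surjective; Lemma~\ref{lem: base change of relative GFS} and Lemma~\ref{lem: compositum of relative GFS} then handle the extension to $K$, exactly as in the paper. The difference is that you dualize $F^e_X=W\circ F^e_{X/k}$ directly, whereas the paper passes through the $2e$-th Frobenius to first show $(Y,\Theta)=(X,\Delta)_{k^{1/p^e}}$ is absolutely globally $F$-split and then factors $F^e_Y=H\circ G$; your route is slightly shorter, and your divisor bookkeeping and Cartier-index adjustment of $e$ are unnecessary, since duality along $W$ alone already identifies the two evaluation maps.
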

\begin{proof}
	Let $e$ be an integer such that the map $\calO_X\rightarrow F^e_*\calO_X(\lceil(p^e-1)\Delta\rceil)$ is split. 
	We show first that $(X,\Delta)$ is globally $F$-split over $k$.  
	Let $(Y,\Theta)$ be the base change $(X,\Delta)\times_k k^{\frac{1}{p^e}}$. 
	We name some morphisms as in the following diagram:
	\[\begin{tikzcd}
		X &&&&& \\
		& Y && X \\
		&&&& Y & X \\
		&& {k^{\frac{1}{p^{2e}}}} && {k^{\frac{1}{p^e}}} & k.
		\arrow["H"{description}, from=1-1, to=2-2]
		\arrow["{F^e}", from=1-1, to=2-4]
		\arrow[from=1-1, to=4-3]
		\arrow["G"{description}, from=2-2, to=2-4]
		\arrow["{F^e}"', from=2-2, to=3-5]
		\arrow[from=2-2, to=4-3]
		\arrow["H"{description}, from=2-4, to=3-5]
		\arrow["{F^e}", from=2-4, to=3-6]
		\arrow["f"', from=2-4, to=4-5]
		\arrow["G"{description}, from=3-5, to=3-6]
		\arrow[from=3-5, to=4-5]
		\arrow["\lrcorner"{anchor=center, pos=0.125}, draw=none, from=3-5, to=4-6]
		\arrow["f", from=3-6, to=4-6]
		\arrow["{F^e}"', from=4-3, to=4-5]
		\arrow["{F^e}"', from=4-5, to=4-6]
	\end{tikzcd}\]
	Consider the following composition of $\calO_X$-module morphisms:
	\[
		\calO_X \rightarrow G_*\calO_Y \rightarrow G_*F^e_*\calO_Y(\lceil (p^e-1)\Theta \rceil) \rightarrow F^{2e}_*\calO_X(\lceil (p^{2e}-1)\Delta \rceil).
	\]
	Taking dual we get
	\begin{align*}
		\SheafHom_{\calO_X}(F^{2e}_*\calO_X(\lceil (p^{2e}-1)\Delta \rceil),\calO_X) &\rightarrow 
		\SheafHom_{\calO_X}(G_*F^e_*\calO_Y(\lceil (p^e-1)\Theta \rceil), \calO_X)\\ \rightarrow
		\SheafHom_{\calO_X}(G_*\calO_Y, \calO_X)&\rightarrow \calO_X.
	\end{align*}
	We now compute the two middle terms. 
	First we have the following identification by Grothendieck duality and Lemma \ref{lem: relative dualizing sheaf of field extension}:
	\begin{align*}
		\SheafHom_{\calO_X}(G_*\calO_Y,\calO_X) 
		&\cong G_*G^!\calO_X = \cong G_*\calO_Y,\\
		\SheafHom_{\calO_X}(G_*F^e_*\calO_Y(\lceil (p^e-1)\Theta\rceil), \calO_X)
		&=G_*\SheafHom_{\calO_Y}(F^e_*\calO_Y(\lceil (p^e-1)\Theta \rceil), \calO_Y).
	\end{align*}
	Taking global sections of the dual chain we get
	\begin{align*}
		\Hom_{\calO_X}(F^{2e}_*\calO_X(\lceil (p^{2e}-1)\Delta \rceil),\calO_X) \rightarrow
		\Hom_{\calO_Y}(F^e_*\calO_Y(\lceil (p^e-1)\Theta \rceil), \calO_Y) \overset{\beta}{\rightarrow}
		l\otimes_k k^{\frac{1}{p^e}}\rightarrow 
		l.
	\end{align*}
	By the assumption of global $F$-splitting and Lemma \ref{lem: multiplies of GFS index are still GFS}, the whole composition is a surjective map of $l$-vector spaces, which implies $\beta$ is non-zero. 
	As $l/k$ is separable, the ring $l\otimes_k k^{\frac{1}{p^e}}$ is indeed a field. 
	Now $\beta$ is a non-zero map of $l\otimes_k k^{\frac{1}{p^e}}$-vector spaces with a one-dimensional codomain, hence must be surjective, which in turn shows that $(Y,\Theta)$ is globally $F$-split. 
	The map $\calO_Y \rightarrow H_*\calO_X(\lceil (p^e-1)\Delta \rceil)$ is split as the split map $\calO_Y \rightarrow F^e_*\calO_Y(\lceil (p^e-1)\Theta \rceil)$ factors through it, and therefore $(X,\Delta)$ is globally $F$-split over $k$. 
	
	For the general global $F$-splitting of $(X_K,\Delta_K)$, we may apply first Lemma \ref{lem: base change of relative GFS} to see that $(X_K,\Delta_K)$ is globally $F$-split over $K$. 
	Now $\Spec K$ is globally $F$-split as $K$ is $F$-finite, and we deduce that $(X_K,\Delta_K)$ is absolutely globally $F$-split by Lemma \ref{lem: compositum of relative GFS}. 
\end{proof}

We then give a Bertini type theorem on the boundary of a globally $F$-split pair.

\begin{prop}\label{prop: relative GFS on an open}
	Assume the pair $(X,\Delta+\Delta')$ is globally $F$-split, where we separate the boundary into two parts such that
	\begin{enumerate}
		\item both $\Delta$ and $\Delta'$ are effective $\bbQ$-Cartier $\bbQ$-divisors, and
		\item the Cartier index of $\Delta'$ is coprime to $p$.
	\end{enumerate}
	Pick integer $m$ coprime to $p$ such that $\Gamma=m\Delta'$ is an integral Cartier divisor, and consider the complete linear system $|\Gamma|$. 
	Denote with $\widetilde \Gamma$ the universal hyperplane on $X\times \bbP(|\Gamma|)$. 
	Then there is an open $U\subset \bbP(|\Gamma|)$ containing the closed point $t\in U(l)$ corresponding to $\Gamma$, such that 
	\begin{enumerate}
		\item $(X\times U,\pr_1^*\Delta+\frac{1}{m}\widetilde \Gamma|_U)$ is globally $F$-split over $U$,
		\item $(X\times U,\pr_1^*\Delta+\frac{1}{m}\widetilde \Gamma|_U)$ is globally $F$-split over $l$, 
		\item $(X\times U,\pr_1^*\Delta+\frac{1}{m}\widetilde \Gamma|_U)$ is globally $F$-split. 
	\end{enumerate}
\end{prop}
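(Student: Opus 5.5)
First reduce to a splitting of a single relative map over $P:=\bbP(|\Gamma|)$. Pick $e$ with $m\mid p^e-1$; such $e$ exists since $m$ is coprime to $p$. By Lemma \ref{lem: multiplies of GFS index are still GFS}, the map $\calO_X\rightarrow F^e_*\calO_X(\lceil (p^e-1)(\Delta+\Delta')\rceil)$ is split. Put $\calM:=\calO(\lceil (p^e-1)\pr_1^*\Delta\rceil+\frac{p^e-1}{m}\widetilde\Gamma)$ on $X\times P$, where $\frac{p^e-1}{m}\widetilde\Gamma$ is an integral Cartier divisor. Let $(X\times P)'$ be the base change along $F^e_P$. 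By Lemma \ref{lem: duality}, applied relatively to the finite map $F^e_{X\times P/P}$, the splittings of $\calO_{(X\times P)'}\rightarrow F^e_{X\times P/P,*}\calM$ correspond to certain sections. Consider $\calH:=\SheafHom(F^e_{X\times P/P,*}\calM,\calO_{(X\times P)'})$ and push it forward to $P$ via the proper map $(X\times P)'\rightarrow P$. This gives a coherent sheaf $\calE$ on $P$ with an evaluation map $\mathrm{ev}:\calE\rightarrow \pr_*\calO_{(X\times P)'}$, and a global $F$-splitting over an open $U$ amounts to a section of $\calE|_U$ mapping to $1$.

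The key step is to produce such a section near $t$. Work affine-locally: take an affine neighbourhood $V=\Spec R$ of $t$. Over the fibre at $t$, the hypothesis yields a splitting $\phi_t$ of $\calO_{X_{l}}\rightarrow F^e_{X_l/l,*}\calO(\lceil(p^e-1)\Delta\rceil+\frac{p^e-1}{m}\Gamma)$. Here Proposition \ref{prop: GFS is stable under base change} converts the absolute splitting of $(X,\Delta+\Delta')$ into a splitting relative to $k$, hence to $l$ using separability of $l/k$. The dual sheaf $\calH$ corresponds, by relative duality, to $\calO((1-p^e)(K_{X\times P/P}+\pr_1^*\Delta)-\frac{p^e-1}{m}\widetilde\Gamma)$. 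This divisor is $\bbQ$-Cartier of index coprime to $p$ and is pulled back compatibly from $X$, twisted by $\calO_P(-(p^e-1)/m)$. I would lift $\phi_t$ to a section over $V$: after shrinking $V$, $\calE|_V$ surjects onto its fibre at $t$ by cohomology and base change or Nakayama, or one can simply trivialize $\calO_P(1)$ on $V$. The evaluation of the lift to $1$ is a regular function on $V$ equal to $1$ at $t$. Shrinking to $U$ where it is a unit and rescaling gives (1).

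For (2) and (3), apply Lemma \ref{lem: compositum of relative GFS}. Since $U$ is open in $\bbP^N_l$, it is smooth over $l$, and $U$ is globally $F$-split over $l$ after shrinking to an affine open: affine smooth schemes are $F$-split over the base, because the relative Frobenius is finite flat and splits locally. Combining with (1) gives (2). Then $l$ is $F$-finite, hence globally $F$-split over $\bbF_p$, and Lemma \ref{lem: compositum of relative GFS} again gives (3).

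The main obstacle is the lifting in the key step: the fibre of $\calE$ at $t$ must agree with the Hom-space on the fibre $X_t$, so that $\phi_t$ actually lifts. I would handle this by flatness of $X\times P\rightarrow P$ and the fact that the relevant divisor is Cartier of index coprime to $p$, so that $\calH$ is a line bundle twisted compatibly; base change of relative Frobenius is Lemma \ref{lem: base change of relative GFS}. Even if surjectivity onto the fibre fails, $\phi_t$ could be realized by a global section of the line bundle on $X\times P$ pulled back from $X$.
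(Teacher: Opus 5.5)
Your proposal is correct and is essentially the paper's argument. Relative duality for $F^e_{X\times P/P}=F^e_{X/l}\times\Id$, together with a trivialization of $\calO(1)|_V$ on an affine $V\ni t$, lets the splitting section on $X$ be pulled back along $\pr_1$; its composite with the natural map is a function on $(X\times V)'$ that is nonzero over $t$, so shrinking to where it is a unit gives (1), and (2) and (3) follow from Lemma~\ref{lem: compositum of relative GFS}. The only small fix is that $e$ must be a common multiple of a splitting exponent and the order of $p$ modulo $m$, since Lemma~\ref{lem: multiplies of GFS index are still GFS} only passes from a splitting exponent to its multiples (the paper does exactly this).
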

\begin{proof}
	Take an integer $e$ such that $m$ divides $p^e-1$. 
	Let $(Y,\Theta+\Theta'):=(X,\Delta+\Delta')\times_l l^{\frac{1}{p^e}}$ be the Frobenius base change, and let $\Pi:=m\Theta'$, which is the base change of $\Gamma$. 
	Similarly, we denote with $\widetilde\Pi$ the universal divisor on $Y\times \bbP(|\Pi|)$.
	By Lemma \ref{lem: multiplies of GFS index are still GFS}, we may replace $e$ by a multiple and assume the map $\calO_X \rightarrow F_*^e \calO_X(\lceil (p^e-1)(\Delta+\Delta') \rceil)$ is split. 
	By Proposition \ref{prop: GFS is stable under base change}, the map $\calO_Y \rightarrow F_{X/l,*}^e \calO_X(\lceil (p^e-1)(\Delta+\Delta') \rceil)$ is also split. 
	Consider the following diagram:
	\[\begin{tikzcd}
		{X\times \bbP(|\Gamma|)} &&&& \\
		& {Y\times \bbP(|\Gamma|)} && {Y\times \bbP(|\Pi|)} & {X\times \bbP(|\Gamma|)} \\
		& {\bbP(|\Gamma|)} && {\bbP(|\Pi|)} & {\bbP(|\Gamma|).} \\
		&&& {\Spec l^{\frac{1}{p^e}}} & {\Spec l}
		\arrow["{{F^e_{X/l}\times \Id}}"{description}, from=1-1, to=2-2]
		\arrow["{{F^e}}", from=1-1, to=2-5]
		\arrow["{{{\pr_2}}}"', from=1-1, to=3-2]
		\arrow["{{\Id \times F^e_{\bbP(\Gamma)/l}}}"', from=2-2, to=2-4]
		\arrow[from=2-2, to=3-2]
		\arrow["\lrcorner"{anchor=center, pos=0.125}, draw=none, from=2-2, to=3-4]
		\arrow[from=2-4, to=2-5]
		\arrow["{{{\pr_2}}}", from=2-4, to=3-4]
		\arrow["\lrcorner"{anchor=center, pos=0.125}, draw=none, from=2-4, to=3-5]
		\arrow["{{{\pr_2}}}", from=2-5, to=3-5]
		\arrow["{{F^e_{\bbP(|\Gamma|)/l}}}"', from=3-2, to=3-4]
		\arrow[from=3-2, to=4-4]
		\arrow[from=3-4, to=3-5]
		\arrow[from=3-4, to=4-4]
		\arrow["\lrcorner"{anchor=center, pos=0.125}, draw=none, from=3-4, to=4-5]
		\arrow[from=3-5, to=4-5]
		\arrow["{{F^e}}"', from=4-4, to=4-5]
	\end{tikzcd}\]
	Applying Lemma \ref{lem: base change of relative GFS} to the above mentioned split map, the map 
	\[
		\calO_{Y\times \bbP(|\Gamma|)} \rightarrow \left( F_{X/l}^e\times\Id \right)_* \calO_{X\times \bbP(|\Gamma|)}\left(\left\lceil (p^e-1)\left( \pr_1^*\Delta+\pr_1^*\Delta' \right) \right\rceil\right)
	\] 
	is also split.
	Let $V$ be an affine open in $\bbP(|\Gamma|)$ containing $t$. 
	By Lemma \ref{lem: duality}, we get a correspondence 
	\begin{gather*}
		\Hom_{\calO_{Y\times V} \text{-mod}} \left( \left(F_{X/l}^e\times\Id \right)_* \calO_{X\times V}\left(\left\lceil (p^e-1) \left( \pr_1^*\Delta+\frac{1}{m}\widetilde\Gamma \right) \right\rceil\right) , \calO_{Y\times V} \right)\\
		\parallel\\
		H^0\left( X\times V, \calO_{X\times V}\left(\left\lfloor (1-p^e)\left( \pr_1^*(K_{X}+\Delta)+\frac{1}{m}\widetilde\Gamma \right) \right\rfloor\right)\right)\\
		\parallel\\
		H^0\left( X\times V, \pr_1^*\calO_{X}\left(\left\lfloor (1-p^e)\left( K_{X}+\Delta+\frac{1}{m}\Gamma \right) \right\rfloor\right)\right), 
	\end{gather*}
	where the latter equality is because $\calO_{X\times\bbP(|\Gamma|)}(\widetilde\Gamma)=\pr_1^*\calO_X(\Gamma)\otimes\pr_2^*\calO_{\bbP(|\Gamma|)}(1)$, and $\calO_{\bbP(|\Gamma|)}(1)|_V$ is trivial. 
	There is then a canonical section on $H^0(X\times V, \pr_1^*\calO_X(\lfloor (1-p^e)(K_X+\Delta+\frac{1}{m}\Gamma) \rfloor))$ defined by the pullback of the section of $H^0(X,\calO_X(\lfloor (1-p^e)(K_X+\Delta+\frac{1}{m}\Gamma) \rfloor))$ that gives the global $F$-splitting on $X$. 
	The section defines then a homomorphism $(F_{X/l}^e\times\Id )_* \calO_{X\times V}(\lceil (p^e-1) ( \pr_1^*\Delta+\frac{1}{m}\widetilde\Gamma ) \rceil) \rightarrow \calO_{Y\times V}$, and we take its precomposition with the natural map $\calO_{Y\times V} \rightarrow (F_{X/l}^e\times\Id )_* \calO_{X\times V}(\lceil (p^e-1)(\pr_1^*\Delta+\frac{1}{m}\widetilde\Gamma) \rceil)$. 
	The composition is nothing but a section in $H^0(Y\times V, \calO_{Y\times V})$, whose restriction to $Y_t$ is non-zero, hence itself is invertible on an open $U\subset V$ containing $t$.
	This shows that the map $\calO_{Y\times U} \rightarrow (F_{X/l}^e\times\Id )_*\calO_{X\times U}(\lceil (p^e-1)(\pr_1^*\Delta+\frac{1}{m}\widetilde\Gamma) \rceil)$ is split, that is, $(X\times U,\pr_1^*\Delta+\frac{1}{m}\widetilde \Gamma|_U)$ is globally $F$-split over $U$. 
	For the global $F$-splitting of $(X\times U,\pr_1^*\Delta+\frac{1}{m}\widetilde \Gamma|_U)$ over $l$ and the absolute $F$-splitting, it suffices to notice that $U$ is globally $F$-split over $\Spec l$ and $\Spec l$ is globally $F$-split over $\bbF_p$, which allows us to apply Lemma \ref{lem: compositum of relative GFS}. 
\end{proof}

\begin{cor}\label{cor: the generic fibre is GFS}
	Succeeding the notations above. 
	Let $\eta$ be the generic point of $\bbP(|\Gamma|)$. Then $(X_\eta, \Delta_\eta+\frac{1}{m}\widetilde\Gamma|_\eta)$ is globally $F$-split over $\eta$ and absolutely globally $F$-split. 
\end{cor}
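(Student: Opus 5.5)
The plan is to deduce the corollary from Proposition \ref{prop: relative GFS on an open} by a flat base change along the generic point. Proposition \ref{prop: relative GFS on an open} gives an open $U\subset \bbP(|\Gamma|)$ containing $t$, over which $(X\times U,\pr_1^*\Delta+\frac{1}{m}\widetilde\Gamma|_U)$ is globally $F$-split over $U$. A nonempty open of the irreducible space $\bbP(|\Gamma|)$ contains the generic point, so $\eta\in U$. The inclusion $\Spec k(\eta)\rightarrow U$ is flat, since it is a localization. Applying Lemma \ref{lem: base change of relative GFS} to the base change $T=\Spec k(\eta)\rightarrow U$ then shows that $(X_\eta,\Delta_\eta+\frac{1}{m}\widetilde\Gamma|_\eta)$ is globally $F$-split over $\eta$.

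One point to check is that the restricted boundary is the boundary on the generic fibre. The pullback of $\pr_1^*\Delta+\frac1m\widetilde\Gamma$ to $X_\eta$ is the pullback along a flat morphism, so it is the divisor $\Delta_\eta+\frac1m\widetilde\Gamma|_\eta$. The rounding is also compatible: Lemma \ref{lem: base change of relative GFS} handles this through the inclusion $\lceil (p^e-1)\Delta_T\rceil\le\lceil (p^e-1)\Delta\rceil_T$, which is in fact an equality for localizations. The variety $X_\eta$ is normal, because $X\times U$ is normal and localization preserves normality, so $(X_\eta,\dots)$ is a pair in the paper's sense.

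For absolute global $F$-splitting, I would apply Lemma \ref{lem: compositum of relative GFS} with $f\colon X_\eta\rightarrow \Spec k(\eta)$ and $g\colon \Spec k(\eta)\rightarrow \Spec\bbF_p$. Condition (1) is the splitting over $\eta$ obtained above. Condition (2) is that $\Spec k(\eta)$ is globally $F$-split over $\bbF_p$. This holds because $k(\eta)$ is a finitely generated extension of the $F$-finite field $l$, hence is $F$-finite. The Frobenius $k(\eta)\rightarrow k(\eta)^{1/p}$ is then a finite extension of fields, and any inclusion of fields splits as a map of vector spaces.

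Alternatively, the absolute statement follows by localizing item (3) of Proposition \ref{prop: relative GFS on an open}: a splitting of $\calO_{X\times U}\rightarrow F^e_*\calO(\dots)$ restricts to $X_\eta$ because Frobenius pushforward commutes with localization. I expect no real obstacle. The only care needed is the bookkeeping that $\eta\in U$, which uses the irreducibility of the projective space, and the compatibility of rounded divisors under this flat base change.
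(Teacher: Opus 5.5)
Your proposal is correct and matches the paper's proof: flat base change of the relative splitting over $U$ to the generic point via Lemma \ref{lem: base change of relative GFS}, then Lemma \ref{lem: compositum of relative GFS} using that $k(\eta)$ is $F$-finite, so $\Spec k(\eta)$ is globally $F$-split. Your extra checks ($\eta\in U$ by irreducibility, normality of $X_\eta$, compatibility of rounding) and your alternative of localizing item (3) of Proposition \ref{prop: relative GFS on an open} are all valid, though the paper does not spell them out.
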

\begin{proof}
	Since $(X\times U,\pr_1^*\Delta+\frac{1}{m}\widetilde \Gamma|_U)$ is globally $F$-split over $U$, Lemma \ref{lem: base change of relative GFS} tells us that $(X_\eta, \Delta_\eta+\frac{1}{m}\widetilde\Gamma|_\eta)$ is globally $F$-split over $\eta$. 
	Moreover, $k(\eta)$ is a purely transcendental, finitely generated extension over $l$, hence is $F$-finite and globally $F$-split. 
	Then we can apply Lemma \ref{lem: compositum of relative GFS} and get the absolute global $F$-splitting of $(X_\eta, \Delta_\eta+\frac{1}{m}\widetilde\Gamma|_\eta)$. 
\end{proof}

The proof of the next Corollary is identical to the argument above, by replacing $\eta$ with a closed point $\Spec l \rightarrow U$. 

\begin{cor}\label{cor: Bertini for GFS}
	Succeeding the notations above. For a general member $\Gamma' \in |\Gamma|$, the pair $(X,\Delta+\frac{1}{m}\Gamma')$ is globally $F$-split. 
\end{cor}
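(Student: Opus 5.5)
The idea is to rerun the proof of Proposition \ref{prop: relative GFS on an open}, but specialize along a closed point of $U$ rather than restricting to the generic point. Proposition \ref{prop: relative GFS on an open} gives an open $U\subset\bbP(|\Gamma|)$ containing $t$ such that $(X\times U,\pr_1^*\Delta+\frac{1}{m}\widetilde\Gamma|_U)$ is globally $F$-split over $U$. Here $t$ is the point corresponding to $\Gamma=m\Delta'$, and global $F$-splitting of $(X,\Delta+\Delta')$ is the hypothesis. Since $U$ is a nonempty open of the irreducible $\bbP(|\Gamma|)$, the phrase ``general member'' means precisely a member $\Gamma'$ corresponding to a closed point $s\in U(l)$. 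The case of a closed point $s$ with residue field a finite extension is handled identically; if $k$ is finite one interprets ``general'' accordingly.

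For such an $s$, the fibre of $X\times U\to U$ over $s$ is $(X,\Delta+\frac{1}{m}\Gamma')$, because $\widetilde\Gamma|_s=\Gamma'$. The base change $\Spec l\to U$ given by $s$ is not flat, so Lemma \ref{lem: base change of relative GFS} does not apply as stated. I would avoid this issue by not base changing a splitting. Instead I would restrict the explicit splitting section from the proof of Proposition \ref{prop: relative GFS on an open} to the fibre. That section is the canonical section $\sigma\in H^0(X\times V,\pr_1^*\calO_X(\lfloor(1-p^e)(K_X+\Delta+\frac1m\Gamma)\rfloor))$. It defines a map $(F^e_{X/l}\times\Id)_*\calO_{X\times V}(\lceil(p^e-1)(\pr_1^*\Delta+\frac1m\widetilde\Gamma)\rceil)\to\calO_{Y\times V}$, and the composite with the natural inclusion is a function on $Y\times V$ that is invertible on $Y\times U$.

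Restricting to the slice $Y\times\{s\}\cong Y$ (and $X\times\{s\}\cong X$), the same section $\sigma|_s$ corresponds by Lemma \ref{lem: duality} to a map $F^e_{X/l,*}\calO_X(\lceil(p^e-1)(\Delta+\frac1m\Gamma')\rceil)\to\calO_Y$. Its composite with $\calO_Y\to F^e_{X/l,*}\calO_X(\dots)$ is the restriction of the invertible function, and is therefore a unit. This shows that $(X,\Delta+\frac1m\Gamma')$ is globally $F$-split over $l$. Lemma \ref{lem: compositum of relative GFS}, together with the fact that $\Spec l$ is globally $F$-split, then gives absolute global $F$-splitting.

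The main obstacle is verifying that forming the dual map commutes with restriction to the slice $s$. Concretely, this requires two identifications. First, $\lceil(p^e-1)(\pr_1^*\Delta+\frac1m\widetilde\Gamma)\rceil|_{X\times s}=\lceil(p^e-1)(\Delta+\frac1m\Gamma')\rceil$; this holds because $\widetilde\Gamma$ is Cartier and flat over $U$, and because $m\mid p^e-1$ makes the $\widetilde\Gamma$-part integral. Second, the correspondence in Lemma \ref{lem: duality} is compatible with pulling back along the section $X\cong X\times s\hookrightarrow X\times V$, which holds for a product with a smooth factor. With both checks in place, the fibrewise map is exactly the restriction of the family map, and nonvanishing of the composite follows.
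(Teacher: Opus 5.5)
Your proposal is correct and follows the paper's route: the paper's proof reruns the proof of Corollary~\ref{cor: the generic fibre is GFS} with $\eta$ replaced by a closed point $\Spec l\to U$. That is, it specializes the relative splitting over $U$ from Proposition~\ref{prop: relative GFS on an open} and then applies Lemma~\ref{lem: compositum of relative GFS}. Your explicit restriction of the splitting section is a more careful justification of the specialization step, since Lemma~\ref{lem: base change of relative GFS} is only stated for flat base change. It goes through here because the relevant sheaf is a pullback from $X$ twisted by a line bundle, and affine base change needs no flatness.
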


Now we turn to study the base change properties of strong $F$-regularity. 
It is well known that they are not stable under field extensions. 
However recall from \cite[Theorem 6.3]{patakfalviWeakBeauvilleBogomolovDecomposition2025} that geometric sharp $F$-purity ensures stable base change of strong $F$-regularity. 
In their settings the base field is assumed to be perfect, but the proof works essentially still in $F$-finite settings. 

\begin{prop}[{\cite[Theorem 6.3]{patakfalviWeakBeauvilleBogomolovDecomposition2025}}]\label{prop: SFR stable under base change if GFS}
	Assume $(X,\Delta)$ is strongly $F$-regular and geometrically sharply $F$-pure, that is, $(X,\Delta)_{\bar k}$ is sharply $F$-pure. 
	Then $(X,\Delta)_{\bar k}$ is also strongly $F$-regular. 
\end{prop}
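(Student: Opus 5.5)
Since the question is local, we may cover $X$ by affine opens and assume $X=\Spec R$. Strong $F$-regularity of $(X,\Delta)_{\bar k}$ must be checked on each affine chart of $X_{\bar k}$, and the base change of an affine cover of $X$ is such a cover. Sharp $F$-purity of $(X,\Delta)_{\bar k}$ is likewise local, so on each chart $(R_{\bar k},\Delta_{\bar k})$ is $F$-split.

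The strategy is the one of \cite[Theorem 6.3]{patakfalviWeakBeauvilleBogomolovDecomposition2025}: use a test element together with the splitting. I would proceed in three steps.

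First, pick a nonzero $c\in R$ such that $X\setminus V(c)$ is regular and $\Delta$ is supported in $V(c)$. Enlarging $V(c)$ if necessary, we may also arrange that $(X,\Delta)_{\bar k}$ restricted to $D(c)$ is strongly $F$-regular. This uses that $X$ is geometrically normal, which follows from the sharp $F$-purity of the base change by an argument as in Lemma~\ref{lem: GFS implies geometrically normal}, and that $\bar k/k$ is algebraic, so generic smoothness holds after a finite purely inseparable extension.

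Second, use the strong $F$-regularity of $(R,\Delta)$: there is an $e_0$ and an $R$-linear map
\[
\phi: F^{e_0}_*R(\lceil (p^{e_0}-1)\Delta\rceil + \mathrm{div}(c)) \rightarrow R
\]
sending $F^{e_0}_*1$ to $1$. The key step is to base change $\phi$ along $k\to \bar k$. Since $\bar k$ is not $F$-finite over $k$ in a controlled way, I would work with the relative Frobenius $F^{e_0}_{X/k}$. I would transport $\phi$ to a map over $\bar k$ using Grothendieck duality (Lemma~\ref{lem: duality}) together with the triviality of $\omega$ for field extensions (Lemma~\ref{lem: relative dualizing sheaf of field extension}), exactly as in the proof of Proposition~\ref{prop: GFS is stable under base change}.

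The relative splitting lives over $k^{1/p^{e_0}}$. To compose it with the absolute Frobenius of $\bar k$ one needs that $k^{1/p^{e_0}}\otimes_k \bar k \to \bar k$ splits $\bar k$-linearly. This holds because $\bar k$ is perfect and the map is a surjection onto a field. The outcome is a map over $R_{\bar k}$ that splits $c$ times the relative Frobenius.

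Third, combine this with the sharp $F$-purity of the base change. Composing the $F$-splitting $\psi$ of $(R_{\bar k},\Delta_{\bar k})$ iteratively with the map from the second step shows that $c$ is a "big test element" for the pair. Hochster--Huneke/Schwede's criterion then gives strong $F$-regularity: if $c$ is an element such that the localisation is strongly $F$-regular and $c$ admits a splitting along some Frobenius power twisted by the boundary, then the pair is strongly $F$-regular.

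The main obstacle is the second step: passing from the absolute splitting over the $F$-finite, possibly imperfect field $k$ to one over $\bar k$. The relative Frobenius mixes $k^{1/p^e}$-structure, and this is where the authors' remark that "the proof works in $F$-finite settings" has to be justified. I would first try to reduce to the case where $k$ is perfect by the same factorisation through $Y=X\times_k k^{1/p^e}$ used in Proposition~\ref{prop: GFS is stable under base change}. Concretely, I would show that $(Y,\Theta)$ remains strongly $F$-regular via the composition trick there, applied to the twisted map $\phi$ instead of the plain splitting. I would then pass to the perfection $k^{1/p^\infty}$ and invoke the perfect-field case of \cite[Theorem 6.3]{patakfalviWeakBeauvilleBogomolovDecomposition2025} directly.
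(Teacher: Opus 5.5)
Your Step~2 carries the entire content of the statement, and the mechanism you propose for it does not work. The paper gives no argument of its own beyond deferring to \cite[Theorem 6.3]{patakfalviWeakBeauvilleBogomolovDecomposition2025}, so your sketch has to stand on its own.

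\emph{The transport fails locally.} Transporting $\phi$ ``exactly as in the proof of Proposition~\ref{prop: GFS is stable under base change}'' is a global argument. It concludes that $\beta$ is surjective only because its target $H^0(Y,\calO_Y)=l\otimes_k k^{1/p^e}$ is a field. After your reduction to $X=\Spec R$, the target is $R\otimes_k k^{1/p^e}$. Surjectivity of the composite onto $R$ then only says that the image of $\beta$ is an ideal mapped onto $R$ by the last arrow, which is $R$-linear but not $R\otimes_k k^{1/p^e}$-linear. That ideal need not be the unit ideal. Also, strong $F$-regularity gives no global map with $F^{e_0}_*c\mapsto 1$ to feed into the global version.

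\emph{The linearity claim is wrong.} The map $\phi\otimes_k\bar k$ is defined on $F^{e_0}_*R\otimes_k\bar k=F^{e_0}_*R\otimes_{k^{1/p^{e_0}}}A$, where $A:=k^{1/p^{e_0}}\otimes_k\bar k$. This only surjects onto $F^{e_0}_*(R_{\bar k})=F^{e_0}_*R\otimes_{k^{1/p^{e_0}}}\bar k$. A $\bar k$-linear section $s$ of $A\to\bar k$ does not help. The map $\mathrm{id}\otimes s$ is well defined and $R_{\bar k}$-linear only if $s$ is $A$-linear, and no such section exists because $A$ is a non-reduced Artinian local ring. Perfectness of $\bar k$ plays no role here.

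\emph{The hypothesis is never used.} Step~2 never invokes the sharp $F$-purity of $(X,\Delta)_{\bar k}$. If it produced $\Psi\colon F^e_*R_{\bar k}\to R_{\bar k}$ with $\Psi(F^e_*c)=1$, your standard test-element criterion in Step~3 would finish with no purity hypothesis at all. That conclusion is false. Over $k=\bbF_7(t)$, the ring $R=k[x,y,z]/(x^2+y^3+z^7-t)$ is regular, hence strongly $F$-regular, and geometrically normal, with $D(x)$ smooth. Yet $R_{\bar k}\cong\bar k[x,y,z]/(x^2+y^3+z^7)$ is not even $F$-pure, since $f^6\in\mathfrak m^{[7]}$. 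So the unexplained ``compose iteratively with $\psi$'' would have to carry the whole proof.

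\emph{The fallback does not help.} In your last paragraph, ``$(Y,\Theta)$ remains strongly $F$-regular via the composition trick'' is the same failing transport. Moreover, over a perfect field $\bar k/k$ is separable algebraic and the statement is immediate, so all the content lies in the purely inseparable part.

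What is missing is to extract a \emph{relative} Frobenius splitting from the hypothesis, and then to shift $\phi$ by one Frobenius rather than base change it.
\begin{itemize}
\item Since $\bar k$ is perfect, the base change of $R_1:=R\otimes_k k^{1/p}\to R^{1/p}$ along $k^{1/p}\to\bar k$ is the Frobenius of $R_{\bar k}$. Also $R^{1/p}$ is a finite $R_1$-module.
\item By faithfully flat descent of the surjectivity of evaluation at $1$, local $F$-splittings of $X_{\bar k}$ therefore yield local $R_1$-linear splittings $\sigma\colon R^{1/p}\to R_1$. This is for $\Delta=0$; the boundary needs the usual bookkeeping of round-ups.
\item This splitting is also what gives geometric normality, via the argument of Lemma~\ref{lem: GFS implies geometrically normal}.
\item Because the inclusion is split, $R_1\subseteq R^{1/p}$, so $F^e_*R_1\subseteq F^{e+1}_*R$.
\item For $0\neq d\in R_1$ we have $d^p\in R$. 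Strong $F$-regularity of $R$ gives $\phi$ with $\phi(F^e_*d^p)=1$.
\item The restriction of $\sigma\circ F_*\phi\colon F^{e+1}_*R\to R_1$ to $F^e_*R_1$ is $R_1$-linear and sends $F^e_*d$ to $1$.
\end{itemize}
Hence $X_{k^{1/p}}$ is strongly $F$-regular. The hypotheses are inherited, so one iterates over all $k^{1/p^n}$ and then passes to $\bar k$, e.g.\ via the base change results for test ideals in \cite{patakfalviFsingularitiesFamilies2018}, as in Proposition~\ref{prop: SFR descends along perfect field extension}.

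Your outline names both ingredients, a splitting and an $F$-regularity map. But it never derives the splitting from the hypothesis, and it tries to base change $\phi$ instead of composing a Frobenius twist of $\phi$ with $\sigma$.
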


The situation for the converse direction is better. 
We may show that strong $F$-regularity descends from perfect base fields. 

\begin{prop}\label{prop: SFR descends along perfect field extension}
	Let $k\subset k'$ be a field extension such that $k'$ is perfect. 
	If $(X,\Delta)_{k'}$ is strongly $F$-regular, then so is $(X,\Delta)$. 
\end{prop}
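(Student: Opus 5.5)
The plan is to reduce to a local statement and descend a splitting along the faithfully flat map $\pi: X_{k'}\to X$. Strong $F$-regularity is local, so we may assume $X=\Spec A$ is affine and $R:=A\otimes_k k'$ is the base change. We must show that for every effective Weil divisor $D$ on $X$ there is some $e>0$ for which the map $A\to F^e_*A(\lceil (p^e-1)\Delta\rceil + D)$ splits. Pulling back, $\pi^*D$ is an effective Weil divisor on $X_{k'}$; this uses normality of $X_{k'}$, which follows from strong $F$-regularity. The hypothesis then gives an $e$ and an $R$-linear splitting $\phi': F^e_*R(\lceil (p^e-1)\Delta_{k'}\rceil+\pi^*D)\to R$ of the natural map.

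The key step is to relate Frobenius pushforwards on $X$ and on $X_{k'}$. Since $k'$ is perfect, $F^e_{k'}$ is an isomorphism. Hence the absolute Frobenius of $R$ factors as the relative Frobenius over $k'$, and
\[
F^e_*R \cong F^e_{*}(A)\otimes_{k} k'
\]
compatibly with the divisorial sheaves. More precisely, as $A$-modules, $F^e_*R(\lceil(p^e-1)\Delta_{k'}\rceil+\pi^*D)$ is identified with $F^e_*\bigl(A(\lceil (p^e-1)\Delta\rceil+D)\bigr)\otimes_k k'$. Concretely, $(a\otimes\lambda)\mapsto$ the element $a\otimes\lambda$ with $\lambda^{1/p^e}$ absorbed, which uses the perfectness of $k'$. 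The pullback of reflexive sheaves commutes with the flat base change, and the round-ups agree because $\pi$ is flat, so coefficients are unchanged. I expect this identification to be the main technical point, and I would carry it out at the level of the commutative diagram of Frobenii as in Lemma \ref{lem: base change of relative GFS}.

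With this identification, the natural map $R\to F^e_*R(\cdots)$ is the base change along $k\to k'$ of the natural map $\alpha: A\to M:=F^e_*A(\lceil (p^e-1)\Delta\rceil+D)$. Consider the evaluation map $\Hom_A(M,A)\to A$, $\psi\mapsto\psi(1)$. Since $M$ is a finite $A$-module, because $k$ is $F$-finite, Hom commutes with the flat base change $\otimes_k k'$. The base-changed evaluation map $\Hom_R(M_{k'},R)\to R$ is surjective, since $\phi'$ maps $1$ to $1$. Surjectivity descends along the faithfully flat extension $A\to R$, so some $\psi\in\Hom_A(M,A)$ satisfies $\psi(1)=1$. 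This $\psi$ is the desired splitting. Gluing over an affine cover then gives strong $F$-regularity of $(X,\Delta)$.

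The remaining care needed is that $e$ may depend on the chart, which is harmless since the definition is local. I expect the main obstacle to be the compatibility of divisorial sheaves and Frobenius pushforward under the inseparable-looking base change. Perfectness of $k'$ is exactly what makes the relative and absolute Frobenius agree up to an isomorphism, so I would handle this step first.
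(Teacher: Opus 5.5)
\textbf{The key identification is false when $k$ is imperfect.} You single out $F^e_*R\cong F^e_*(A)\otimes_k k'$ as the main technical point, but it fails whenever $k$ is imperfect, and that is the case the paper needs ($k$ is only $F$-finite).

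Perfectness of $k'$ does identify the absolute Frobenius of $X_{k'}$ with the relative Frobenius $F^e_{X_{k'}/k'}$. However, $F^e_{X_{k'}/k'}$ is the base change of the \emph{relative} Frobenius $F^e_{X/k}$, not of the absolute Frobenius $F^e_X$. That is exactly what the Cartesian diagram in Lemma \ref{lem: base change of relative GFS} expresses. The square formed by $F^e_{X_{k'}}$, $F^e_X$ and $\pi$ is not Cartesian.

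Concretely, for $X=\Spec k$ you get $\dim_{k'}(F^e_*k\otimes_k k')=[k:k^{p^e}]$, while $F^e_*k'\cong k'$. In general $\pi^*F^e_*\calO_X$ has generic rank $p^{e\dim X}[k:k^{p^e}]$, whereas $F^e_*\calO_{X_{k'}}$ has generic rank $p^{e\dim X}$. So $R\to F^e_*R(\cdots)$ is not the base change of $\alpha$. Consequently $\phi'$ does not by itself give the $R$-linear map $M\otimes_A R\to R$ with $1\mapsto 1$ that your descent step requires.

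\textbf{The repair is cheap.} You never need an isomorphism, only the natural $R$-linear comparison map $\beta\colon M\otimes_A R\to F^e_*R(\pi^*E)$, $m\otimes r\mapsto r^{p^e}\pi^*(m)$, where $E=\lceil(p^e-1)\Delta\rceil+D$. This map sends $1\otimes 1$ to $1$. It is in fact surjective because $k'$ is perfect, but it is not injective when $k$ is imperfect. Precomposing $\phi'$ with $\beta$ gives the required map, and your faithfully flat descent of the surjectivity of $\Hom_A(M,A)\to A$ then goes through verbatim. Alternatively, you can descend a splitting of the relative Frobenius to $X^{(p^e)}$ and compose it with a $k$-linear splitting of $k\to F^e_*k$, as in Lemma \ref{lem: compositum of relative GFS}.

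\textbf{The round-ups need not agree.} Flatness of $\pi$ does not preserve coefficients, because a prime divisor can become non-reduced after an inseparable base change. For example, on $\bbP^2$ over $k=\bbF_p(t)$ take $\Delta=\frac{1}{p+1}P$ with $P=\{x^p-ty^p=0\}$. All hypotheses hold, and $\Delta_{k'}=\frac{p}{p+1}L$ for a line $L$. Yet $\pi^*\lceil(p-1)\Delta\rceil=pL$, while $\lceil(p-1)\Delta_{k'}\rceil=(p-1)L$.

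Since $\pi^*\lceil(p^e-1)\Delta\rceil\le\lceil(p^e-1)\Delta_{k'}\rceil+\pi^*\Supp\Delta$ for every $e$, the fix is as follows. Apply the strong $F$-regularity of $X_{k'}$ to the $e$-independent divisor $\pi^*(D+\Supp\Delta)$. Then restrict the resulting splitting to the subsheaf $F^e_*R(\pi^*E)$.

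\textbf{Comparison with the paper.} With both repairs, your argument is a valid route genuinely different from the paper's. The paper works with test ideals. It uses the Patakfalvi--Schwede--Zhang base change theorem to reach a finite extension $k^{1/p^e}$ on which $\tau=\calO$. It then applies the Schwede--Tucker transformation rule along $X_{k^{1/p^e}}\to X$, with a splitting $\frakT$ whose divisor $R_\frakT$ vanishes. Your repaired descent is more elementary and needs neither test ideals nor perfectness of $k'$. It uses only faithful flatness of $X_{k'}\to X$ and finiteness of $F^e_*$ over the $F$-finite field $k$.
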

\begin{proof}
	Let $\tau(X,\Delta)$ be the test ideal (\cite[Section 2.2]{schwedeCentersFpurity2010}) associated to the pair $(X,\Delta)$. 
	By \cite[Proposition 6.18]{schwedeBehaviorTestIdeals2012}, $(X,\Delta)$ is strongly $F$-regular if and only if $\tau(X,\Delta)=\calO_X$. 
	By \cite[Theorem A]{patakfalviFsingularitiesFamilies2018}, where we let $V=k$, there is a finite extension $k''=k^{\frac{1}{p^e}}$ over $k$ such that $\tau(X_{k''},\Delta_{k''})=\calO_{X_{k''}}$, hence $(X,\Delta)_{k''}$ is strongly $F$-regular. 
	Let $\pi: X_{k''} \rightarrow X$ be the natural map. 
	We pick then a $k$-linear map $k''\rightarrow k$ that splits the inclusion $k\rightarrow k''$. 
	Pulling back to $X$, we get a surjective $\calO_X$-module homomorphism $\pi_*\calO_{X_{k''}} \rightarrow \calO_X$ that splits $\calO_X \rightarrow \pi_*\calO_{X_{k''}}$. 
	We denote this homomorphism with $\frakT$, following the notations in \cite{schwedeBehaviorTestIdeals2012}. 
	The map $\frakT$ corresponds to an effective divisor $R_\frakT$ linearly equivalent to $K_{X_{k''}}-\pi^*K_X$ by Lemma \ref{lem: duality}. 
	Since $K_{X_{k''}}=\pi^*K_X$, the divisor $R_\frakT$ has to be trivial. 
	So by \cite[Theorem 6.25]{schwedeBehaviorTestIdeals2012}, we have $\tau(X,\Delta)=\frakT(\tau(X_{k''},\Delta_{X_{k''}}))=\frakT(\calO_{X_{k''}})=\calO_X$. 
\end{proof}

We are now ready to prove Theorem \ref{thm: Bertini of GFS+SFR}. 
For convenience to readers we state the theorem here again. 

\begin{thm}\label{thm: Bertini of GFS+SFR in text}
	Let $(X,\Delta)$ be a strongly $F$-regular proper pair over an $F$-finite field $k$ of characteristic $p>0$, such that
	\begin{enumerate}
		\item $H^0(X,\calO_X)$ is separable over $k$, and
		\item $K_X+\Delta$ has Cartier index coprime to $p$.
	\end{enumerate}
	Let $\Delta'$ be an effective $\bbQ$-Cartier $\bbQ$-divisor such that $(X,\Delta+\Delta')$ is globally $F$-split, and $m>1$ an integer coprime to $p$ such that $m\Delta'$ is Cartier and $|m\Delta'|$ is base point free. 
	Then for a general member $\Gamma\in |m\Delta'|$, the pair $(X,\Delta+\frac{1}{m}\Gamma)$ is globally $F$-split and strongly $F$-regular.
\end{thm}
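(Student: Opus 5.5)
The plan follows the outline given at the beginning of this section. Set $\Gamma_0:=m\Delta'$, consider the linear system $|\Gamma_0|$, and let $\widetilde\Gamma$ be the universal divisor on $X\times\bbP(|\Gamma_0|)$. The pair $(X,\Delta+\frac1m\Gamma_0)=(X,\Delta+\Delta')$ is globally $F$-split. Proposition \ref{prop: relative GFS on an open} therefore gives an open $U\ni t$ over which $(X\times U,\pr_1^*\Delta+\frac1m\widetilde\Gamma|_U)$ is globally $F$-split over $U$. Corollary \ref{cor: Bertini for GFS} then gives global $F$-splitting of $(X,\Delta+\frac1m\Gamma)$ for $\Gamma$ in a dense open subset of $|\Gamma_0|$. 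It remains to show strong $F$-regularity for $\Gamma$ in a (possibly smaller) dense open set, and to intersect the two opens.

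For strong $F$-regularity I will first work at the generic point $\eta$ of $\bbP(|\Gamma_0|)$. The system is base point free and $(X,\Delta)$ is strongly $F$-regular. By \cite[Theorem 4.19]{tanakaBertiniTheoremsAdmitting2024}, the generic member gives a strongly $F$-regular pair $(X_\eta,\Delta_\eta+\frac1m\widetilde\Gamma|_\eta)$; here $m>1$ keeps the coefficient $\frac1m<1$. By Corollary \ref{cor: the generic fibre is GFS}, this pair is also globally $F$-split over $\eta$ and absolutely globally $F$-split. Next I pass to $\overline\eta$. By Lemma \ref{lem: GFS implies geometrically normal}, the base change to $\overline{k(\eta)}$ is normal, so the pair there is well defined. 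Separability of $H^0$ over $k(\eta)$ holds because $l\otimes_k k(\eta)$ is separable. Proposition \ref{prop: GFS is stable under base change}, applied to the $F$-finite extensions $k(\eta)^{1/p^e}$ and then passing to the limit (or applied directly via relative splitting and Lemma \ref{lem: base change of relative GFS}), gives that the geometric generic fibre is globally $F$-split, hence sharply $F$-pure. Proposition \ref{prop: SFR stable under base change if GFS} then shows that $(X_{\overline\eta},\Delta_{\overline\eta}+\frac1m\widetilde\Gamma|_{\overline\eta})$ is strongly $F$-regular.

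Now I spread this out. The family $(X\times U,\pr_1^*\Delta+\frac1m\widetilde\Gamma|_U)\to U$ has geometric generic fibre strongly $F$-regular. By \cite[Theorem B]{patakfalviFsingularitiesFamilies2018}, the fibre over a general perfect point of $U$ is strongly $F$-regular. So there is a dense open $U'\subset U$ such that for every closed point $s\in U'$ and a perfect field $k'$ over $k(s)$ (e.g. $\overline{k(s)}$), the pair $(X,\Delta+\frac1m\Gamma_s)_{k'}$ is strongly $F$-regular. Proposition \ref{prop: SFR descends along perfect field extension} then descends strong $F$-regularity to $(X,\Delta+\frac1m\Gamma_s)$ over $k(s)$, and thus over $k$, as $X_{k(s)}\to X$ is just a field extension of the base for $k$-points. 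Intersecting with the open set from the first step completes the proof.

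I expect the main obstacle to be the passage through $\overline\eta$. Theorem B of \cite{patakfalviFsingularitiesFamilies2018} only gives openness around perfect points, which is why the geometric generic fibre is needed and why the base change must be justified. Concretely, the point needing care is that $k(\eta)$ is $F$-finite while $\overline{k(\eta)}$ is not. I would handle this by showing that the relative global $F$-splitting over $U$ yields, via Lemma \ref{lem: base change of relative GFS}, a relative splitting over $\overline\eta$, and then splitting over the perfect field $\overline\eta$ is absolute.
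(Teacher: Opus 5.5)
Your proposal is correct and follows the paper's proof essentially step for step: Corollary \ref{cor: Bertini for GFS} gives the splitting, \cite[Theorem 4.19]{tanakaBertiniTheoremsAdmitting2024} and Corollary \ref{cor: the generic fibre is GFS} handle $\eta$, Propositions \ref{prop: GFS is stable under base change} and \ref{prop: SFR stable under base change if GFS} handle $\overline\eta$, and \cite[Theorem B]{patakfalviFsingularitiesFamilies2018} together with Proposition \ref{prop: SFR descends along perfect field extension} finish. Two small corrections: the paper takes pure $F$-regularity of $(X_\eta,\Delta_\eta+\widetilde\Gamma|_\eta)$ from Tanaka and obtains coefficient $\frac1m$ via Corollary \ref{cor: midpoint of SFR and PFR is SFR}; also, your worry is unfounded, since $\overline{k(\eta)}$ is perfect and hence $F$-finite, so Proposition \ref{prop: GFS is stable under base change} applies directly (your detour through Lemma \ref{lem: base change of relative GFS} is valid but unnecessary).
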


\begin{proof}
	Let $\bbP(|m\Delta'|)$ be the total space of effective divisors linearly equivalent to $m\Delta'$, and consider the universal divisor $\widetilde\Gamma$ on $X\times \bbP(|m\Delta'|)$. 
	Let $\eta\in\bbP(|m\Delta'|)$ be the generic point. 
	Note that $k(\eta)$ is a purely transcendental and finitely generated extension of $l$, and hence is in particular $F$-finite. 
	The global $F$-splitting of a general $\Gamma\in|m\Delta'|$ follows from Corollary \ref{cor: Bertini for GFS}. 
	We proceed then to prove its strong $F$-regularity.
	By \cite[Theorem 4.19]{tanakaBertiniTheoremsAdmitting2024}, the pair $(X_\eta,\Delta_\eta+\widetilde\Gamma|_\eta)$ is purely $F$-regular. 
	Then by Corollary \ref{cor: midpoint of SFR and PFR is SFR}, the pair $(X_\eta,\Delta_\eta+\frac{1}{m}\widetilde\Gamma|_\eta)$ is strongly $F$-regular. 
	It is also globally $F$-split by Corollary \ref{cor: the generic fibre is GFS}. 
	Then we apply Proposition \ref{prop: GFS is stable under base change} and deduce that the geometric generic fibre $(X_{\bar\eta},\Delta_{\bar\eta}+\frac{1}{m}\widetilde\Gamma|_{\bar\eta})$ is also globally $F$-split. 
	By Proposition \ref{prop: SFR stable under base change if GFS}, the geometric generic fibre is also strongly $F$-regular. 
	Finally, we can apply \cite[Theorem B]{patakfalviFsingularitiesFamilies2018} to see that for a general $\Gamma\in|m\Delta'|$, the pair $(X,\Delta+\frac{1}{m}\Gamma)_{\bar l}$ is strongly $F$-regular, and apply Proposition \ref{prop: SFR descends along perfect field extension} to see that $(X,\Delta+\frac{1}{m}\Gamma)$ is strongly $F$-regular. 
\end{proof}

\subsection{Finite decomposition when $-K_X-\Delta$ is semi-ample or $k$ is finite}
Applying Theorem \ref{thm: Bertini of GFS+SFR in text} to a strongly $F$-regular globally $F$-split pair $(X,\Delta)$ such that $-K_X-\Delta$ is semi-ample, we can pick a complement $\Delta' \sim_\bbQ -K_X-\Delta$ such that $(X,\Delta + \Delta')$ is still strongly $F$-regular globally $F$-split, hence

\begin{cor}\label{cor: associated bundle semi-ample in text}
	Let $(X,\Delta)$ be a globally $F$-split strongly $F$-regular projective pair over an infinite perfect field $k$ of characteristic $p>0$, such that $m(-K_X-\Delta)$ is Cartier and base point free for some integer $m$ coprime to $p$. 
	Let $(X,\Delta)\rightarrow A$ be the Albanese of $X$, and assume that $A$ has a rational point $0\in A(k)$. 
	Then $(X,\Delta)\rightarrow A$ is an $(X_0,\Delta_0)$-bundle associated to an isogeny $A'\rightarrow A$. 
\end{cor}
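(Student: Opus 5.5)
Write $N:=-K_X-\Delta$, so that $mN$ is Cartier and base point free. The plan is to reduce to the log Calabi--Yau situation and then invoke the weak Beauville--Bogomolov decomposition for log-$K$-trivial pairs, \cite[Theorem 1.13]{patakfalviWeakBeauvilleBogomolovDecomposition2025}.

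\textbf{Step 1: a good complement.} By Corollary \ref{cor: GFS is log CY} there is an effective $\Delta''$ with $(X,\Delta+\Delta'')$ globally $F$-split and $K_X+\Delta+\Delta''\sim_\bbQ 0$, so $\Delta''\sim_\bbQ N$. To apply Theorem \ref{thm: Bertini of GFS+SFR in text} I want the pair $(X,\Delta+\Delta')$ to be globally $F$-split with $m\Delta'$ Cartier, base point free and linearly equivalent to $mN$. Taking $\Delta'=\Delta''$ works as long as $m\Delta''\sim mN$ is Cartier. If it is not, I replace $m$ by a multiple $m'$ coprime to $p$ (for instance by enlarging $e$ so that $m \mid p^{e}-1$, as in Corollary \ref{cor: GFS is log CY}) so that $m'\Delta''\sim m'N$; the system $|m'N|$ stays base point free. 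Since $k$ is perfect, $H^0(X,\calO_X)$ is separable over $k$, and the Cartier index hypothesis is given. Because $k$ is infinite, Theorem \ref{thm: Bertini of GFS+SFR in text} produces a $k$-rational general member $\Gamma\in|m'N|$ with $(X,\Delta+\frac1{m'}\Gamma)$ globally $F$-split and strongly $F$-regular. Moreover $K_X+\Delta+\frac1{m'}\Gamma\sim_\bbQ 0$, with Cartier index dividing $m'$ and hence coprime to $p$.

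\textbf{Step 2: apply the log $K$-trivial decomposition.} I apply \cite[Theorem 1.13]{patakfalviWeakBeauvilleBogomolovDecomposition2025} to $(X,\Delta+\frac1{m'}\Gamma)$. Its Albanese is the same $X\to A$, since the Albanese depends only on $X$. The theorem gives an isogeny $A'\to A$ such that
\[
(X,\Delta+\tfrac1{m'}\Gamma)\times_A A'\cong A'\times(X_0,\Delta_0+\tfrac1{m'}\Gamma_0),
\]
with the isogeny's kernel acting diagonally, so that the pair is a bundle associated to the torsor $A'\to A$. Forgetting the component $\frac1{m'}\Gamma$ of the boundary, the product decomposition restricts to $\Delta$, giving $(X,\Delta)\times_A A'\cong A'\times(X_0,\Delta_0)$ equivariantly. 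This shows that $(X,\Delta)\to A$ is the $(X_0,\Delta_0)$-bundle associated to $A'\to A$.

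\textbf{Main obstacle.} I expect the hard part to be checking that the output of \cite{patakfalviWeakBeauvilleBogomolovDecomposition2025} really has the associated-bundle form, i.e.\ a diagonal action of the finite group scheme $\ker(A'\to A)$, rather than mere trivialization after a finite cover. If the cited result only gives trivialization over $A'$, I will instead combine it with Corollary \ref{cor: associated bundle abelian variety positive characteristic in text}. That corollary already gives an associated bundle structure for some $G$-torsor $P$ with $G$ a quotient of $\pi_1^S(A,0)$. Trivialization after the isogeny then forces the relevant numerically flat bundles $f_*\calL^{\otimes m}$, $f_*\calL^{\otimes m}(-rD)$ to become trivial on $A'$, so they lie in the Tannakian category of representations of $\ker(A'\to A)$. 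Hence $G$ factors through this finite group and $P$ is induced from $A'$, and Corollary \ref{cor: associated fibre bundle comparison} finishes the argument. A second, smaller point to verify is the rationality and genericity of $\Gamma$ over the infinite field $k$.
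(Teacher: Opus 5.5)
The overall architecture matches the paper: the complement from Corollary~\ref{cor: GFS is log CY} with $m'=p^{ke}-1$ a multiple of $m$, a $k$-rational general member of $|m'N|$ via Theorem~\ref{thm: Bertini of GFS+SFR in text} and the infinitude of $k$, then the log-$K$-trivial decomposition of Patakfalvi--Zdanowicz, then removing $\frac{1}{m'}\Gamma$. There is, however, a genuine gap, and it is exactly the step you flag. A splitting $X\times_A A'\cong A'\times X_0$ over an isogeny with kernel $K$ does not by itself make $X$ an associated bundle. Its descent datum is a morphism $A'\times K\to\underline{\Aut}(X_0,\Delta_0+\frac{1}{m'}\Gamma_0)$. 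This morphism depends on the chosen splitting and need not be constant along $A'$ when that group scheme is not affine. For instance, if the fibres are elliptic curves $E$, compose a good splitting with $(a',e)\mapsto(a',e+h(a'))$ for a non-constant morphism $h\colon A'\to E$.

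The paper supplies the missing idea by rigidifying with a polarization; it does not read the diagonal action off the statement of \cite[Theorem 1.13]{patakfalviWeakBeauvilleBogomolovDecomposition2025}. Writing $\Delta'=\frac{1}{m'}\Gamma$:
\begin{enumerate}
\item By \cite[Theorem 9.3]{patakfalviWeakBeauvilleBogomolovDecomposition2025}, the map $I=\mathrm{Isom}_A((X,\Delta+\Delta',\calL),A\times(X_0,\Delta_0+\Delta'_0,\calL_0))\to A$ is surjective. Hence $I$ is a torsor under the affine group $\Aut_{(X_0,\Delta_0+\Delta'_0,\calL_0)}$.
\item The evaluation map $(\varphi,a)\mapsto\varphi^{-1}(a)$ is invariant under the diagonal action. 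It descends to an isomorphism $(I\times X_0)/\Aut_{(X_0,\Delta_0+\Delta'_0,\calL_0)}\to X$.
\item $I$ is finite by \cite[Proposition 10.1]{patakfalviWeakBeauvilleBogomolovDecomposition2025} and is dominated by an isogeny by \cite[Proposition 11.4]{patakfalviWeakBeauvilleBogomolovDecomposition2025}.
\end{enumerate}
Without an argument of this kind, your Step 2 only gives a splitting after an isogeny, not the associated-bundle structure.

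Your fallback does not close the gap. A splitting over $A'$ need not carry the pullback of $\calL$ to $\pr_2^*\calL_0$; it can differ by $\pr_1^*\calN$ with $\calN$ numerically trivial on $A'$. So $\phi^*f_*\calL^{\otimes m}$ need not be trivial. Concretely, take the trivial family $X=A\times X_0$ and $\calL=\pr_1^*\calN\otimes\pr_2^*\calL_0$ with $\calN\in\mathrm{Pic}^0(A)$ non-torsion. Then $f_*\calL^{\otimes m}\cong\calN^{\otimes m}\otimes_k H^0(X_0,\calL_0^{\otimes m})$ is numerically flat but is trivialized by no isogeny. The Tannakian group of $\langle f_*\calL\rangle^\otimes$ is $\mathbb{G}_m$, not a quotient of a finite group scheme. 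Corollary~\ref{cor: associated bundle abelian variety positive characteristic in text} gives no control over which $\calL$ is used, so the claim that $G$ factors through $\ker(A'\to A)$ is unjustified. Over a finite field this is exactly what Proposition~\ref{prop: numerically flat bundle over finite field} provides, which is why Theorem~\ref{thm: associated bundle finite field in text} can argue this way.

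A smaller point: dropping $\frac{1}{m'}\Gamma$ at the end needs a line of justification. The paper arranges that $\frac{1}{m'}$ differs from every coefficient of $\Delta$ and that $\Gamma$ shares no component with $\Delta$, so the structure group preserves $\Delta'_0$ and hence $\Delta_0$. Alternatively, in an equivariant product $A'\times X_0$, a divisor supported on $A'\times\Supp(\Delta_0+\Gamma_0)$ is determined by its restriction to a single fibre.
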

\begin{proof}
	By Corollary \ref{cor: GFS is log CY}, there exists $\Delta'\sim_\bbQ -K_X-\Delta$ such that
	\begin{enumerate}
		\item $(X,\Delta+\Delta')$ is globally $F$-split, and
		\item $\Delta'$ has Cartier index coprime to $p$. 
	\end{enumerate}
	Let $m'$ be an integer coprime to $p$, such that $\Gamma:=m'\Delta'$ is Cartier and $|\Gamma|$ is base point free.
	Then we may apply Theorem \ref{thm: Bertini of GFS+SFR in text} and replace $\Gamma$ by a general member in the complete linear system. 
	We need the infinitude assumption on $k$ here, as otherwise an open set in $\bbP(|\Gamma|)$ might contain no rational point. 
	The new resulting $\Delta'=\frac{1}{m'}\Gamma$ still satisfies the two above mentioned properties, and we additionally obtain that $(X,\Delta+\Delta')$ is strongly $F$-regular. 
	Without loss of generality we may assume furthermore that $\frac{1}{m'}$ is not equal to any other coefficient in $\Delta$, and $\Gamma$ does not agree with any irreducible component of $\Delta$. 
	
	By \cite[Theorem 9.3]{patakfalviWeakBeauvilleBogomolovDecomposition2025}, the natural map $I=\mathrm{Isom}_A((X,\Delta+\Delta',\calL),A\times (X_0,\Delta_0+\Delta_0',\calL_0)) \rightarrow A$ is surjective for some relative polarization $\calL$, hence $I\rightarrow A$ is an $\Aut_{(X_0,\Delta_0+\Delta'_0,\calL_0)}$-torsor. 
	Moreover, there is a canonical isomorphism $I\times_A (X,\Delta+\Delta') \cong I\times (X_0,\Delta_0+\Delta'_0)$. 
	We claim that the $\Aut_{(X_0,\Delta_0+\Delta'_0,\calL_0)}$-action on $I\times (X_0,\Delta_0+\Delta'_0)$ is diagonal. 
	Indeed, consider the evaluation map
	\begin{align*}
		\mathrm{ev}: I\times (X_0,\Delta_0+\Delta'_0) &\rightarrow (X,\Delta+\Delta'),\\
						(\varphi,a) &\mapsto \varphi^{-1}(a).
	\end{align*}
	The map is clearly invariant under the diagonal action $\sigma\cdot(\varphi,a):=(\sigma(\varphi),\sigma(a))$ by $\Aut_{(X_0,\Delta_0+\Delta'_0,\calL_0)}$, and hence descends to a morphism $(I\times (X_0,\Delta_0+\Delta'_0))/G \rightarrow X$, which is an isomorphism because it is so after pulling back along $I\rightarrow A$.  
	Therefore, the Albanese $(X,\Delta+\Delta')\rightarrow A$ is an $(X_0,\Delta_0+\Delta'_0)$-bundle associated to $I\rightarrow A$. 
	By \cite[Proposition 10.1]{patakfalviWeakBeauvilleBogomolovDecomposition2025}, the torsor $I\rightarrow A$ is a finite map, and is dominated by an isogeny $A'\rightarrow A$ by \cite[Proposition 11.4]{patakfalviWeakBeauvilleBogomolovDecomposition2025}. 
	Hence $(X,\Delta+\Delta')\rightarrow A$ is also an $(X_0,\Delta_0+\Delta'_0)$-bundle associated to $A' \rightarrow A$. 
	As $\Supp(\Delta')$ does not agree with any irreducible component of $\Delta$, and the coefficient is not equal to any coefficient in $\Delta$, the action of $\Aut_{(X_0,\Delta_0+\Delta'_0,\calL_0)}$ must send $\Delta'$ to itself. 
	So we can remove now the boundary $\Delta'$, and $(X,\Delta)\rightarrow A$ is an $(X_0,\Delta_0)$-bundle associated to $A' \rightarrow A$. 
\end{proof}

Ejiri posed in \cite{ejiriSplittingAlgebraicFiber2023a} the question for a general base.

\begin{que}[{\cite[Question 1.16]{ejiriSplittingAlgebraicFiber2023a}}]
	Let $f:(X,\Delta)\rightarrow Y$ be a surjective morphism such that $-K_{X/Y}-\Delta$ is semi-ample. 
	Let $y\in Y(k)$ be a rational point. 
	Does there exist a finite cover $Y'\rightarrow Y$ that trivializes $f$, i.e. $Y'\times_Y (X,\Delta) \cong Y'\times (X_y,\Delta_y)$?
\end{que}

Nori introduced in \cite{noriFundamentalGroupscheme1982} the notion of an essentially finite vector bundle. 
Recall that a vector bundle $\calE$ is called \textbf{finite} if there exist two polynomials $u\neq v$ such that $u(\calE)\cong v(\calE)$, where sum and product are computed by direct sum and tensor product. 
A vector bundle is called \textbf{essentially finite} if it is a subquotient of finite vector bundles. 
Biswas and Dos Santos proved in \cite[Theorem 2]{biswasVectorBundlesTrivialized2011} that a vector bundle $\calE$ on $Y$ is essentially finite if and only if there exists a finite surjective morphism $f:Z\rightarrow Y$ such that $f^*\calE$ is trivial. 
If $\calE$ is essentially finite, then the group scheme Tannakian dual to $\langle \calE \rangle^\otimes$ is finite, see \cite[Proposition 3.10]{noriFundamentalGroupscheme1982}. 
So by Theorem \ref{thm: main theorem in text}, the question can be reduced to the following (probably harder) question:

\begin{que}
	Let $f:(X,\Delta)\rightarrow Y$ be a surjective morphism such that $-K_{X/Y}-\Delta$ is semi-ample. 
	Does there exist an $f$-ample line bundle $\calL$ on $X$ such that $f_*(\calL^{\otimes n})$ and $f_*(\calL^{\otimes n}(-D))$ are essentially finite for all $n$ and all irreducible components $D$ of $\Delta$?
\end{que}

\begin{que}
	Let $f:(X,\Delta)\rightarrow Y$ be a surjective morphism such that $-K_{X/Y}-\Delta$ is semi-ample. 
	Does there exist an $f$-ample line bundle $\calL$ on $X$ such that
	\begin{enumerate}
		\item $f_*\calL$ is essentially finite,
		\item $f_*(\calL^{\otimes n})$ and $f_*(\calL^{\otimes n}(-D))$ are numerically flat for all $n$ and all irreducible components $D$ of $\Delta$?
	\end{enumerate}  
\end{que}

If we restrict ourselves to work over a finite field, then indeed every numerically flat bundle is essentially finite. 

\begin{prop}\label{prop: numerically flat bundle over finite field}
	Let $Y$ be a smooth projective variety over a finite field $k$. 
	Then every numerically flat bundle on $Y$ is essentially finite. 
\end{prop}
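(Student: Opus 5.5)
The plan is a finiteness argument: numerically flat bundles of fixed rank on $Y$ over a finite field form a bounded family, so they have only finitely many isomorphism classes. Frobenius pullback permutes these classes, hence is eventually periodic on each orbit. Once we have $F^{a,*}\calE\cong F^{b,*}\calE$ with $a<b$, a bundle of this kind becomes trivial after a finite cover, i.e. it is essentially finite.

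The first step is boundedness. Let $\calE$ be numerically flat of rank $n$ and fix an ample $H$. By Proposition \ref{prop: characterization of numerical flatness}, $\calE$ and all its Frobenius pullbacks $F^{e,*}\calE$ are strongly $H$-semistable with vanishing $c_1\cdot H^{d-1}$ and $c_2\cdot H^{d-2}$. Pullbacks of nef bundles are nef, so each $F^{e,*}\calE$ is again numerically flat. Langer's boundedness theorem for semistable sheaves in positive characteristic then says that all these bundles lie in one bounded family. One can either invoke it directly or use the stronger statement from \cite{langerSfundamentalGroupScheme2011} that numerically flat bundles of rank $n$ form a bounded family. A bounded family is parametrized by a scheme of finite type over the finite field $k$, which has only finitely many $k$-points. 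Hence the set $\{F^{e,*}\calE\}_{e\ge 0}$ has only finitely many isomorphism classes over $k$, so $F^{a,*}\calE\cong F^{b,*}\calE$ for some $a<b$. Here we use $k$-Frobenius: since $k$ is perfect, the absolute Frobenius differs from the $k$-linear one only by a twist of $k$, and that twist has finite order because $k$ is finite.

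The second step is essential finiteness. Set $\calF:=F^{a,*}\calE$, so $F^{c,*}\calF\cong\calF$ with $c=b-a$. By Lang's theorem (cf.\ Lange--Stuhler and Katz), a bundle $\calF$ with $F^{c,*}\calF\cong\calF$ over a finite field is trivialized by a finite étale cover: the isomorphism gives a $\GL_n(\bbF_{p^c})$-torsor, étale-locally. So $\calF$ is étale-trivializable, and in particular essentially finite by \cite[Theorem 2]{biswasVectorBundlesTrivialized2011}. Then $\calE$ is trivialized by the composite of $F^a$ (finite surjective) with that étale cover. Applying \cite[Theorem 2]{biswasVectorBundlesTrivialized2011} again, $\calE$ is essentially finite. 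Since $k$ is perfect, the absolute Frobenius of $Y$ is a finite surjective morphism, so this composite is indeed finite surjective.

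The main obstacle is the first step: justifying that numerically flat bundles of fixed rank have finitely many isomorphism classes over a finite field. This needs boundedness uniformly in the Frobenius iterates, together with the passage from finitely many $\bar k$-points to finitely many $k$-isomorphism classes. The second part is not automatic, because distinct $k$-forms could map to the same $\bar k$-point. I would handle it through the finiteness of $k$-points of the finite-type Quot scheme: for $m$ large, every member of the family is a quotient of $\calO(-m)^N$, and a given class is recovered from a $k$-point of that Quot scheme. I would cite Langer's boundedness theorem for this rather than reprove it.
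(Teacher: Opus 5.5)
Your argument is correct and is essentially the paper's proof. The paper gets the finite trivializing cover for a numerically flat bundle over a finite field by citing \cite[Lemma 2.5]{patakfalviWeakBeauvilleBogomolovDecomposition2025}, and then concludes with \cite[Theorem 2]{biswasVectorBundlesTrivialized2011}, exactly as your last step does; your chain of boundedness, finitely many $k$-points of the Quot scheme, Frobenius periodicity and Lange--Stuhler is the standard argument behind that cited input, and your handling of the non-$k$-linearity of the absolute Frobenius is adequate.
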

\begin{proof}
	Simply combine \cite[Lemma 2.5]{patakfalviWeakBeauvilleBogomolovDecomposition2025} and \cite[Theorem 2]{biswasVectorBundlesTrivialized2011}. 
\end{proof}

Therefore, we obtain a finiteness result for the weak Beauville--Bogomolov decomposition when the base field is finite. 

\begin{thm}\label{thm: associated bundle finite field in text}
	Let $(X,\Delta)$ be a globally $F$-split strongly $F$-regular projective pair over a finite field $k$ of characteristic $p>0$, such that $-K_X-\Delta$ is nef and of Cartier index coprime to $p$. 
	Then the Albanese $(X,\Delta)\rightarrow A$ is an $(X_0,\Delta_0)$-bundle associated to an isogeny $A'\rightarrow A$, where $0\in A(k)$ is a rational point. 
\end{thm}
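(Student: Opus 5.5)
We first put ourselves in the setting of Corollary \ref{cor: associated bundle abelian variety positive characteristic in text}. Over a finite field Lang's theorem gives a rational point $0\in A(k)$, so $A$ is an abelian variety over $k$. Proposition \ref{prop: geometric generic fibre SFR over Albanese} then shows that $X$ is Cohen--Macaulay, that $f:X\rightarrow A$ is a surjective fibration, and that the geometric generic fibre is strongly $F$-regular. Since $K_A\sim 0$, the hypotheses of Theorem \ref{thm: associated bundle relative anti-nef positive characteristic in text} hold.

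We then run the proof of that theorem. It gives an $f$-ample line bundle $\calL$ (replaced by a suitable power $\calL^{\otimes m_0}$) such that $f_*(\calL^{\otimes m})$ and $f_*(\calL^{\otimes m}(-rD))$ are numerically flat for all $m>0$ and all components $D$ of $\Delta$, with property \ref{property very very ample}. As in the proof of Theorem \ref{thm: main theorem in text}, let $G$ be the Tannakian dual of $\langle f_*\calL\rangle^\otimes$ and let $P\rightarrow A$ be the corresponding principal $G$-bundle; then $f$ is an $(X_0,\Delta_0)$-bundle associated to $P\rightarrow A$. By Proposition \ref{prop: numerically flat bundle over finite field}, $f_*\calL$ is essentially finite. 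Every object of $\langle f_*\calL\rangle^\otimes$ is a subquotient of some $u(f_*\calL,(f_*\calL)^\vee)$, and these remain essentially finite, so the whole category lies inside Nori's essentially finite category. By \cite[Proposition 3.10]{noriFundamentalGroupscheme1982}, $G$ is a finite group scheme, and hence $P\rightarrow A$ is a finite torsor.

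It remains to replace the finite torsor $P$ by an isogeny. We may pass to a connected component: $P$ is a $G$-torsor with $H^0(P,\calO_P)=k$ by Proposition \ref{prop: rigidity of universal cover}, so $P$ is connected. Because $\pi$ is finite, $P$ is also proper, and it carries the rational point above $0$ given by the fibre functor. The key step, and the main obstacle, is to show that a connected finite torsor over an abelian variety is dominated by an isogeny. One route is to use the result cited in the proof of Corollary \ref{cor: associated bundle semi-ample in text}, \cite[Proposition 11.4]{patakfalviWeakBeauvilleBogomolovDecomposition2025}. Alternatively, one can use that finite torsors under abelian varieties pointed at the origin come from the Nori fundamental group scheme of $A$, which is the limit of the kernels of the isogenies $[n]$ together with the relative Frobenius isogenies (Nori, Mumford). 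Either way there is an isogeny $A'\rightarrow A$ factoring through $P$, in the sense that $A'\rightarrow A$ corresponds to a quotient $\ker\rightarrow G$. Corollary \ref{cor: associated fibre bundle comparison}, applied to the inclusion of Tannakian categories $\langle f_*\calL\rangle^\otimes\subset\mathrm{Rep}(\ker(A'\rightarrow A))$, then shows that $f$ is an $(X_0,\Delta_0)$-bundle associated to $A'\rightarrow A$.

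Two points need care. The first is rationality over the non-closed field $k$; this is handled because all constructions are Tannakian with the $k$-rational fibre functor at $0$. The second is verifying that the group scheme attached to an isogeny torsor is the kernel, which is standard.
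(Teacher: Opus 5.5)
Your proposal is correct and follows the paper's proof. Both use Lang's theorem for the rational point and the associated-bundle result over the Albanese (Corollary \ref{cor: associated bundle abelian variety positive characteristic in text}). Both then get finiteness of $G$ from Proposition \ref{prop: numerically flat bundle over finite field} together with Nori, and dominate the finite torsor by an isogeny via \cite[Proposition 11.4]{patakfalviWeakBeauvilleBogomolovDecomposition2025}.

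Your extra checks are harmless elaborations of steps the paper leaves implicit. These are the connectedness of $P$ from $H^0(P,\calO_P)=k$, the rational point over $0$, and the final comparison via Corollary \ref{cor: associated fibre bundle comparison}.
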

\begin{proof}
	By \cite[Theorem 2]{langAlgebraicGroupsFinite1956}, the Albanese $A$ has a rational point $0\in A(k)$, so we may apply Corollary \ref{cor: associated bundle abelian variety positive characteristic in text} and deduce that the Albanese $(X,\Delta)\rightarrow A$ is an $(X_0,\Delta_0)$-bundle associated to a principal $G$-bundle $P\rightarrow A$, where $G$ is the algebraic group Tannakian dual to $\langle f_*\calL \rangle^\otimes$. 
	By Proposition \ref{prop: numerically flat bundle over finite field}, the group $G$ is finite. 
	So $P\rightarrow A$ is dominated by an isogeny $A'\rightarrow A$ by \cite[Proposition 11.4]{patakfalviWeakBeauvilleBogomolovDecomposition2025}, and $(X,\Delta)\rightarrow A$ is an $(X_0,\Delta_0)$-bundle associated to $A'\rightarrow A$. 
\end{proof}

\section{Homogeneous fibrations over an abelian variety}\label{section: homogeneous fibration}
Recall Definition \ref{defn: homogeneous fibration} of a homogeneous morphism. 
In this section, we prove Theorem \ref{thm: associated bundles are homogeneous}. 
For convenience to readers we restate the theorem. We work over an algebraically closed field $k$ of characteristic $p\geq 0$.

\begin{thm}\label{thm: associated bundles are homogeneous in text}
	Let $f:(X,\Delta) \rightarrow A$ be a fibre bundle associated to the S-universal cover $\widetilde A^S \rightarrow A$, where $A$ is an abelian variety. 
	Then $f$ is homogeneous.
\end{thm}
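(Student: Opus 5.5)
The plan is to show that the S-universal cover $\widetilde A^S\rightarrow A$, viewed as a $\pi_1^S(A,0)$-torsor, is invariant under translations: for every $a\in A(k)$ we want an isomorphism of $\pi_1^S(A,0)$-torsors $t_a^*\widetilde A^S\cong \widetilde A^S$. Given this, write $(X,\Delta)=(\widetilde A^S\times W,\widetilde A^S\times\Theta)/\pi_1^S(A,0)$. Pulling back along $t_a$ gives $t_a^*(X,\Delta)\cong (t_a^*\widetilde A^S\times W, t_a^*\widetilde A^S\times\Theta)/\pi_1^S(A,0)$, and a torsor isomorphism induces an isomorphism of associated bundles over $A$. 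Composing with the canonical map $t_a^*X\rightarrow X$ covering $t_a$ then gives an automorphism $\sigma_a$ of $(X,\Delta)$ with $f\circ\sigma_a=t_a\circ f$. This is exactly Definition \ref{defn: homogeneous fibration}.

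To produce the torsor isomorphism, I will use the Tannakian description from Proposition \ref{prop: correspondence principal bundle and tannakian}. The torsor $\widetilde A^S$ corresponds to the fibre-functor-valued tensor functor $\mathrm{Vec}_0^S(A)\hookrightarrow \mathrm{Coh}(A)$, together with the identification $\pi_1^S(A,0)=\Aut^\otimes(\omega_0)$, where $\omega_0(\calE)=\calE|_0$. The torsor $t_a^*\widetilde A^S$ corresponds to the composite functor $\calE\mapsto t_a^*\calE$. Pullback by an automorphism preserves nefness, so $t_a^*$ is a tensor autoequivalence of $\mathrm{Vec}_0^S(A)$. Hence the two torsors are isomorphic (as $\pi_1^S(A,0)$-torsors, via Nori's equivalence) as soon as there is a tensor natural isomorphism $t_a^*\calE\cong\calE$, functorial in $\calE\in\mathrm{Vec}_0^S(A)$ and compatible with the identification at the base point.

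The key step is therefore translation-invariance of numerically flat bundles, functorially in $\calE$. My first approach is to use the family of translations: the addition map $\mu:A\times A\rightarrow A$ and the bundle $\mu^*\calE$ on $A\times A$, viewed as a family over the second factor. I would show that $\mu^*\calE\cong \pr_1^*\calE$, naturally and tensor-compatibly, by trivializing everything on the pullback along $\widetilde A^S\times A$. Since $\mu^*\calE$ is numerically flat on $A\times A$, and $\pi_1^S$ of a product is the product (Langer's Künneth formula for the S-fundamental group scheme), we would get $\pi_1^S(A\times A)=\pi_1^S(A)\times\pi_1^S(A)$. The representation attached to $\mu^*\calE$ is then the representation $\rho$ of $\calE$ composed with multiplication $\pi_1^S(A)\times\pi_1^S(A)\rightarrow\pi_1^S(A)$. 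The restriction to $0\times A$ gives $\calE$ itself, while the restriction to $A\times\{a\}$ gives $t_a^*\calE$ with the representation $\rho$ (after identifying fibres at $0$ and $a$ by the path $\pi_1^S$-torsor). Since $A$ is connected, the restriction of the representation to the first factor does not depend on the point $a$. This gives $t_a^*\calE\cong\calE$ as objects with their associated representations, hence naturally and tensorially.

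I expect this last step to be the main obstacle. The fibre functors at $0$ and at $a$ differ, so the isomorphism $t_a^*\widetilde A^S\cong\widetilde A^S$ must be corrected by a point of the torsor over $a$. This is harmless over an algebraically closed field, where $\widetilde A^S|_a$ has $k$-points because a torsor under an affine group scheme over $k=\bar k$ is trivial. Making the Künneth-type input rigorous in positive characteristic is the real content; if it is unavailable, I would instead try using Proposition \ref{prop: rigidity of universal cover} directly on $\widetilde A^S\times A$ to rigidify the family $\mu^*$.
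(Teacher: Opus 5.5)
Your route is viable but differs from the paper's. Two steps in the write-up need repair before it is a proof.

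\textbf{Comparison.} Your reduction is sound. With $G=\pi_1^S(A,0)$, an isomorphism of $G$-torsors $t_a^*\widetilde A^S\cong\widetilde A^S$ is the same datum as a $G$-equivariant automorphism of $\widetilde A^S$ lifting $t_a$, and either one descends to $\sigma_a$. The paper gets this lift in one line. It quotes Miyanishi and Brion for a commutative group structure on $\widetilde A^S$ with $0\to\pi_1^S(A,0)\to\widetilde A^S\to A\to 0$ exact. It then lifts $a$ to $a'\in\widetilde A^S(k)$, notes that $t_{a'}$ commutes with the $\pi_1^S(A,0)$-action, and descends $(t_{a'},\Id)$. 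You instead derive translation invariance from the K\"unneth formula for $\pi_1^S$, which I believe Langer proved over algebraically closed fields in every characteristic \cite{langerSfundamentalGroupScheme2012}. In effect you rebuild the group law on $\widetilde A^S$ from K\"unneth and the Tannakian formalism. This avoids the homogeneous-bundle theory the paper imports from Miyanishi and Brion, at the cost of more bookkeeping.

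\textbf{First repair.} The claim $\mu^*\calE\cong\pr_1^*\calE$ is false. Restricting to $\{0\}\times A$ would force $\calE$ to be trivial; for a nontrivial $L\in\mathrm{Pic}^0(A)$ one has $\mu^*L\cong\pr_1^*L\otimes\pr_2^*L$. The family $a\mapsto t_a^*\calE$ is isotrivial, not constant, and you only need the fibrewise statement. Your fallback via Proposition \ref{prop: rigidity of universal cover} aims at the same false statement.

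\textbf{Second repair.} ``Since $A$ is connected'' is a topological heuristic, not an argument in this setting. The computation that works is as follows.
\begin{itemize}
\item K\"unneth identifies the universal torsor of $A\times A$ with $\widetilde A^S\times\widetilde A^S$.
\item $\mu_*\colon G\times G\to G$ is the identity on each factor, so $\mu_*(g,h)=gh$ and $G$ is commutative (Eckmann--Hilton). You use this tacitly when you compose with multiplication.
\item Hence $\mu^*\widetilde A^S\cong(\widetilde A^S\times\widetilde A^S)\times^{G\times G,\mu_*}G$.
\item Pull back along $x\mapsto(x,a)$ and choose $a'$ in the fibre of $\widetilde A^S$ over $a$.
\item The map $x\mapsto[(x,a'),1]$ is $G$-equivariant, since $[(xg,a'),1]=[(x,a'),\mu_*(g,1)]=[(x,a'),g]$. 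It is therefore an isomorphism of $G$-torsors $\widetilde A^S\cong t_a^*\widetilde A^S$.
\end{itemize}
The composite $\widetilde A^S\cong t_a^*\widetilde A^S\to\widetilde A^S$ plays exactly the role of the paper's $t_{a'}$.

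Finally, the Tannakian criterion needs no compatibility with the base point. Any tensor isomorphism between the functors $\calE\mapsto\calE$ and $\calE\mapsto t_a^*\calE$ on $\mathrm{Vec}_0^S(A)$ suffices.
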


\begin{proof}
	By \cite{miyanishiSomeRemarks1973} and \cite{brionHomogeneousVectorBundles2020}, the S-universal cover $\widetilde A^S$ can be endowed with a commutative group scheme structure such that the following sequence is exact:
	\[\begin{tikzcd}
		0 & {\pi_1^S(A,0)} & {\widetilde A^S} & A & 0.
		\arrow[from=1-1, to=1-2]
		\arrow[from=1-2, to=1-3]
		\arrow[from=1-3, to=1-4]
		\arrow[from=1-4, to=1-5]
	\end{tikzcd}\]
	Let $a\in A(k)$ be any rational point and let $t_a$ be the corresponding translation. 
	As $\widetilde A^S \rightarrow A$ is faithfully flat and $\pi_1^S(A,0)$ is pro-algebraic, there exists a preimage $a'\in \widetilde A^S(k)$, and the corresponding translation $t_{a'}$ lifts $t_a$. 
	Note that $t_{a'}$ commutes with the action of $\pi_1^S(A,0)$ on $\widetilde A^S$. 
	Consider then the automorphism $(t_{a'},\Id)$ on $\widetilde A^S\times (X_0,\Delta_0)$. 
	Clearly it commutes with the diagonal action of $\pi_1^S(A,0)$ on $\widetilde A^S\times (X_0,\Delta_0)$, hence can be descended to an automorphism $\sigma_{a'}$ on the quotient $(\widetilde A^S\times (X_0,\Delta_0))/\pi_1^S(A,0) = (X,\Delta)$. 
	Since $f$ is induced by the first projection before quotient, we must have $f\circ\sigma_{a'}=t_a \circ f$. 
	This concludes the proof.
\end{proof}

\begin{cor}\label{cor: albanese is homogeneous in text}
	Let $(X,\Delta)$ be either
	\begin{enumerate}
		\item a normal klt projective pair over $\bbC$, such that $-K_X-\Delta$ is nef, or
		\item a globally $F$-split strongly $F$-regular projective pair over an algebraically closed field $k$ of characteristic $p>0$, such that $-K_X-\Delta$ is nef and of Cartier index coprime to $p$.
	\end{enumerate}
	Then the Albanese $(X,\Delta)\rightarrow A$ is homogeneous. 
\end{cor}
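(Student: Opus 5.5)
The plan is to combine Theorem \ref{thm: associated bundles are homogeneous in text} with the associated bundle statements already proved over each kind of base field. Since $k$ is algebraically closed (resp. $k=\bbC$), the Albanese $A$ is an abelian variety with a rational point $0\in A(k)$, and all points $a\in A(k)$ are rational. So the hypotheses of Corollary \ref{cor: associated bundle abelian variety positive characteristic in text} in case (2), and of Corollary \ref{cor: associated bundle abelian variety complex numbers in text} in case (1), are satisfied.

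In case (2), Corollary \ref{cor: associated bundle abelian variety positive characteristic in text} shows that $(X,\Delta)\rightarrow A$ is an $(X_0,\Delta_0)$-bundle associated to a principal $G$-bundle $P\rightarrow A$. Here $G$ is a quotient of $\pi_1^S(A,0)$ and $P$ corresponds to the full Tannakian subcategory $\langle f_*\calL\rangle^\otimes\subset \mathrm{Vec}_0^S(A)$. By Corollary \ref{cor: associated fibre bundle comparison}, it is then also an $(X_0,\Delta_0)$-bundle associated to $\widetilde A^S\rightarrow A$. Case (1) is identical, using Corollary \ref{cor: associated bundle abelian variety complex numbers in text} instead (klt already covers normality). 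The Cohen--Macaulay, surjective fibration and strongly $F$-regular geometric generic fibre inputs needed in case (2) are supplied by Proposition \ref{prop: geometric generic fibre SFR over Albanese}, and these are already built into the cited corollary.

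Having exhibited $f$ as a fibre bundle associated to $\widetilde A^S\rightarrow A$ over an algebraically closed field, I would apply Theorem \ref{thm: associated bundles are homogeneous in text} directly to conclude that $f$ is homogeneous. The only point needing care is making sure the associated bundle structure really is with respect to the diagonal $\pi_1^S(A,0)$-action on $\widetilde A^S\times(X_0,\Delta_0)$, since that is the form Theorem \ref{thm: associated bundles are homogeneous in text} uses. This follows from the description $P=(\widetilde A^S\times G)/\pi_1^S(A,0)$: it gives
\[
(P\times X_0)/G\cong(\widetilde A^S\times X_0)/\pi_1^S(A,0),
\]
where $\pi_1^S(A,0)$ acts on $X_0$ through $\pi_1^S(A,0)\rightarrow G$. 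This is exactly Corollary \ref{cor: associated fibre bundle comparison}, so I expect no real obstacle; the corollary is a formal combination of the preceding results.
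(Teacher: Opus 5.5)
Your proposal is correct and follows the paper's proof, which simply combines Theorem~\ref{thm: associated bundles are homogeneous in text} with Corollaries~\ref{cor: associated bundle abelian variety positive characteristic in text} and~\ref{cor: associated bundle abelian variety complex numbers in text}. Your explicit passage from $P$ to $\widetilde A^S$ via Corollary~\ref{cor: associated fibre bundle comparison} is exactly the step the paper leaves implicit.
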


\begin{proof}
	Combine Theorem \ref{thm: associated bundles are homogeneous in text} with Corollary \ref{cor: associated bundle abelian variety positive characteristic in text} and Corollary \ref{cor: associated bundle abelian variety complex numbers in text}.
\end{proof}




\printbibliography

@article{ambroModuliBDivisor2005,
  title = {The Moduli {$b$}-Divisor of an {lc}-Trivial Fibration},
  author = {Ambro, Florin},
  date = {2005},
  journaltitle = {Compositio Mathematica},
  shortjournal = {Compos. Math.},
  volume = {141},
  number = {2},
  pages = {385--403},
  langid = {english}
}

@article{beauvilleVarietesKahleriennesDont1983,
  title = {Vari{\'e}t{\'e}s K{\"a}hleriennes dont la premi{\`e}re classe de Chern est nulle},
  author = {Beauville, Arnaud},
  date = {1983},
  journaltitle = {Journal of Differential Geometry},
  shortjournal = {J. Differential Geom.},
  volume = {18},
  number = {4},
  pages = {755--782},
  langid = {french}
}

@article{biswasVectorBundlesTrivialized2011,
  title = {Vector Bundles Trivialized by Proper Morphisms and the Fundamental Group Scheme},
  author = {Biswas, Indranil and Dos Santos, Jo{\~a}o Pedro P.},
  date = {2011-04},
  journaltitle = {Journal of the Institute of Mathematics of Jussieu},
  shortjournal = {J. Inst. Math. Jussieu},
  volume = {10},
  number = {2},
  pages = {225--234},
  langid = {english}
}

@article{bogomolovDecompositionKahlerManifolds1974,
  title = {The Decomposition of K{\"a}hler Manifolds with a Trivial Canonical Class},
  author = {Bogomolov, Fedor A.},
  date = {1974},
  journaltitle = {Mathematics of the USSR-Sbornik},
  shortjournal = {Math. USSR-Sb.},
  volume = {22},
  number = {4},
  pages = {580--583},
  langid = {english}
}

@article{brionHomogeneousVectorBundles2020,
  title = {Homogeneous Vector Bundles over Abelian Varieties via Representation Theory},
  author = {Brion, Michel},
  date = {2020-02-03},
  journaltitle = {Representation Theory},
  shortjournal = {Represent. Theory},
  volume = {24},
  number = {3},
  pages = {85--114},
  langid = {english}
}

@article{caoAlbaneseMapsProjective2018,
  title = {Albanese Maps of Projective Manifolds with Nef Anticanonical Bundles},
  author = {Cao, Junyan},
  date = {2019},
  journaltitle = {Annales scientifiques de l'{\'E}cole normale sup{\'e}rieure},
  shortjournal = {Ann. Sci. {\'E}c. Norm. Sup{\'e}r.},
  volume = {52},
  number = {5},
  pages = {1137--1154},
  langid = {english}
}

@incollection{deligneTannakianCategories,
  title = {Tannakian Categories},
  author = {Deligne, Pierre and Milne, James S.},
  date = {1982},
  booktitle = {Hodge Cycles, Motives, and Shimura Varieties},
  series = {Lecture Notes in Mathematics},
  number = {900},
  publisher = {Springer-Verlag},
  pages = {101--228},
  langid = {english}
}

@article{demaillyCOMPACTCOMPLEXMANIFOLDS,
  title = {Compact Complex Manifolds with Numerically Effective Tangent Bundles},
  author = {Demailly, Jean-Pierre and Peternell, Thomas and Schneider, Michael},
  date = {1994},
  journaltitle = {Journal of Algebraic Geometry},
  shortjournal = {J. Algebraic Geom.},
  volume = {3},
  number = {2},
  pages = {295--345},
  langid = {english}
}

@article{demaillyCompactKahlerManifolds,
  title = {Compact K{\"a}hler Manifolds with Hermitian Semipositive Anticanonical Bundle},
  author = {Demailly, Jean-Pierre and Peternell, Thomas and Schneider, Michael},
  date = {1996},
  journaltitle = {Compositio Mathematica},
  shortjournal = {Compos. Math.},
  volume = {101},
  number = {2},
  pages = {217--224},
  langid = {english}
}

@online{ejiriSplittingAlgebraicFiber2023a,
  title = {Splitting of Algebraic Fiber Spaces with Nef Relative Anti-Canonical Divisor and Decomposition of {$F$}-Split Varieties},
  author = {Ejiri, Sho},
  date = {2023-08-28},
  eprint = {2308.11145},
  eprinttype = {arXiv},
  eprintclass = {math.AG},
  langid = {english}
}

@article{ejiriWhenAlbaneseMorphism2019,
  title = {When Is the {{Albanese}} Morphism an Algebraic Fiber Space in Positive Characteristic?},
  author = {Ejiri, Sho},
  date = {2019-09-01},
  journaltitle = {manuscripta mathematica},
  shortjournal = {Manuscripta Math.},
  volume = {160},
  number = {1--2},
  pages = {239--264},
  langid = {english}
}

@article{gongyoRationalConnectednessGlobally2015,
  title = {On Rational Connectedness of Globally {{{$F$}-regular}} Threefolds},
  author = {Gongyo, Yoshinori and Li, Zhiyuan and Patakfalvi, Zsolt and Schwede, Karl and Tanaka, Hiromu and Zong, Runhong},
  date = {2015-08},
  journaltitle = {Advances in Mathematics},
  shortjournal = {Adv. Math.},
  volume = {280},
  pages = {47--78},
  langid = {english}
}

@article{haraFregularFpureRings2001,
  title = {{$F$}-Regular and {{{$F$}-pure}} Rings vs. Log Terminal and Log Canonical Singularities},
  author = {Hara, Nobuo and Watanabe, Kei-ichi},
  date = {2001-12-17},
  journaltitle = {Journal of Algebraic Geometry},
  shortjournal = {J. Algebr. Geom.},
  volume = {11},
  number = {2},
  pages = {363--392},
  langid = {english}
}

@book{hartshorneAlgebraicGeometry1977,
  title = {Algebraic {{Geometry}}},
  author = {Hartshorne, Robin},
  date = {1977},
  series = {Graduate {{Texts}} in {{Mathematics}}},
  number = {52},
  publisher = {Springer New York},
  langid = {english}
}

@article{hochsterTightClosureStrong1989,
  title = {Tight Closure and Strong {{{$F$}-regularity}}},
  author = {Hochster, Melvin and Huneke, Craig},
  date = {1989},
  journaltitle = {M{\'e}moires de la Soci{\'e}t{\'e} Math{\'e}matique de France. Nouvelle S{\'e}rie},
  shortjournal = {M{\'e}m. Soc. Math. Fr. Nouv. S{\'e}r.},
  number = {38},
  pages = {119--133},
  langid = {english}
}

@book{jantzenRepresentationsAlgebraicGroups2014,
  title = {Representations of {{Algebraic Groups}}},
  author = {Jantzen, Jens Carsten},
  date = {2014},
  series = {Mathematical {{Surveys}} and {{Monographs}}},
  number = {107},
  publisher = {American Mathematical Society},
  langid = {english}
}

@article{kawamataCharacterizationAbelianVarieties1981,
  title = {Characterization of Abelian Varieties},
  author = {Kawamata, Yujiro},
  date = {1981},
  journaltitle = {Compositio Mathematica},
  shortjournal = {Compos. Math.},
  volume = {43},
  number = {2},
  pages = {253--276},
  langid = {english}
}

@book{kollarBirationalGeometryAlgebraic2002,
  title = {Birational Geometry of Algebraic Varieties},
  author = {Koll{\'a}r, J{\'a}nos and Mori, Shigefumi},
  date = {2002},
  series = {Cambridge Tracts in Mathematics},
  number = {134},
  publisher = {Cambridge University Press},
  langid = {english}
}

@article{langAlgebraicGroupsFinite1956,
  title = {Algebraic {{Groups Over Finite Fields}}},
  author = {Lang, Serge},
  date = {1956-07},
  journaltitle = {American Journal of Mathematics},
  shortjournal = {Am. J. Math.},
  volume = {78},
  number = {3},
  pages = {555--563},
  langid = {english}
}

@article{langerSfundamentalGroupScheme2011,
  title = {On the {{S-fundamental}} Group Scheme},
  author = {Langer, Adrian},
  date = {2011},
  journaltitle = {Annales de l'Institut Fourier},
  shortjournal = {Ann. Inst. Fourier},
  volume = {61},
  number = {5},
  pages = {2077--2119},
  langid = {english}
}

@article{langerSfundamentalGroupScheme2012,
  title = {On the {{S-fundamental}} Group Scheme. {{II}}},
  author = {Langer, Adrian},
  date = {2012-10},
  journaltitle = {Journal of the Institute of Mathematics of Jussieu},
  shortjournal = {J. Inst. Math. Jussieu},
  volume = {11},
  number = {4},
  pages = {835--854},
  langid = {english}
}

@article{matsumuraStructureTheoremProjective2025,
  title = {Structure Theorem for Projective klt Pairs with Nef Anti-Canonical Divisor},
  author = {Matsumura, Shin-ichi and Wang, Juanyong},
  date = {2025-09-30},
  journaltitle = {Journal of the European Mathematical Society},
  shortjournal = {J. Eur. Math. Soc.}
}

@incollection{miyanishiSomeRemarks1973,
  title = {Some Remarks on Algebraic Homogeneous Vector Bundles},
  booktitle = {Number Theory, Algebraic Geometry and Commutative Algebra: {{In}} Honor of Yasuo Akizuki},
  author = {Miyanishi, Masayoshi},
  editor = {Kusunoki, Yusuke and Mizohata, Sigeru and Nagata, Masayoshi and Toda, Hiroshi and Yamaguti, Masaya and Yoshizawa, Hiroki},
  date = {1973},
  pages = {71--93},
  publisher = {Kinokuniya Book-store Co.},
  langid = {english}
}

@article{noriFundamentalGroupscheme1982,
  title = {The Fundamental Group-Scheme},
  author = {Nori, Madhav V},
  date = {1982-07},
  journaltitle = {Proceedings Mathematical Sciences},
  shortjournal = {Proc. Math. Sci.},
  volume = {91},
  number = {2},
  pages = {73--122},
  langid = {english}
}

@article{patakfalviFsingularitiesFamilies2018,
  title = {{{$F$}}-Singularities in Families},
  author = {Patakfalvi, Zsolt and Schwede, Karl and Zhang, Wenliang},
  date = {2018-05-01},
  journaltitle = {Algebraic Geometry},
  shortjournal = {Algebr. Geom.},
  pages = {264--327},
  langid = {english}
}

@article{patakfalviSemipositivityPositiveCharacteristics2014,
  title = {Semi-Positivity in Positive Characteristics},
  author = {Patakfalvi, Zsolt},
  date = {2014},
  journaltitle = {Annales scientifiques de l'{\'E}cole normale sup{\'e}rieure},
  shortjournal = {Ann. Sci. {\'E}c. Norm. Sup{\'e}r.},
  volume = {47},
  number = {5},
  pages = {991--1025},
  langid = {english}
}

@article{patakfalviWeakBeauvilleBogomolovDecomposition2025,
  title = {The Weak {{Beauville--Bogomolov}} Decomposition in Characteristic {$p\geq 0$}},
  author = {Patakfalvi, Zsolt and Zdanowicz, Maciej},
  date = {2026},
  journaltitle = {Annales scientifiques de l'{\'E}cole normale sup{\'e}rieure},
  shortjournal = {Ann. Sci. {\'E}c. Norm. Sup{\'e}r.},
  volume = {59},
  number = {2},
  pages = {501--577},
  langid = {english}
}

@article{schwedeBehaviorTestIdeals2012,
  title = {On the Behavior of Test Ideals under Finite Morphisms},
  author = {Schwede, Karl and Tucker, Kevin},
  date = {2014},
  journaltitle = {Journal of Algebraic Geometry},
  shortjournal = {J. Algebraic Geom.},
  volume = {23},
  number = {3},
  pages = {399--443},
  langid = {english}
}

@article{schwedeCentersFpurity2010,
  title = {Centers of {{{$F$}-purity}}},
  author = {Schwede, Karl},
  date = {2010-07},
  journaltitle = {Mathematische Zeitschrift},
  shortjournal = {Math. Z.},
  volume = {265},
  number = {3},
  pages = {687--714},
  langid = {english}
}

@article{tanakaBertiniTheoremsAdmitting2024,
  title = {Bertini Theorems Admitting Base Changes},
  author = {Tanaka, Hiromu},
  date = {2024-04},
  journaltitle = {Journal of Algebra},
  shortjournal = {J. Algebra},
  volume = {644},
  pages = {64--125},
  langid = {english}
}

@article{wittenbergAlbaneseTorsorsElementary2008a,
  title = {On {{Albanese}} Torsors and the Elementary Obstruction},
  author = {Wittenberg, Olivier},
  date = {2008-04},
  journaltitle = {Mathematische Annalen},
  shortjournal = {Math. Ann.},
  volume = {340},
  number = {4},
  pages = {805--838},
  langid = {english}
}

@article{xuHomogeneousFibrationsLog2020,
  title = {Homogeneous Fibrations on Log {{Calabi--Yau}} Varieties},
  author = {Xu, Jinsong},
  date = {2020-07},
  journaltitle = {manuscripta mathematica},
  shortjournal = {Manuscripta Math.},
  volume = {162},
  number = {3--4},
  pages = {389--401},
  langid = {english}
}

@MISC{stacks-project,
    AUTHOR = "Authors, The Stacks Project",
    TITLE = "Stacks Project",
    URL = "https://stacks.math.columbia.edu/",
    SHORTHAND = "Stack"
}

\end{document}